\numberwithin{equation}{section}
\theoremstyle{plain}
\newtheorem{theorem}{Theorem}[section]
\newtheorem{corollary}[theorem]{Corollary}
\newtheorem{problem}[theorem]{Problem}
\newtheorem{lemma}[theorem]{Lemma}
\newtheorem{proposition}[theorem]{Proposition}
\newtheorem{definition}[theorem]{Definition}
\newtheorem{remark}[theorem]{Remark}
\theoremstyle{remark}
\newtheorem{example}[theorem]{Example}
\renewcommand{\d}{{\mathrm d}}
\newcommand{\supp}{\mathop{\rm supp}\nolimits}
\newcommand{\restr}[1]{\lower3pt\hbox{$|_{#1}$}}
\newcommand{\mres}{\mathbin{\vrule height 1.6ex depth 0pt width 0.13ex\vrule height 0.13ex depth 0pt width 1.3ex}}
\newcommand{\la}{{\langle}}                  
\newcommand{\ra}{{\rangle}}
\newcommand{\eps}{\varepsilon}  
\newcommand{\nchi}{{\raise.3ex\hbox{$\chi$}}}
\newcommand{\N}{\mathbb{N}}
\renewcommand{\P}{\mathbb{P}}
\newcommand{\Q}{\mathbb{Q}}
\newcommand{\R}{\mathbb{R}}
\newcommand{\BB}{\mathscr{B}}
\newcommand{\FF}{\mathscr{F}}
\newcommand{\GG}{\mathscr{G}}
\newcommand{\cA}{{\ensuremath{\mathcal A}}}
\newcommand{\cB}{{\ensuremath{\mathcal B}}}
\newcommand{\cF}{{\ensuremath{\mathcal F}}}
\newcommand{\cH}{{\ensuremath{\mathcal H}}}
\newcommand{\cK}{{\ensuremath{\mathcal K}}}
\newcommand{\cI}{{\ensuremath{\mathcal I}}}
\newcommand{\cL}{{\ensuremath{\mathcal L}}}
\newcommand{\cN}{{\ensuremath{\mathcal N}}}
\newcommand{\cO}{{\ensuremath{\mathcal O}}}
\newcommand{\cP}{{\ensuremath{\mathcal P}}}
\newcommand{\cQ}{{\ensuremath{\mathcal Q}}}
\newcommand{\cR}{{\ensuremath{\mathcal R}}}
\newcommand{\cS}{{\ensuremath{\mathcal S}}}
\newcommand{\cU}{{\ensuremath{\mathcal U}}}
\newcommand{\cV}{{\ensuremath{\mathcal V}}}
\newcommand{\cZ}{{\ensuremath{\mathcal Z}}}
\newcommand{\bb}{{\boldsymbol b}}
\newcommand{\ff}{{\boldsymbol f}}
\newcommand{\ii}{{\boldsymbol i}}
\newcommand{\vv}{{\boldsymbol v}}
\newcommand{\bB}{{\boldsymbol B}}
\newcommand{\qQ}{{\boldsymbol Q}}
\newcommand{\ggamma}{{\boldsymbol \gamma}}
\newcommand{\mmu}{{\boldsymbol \mu}}
\newcommand{\ppi}{{\boldsymbol \pi}}
\newcommand{\xxi}{{\boldsymbol  \xi}}
\newcommand{\ttheta}{{\boldsymbol  \theta}}
\newcommand{\sfc}{{\mathsf c}}
\newcommand{\sfd}{{\mathsf d}}
\newcommand{\sfg}{{\mathsf g}}
\newcommand{\sfh}{{\mathsf h}}
\newcommand{\sfm}{{\mathsf m}}
\newcommand{\sfw}{{\mathsf w}}
\newcommand{\sfD}{{\mathsf D}}
\newcommand{\sfE}{{\mathsf E}}
\newcommand{\sfI}{{\mathsf I}}
\newcommand{\sfM}{{\mathsf M}}
\newcommand{\sfW}{{\mathsf W}}
\newcommand{\frd}{{\mathfrak  d}}
\newcommand{\frg}{{\mathfrak  g}}
\newcommand{\frm}{{\mathfrak  m}}
\newcommand{\frp}{{\mathfrak  p}}
\newcommand{\frr}{{\mathfrak  r}}
\newcommand{\frw}{{\mathfrak  w}}
\newcommand{\frF}{{\mathfrak  F}}
\newcommand{\rme}{{\mathrm e}}
\newcommand{\rmt}{{\mathrm t}}
\newcommand{\rmA}{{\mathrm A}}
\newcommand{\rmC}{{\mathrm C}}
\newcommand{\rmD}{{\mathrm D}}
\newcommand{\rmE}{{\mathrm E}}
\newcommand{\rmF}{{\mathrm F}}
\newcommand{\rmG}{{\mathrm G}}
\newcommand{\rmH}{{\mathrm H}}
\newcommand{\rmL}{{\mathrm L}}
\newcommand{\rmP}{{\mathrm P}}
\newcommand{\rmQ}{{\mathrm Q}}
\newcommand{\rmS}{{\mathrm S}}
\newcommand{\rmX}{{\mathrm X}}
\newcommand{\rmY}{{\mathrm Y}}
\newcommand{\ttG}{{\mathtt G}}
\newcommand{\ttM}{{\mathtt M}}
\newcommand{\ttN}{{\mathtt N}}
\newcommand{\ttP}{{\mathtt P}}
\newcommand{\ttR}{{\mathtt R}}
\newcommand{\ttW}{{\mathtt W}}
\newcommand{\bttM}{{\boldsymbol{\mathtt M}}}
\newcommand{\pPi}{{\text{$\mathit{\Pi}$}}}
\renewcommand{\P}{\ensuremath{\mathbb{M}}}
\newcommand{\OOmega}{\rmQ}
\newcommand{\oomega}{q}
\newcommand{\Thetao}{\Omega}
\newcommand{\thetao}{\omega}
\renewcommand{\Q}{\ensuremath{\mathbb P}}
\newcommand{\id}{\mathrm{Id}}
\title{Totally convex functions, 
\texorpdfstring{$L^2$}{}-Optimal transport for laws of random measures, and solution to the Monge problem}
\begin{document}

\author{
  Alessandro Pinzi
\thanks{Bocconi University, Department of Decision Sciences, via Roentgen 1, 20136 Milano (Italy). 
Email:
 \textsf{alessandro.pinzi@phd.unibocconi.it}},
 Giuseppe Savar\'e
 \thanks{Bocconi University, Department of Decision Sciences and BIDSA, via Roentgen 1, 20136 Milano (Italy).
 Email: \textsf{giuseppe.savare@unibocconi.it}
 }
}

\date{\today}

\maketitle

\begin{abstract}
We study the Optimal Transport problem 
for laws of random measures in the Kantorovich-Wasserstein space
$\cP_2(\cP_2(\rmH))$,
associated with a Hilbert space $\rmH$
(with finite or infinite dimension) and 
for the corresponding quadratic cost induced by the squared Wasserstein metric in $\cP_2(\rmH).$

Despite the lack of smoothness of the
cost, the fact that the space $\cP_2(\rmH)$ is not Hilbertian, and 
the curvature distortion induced 
by the underlying Wasserstein metric, we will show how to recover at the level of random measures in 
$\cP_2(\cP_2(\rmH))$ the same deep and powerful results linking 
Euclidean Optimal Transport problems 
in $\cP_2(\rmH)$ and  convex 
analysis.

Our approach relies on the notion of totally convex functionals, on
their total subdifferentials, 
and their Lagrangian liftings 
in the space 
square integrable $\rmH$-valued maps $L^2(\OOmega,\P;\rmH).$

With these tools, we identify a natural class of regular measures in $\cP_2(\cP_2(\rmH))$ 
for which the Monge formulation of 
the OT problem has a unique solution
and we will show that this class
includes relevant examples 
of measures with full support
in $\cP_2(\rmH)$ arising from 
the push-forward transformation
of nondegenerate Gaussian measures in $L^2(\OOmega,\P;\rmH).$

\end{abstract}

{\small\tableofcontents}

\newcommand{\mduality}[2]{[#1,#2]}
\newcommand{\rW}[1]{\cP_{#1}(\rmH)}
\newcommand{\cPP}{\cP\kern-4.5pt\cP}
\newcommand{\RW}[1]{\cP\kern-4.5pt\cP_{#1}(\rmH)}
\newcommand{\RWfin}[1]{\ensuremath{\cP\kern-4.5pt\cP_{#1}(\R^d)}}
\newcommand{\RWr}[1]{\cP\kern-4.5pt\cP_{#1}^{r\kern-1pt r}(\rmH)}
\newcommand{\RWrd}[1]{\ensuremath{\cP\kern-4.5pt\cP_{#1}^{r\kern-1pt r}(\R^d)}}
\newcommand{\RWru}[1]{\ensuremath{\cP\kern-4.5pt\cP_{#1}^{r\kern-1pt r}(\R)}}
\newcommand{\RWgr}[1]{\cP\kern-4.5pt\cP_{#1}^{gr\kern-1pt r}(\rmH)}
\newcommand{\RWgrd}[1]{\ensuremath{\cP\kern-4.5pt\cP_{#1}^{g\kern-1pt r\kern-1pt r}(\R^d)}}
\newcommand{\RWW}[1]{\cP\kern-4.5pt\cP_{#1}(\rmH\times\rmH)}
\newcommand{\RWWdet}[1]{\cP\kern-4.5pt\cP_{#1}^{\rm det} (\rmH\times\rmH)}
\newcommand{\msc}[2]{[#1,#2]}
\newcommand{\mmsc}[2]{[\kern-2pt[#1,#2]\kern-2pt]}
\newcommand{\shto}{\shortrightarrow}
\newcommand{\Rinf}{\ensuremath{\R\cup\{\infty\}}}
\newcommand{\bpartial}{\boldsymbol{\partial}}
\newcommand{\bpartialt}{\boldsymbol{\partial}_{\rmt}}
\newcommand{\Ast}{{\mathop{\scalebox{1}{\raisebox{-0.2ex}{$\ast$}}}}}%
\newcommand{\RGamma}{\mathrm{R}\Gamma}
\newcommand{\lift}[1]{#1^\ell}
\newcommand{\cPdet}[1]{{\cP^{\rm det}_{#1}}}
\newcommand{\LF}{\hat{\brmF}}
\newcommand{\bttP}{{\boldsymbol{\mathtt P}}}

\section{Introduction}

One of the most elegant and fascinating aspects of Optimal Transport Theory
\cite{RRI-II,Villani09}
for the classical quadratic cost $\sfc(x,y):=\frac 12 |x-y|^2$ in
$\R^d$ is its link with convex analysis,
which has been thoroughly exploited
by Knott-Smith, Rachev-R\"uschendorf, and Brenier 
\cite{Knott-Smith84,RR90,Brenier91}
(see also 
\cite{Cuesta-Matran89}).
If $\mu,\nu$ belong to the space
$\cP_2(\R^d)$ of probability measures with finite quadratic moment
\begin{equation}
    \label{eq:2moment}    \sfm_2^2(\mu):=\int_{\R^d}|x|^2\,\d\mu(x),
\end{equation}
is in fact possible to prove that
a coupling $\ggamma\in \cP(\R^d\times \R^d)$
with marginals $\mu,\nu$ 
(we say that $\ggamma\in \Gamma(\mu,\nu)$) is optimal for the $L^2$-Kantorovich-Wasserstein metric
\begin{equation}
  \label{eq:110}
  \sfw_2^2(\mu,\nu):=
  \min\Big\{\int_{\R^d\times \R^d}|x-y|^2\,\d\ggamma:
  \ggamma\in \Gamma(\mu,\nu)\Big\}
  =\min\Big\{ 
  \mathbb{E}\big[
  |X-Y|^2\big]:X\sim \mu,\ Y\sim\nu\Big\}
\end{equation}
if and only if its support $S:=\supp(\ggamma)\subset \R^d\times \R^d$
is cyclically monotone, i.e.
\begin{equation}
  \label{eq:111}
  \begin{gathered}
      \sum_{n=1}^N \langle y_n,x_n-x_{\sigma(n)}\rangle\ge0
  \quad\text{for every $N\in \N$, every choice of }(x_n,y_n)\in S\\
  \text{ and every permutation $\sigma\in \rmS_N$}.
  \end{gathered}
\end{equation}
The dual formulation of \eqref{eq:110}
and the fact that every cyclically monotone subset of $\R^d\times \R^d$ is 
contained in the graph of the convex subdifferential
$\partial\varphi$
of a convex and lower semicontinuous function
$\varphi:\R^d\to \Rinf$
implies that we can find optimal conjugate Kantorovich potentials
$\varphi,\varphi^{\Ast}$ such that
for all optimal couplings $\ggamma_{\rm opt}$ 
solving \eqref{eq:110}
the support of 
$\ggamma_{\rm opt}$ is contained in the graph of $\partial\phi$ and 
\begin{equation}
  \label{eq:110bis}
\int_{\R^d\times \R^d}
x\cdot y\,\d\ggamma_{\rm opt}(x,y)=
\int_{\R^d}\varphi(x)\,\d\mu(x)+
\int_{\R^d}\varphi^{\Ast}(y)\,\d\nu(y).
\end{equation}
The simple but crucial link between 
the first integral in \eqref{eq:110bis}
and the Optimal Transport problem
\eqref{eq:110}
is guaranteed by the specific property of the Euclidean norm in $\R^d$, for which
\begin{equation}
    \label{eq:Rd-miracle}
    \sfw_2^2(\mu,\nu)=
    \sfm_2^2(\mu)
    +\sfm_2^2(\nu)-
    2\msc\mu\nu,\quad
    \msc\mu\nu:=
    \max_{\ggamma\in \Gamma(\mu,\nu)}
    \int x\cdot y\,\d\ggamma,
\end{equation}
so that the minimum problem \eqref{eq:110}
and the maximum problem \eqref{eq:Rd-miracle}
defining $\msc\mu\nu$ share the same class of
optimizers. 

Since the subdifferential of a convex function $\varphi$
is a singleton at every differentiability point of $\varphi$
and the set of non-Gateux-differentiability points of a convex Lipschitz function can be covered by a countable union of 
\textrm{d.c.}~hypersurfaces \cite{Zajicek79} (a result which holds even in infinite dimensional Hilbert spaces), 
one can prove that when $\mu\in \cP_2(\R^d)$
does not give mass to 
\textrm{d.c.}~hypersurfaces 
(we say that $\mu\in \cP_2^r(\R^d)$
is \emph{regular})
there exists a unique optimal coupling $\ggamma_{\rm opt}$
which is moreover concentrated on the graph of a Borel map $f$, thus satisfying $f_\sharp \mu=\nu$. 
The class of regular measures $\cP^r_2(\R^d)$ coincide with
the class of atomless measures when $d=1$
and contains all the measures absolutely continuous with respect to the $d$-dimensional Lebesgue measure $\cL^d$ for every dimension $d$.

This remarkable combination of analytic and geometric 
arguments yields the solution of the Monge formulation of \eqref{eq:110}-\eqref{eq:Rd-miracle},
i.e.~the existence of an optimal transport map $f$ such that 
\begin{equation}
    \label{eq:Monge-intro}
    f_\sharp\mu=\nu,\quad
    \sfw_2^2(\mu,\nu)=
    \int_{\R^d}|f(x)-x|^2\,\d\mu(x),\quad
    \msc\mu\nu=
    \int_{\R^d}f(x)\cdot x\,\d\mu(x).
\end{equation}
\subsubsection*{The convex theory for random measures}
Whenever $(\rmX,\sfd_\rmX)$ is 
a (complete, separable) metric space,
the construction of the $L^2$-Kantorovich-Wasserstein metric can be naturally extended to 
$\cP_2(\rmX)$, the space of 
probability measures on $\rmX$ with finite quadratic moment;
the resulting (squared) metric
\begin{equation}
  \label{eq:110ter}
  \sfW_{2,\sfd_\rmX}^2(\mu,\nu):=
  \min\Big\{\int_{\rmX\times \rmX}\sfd_\rmX^2(x,y)\,\d\ggamma:
  \ggamma\in \Gamma(\mu,\nu)\Big\}
  =
  \min\Big\{ 
  \mathbb{E}\big[
  \sfd^2_\rmX(X,Y)\big]:X\sim \mu,\ Y\sim\nu\Big\}
\end{equation}
inherits relevant geometric properties from the underlying space $\rmX$ and the problem still admits a dual
formulation involving Kantorovich potentials. 

Since $(\cP_2(\R^d),\sfw_{2})$ 
is a complete and separable metric space,
a nice example of applications of the 
metric perspective is 
provided by 
the possibility to lift 
Optimal Transport problems
at the level of the laws of the so-called 
random measures, i.e.~probability measures
in the space 
$\cPP_2(\R^d):=\cP_2(\cP_2(\R^d))$
endowed with 
the Kantorovich-Wasserstein metric
$\sfW_{2}:=\sfW_{2,\sfw_{2}}$
\begin{align}
    \label{eq:WW2-intro}
    \sfW_2^2(\ttM ,\ttN):={}&
    \min\Big\{\int_{\cP_2(\R^d)
    \times\cP_2(\R^d)}
    \sfw_2^2(\mu,\nu)\,\d
    \text{$\pPi$}(\mu,\nu):
    \text{$\pPi$}\in \Gamma(\ttM,\ttN)\Big\}
    \\={}&
    \min\Big\{
    \mathbb{E}\big[
  \sfw_2^2(M,N)\big]:M\sim \ttM,\ N\sim\ttN    \Big\},\quad
    \ttM,\ttN\in \cPP_2(\R^d).
\end{align}
A similar class of problems,
starting however from a 
smooth and compact Riemannian manifold instead of $\R^d$, have recently been studied by 
\cite{Emami-Pass}. 
Here we want to focus on the Euclidean case 
(also including infinite dimensional separable Hilbert spaces), which shows 
distinguished and remarkable features
and has recently attracted 
a lot of attention in view
of many interesting applications
\cite{bonet2025flowing-329,fornasier2023approximation-b42,acciaio2025,Pinzi-Savare25, catalano2024hierarchical}.

It is well known that in general metric spaces $(\rmX,\sfd_\rmX)$
(as, in particular, $(\cP_2(\R^d),\sfw_2)$)
the link between Optimal Transport problems and convex analysis 
is typically lost, mainly due to the possible lack of a linear structure in $\rmX$;
even when
$\rmX=\R^d$ but the 
metric cost is 
induced by a non-Hilbertian norm $\|\cdot\|$,
convexity and Legendre duality of Kantorovich potentials do not hold, since  the optimality condition
and the structure of optimal transport maps involve the \emph{nonlinear} differential associated with $\frac12\|\cdot\|^2.$

Even if $(\cP_2(\R^d),\sfw_2)$ is a genuine
metric space which is positively curved in the sense of Alexandrov
\cite{AGS08},
the aim of the present paper is to show that 
\begin{quote}
    \em    a large part of the convexity landscape of the Euclidean case remarkably holds also
for the Optimal Transport problem 
\eqref{eq:WW2-intro}  
for laws of random measures in $\cPP_2(\R^d),$
if we use the appropriate notion
of \emph{total displacement} convexity in 
$\cP_2(\R^d)$,
\end{quote}
i.e.~convexity
along interpolation curves induced by arbitrary couplings.

Such a nice and somehow unexpected 
result relies on two important properties.
First of all, 
as for \eqref{eq:Rd-miracle},
the lifted Wasserstein metric $\sfW_2$
given by \eqref{eq:WW2-intro}
still satisfies a similar identity
\begin{equation}
    \begin{gathered}
        \sfW_2^2(\ttM_1,\ttM_2)=
    \sfM_2^2(\ttM_1)+
    \sfM_2^2(\ttM_2)-
    2\mmsc{\ttM_1}{\ttM_2},\\
    \sfM_2^2(\ttM) = \int \sfm_2^2(\mu) d\ttM(\mu), \quad
    \mmsc{\ttM_1}{\ttM_2}=
    \max_{\pPi\in \Gamma(\ttM_1,\ttM_2)}
    \int \msc{\mu_1}{\mu_2}\,
    \d\pPi(\ttM_1,\ttM_2),
    \end{gathered}
\end{equation}
so that we can study
the equivalent formulation in terms of 
the maximization of the function $(\mu_1,\mu_2)\mapsto 
\msc{\mu_1}{\mu_2}.$

Even if $\msc\cdot\cdot$ is not bilinear,
it exhibits many analogies with 
a scalar product, in particular 
along displacement interpolation of measures,
i.e.~curves obtained by 
general couplings $\mmu\in \cP(R^d\times \R^d)$
via the dynamic push forward
\begin{equation}
\label{eq:disp-interpolation-intro}
    \mu_t=(\pi^{1\shortrightarrow2}_t)_\sharp\mmu,\quad
    \pi^{1\shortrightarrow2}_t(x_1,x_2):=
    (1-t)x_1+tx_2,\quad
    t\in [0,1],\quad 
    \mmu\in \Gamma(\mu_0,\mu_1).
\end{equation}
When $\mmu$ is an optimal coupling
between $\mu_0$ and $ \mu_1$ 
for the Wasserstein metric \eqref{eq:110}
the curve $t\mapsto\mu_t$ is in fact
a minimal, constant speed geodesic 
in $\cP_2(\R^d)$ (which we call
\emph{optimal displacement interpolation}) 
and plays a crucial
geometric role
in the Optimal Transport setting,
since the pioneering paper of McCann
\cite{McCann97}. 
In particular a function $\phi:\cP_2(\R^d)\to\Rinf$ is called
geodesically (or displacement) convex
if for every $\mu_0,\mu_1\in \cP_2(\R^d)$
there exists a 
geodesic $(\mu_t)_{t\in [0,1]}$ 
connecting $\mu_0$ to $\mu_1$ such that 
$t\mapsto \phi(\mu_t)$ is convex in $[0,1].$

A more restricted class of functions 
are in fact convex along 
\emph{any} displacement interpolation, induced by arbitrary couplings 
$\mmu\in \Gamma(\mu_0,\mu_1)$ 
as in \eqref{eq:disp-interpolation-intro},
thus satisfying
\begin{equation}
    \label{eq:total-convexity-intro}
    \phi\big((\pi^{1\shto2}_t)_\sharp\mmu
    \big)
    \le 
    (1-t)\phi\big(\mu_0)
    +t\phi\big(\mu_1\big)
    \quad
    \text{for every }
    \mu_0,\mu_1\in \cP_2(\R^d),\ 
    \mmu\in \Gamma(\mu_0,\mu_1).
\end{equation}
Such a class of \emph{totally displacement convex} functionals, thoroughly studied in \cite{CSS22}, enjoys better properties.
Starting from the fact 
that
(see also the inspiring notes by
Brenier \cite{Brenier20})
\begin{equation}
    \label{eq:msc-is-totally-convex}
    \mu\mapsto \msc\mu\nu\quad\text{is totally convex for every $\nu\in \cP_2(\R^d)$},
\end{equation}
we can introduce the 
Kantorovich-Legendre-Fenchel transform
\begin{equation}
    \label{eq:KLF-intro}
    \phi^\star(\nu):=
    \sup_{\mu\in \cP_2(\R^d)}
    \msc \nu\mu-\phi(\mu)
\end{equation}
and prove that 
proper, totally convex, and lower semicontinuous functions are characterized by
the identity $\phi=\phi^{\star\star}$
as for convex functions in Euclidean spaces
(see Section \ref{subsec:KLF}).

The Rockafellar type 
transformation 
\eqref{eq:KLF-intro} 
is a typical technique
in Optimal Transport (where it is applied
to the concave version of the potentials
and it is called $\sfc$-transform).
What distinguishes the Euclidean 
and the current random-Euclidean setting
is the possibility to characterize
$\sfc$-transforms and self-biconjugate functions in a simple
way using convexity.

This remarkable property 
allows us to retrace the same strategy as 
in the finite-dimensional Euclidean theory and has
relevant applications:
\begin{enumerate}
    \item it provides an intrinsic
    characterization for the optimal Kantorovich potentials
    associated with 
    the Wasserstein metric 
    \eqref{eq:WW2-intro}
    (Section \ref{subsec:c-concave}):
    they coincide with the class of totally convex functionals;
    \item 
    it allows for a deeper understanding
    of the Wasserstein (total) subdifferential
    of $\phi$
    \cite{AGS08,CSS22}, interpreted as 
    a 
    \emph{Multivalued Probability Vector Field} (Section \ref{sec:MPVF});
    \item it clarifies the structure of optimal couplings 
    and minimal geodesics
    (Sections \ref{subsec:L2OT}, 
    \ref{subsec:G-structure})
    \item it suggests a general lifting strategy to the $L^2$-space of Lagrangian maps, importing a Hilbertian perspective for regularity
    of laws of random measures
    (Section \ref{subsec:regular-measures})
    which plays a crucial role in 
    proving uniqueness for solution
    to the Monge formulation.
\end{enumerate}
Let us briefly summarize the main points: 
combining Kantorovich duality and 
total displacement convexity, we will prove that for every $\ttM,\ttN\in \cPP_2(\R^d)$
there exists a pair of conjugate totally convex function $\phi,\phi^\star:\cP_2(\R^d)\to\Rinf$
such that 
\begin{equation}
    \label{eq:dudality-intro}
    \int \phi(\mu)\,\d\ttM(\mu)+
    \int \phi^\star(\nu)\,\d\ttN(\nu)=
    \mmsc\ttM\ttN.
\end{equation}
We will then show that 
there is a natural correspondence between
optimal couplings $\pPi \in \Gamma_o(\ttM,\ttN)$ 
minimizing 
\eqref{eq:WW2-intro}
and random coupling laws
$\bttP\in \cPP_2(\R^d\times \R^d)$
which can be used to characterize $\sfW_2$ as 
\begin{equation}
    \sfW_2^2(\ttM,\ttN)
    =\min\bigg\{
    \int \int_{\R^d\times \R^d}
    |x-y|^2\,\d\ggamma(x,y)
    \,
    \d\boldsymbol\ttP(\ggamma):
    (\pi^1_\sharp)_\sharp 
    \bttP=\ttM,
    \ 
    (\pi^2_\sharp)_\sharp 
    \boldsymbol{\bttP} =\ttN\bigg\}. 
\end{equation}
Random couplings are optimal
if and only if their support is contained
in the so-called total subdifferential
$\bpartialt\phi$
of the optimal potential 
$\phi$ in \eqref{eq:dudality-intro}, a closed subset of 
$\cP_2(\R^d\times \R^d).$
Optimality can also be characterized by 
total cyclical monotonicity and in particular 
implies that optimal random couplings are concentrated
on the subset $\cP_{2,o}(\R^d\times \R^d)$
of the usual optimal couplings in $\R^d\times \R^d.$

As for the classic subdifferentials of convex functions, among all the elements of 
the total subdifferential 
$\bpartialt\phi$ there is a minimal distinguished one (called the minimal section
and denoted by $\bpartialt^\circ\phi$)
which can be represented by 
a nonlocal deterministic field 
$\ff^\circ:\R^d\times \cP_2(\R^d)\to\R^d:$
\begin{equation}
    \label{eq:minimal-section}
    \ggamma\in \bpartialt^\circ\phi(\mu)\quad
    \Leftrightarrow
    \quad
    \ggamma=(\ii\times \ff^\circ(\cdot,\mu)_\sharp 
    \mu.
\end{equation}
Total cyclical monotonicity 
can be more easily understood in the case of $\ff^\circ$, where it reads as 
\begin{equation}
    \label{eq:total-cyclic-intro}
    \begin{gathered}
        \sum_{n=1}^N\int \la
    \ff^\circ(x_n,\mu_n),x_n-x_{\sigma(n)}\ra\,\d\mmu(x_1,\cdots,x_N)\ge0\\
    \text{for every }
      \mu_i\in D(\bpartialt\phi),\ \mmu\in \Gamma(\mu_1,\mu_2,\cdots,\mu_N),\ 
      \sigma\in \rmS_N=\operatorname{Sym}(\{1,\cdots,N\}).
    \end{gathered}
\end{equation}
As a byproduct, when $\bpartialt\phi[\mu]$ reduces to a singleton for $\ttM$-a.e.~$\mu\in \cP_2(\R^d)$,
the OT problem \eqref{eq:WW2-intro}
has a unique solution $\pPi_o$ which
is also concentrated on a Monge map 
$\FF:\cP_2(\R^d)\to \cP_2(\R^d)$,
$\pPi=(\id\times \FF)_\sharp \ttM$.
Moreover, 
$\FF$ can be expressed 
as a push-forward via $\ff^\circ$:
\begin{equation}
    \label{eq:push-intro}
    \FF(\mu)=\ff^\circ(\cdot,\mu)_\sharp\mu,\quad 
    \sfW_2^2(\ttM,\ttN)=
    \int \sfw_2^2(\mu,\FF(\mu))\,\d\ttM(\mu)
    =\int\bigg(
    \int |\ff^\circ(x,\mu)
    -x|^2\,\d\mu(x)\bigg)
    \,\d\ttM(\mu).
\end{equation}
It turns out that $\ff^\circ$ solves
the strict Monge formulation 
of \eqref{eq:WW2-intro}
\begin{equation}
    \label{eq:strict-Monge-intro}
    \inf\bigg\{
    \int\bigg(
    \int |\ff(x,\mu)
    -x|^2\,\d\mu(x)\bigg)
    \,\d\ttM(\mu):
    \ff:\R^d\times \cP_2(\R^d)\to\R^d,\
    \ttN=\int \delta_{\ff(\cdot,\mu)_\sharp\mu}\,\d\ttM(\mu)\bigg\}.
\end{equation}
\subsubsection*{Lagrangian parametrizations, Hilbertian liftings, and laws of Gaussian-generated random measures}
The above discussion raises the crucial question to find general condition on $\ttM$ ensuring that $\bpartialt\phi$ is concentrated on a singleton $\ttM$-a.e.
A similar problem has also been considered in 
\cite{Emami-Pass} by assuming 
suitable regularity properties on the Dirichlet form associated with $\ttM$ in $\cP_2(M)$, in particular the Rademacher property studied in \cite{DelloSchiavo20}.

Here we adopt a different perspective, 
inspired by the crucial fact that 
Optimal Kantorovich potentials are totally convex on 
$\cP_2(\R^d)$, i.e.~convex along arbitrary couplings as in \eqref{eq:total-convexity-intro}.
It is then possible to apply a natural
Lions-Lagrangian lifting technique
that has been systematically studied in 
\cite{CSS22} in the context of convex analysis and evolution problems
(but see also the relevant notions
of $\rmL$-convexity, $\rmL$-monotonicity
\cite{Cardaliaguet13,Carmona-Delarue18},
Fr\'echet differentiability
\cite{Gangbo-Tudorascu19},
and the discussion
of \cite[Remark 5.4]{CSS22}).

The main idea is to introduce a standard Borel space
$(\OOmega,\cF_\OOmega)$ endowed with 
a nonatomic probability measure $\P$
which can represent every 
measure of $\mu\in\cP_2(\R^d)$
as the law $X_\sharp \P$ 
of a map $X$
in the Hilbert space $\cH:=L^2(\OOmega,\P;\R^d).$
The law map $\iota=\iota_\P:X\mapsto X_\sharp \P$
is a $1$-Lipschitz surjection 
from $\cH$ to $\cP_2(\R^d)$
and total convexity of $\phi$ in 
$\cP_2(\R^d)$ is equivalent to 
the usual convexity of $\hat \phi:=\phi\circ\iota$ in the Hilbert space $\cH.$ Such a correspondence also holds at the levels of the respective subdifferentials,
so that the measures $\mu$ where $\bpartialt\phi[\mu]$ reduces to a singleton 
correspond to maps $X\in \cH$ where
$\hat\phi$ is Gateux-differentiable.

Since every measure $\ttM$ on 
$\cP_2(\R^d)$ can be obtained as the push-forward $\iota_\sharp \frm$ 
of a measure $\frm$ on $\cH$,
it is tempting to lift the problem of $\ttM$-a.e.~differentiability in $\cP_2(\R^d)$
to the problem of $\frm$-a.e.~differentiability of $\hat\phi$ in $\cH$,
for which many powerful result are known.
In particular, 
\textrm{d.c.}~hypersurfaces in a Hilbert space are 
Gaussian null 
\cite{Aronszajn76,Phelps78,Bogachev84,Csonryei99}), i.e.~are negligible
w.r.t.~every nondegenerate Gaussian measure.

We will systematically pursue this direction, showing that the strict Monge problem in $\cPP_2(\R^d)$ has a unique solution
if $\ttM$ satisfies two conditions: 
\begin{enumerate}
    \item[(R1)] 
    it is concentrated on
    the set of 
    \emph{regular} measures $\cP^r_2(\R^d)$, i.e.~$\mu\in \cP_2^r(\R^d)$ for $\ttM$-a.e.~$\mu$;
    \item[(R2)] $\ttM(B)=0$ on
    every Borel set $B\subset \cP_2^r(\R^d)$
    such that $\iota^{-1}(B)$ 
    is contained in a \textrm{d.c.}~hypersurface of the Hilbert space
    $\cH=L^2(\OOmega,\P;\R^d).$
\end{enumerate}
We call $\RWrd2$ the 
set of \emph{super-regular measures} satisfying the two conditions above; it is worth noticing that 
the first condition has also been assumed by 
\cite{Emami-Pass}, whereas the second one
is strongly related to the lifting procedure.
We will show that 
those conditions are nearly optimal if
we look for measures $\ttM$ for which
the usual Monge formulation has 
at least one solution for every target measure $\ttN$. Both conditions are stable if we replace $\ttM$ by $\ttM'\ll\ttM$. 

A simple way to construct measures
satisfying (R2) is to 
start from a regular measure $\frm\in \cP_2^r(\cH)$ and 
taking its push forward $\ttM=\iota_\sharp \frm$. 
We will focus on the relevant case
of Laws of Gaussian-Generated Random Measures (LGGRM), i.e.~measures 
in $\cPP_2(\R^d)$ of the form 
$\ttG=\iota_\sharp \frg$ 
where $\frg$ is a non-degenerate Gaussian measure in $\cH$ \cite{Bogachev98}.
They have full support in $\cP_2(R^d)$ 
and will satisfy condition (R2). 
We will show that \emph{every} LGGRM 
in dimension $d=1$ is super-regular
and we will exhibit a large class
of super-regular 
LGGRM in every dimension.
As a byproduct, we obtain that the class
of super-regular measures is dense in 
$\cPP_2(\R^d).$

We have developed our analysis in the
case of the ``Euclidean'' $2$-Wasserstein metric, since we believe that its distinguished features  deserve a separate analysis.
Since (finite) dimension play a role only in the final discussion of super-regular measures, 
we decided to develop our theory in an arbitrary separable Hilbert space $\rmH$, 
even if all the results are new also in finite dimension.
In addition, we think that many tools we have introduced in this paper may be useful to study
the more general case of the $L^p$-Wasserstein metric induced by a smooth norm in $\R^d$. We are also confident that 
the class of LGGRM measures may reveal 
interesting features from the viewpoint of 
the induced Dirichlet form in $\cP_2(\R^d)
$ (see also \cite{Fornasier-Savare-Sodini23}), and we plan to address both questions in a forthcoming paper.

\subsubsection*{Plan of the paper}
After a quick recap of the main notions and notation in Section \ref{sec:prel},
we will deal with totally convex function
and their link with optimal Kantorovich potentials in Section \ref{sec:totally-convex}.
The related notions of multivalued probability vector fields and total subdifferentials are developed in Section
\ref{sec:MPVF}.

Section 
\ref{sec:ROT} is devoted
to applications of these tools to the Optimal Transport problem in $\cPP_2(\R^d)$.
The (strict) Monge formulation, its solution
for super-regular measures, and the discussion of the relevant examples
is presented in the last section
\ref{sec:Monge}.

\subsubsection*{Acknowledgments}
We wish to thank Eugenio Regazzini and Luciano Tubaro
for stimulating discussions. We are grateful to Anna Korba for insightful discussions and for providing a draft of the paper \cite{bonet2025flowing-329}. \\
GS has been supported by the MIUR-PRIN 202244A7YL project Gradient Flows and Non-Smooth
Geometric Structures with Applications to Optimization and Machine Learning, by the INDAM project
E53C23001740001, and by the Institute for Advanced Study of the Technical University of Munich, funded
by the German Excellence Initiative.


\section{Notation and preliminary results}
\label{sec:prel}

The following table contains the main notation that we shall use throughout the paper:
\begin{center}
\newcommand{\specialcell}[2][c]{%
  \begin{tabular}[#1]{@{}l@{}}#2\end{tabular}}
\begin{small}
\begin{longtable}{lll}
&$\rmH$&separable Hilbert space;\\
&$\cP_2(\rmH)$&the space of probability measures on $\rmH$ with finite quadratic moment, \ref{subsec:W2};\\
&$\cP_2^r(\rmH)$&regular measures, Def.~\ref{def:usual-regular};\\
&$\cP_{2,o}(\rmH\times \rmH)$&the set of optimal couplings, \ref{subsec:W2};\\
&$\sfw_2$&the $L^2$-Wasserstein metric on $\cP_2(\rmH)$, \ref{subsec:W2};\\
&$\msc\cdot\cdot$&the maximal correlation pairing, Definition \ref{def:MC-function};\\
&$\mu,\nu,\cdots$&notation for typical measures in $\cP_2(\rmH)$;\\
&$\cPP_2(\rmH)=\cP_2(\cP_2(\rmH))$&
the space of probability measures on $\cP_2(\rmH)$ with finite quadratic moment,
\ref{subsec:W2};
\\
&$\sfW_2$&the $L^2$-Wasserstein metric on
$(\cP_2(\rmH),\sfw_2)$, 
\ref{subsec:W2};\\
&$\ttM,\ttN,\cdots$&notation for typical
laws of random measures in $\cPP_2(\rmH)$;\\
&$f_\sharp \mu$&push forward of a measure $\mu$ via the map $f$, \ref{sec:prel};\\
&$f_{\sharp\sharp} \ttM=
(f_\sharp)_\sharp \ttM$&iterated push forward of a measure $\ttM$ via the map $f$ defined in $\rmH$;\\
&$\pi^i$&projection on the $i$-th coordinate in a product space, \ref{sec:prel};\\
&$\rmS_N=\operatorname{Sym}(\{1,\cdots,N\})$
&
the symmetric group of permutations 
of $\{1,\cdots,N\}$;\\
&$\Gamma(\mu,\nu),\ \Gamma_o(\mu,\nu)$&set of (resp.~optimal) couplings with marginals $\mu,\nu$, \ref{sec:prel};\\
&$\cP^{\rm det}(\rmX^1\times \rmX^2)$&
set of deterministic couplings, concentrated on the graph of a Borel map, \eqref{eq:91};\\
&$(\OOmega,\cF_\OOmega,\P),\ (\Thetao,\cF,\Q)$ &
standard Borel spaces endowed with nonatomic probability measures, \ref{subsec:lagrangian};\\
&$\cH=L^2(\OOmega,\cF_\OOmega,\P;\rmH)$& the $L^2$ space
of $\rmH$ valued maps, \ref{subsec:lagrangian};\\
&$\iota$&the law-pushforward map $\iota(X):=X_\sharp\P$ from $\cH$ to $\cP_2(\rmH)$, \ref{subsec:lagrangian};\\
&$\ii$&the identity vector field $\ii(x)=x$ in $\rmH$;\\
&$\id$&the identity map in a general set;\\
&$\frm,\frg$&typical probability measures
in $\cP_2(\cH)$;\\
&$\rmG(\OOmega)$&the group of measure preserving isomorphisms of $(\OOmega,\cF_\OOmega,\P)$,
\ref{subsec:lagrangian}\\
&$\hat \phi$&$=\phi\circ\iota$, the lifting
of a function defined in $\cP_2(\rmH)$ to
$\cH$, \ref{subsec:lagrangian};\\
&$\phi^\star$&the Kantorovich-Legendre-Fenchel conjugate of a function
$\phi$, Def.~\ref{def:measure-conj};\\
&$\boldsymbol{\rmF},\ \hat{\boldsymbol\rmF}$&a typical MPVF in 
$\cP_2(\rmH\times \rmH)$ and its lifting to $\cH\times \cH$, \ref{sec:MPVF};\\
&$\bpartialt\phi,
\bpartialt^\circ\phi$&the total subdifferential of $\phi$ and its minimal section, Def.~\ref{def:total-sub};\\
&$\nabla_W\phi$&the nonlocal field associated
to $\bpartialt^\circ\phi$;\\
&$\RGamma(\ttM,\ttN)$&random couplings between two laws in $\cPP_2(\rmH)$, 
\ref{susbec:random-couplings};\\
&$\mmsc\cdot\cdot$  &cost in $\cPP_2(\rmH)$
induced by the maximal correlation pairing $\msc\cdot\cdot$, 
Def.~\ref{def:msc2};
\\
&$\RWr2$&set of super-regular measures in $\cPP_2(\rmH)$, Def.~\ref{Random Gauss-null sets};\\
\end{longtable}
\end{small}
\end{center}

If $\rmX$ is a Polish topological space (i.e.~it is separable and its
topology is induced by a complete metric)
we denote by $\cP(\rmX)$ the space of Borel probability measures on
$\rmX$ endowed with the weak (Polish) topology in duality with continuous and
bounded real functions.
Given a Borel map 
between Polish spaces 
$f:\rmX\to \rmY$ and $\mu\in \cP(\rmX)$
we denote by $f_\sharp\mu$ the image measure $\mu\circ f^{-1}$ given by
$f_\sharp\mu(B):=\mu(f^{-1}(B))$ for every Borel subset $B$ of $\rmY$.
In the case of a cartesian product
of Polish spaces
$\boldsymbol \rmX:=\Pi_{i=1}^N\rmX^i$
we will denote by $\pi^i:\boldsymbol \rmX\to \rmX^i$ the map
$\pi^i(x_1,\cdots, x_N):=x_i$, and similarly
$\pi^{ij}(x_1,\cdots, x_N):=(x_i,x_j)$.

A probability kernel is a Borel map $\kappa:\rmX
\ni x\mapsto \kappa_x \in  \cP(\rmY)$; if
$\mu\in \cP(\rmX)$ there exists a unique Borel probability measure
$\mu\otimes \kappa
=\int_{\rmX\times \rmY}\delta_x\otimes \kappa_x\,\d\mu(x)
\in \cP(\rmX\times \rmY)$ which satisfies
\begin{equation}
  \label{eq:60}
  \int_{\rmX\times \rmY}f(x,y)\,\d( \mu\otimes\kappa)(x,y):=
  \int_{\rmX}\Big(\int_\rmY f(x,y)\,\d\kappa_x(y)\Big)\,\d\mu(x)
\end{equation}
for every bounded (or nonnegative) real Borel function $f$ defined in
$\rmX\times \rmY$.

Elements of $\cP(\rmX\times \rmY)$ are also called
\emph{couplings}
or \emph{transport plans}.
The universal disintegration Theorem \cite[Corollary 1.26]{Kallenberg17}
says that
there exists a Borel map
$\cK:\rmX\times \cP(\rmX\times \rmY)\to \cP(\rmY)$
such that
$\kappa_x:=\cK(x,\ggamma)$ provides a disintegration of $\ggamma$
with respect to its first marginal, i.e.
\begin{equation}
  \label{eq:61}
  \ggamma=\mu\otimes \kappa,\quad
  \mu:=\pi^1_\sharp\ggamma,\quad
  \kappa_x:=\cK(x,\ggamma)\quad x\in \rmX,
  \quad
  \text{for every }\ggamma\in \cP(\rmX\times \rmY).
\end{equation}
We will denote by $\cPdet{}(\rmX\times \rmY)$ the set of
deterministic couplings:
\begin{equation}
  \label{eq:71}
  \cPdet{}(\rmX\times \rmY):=
  \Big\{\ggamma\in \cP(\rmX\times \rmY):
  \ \exists f:\rmX\to\rmY\text{ Borel, such that }
  \ggamma=(\id\times f)_\sharp\mu,\ \mu=\pi^1_\sharp\ggamma\Big\}.
\end{equation}
Since disintegrations are uniquely defined almost everywhere
with respect to the marginal, we also have
\begin{equation}
  \label{eq:91}
  \begin{gathered}
    \ggamma\in \cPdet{}(\rmX\times \rmY) \quad\text{if and only
      if} \quad \cK(\cdot,\ggamma)\in \cP_\delta(\rmY) \text{
      $\pi^1_\sharp\ggamma$-a.e.}  \\
    \text{where}\qquad
    \cP_\delta(\rmY):=\Big\{\delta_y:y\in \rmY\Big\}.
  \end{gathered}
\end{equation}
It is possible to prove
\cite{LS25} that $  \cPdet{}(\rmX\times \rmY)$
is a $G_\delta$ (thus Borel) subset of $\cP(\rmX\times \rmY)$.


Given $\mu_i\in \cP(\rmX^i)$ we will denote by
$\Gamma(\mu_1,\cdots,\mu_N)$
the subset of $\cP(\boldsymbol\rmX)$ whose elements $\mmu$ satisfy
$\pi^i_\sharp(\mmu)=\mu_i$, $i=1,\cdots, N$.

We will often use the following consequence of Von Neumann selection
Theorem \cite[Theorem 13, pag. 127]{Schwartz73}:
\begin{theorem}
  \label{thm:VN}
  Let $\rmX,\rmY$ be Polish spaces,
  $\nu\in \cP(\rmY)$ concentrated on the Borel set $B\subset \rmY$
  and let $f:\rmX\to \rmY$ be a Borel map.
  If $f(\rmX)\supset B$ then there exists a measure
  $\mu\in \cP(\rmX)$ 
  such that
  \begin{equation}
    \label{eq:87}
    \text{$\mu$ is concentrated on $f^{-1}(B)$,}
    \quad
    f_\sharp\mu=\nu.
  \end{equation}
\end{theorem}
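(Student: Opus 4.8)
The plan is to read the statement as a measurable right-inverse (section) problem for the Borel surjection $f$ over the carrier of $\nu$, and then to push $\nu$ forward along such a section. Concretely, I would first record that the graph $G:=\{(x,y)\in\rmX\times\rmY:\ y=f(x)\}$ is a Borel subset of the Polish space $\rmX\times\rmY$ (it is the preimage of the closed diagonal of $\rmY\times\rmY$ under the Borel map $(x,y)\mapsto(f(x),y)$), and that its second projection is $\pi^2(G)=f(\rmX)\supset B$. Since $f(\rmX)$ is the Borel image of a Polish space it is analytic, hence $\nu$-measurable, and from $\nu(B)=1$ we get $\nu(\pi^2(G))=1$.

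Next I would apply the von Neumann selection theorem in the cited form \cite[Theorem 13, pag.\ 127]{Schwartz73} to the Borel set $G$, whose second marginal is $\nu$-full: this produces a map $g\colon\rmY\to\rmX$, measurable with respect to the $\nu$-completion of the Borel $\sigma$-algebra, with $(g(y),y)\in G$ for $\nu$-a.e.\ $y$, i.e.\ $f(g(y))=y$ for $\nu$-a.e.\ $y$. Because the target $\rmX$ is Polish, one may then either replace $g$ by a Borel map coinciding with it off a $\nu$-negligible set, or simply keep $g$ merely $\nu$-measurable and define $g_\sharp\nu$ through the completion $\bar\nu$ by $A\mapsto\bar\nu(g^{-1}(A))$ on Borel $A\subset\rmX$, which is still a genuine Borel probability measure. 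Either way we end up with $g\colon\rmY\to\rmX$ such that $f\circ g=\id$ holds $\nu$-a.e.

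Finally I would set $\mu:=g_\sharp\nu\in\cP(\rmX)$. Then $f_\sharp\mu=(f\circ g)_\sharp\nu=\id_\sharp\nu=\nu$, since $f\circ g$ agrees with the identity on a set of full $\nu$-measure; and $\mu(f^{-1}(B))=f_\sharp\mu(B)=\nu(B)=1$, so $\mu$ is concentrated on $f^{-1}(B)$, which is exactly \eqref{eq:87}. (Alternatively: for $\nu$-a.e.\ $y$ one has $y\in B$ and $f(g(y))=y\in B$, hence $g(y)\in f^{-1}(B)$, so $g_\sharp\nu$ lives on $f^{-1}(B)$.)

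The single delicate point is the measurability status of the selection $g$: the von Neumann/Jankov theorem only guarantees a universally (equivalently, $\nu$-)measurable section, not a Borel one, so one must pass through the $\nu$-completion — or invoke the standard fact that a $\nu$-measurable map into a Polish space agrees $\nu$-a.e.\ with a Borel map — in order to make sense of $g_\sharp\nu$ as a Borel measure. I do not expect any substantive obstacle beyond this routine bookkeeping; everything else is a direct computation.
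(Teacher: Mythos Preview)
Your proposal is correct and is exactly the standard derivation. The paper does not actually give a proof of this statement: it introduces Theorem~\ref{thm:VN} as ``the following consequence of Von Neumann selection Theorem \cite[Theorem 13, pag.\ 127]{Schwartz73}'' and then simply states it, so there is nothing to compare against beyond the cited reference. Your argument via the Borel graph $G$, a universally measurable section $g$ with $f\circ g=\id$ $\nu$-a.e., and $\mu:=g_\sharp\nu$ is precisely how one unpacks the cited result, and your remark about the universal-vs-Borel measurability of $g$ is the only point that needs care.
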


\subsection{The \texorpdfstring{$L^2$}{}-Wasserstein space}
\label{subsec:W2}
Let $(\rmX,\sfd_\rmX)$ be a complete and separable metric space and
let $x_o$ a point of $\rmX$.
We denote by $\cP_2(\rmX)$ the space of Borel probability measures on
$\rmX$
with finite quadratic moment
\begin{equation}
  \label{eq:3}
  \int_{\rmX} \sfd_\rmX^2(x,x_o)\,\d\mu(x)<\infty.
\end{equation}
It is easy to check that the definition is independent of the choice
of the reference point $x_o$.
The space $\cP_2(\rmX)$ can be
endowed with the $L^2$-Kantorovich-Wasserstein metric
$\sfW_{2,\sfd_\rmX}$
\begin{equation}
  \label{eq:2}
  \sfW^2_{2,\sfd_\rmX}(\mu_1,\mu_2):=
  \min\Big\{\int \sfd_\rmX^2(x_0,x_1)\,\d\mmu(x_0,x_1):\mmu\in \Gamma(\mu_1,\mu_2)\Big\};
\end{equation}
we will denote by $\Gamma_o(\mu_1,\mu_2)$
the (compact, non-empty) subset of $\Gamma(\mu_1,\mu_2)$ where
the minimum is attained.
We will also denote by $\cP_{2,o}(\rmX\times \rmX)$
the set of couplings $\mmu\in \cP_{2}(\rmX\times \rmX)$ such that
$\mmu\in \Gamma_o(\pi^1_\sharp\mmu,\pi^2_\sharp\mmu)$.

$(\cP_2(\rmX),\sfW_{2,\sfd_\rmX})$ is a complete and separable
metric space as well
\cite[Chap.~7]{AGS08}.
In this paper we will mainly consider three important cases,
when $\rmX=\rmH$ is a Hilbert space, when $\rmX=L^2(\OOmega,\P;\rmH)$
is the space of $\rmH$-valued 
(Bochner) square-integrable Lagrangian maps defined in some probability space $(\OOmega,\P)$, and  
when $\rmX=\cP_2(\rmH)$ is the Wasserstein space itself.
\paragraph{The Hilbertian case \texorpdfstring{$\cP_2(\rmH)$}{} and the maximal correlation pairing.} In this paper we will denote by 
$\rmH$
a separable Hilbert space  with 
scalar product
$\la\cdot,\cdot\ra$, norm $|\cdot|$, 
and induced metric $\sfd_\rmH$.
The finite dimensional case $\rmH=\R^d$ provides an important example
covered by the theory.

The previous construction 
applied to $(\rmH,\sfd_\rmH)$ 
yields the space $\rW2$; 
the quadratic moment of $\mu\in \rW2$ is
\begin{equation}
  \sfm^2_2(\mu):=
  \int_\rmH |x|^2\,\d\mu(x)
  =\int_\rmH\sfd_\rmH^2(x,0) d\mu(x),\quad
  \text{corresponding to the choice of }x_o:=0.
\end{equation}
To simplify notation, we will simply denote the Wasserstein metric
$\sfW_{2,\sfd_\rmH}$ by
$\sfw_2$:
\begin{equation}
  \label{eq:4}
  \sfw_{2}^2(\mu_1,\mu_2):=
  \min\Big\{\int_{\rmH\times \rmH} |x_0-x_1|^2\,\d\mmu(x_0,x_1):\mmu\in \Gamma(\mu_1,\mu_2)\Big\}.
\end{equation}
The Euclidean structure of $\sfd_\rmH$ allows for a useful
decomposition of $\sfw_2$.
\begin{definition}[The maximal correlation pairing]
    \label{def:MC-function}
    For every $\mu_1,\mu_2\in \cP_2(\rmH)$
    we set
    \begin{equation}
        \label{eq:MC}
        \msc{\mu_1}{\mu_2}:=
        \max         \Big\{\int_{\rmH\times \rmH} \la x_0,x_1\ra\,\d\mmu:
        \mmu\in \Gamma(\mu_1,\mu_2)\Big\}.
    \end{equation}
\end{definition}
\noindent
It is clear that $\msc\cdot\cdot$ is finite in $\cP_2(\rmH)$
and satisfies
\begin{gather}
    \label{eq:CS-inequality}
    \Big|\msc {\mu_1}{\mu_2}\Big|\le 
    \sfm_2(\mu_1)\,\sfm_2(\mu_2),\\
    \label{eq:W-decomp}\quad 
    \sfw_2^2(\mu_1,\mu_2)=
    \sfm_2^2(\mu_1)+\sfm_2^2(\mu_2)-2
    \msc{\mu_1}{\mu_2}\quad
    \text{for every }\mu_1,\mu_2\in \rW2.
\end{gather}
In particular, a coupling $\mmu$ belongs to $\Gamma_o(\mu_1,\mu_2)$ if
and only if it attains the maximum in \eqref{eq:MC}.
\paragraph{The space \texorpdfstring{$\RW2=\cP_2(\rW2)$}{}}
Since $(\rW2,\sfw_2)$ is a complete and separable metric space,
we can iterate the Wasserstein construction and consider the space
$\cP_2(\rW2)$, which we will also denote by $\RW2$;
its elements (also called laws of random measures) will be denoted by capital letters $\ttM,\ttN,\cdots$.

Using $\delta_0\in \cP_2(\rmH)$ as a reference measure and observing that
$\sfw_2^2(\mu,\delta_0)=\sfm_2^2(\mu)$, 
the quadratic moment in $\RW2$ is
\begin{equation}
  \label{eq:6}
  \sfM_2^2(\ttM):=\int_{\rW2}\sfm_2^2(\mu)\,\d\ttM(\mu)=
  \int_{\rW2}\int_\rmH |x|^2\,\d\mu\,\d\ttM(\mu).
\end{equation}
The corresponding Wasserstein
metric $\sfW_{2,\sfw_2}$
will be denoted by $\sfW_2$:
\begin{equation}
  \label{eq:5}
    \sfW_{2}^2(\ttM_0,\ttM_1):=
    \min\Big\{\int_{\rW2\times \rW2}
    \sfw_2^2(\mu_1,\mu_2)\,\d
    \pPi(\mu_1,\mu_2):\pPi\in \Gamma(\ttM_0,\ttM_1)\Big\}.
\end{equation}
\subsection{Regularity of measures and Monge formulation in \texorpdfstring{$\cP_2(\rmH)$}{}}
\label{subsec:regularity}
A particularly relevant case when the optimal coupling $\mmu$ 
solving \eqref{eq:4} or 
\eqref{eq:MC} is
unique and deterministic according to \eqref{eq:71},  is related to the regularity of (at least one of) the
marginals $\mu_i$.
The most refined description of such a regularity is characterized by the vanishing of 
the marginal $\mu_i$ 
on all the 
so-called \textrm{d.c.}~(or $\delta$-convex or {cc}) hypersurfaces, i.e.~graphs of difference of convex Lipschitz functions in a suitable coordinate system.
More precisely, we recall that a subset 
$S\subset \rmH$ is a \textrm{d.c.}~hypersurface
if we can find a decomposition of $\rmH$
in a orthogonal sum
$\rmH=\rmE\oplus v\R$, where
$v\in \rmH$ and $\rmE$ is the hyperplane of $\rmH$ orthogonal to $v$, and two Lipschitz convex functions
$f,g:\rmE\to\R$ such that 
\begin{equation}
    \label{eq:dc}
    S=\Big\{x+(f(x)-g(x))v:x\in \rmE\Big\}.
\end{equation}
When $\rmH=\R$, \textrm{d.c.}~hypersurfaces just
reduce to a point.
We say that 
\begin{equation}
    \label{eq:exceptional}    
    \text{$B\subset\rmH$ is 
    {\em $\sigma$-\textrm{d.c.}~hypersurface}
    if it can be covered by a countable union of \textrm{d.c.}~hypersurfaces.}
\end{equation}
A larger class of 
negligible subsets is provided by the so-called
Gaussian-null sets introduced by Phelps in 
\cite{Phelps78}: 
a Borel subset $B$ of $\rmH$ is Gaussian-null, if
$\frg(B)=0$ for every non-degenerate Gaussian measure $\frg\in \cP(\rmH)$
(see Section 
\ref{subsec:L-representation} below).
Thanks to
\cite{Csonryei99} 
the class of Gaussian null Borel sets 
coincides with 
the $\sigma$-ideals of 
cube null Borel sets and 
of Aronsajn null Borel sets 
\cite{Aronszajn76} (see also
\cite{Bogachev84} and 
the other comparisons and extensions discussed in \cite{Bogachev18}).
Since every \textrm{d.c.}~hypersurface is Aronsajn null, it 
is immediate to check that 
$\sigma$-\textrm{d.c.}~hypersurfaces sets 
are Gaussian-null.
\begin{definition}[Atomless, regular , and G-regular measures]
  \label{def:usual-regular}
  We denote by $\cP_2^{al}(\rmH)$
  the collection of \emph{atomless} probability measures in $\cP_2(\rmH)$,
  thus satisfying
    $\mu(\{x\})=0$ for every $x\in \rmH$.\\
We denote by $\cP_2^r(\rmH)$
the class of \emph{regular} probability measures that vanish on all \textrm{d.c.}~hypersurfaces (and therefore on all
$\sigma$-\textrm{d.c.}~hypersurfaces).\\
We denote by $\cP_2^{gr}(\rmH)$
the class of \emph{G-regular} 
probability measures, that vanish on
all Gaussian null Borel subsets of
$\rmH$.
\end{definition}
\begin{remark}
    \label{rem:cases}
    It is clear that $\cP_2^{gr}(\rmH)
    \subset \cP_2^{al}(\rmH).$
    It is useful to 
    recall other important relations 
    and some particular cases
    covered by the above definitions.
    \begin{enumerate}
        \item When $\rmH=\R$ is one-dimensional,
        the class of atomless and regular measures coincide, i.e.    $\cP_2^r(\R)=\cP_2^{al}(\R)$.
    \item 
    When $\rmH$ has finite dimension $d$,
    $\mu\in \cP_2^{gr}(\rmH)$ 
    if and only if it is absolutely
continuous
with respect to the $d$-dimensional Lebesgue measure.
\item 
Since \textrm{d.c.}~hypersurfaces are Gaussian null, 
it is immediate to check that 
the class of G-regular measures 
$\cP_2^{gr}(\rmH)$
is included in $\cP_2^r(\rmH).$
Therefore we have
\begin{equation}
    \label{eq:regularities}
    \cP_2^{gr}(\rmH)\subset 
    \cP_2^r(\rmH)\subset 
    \cP_2^{al}(\rmH),\qquad
    \cP_2^r(\R)=
    \cP_2^{al}(\R).
\end{equation}
\end{enumerate}
\end{remark}
The crucial property of 
regular measures $\mu\in \cP_2^r(\rmH)$
is that every
convex and Lipschitz function
$\varphi:\rmH\to \R$ is 
Gateaux-differentiable
$\mu$-almost everywhere.
This follows by a nice result of
Zaj\'i\v cek \cite{Zajicek79}
(see also \cite[Theorem 4.20]{BL00})
showing that 
the set of points where $\varphi:\rmH\to \R$
is not Gateaux-differentiable
is contained in an $\sigma$-\textrm{d.c.}~hypersurface according to \eqref{eq:exceptional}.

Using this property and the structure of optimal couplings,
it is possible to prove the celebrated Br\'enier Theorem.
\begin{theorem}
  \label{thm:Brenier}
  If $\mu_1\in \cP_2^r(\rmH), \mu_2\in \cP_2(\rmH)$ then
  $\Gamma_o(\mu_1,\mu_2)$ contains a unique element $\ggamma$
  which is deterministic, i.e.~concentrated on the graph of a Borel
  map
  $f\in L^2(\rmH,\mu_1;\rmH)$ with $f_\sharp\mu_1=\mu_2$.
\end{theorem}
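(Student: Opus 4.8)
The plan is to follow the classical Knott--Smith--Brenier route via cyclical monotonicity and convex subdifferentials, the only genuinely new point being the treatment of the Kantorovich potential in the infinite-dimensional setting. First I would record that, by the decomposition~\eqref{eq:W-decomp}, a coupling $\mmu\in\Gamma(\mu_1,\mu_2)$ is optimal for $\sfw_2$ if and only if it maximizes $\mmu\mapsto\int\la x,y\ra\,\d\mmu$; since the cost $|x-y|^2$ is continuous and $\int|x-y|^2\,\d\mmu\le 2\sfm_2^2(\mu_1)+2\sfm_2^2(\mu_2)<\infty$ for every $\mmu\in\Gamma(\mu_1,\mu_2)$, the standard measurable cyclical monotonicity of optimal plans (see e.g.~\cite{AGS08}) shows that any $\mmu\in\Gamma_o(\mu_1,\mu_2)$ is concentrated on a closed cyclically monotone set $\Gamma\subset\rmH\times\rmH$ in the sense of~\eqref{eq:111}. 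By Rockafellar's theorem, $\Gamma$ is contained in the graph of the convex subdifferential $\partial\varphi$ of some proper, convex, lower semicontinuous $\varphi\colon\rmH\to\Rinf$.

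The difference with the Euclidean case is that this $\varphi$ need not be finite or Lipschitz, and its effective domain may even have empty interior, so Zaj\'i\v cek's theorem (recalled above) does not apply to it directly; to circumvent this I would truncate. For $k\in\N$ let $B_k$ denote the closed ball of radius $k$ in $\rmH$ and set $\Gamma_k:=\Gamma\cap(B_k\times B_k)$, which is again cyclically monotone. The explicit Rockafellar potential built from finite chains in $\Gamma_k$ is a supremum of affine functions whose slopes lie in $B_k$, and it is finite (indeed vanishes) at the chosen base point by cyclical monotonicity; hence it is a \emph{real-valued}, convex, $k$-Lipschitz function $\varphi_k\colon\rmH\to\R$ with $\Gamma_k$ contained in the graph of $\partial\varphi_k$. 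By Zaj\'i\v cek's theorem the set $N_k$ of points where $\varphi_k$ is not Gateaux-differentiable is a $\sigma$-\textrm{d.c.}~hypersurface, so $\mu_1(N_k)=0$ because $\mu_1\in\cP_2^r(\rmH)$; put $N:=\bigcup_{k\in\N}N_k$, a $\mu_1$-null set.

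Now fix an optimal coupling $\mmu\in\Gamma_o(\mu_1,\mu_2)$ (which is non-empty) and let $(\mmu_x)_x$ be its disintegration with respect to $\mu_1=\pi^1_\sharp\mmu$; for $\mu_1$-a.e.~$x$ the fibre $\Gamma_x:=\{y:(x,y)\in\Gamma\}$ is nonempty and $\mmu_x(\Gamma_x)=1$. I claim $\Gamma_x$ is a singleton whenever $x\notin N$: if $y_1\ne y_2$ both lie in $\Gamma_x$, choosing an integer $k\ge\max\{|x|,|y_1|,|y_2|\}$ gives $(x,y_1),(x,y_2)\in\Gamma_k$ and hence $y_1,y_2\in\partial\varphi_k(x)=\{\nabla\varphi_k(x)\}$ since $x\notin N_k$ --- a contradiction. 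Therefore $\mmu_x=\delta_{f(x)}$ for $\mu_1$-a.e.~$x$, with $f(x)$ the unique point of $\Gamma_x$; the map $f$ is $\mu_1$-measurable (e.g.~$f(x)=\int_\rmH y\,\d\mmu_x(y)$ on the good set, Borel in $x$ and $\mu_1$-a.e.~finite) and may be taken Borel after modification on a $\mu_1$-null set. Then $\mmu=(\id\times f)_\sharp\mu_1$ is deterministic, $f_\sharp\mu_1=\pi^2_\sharp\mmu=\mu_2$, and $\int_\rmH|f|^2\,\d\mu_1=\int|y|^2\,\d\mmu=\sfm_2^2(\mu_2)<\infty$, so $f\in L^2(\rmH,\mu_1;\rmH)$. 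For uniqueness, given $\mmu,\mmu'\in\Gamma_o(\mu_1,\mu_2)$ the coupling $\tfrac12(\mmu+\mmu')\in\Gamma(\mu_1,\mu_2)$ is again optimal (the cost is linear in the plan), hence deterministic by the previous step, which forces $\tfrac12(\mmu_x+\mmu'_x)$ to be a Dirac mass, i.e.~$\mmu_x=\mmu'_x$ for $\mu_1$-a.e.~$x$ and $\mmu=\mmu'$. I expect the main obstacle to be exactly the passage from the abstract Rockafellar potential to genuine convex Lipschitz real-valued functions: unlike in $\R^d$ one cannot rely on local Lipschitz continuity on the interior of the effective domain, and it is the truncation $\Gamma_k$ --- together with the slope bound in the Rockafellar formula --- that makes Zaj\'i\v cek's differentiability theorem (hence the hypothesis $\mu_1\in\cP_2^r(\rmH)$) applicable.
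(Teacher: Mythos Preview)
Your proof is correct and follows precisely the route the paper sketches: the paper does not give a detailed proof of this background result but simply states that it follows from Zaj\'i\v cek's differentiability theorem combined with ``the structure of optimal couplings,'' and you have correctly fleshed out that argument, including the truncation $\Gamma_k=\Gamma\cap(B_k\times B_k)$ needed in infinite dimensions to obtain a genuinely Lipschitz Rockafellar potential to which Zaj\'i\v cek's theorem applies.
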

We will  also be interested to measurability properties
of the above sets. We collect them in the following statement, in the case when $\rmH$ has finite dimension.
\begin{proposition}[Measurability of 
$\cP_2^r(\rmH)$ and $\cP_2^{gr}(\rmH)$]
    \label{prop:measurability}
    Let us assume that 
    $\rmH$ has finite dimension $d.$
    \begin{enumerate}
    \item 
    $\cP_2^r(\rmH)$ is a $G_\delta$ (thus Borel) subset of $\cP_2(\rmH).$
    \item
    $\cP_2^{gr}(\rmH)$ is a Borel subset of 
    $\cP_2(\rmH).$
    \end{enumerate}    
\end{proposition}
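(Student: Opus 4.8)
The plan is to exhibit both $\cP_2^r(\rmH)$ and $\cP_2^{gr}(\rmH)$ as countable intersections (and, where needed, unions) of sets that are either open, closed, or manifestly Borel in the weak topology of $\cP_2(\rmH)$, exploiting the finite dimensionality of $\rmH=\R^d$ in an essential way. For part (1), the key observation is that a measure $\mu\in\cP_2(\rmH)$ fails to be regular exactly when there is a single \textrm{d.c.}~hypersurface $S$ with $\mu(S)>0$; since a \textrm{d.c.}~hypersurface is the graph of $f-g$ with $f,g:\rmE\to\R$ convex and Lipschitz (with $\rmE$ a hyperplane), one can parametrize such hypersurfaces by a countable dense family of data: the direction $v$ ranges over a countable dense subset of the unit sphere, and the convex Lipschitz functions $f,g$ can be approximated by piecewise-affine convex functions with rational coefficients on rational polytopes, with rational Lipschitz bounds. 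The technical heart of this step is the standard quantitative fact (used already in the Euclidean Brenier theory) that if $\mu(S)>\eta$ for some \textrm{d.c.}~hypersurface, then $\mu$ gives mass at least $\eta/2$ to a closed tubular neighbourhood of a hypersurface drawn from this countable family, of controlled width; and conversely, such neighbourhoods shrink down to \textrm{d.c.}~hypersurfaces. Writing $U_{j,\eps}$ for the open $\eps$-tube around the $j$-th model hypersurface, one gets
\begin{equation}
  \cP_2^r(\rmH)=\bigcap_{k\in\N}\ \bigcap_{j\in\N}\ \bigcup_{\eps\in\Q_{>0}}\
  \Big\{\mu:\mu(\overline{U_{j,\eps}})<\tfrac1k\Big\},
\end{equation}
or a variant thereof; since $\mu\mapsto\mu(\overline{U_{j,\eps}})$ is upper semicontinuous for the weak topology, each set $\{\mu:\mu(\overline{U_{j,\eps}})<1/k\}$ is open, and the displayed formula realizes $\cP_2^r(\rmH)$ as a $G_\delta$. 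Getting the quantifiers and the tube-width bookkeeping exactly right so that the two inclusions both hold is the main obstacle in part (1).

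For part (2), in finite dimension we may use Remark \ref{rem:cases}(2): $\mu\in\cP_2^{gr}(\rmH)$ if and only if $\mu\ll\Leb d$. The plan is to show that $\{\mu\in\cP_2(\rmH):\mu\ll\Leb d\}$ is Borel in $\cP_2(\rmH)$. One convenient route is the Lebesgue decomposition: write $\mu=\mu^a+\mu^s$ with $\mu^a\ll\Leb d$ and $\mu^s\perp\Leb d$, and note $\mu\ll\Leb d$ iff $\mu^s=0$ iff $\|\mu^s\|_{\rm TV}=0$. The singular mass can be captured by a countable procedure: for dyadic cubes $Q$ at scale $2^{-n}$, the quantity $\mu^s(\R^d)=\lim_{n}\sum_{Q}\big(\mu(Q)-\int_Q (\text{dyadic density estimate})\big)^+$ or, more robustly, $\mu^s(\R^d)=\lim_{\delta\to0}\sup\{\mu(K):\Leb d(K)<\delta,\ K\text{ compact}\}$, and the latter supremum over compacts of small Lebesgue measure can be reduced to a supremum over a countable family of finite unions of dyadic cubes; each $\mu\mapsto\mu(\text{finite union of closed dyadic cubes})$ is upper semicontinuous, so $\mu\mapsto\mu^s(\R^d)$ is a Borel (indeed Baire-class-one type) function, and $\cP_2^{gr}(\rmH)=\{\mu:\mu^s(\R^d)=0\}$ is Borel. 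The obstruction here is purely one of choosing a countable description of the singular part that is both correct and visibly Borel; there is no genuine measure-theoretic difficulty once the right countable family (dyadic cubes) is fixed. I would remark that, unlike $\cP_2^r(\rmH)$, $\cP_2^{gr}(\rmH)$ need not be $G_\delta$ (absolute continuity is typically a genuinely $F_{\sigma\delta}$-type condition), which is why the statement only claims "Borel" in case (2).
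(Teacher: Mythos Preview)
Your approach to Part (1) has a genuine gap. The displayed formula unravels (since $\overline{U_{j,\eps}}\downarrow S_j$ as $\eps\downarrow 0$) to the condition ``$\mu(S_j)=0$ for every model hypersurface $S_j$'', which only tests your countable family. A measure supported on a single d.c.~hypersurface $S$ meeting each $S_j$ in an $\Haus{d-1}$-null set---for instance, in $\R^2$, normalized arclength on an arc of a smooth strictly convex graph, which meets every piecewise-affine graph in finitely many points---satisfies your formula but is not in $\cP_2^r(\R^d)$. Reordering the quantifiers to $\bigcap_k\bigcup_\eps\bigcap_j$ fails in the opposite direction: Lebesgue measure on $[0,1]^2$ is regular, yet for each $\eps>0$ a fine piecewise-affine sawtooth has a closed $\eps$-tube covering half the square, so this measure would be wrongly excluded. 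The missing ingredient is compactness. The paper fixes a rational box $B\subset\R^{d-1}$, a rotation $R$ from a countable dense set, and $L\in\N$, notes that $\rmC(B,L):=\{f:B\to\R\text{ convex, $L$-Lipschitz, }|f|\le L\}$ is compact in $\rmC(B)$, and uses joint upper semicontinuity of $(\mu,g,h)\mapsto \mu(\rmS_{B,R}(g,h))$ (uniform convergence of $(g,h)$ gives Hausdorff convergence of the compact graphs) to conclude that the full supremum $U(\mu):=\sup_{g,h\in\rmC(B,L)}\mu(\rmS_{B,R}(g,h))$ is itself upper semicontinuous in $\mu$. Then $\{U=0\}$ is $G_\delta$, and intersecting over the countable family of triples $(B,R,L)$ yields $\cP_2^r(\R^d)$. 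Your tube scheme can only be salvaged by stratifying by Lipschitz bound and box and invoking this compactness to pass to a limiting hypersurface.

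For Part (2) your plan is correct and genuinely different from the paper's. Writing $\mu^s(\R^d)=\inf_{\delta\in\Q_{>0}}\sup_F\mu(F)$, with the inner supremum over the countable family of finite unions $F$ of closed dyadic cubes satisfying $\Leb d(F)<\delta$, does give a Borel function of $\mu$, so $\cP_2^{gr}(\R^d)=\{\mu:\mu^s(\R^d)=0\}$ is Borel. The paper's route is shorter: with $\frg$ the standard Gaussian on $\R^d$, the set $C\subset L^1(\R^d,\frg)$ of nonnegative densities with unit mass and finite second moment is $F_\sigma$, and $J:f\mapsto f\frg$ is a continuous injection of $C$ onto $\cP_2^{gr}(\R^d)$, so the image is Borel by Lusin's theorem. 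Your argument is more explicit about the descriptive complexity; the paper's avoids all bookkeeping.
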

\begin{proof}
    \underline{Claim 1} ($d=1$, $\rmH=\R$). 
    It is sufficient to note that 
    $\mu\in \cP_2^r(\R)$ if and only if 
    $\mu\times \mu(D)=0$, 
    where 
    $D:=\{(x,x):x\in \R\} $ is the 
    (closed) diagonal of $\R^2$.
    Since the map $\mu\mapsto \mu\times \mu(D)$
    is upper semicontinuous, 
    we have 
    $\cP_2^r(\R)=
    \bigcap_{k\in \N_+}
    \big\{\mu\in \cP_2(\R):
    \mu\times \mu(D)>1/k\big\}.$

    \smallskip\noindent
    \underline{Claim 1} ($\rmH=\R^d$, $d>1$).
    Let $\cB$ 
    be the collection of all
    the compact boxes 
    $B=\Pi_{k=1}^{d-1}
    [a_k,b_k]\subset \R^{d-1}$
     with rational endpoints
    $a_k<b_k$, let 
    $\cR$ 
    a dense countable set of rotations $R$ of $\R^d$.
    For every $L\in \N_+$ and $B\in \cB$
    we consider the set 
    $$\rmC(B,L):=\{f:B\to \R:
    f\text{ convex, $L$-Lipschitz, $\sup_B|f|\le L$}\}.$$
    $\rmC(B,L)$ is a compact set of the
    Polish space $\rmC(B).$
    For every $g,h\in \rmC(B,L)$ and $R\in \cR$
    we consider the compact graph
    $$\rmS_{B,R}(g,h):=
    \Big\{R(y,g(y)-h(y)):y\in B\Big\}\subset \R^d.$$
    Since every
    d.c.-hypersurface can be covered by
    a countable collections of 
    sets of  the 
    form 
    $\rmS_{B,R}(g,h)$
    for $g,h$ in some $\rmC(B,L)$,
    we have
    \begin{displaymath}
        \begin{aligned}
        \cP_2^r(\R^d)={}&\bigcap\Big\{
        G(B,R,L):B\in \cB,\ R\in \cR,L\in \N_+\Big\},\\
        G(B,R,L):={}&
        \Big\{\mu\in \cP_2(\R^d):
        \mu(\rmS_{B,R}(g,h))=0
        \text{ for every }
        g,h\in \rmC(B,L)\Big\}.
        \end{aligned}
    \end{displaymath}
    It is then sufficient to show that 
    each set $ G(B,R,L)$ is a $G_\delta$. 

    It is not difficult to check that 
    the map
    $(\mu,g,h)\mapsto \mu(\rmS_{B,R}(g,h))$
    is jointly upper semicontinuous
    in $\cP_2(\R^d)\times (\rmC(B,L))^2$
    (uniform convergence in 
    $(\rmC(B,L))^2$ implies 
    Hausdorff convergence of the compact graphs 
    $\rmS_{B,R}(\cdot,\cdot)$) so that 
    the function 
    $$\mu\mapsto U(\mu):=
    \sup_{g,h\in \rmC(B,L)} \mu(\rmS_{B,R}(g,h)$$
    is upper semicontinuous as well, thanks to
    the compactness of $\rmC(B,L).$
    On the other hand
    \begin{displaymath}
        G(B,R,L)=
        \bigcap_{k\in \N_+}
        \Big\{\mu\in \cP_2(\R^d):U(\mu)<1/k\Big\}
    \end{displaymath}
    so it is the intersection of 
    a countable collection of open sets.

    \smallskip\noindent
    \underline{Claim 2.}
    Let $\frg$ denote the 
    standard Gaussian measure in $\R^d$
    and let $C$ be
    the $F_\sigma$ (thus Borel) set 
    $$C:=\Big\{f\in L^1(\R^d,\frg):
    f\ge 0,\ \int f(x)\,\d\frg(x)=1,\ 
    \int |x|^2f(x)\,\d\frg(x)<\infty\Big\}$$
    of the Polish space $L^1(\R^d;\frg_d).$
    The map 
    $J:C\to \cP_2(\R^d)$
    defined by 
    $J:f\mapsto f\frg$,
    is continuous and injective 
    so that  its image    
    $J(C)=\cP_2^r(\R^d)$ is a Borel subset of 
    $\cP_2(\R^d)$ by Lusin Theorem.
\end{proof}

\subsection{Lagrangian representations.}
\label{subsec:lagrangian}
Let us consider a standard Borel space $(\OOmega,\cF_\OOmega)$
endowed with a reference
nonatomic Borel probability measure $\P$.
We denote by
$\cH$ the Hilbert space
$L^2(\OOmega,\cF_\OOmega,\P;\rmH)$ endowed with the scalar product and norm
\begin{equation}
  \label{eq:10}
  \la X,Y\ra_\cH:=\int \la X(\oomega),Y(\oomega)\ra\,\d\P(\oomega),\quad
  \|X\|_\cH^2:=\la X,X\ra_\cH.
\end{equation}
A map $\sfg:\OOmega\to \OOmega$ is a \emph{measure preserving
  isomorphism} (m.p.i.) if it is $\cF_\OOmega-\cF_\OOmega$-measurable, there exists a set of full measure
$\OOmega_0\subset \OOmega$ such that $\sfg\restr \OOmega_0$ is
injective and $\sfg_\sharp\P=\P$. We denote by $\rmG(\OOmega)$ the
group of measure-preserving isomorphisms.
A m.p.i.~$\sfg\in \rmG(\OOmega)$ induces a linear isometry $\sfg^*:\cH\to\cH$ of $\cH$ by 
the map
\begin{equation}
  \label{eq:69}
  \sfg^* X:=X\circ \sfg.
\end{equation}
There is a natural 
$1$-Lipschitz surjective map
$\iota:\cH\to\cP_2(\rmH)$
given by
\begin{equation}
  \label{eq:65}
    \iota(X):=X_\sharp \P\quad\text{for every }X\in \cH,
  \end{equation}
  and satisfying
\begin{equation}
  \label{eq:8}
  \sfm_2(\iota(X))=\|X\|_\cH,\quad
  \sfw_2(\iota( X_1),\iota(X_2))\le \|X_1-X_2\|_{\cH},\quad
  \la X_1,X_2\ra_\cH\le
  \msc{\iota(X_1)}{\iota(X_2)}.
\end{equation}
Notice that
\begin{equation}
  \label{eq:63}
  \sfg\in \rmG(\OOmega),\quad
  Y=X\circ \sfg\quad\Rightarrow\quad
  \iota(Y)=\iota(X),
\end{equation}
i.e.~$\iota\circ \sfg^*=\iota$ for
every $\sfg\in \rmG(\OOmega)$.
The next Lemma shows that we can considerably refine the previous
inequalities.
We refer to \cite[Sect.~3]{CSS2} for the proof.
\begin{lemma}
  \label{le:param}
  For every $k\in \N$ the map $\iota^k:\cH^k\to
  \cP_2(\rmH^k)$ defined by
  \begin{equation}
    \label{eq:37-1}
    \iota^k(X_1,X_2,\cdots,X_k):=(X_1,X_2,\cdots,X_k)_\sharp\P
  \end{equation}
  is surjective. In particular, for every
  $\ggamma\in \cP_{2,o}(\rmH\times \rmH)$
  there exists a pair $(X_{\ggamma,1},X_{\ggamma,2})\in \cH\times \cH$ such that
  $\iota^2(X_{\ggamma,1},X_{\ggamma,2})=\ggamma$ and 
  for every $(X_1,X_2)\in \cH\times \cH$ we have
  \begin{equation}
    \label{eq:33*}
    \iota^2(X_1,X_2)\in \cP_{2,o}(\rmH\times \rmH)
    \ \Leftrightarrow\ 
  \|X_1-X_2\|_{\cH}=\sfw_2(\iota( X_1),\iota(X_2))
  \ \Leftrightarrow\
   \la X_1,X_2\ra_\cH=
   \msc{\iota(X_1)}{\iota(X_2)}.
 \end{equation}
 Moreover, for every $\mu_1,\mu_2\in \rW2$ and $X_1,X_2\in \cH$
 with $\iota(X_i)=\mu_i$ we have
 \begin{equation}
   \label{eq:66}
   \msc {\mu_1}{\mu_2}=\sup_{\sfg\in \rmG(\OOmega)}\la X_1,\sfg^* X_2\ra,\quad
   \sfw_2^2(\mu_1,\mu_2)=\inf_{\sfg\in \rmG(\OOmega)}\|X_1-\sfg^* X_2\|_\cH^2.
 \end{equation}
  Finally, if $\mu_1=\mu_2$ (i.e.~$\iota(X_1)=\iota(X_2)$) then there
  exists a sequence $\sfg_n\in \rmG(\OOmega)$ such that
  $\sfg_n^*X_1\to X_2$ strongly in $\cH$.
\end{lemma}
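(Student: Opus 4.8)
The plan is to establish the four assertions in the order: surjectivity of $\iota^k$; the equivalences \eqref{eq:33*}; the orbit-density statement in the last sentence; and then \eqref{eq:66}, which is deduced from them. Everything rests on the fact that $(\OOmega,\cF_\OOmega,\P)$, being a standard Borel space with nonatomic $\P$, is isomorphic mod $0$ to $([0,1],\Leb1)$, so one may argue as if $\OOmega=[0,1]$ and $\P=\Leb1$, with $\rmG(\OOmega)$ the group of Lebesgue-preserving isomorphisms of $[0,1]$. For the surjectivity of $\iota^k$ I would invoke the classical fact that every Borel probability measure on a standard Borel space is the push-forward of $\Leb1$ under a Borel map (built, after a Borel identification of $\rmH^k$ with a Borel subset of $[0,1]$, from the quantile function): thus $\ggamma\in\cP_2(\rmH^k)$ equals $T_\sharp\P$ for some Borel $T\colon\OOmega\to\rmH^k$. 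The moment identity $\int|T|^2\,\d\P=\int_{\rmH^k}|z|^2\,\d\ggamma(z)<\infty$ places $T$ in $L^2(\OOmega,\P;\rmH^k)$, and writing $T=(X_1,\dots,X_k)$ in its $\rmH$-valued coordinates gives $X_j\in\cH$ with $\iota^k(X_1,\dots,X_k)=\ggamma$. Taking $k=2$ and $\ggamma\in\cP_{2,o}(\rmH\times\rmH)$ yields the pair $(X_{\ggamma,1},X_{\ggamma,2})$.

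Next I would settle \eqref{eq:33*}. For $X_1,X_2\in\cH$ with $\mu_i:=\iota(X_i)$, change of variables gives $\|X_1-X_2\|_\cH^2=\int_{\rmH\times\rmH}|x-y|^2\,\d\iota^2(X_1,X_2)$ and $\langle X_1,X_2\rangle_\cH=\int_{\rmH\times\rmH}\langle x,y\rangle\,\d\iota^2(X_1,X_2)$, while $\|X_i\|_\cH=\sfm_2(\mu_i)$ by \eqref{eq:8}; combined with \eqref{eq:W-decomp} these yield
\[
  \|X_1-X_2\|_\cH^2-\sfw_2^2(\mu_1,\mu_2)=2\bigl(\msc{\mu_1}{\mu_2}-\langle X_1,X_2\rangle_\cH\bigr)\ge 0,
\]
the nonnegativity coming from \eqref{eq:8}. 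Since $\iota^2(X_1,X_2)\in\cP_{2,o}(\rmH\times\rmH)$ exactly when the coupling $\iota^2(X_1,X_2)$ attains the minimum in \eqref{eq:4}, i.e.~when $\|X_1-X_2\|_\cH^2=\sfw_2^2(\mu_1,\mu_2)$, all three conditions in \eqref{eq:33*} express the vanishing of the same nonnegative quantity and are equivalent; in particular the pair produced above automatically realizes these equalities.

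The core of the proof is the density statement, best phrased as $\overline{\{\sfg^*X:\sfg\in\rmG(\OOmega)\}}=\iota^{-1}(\iota(X))$ for each $X\in\cH$: the inclusion $\subseteq$ is immediate from \eqref{eq:63} and the continuity of $\iota$, so what remains is to approximate an arbitrary $Y$ with $\iota(Y)=\iota(X)=:\mu$ by maps $\sfg_n^*X$. Fix $\eps>0$; pick $R$ so that $\int_{\{|X|>R\}}|X|^2\,\d\P<\eps^2$ and $\int_{\{|Y|>R\}}|Y|^2\,\d\P<\eps^2$; partition $\{|x|\le R\}\subset\rmH$ into countably many Borel sets $H_1,H_2,\dots$ of diameter $<\eps$, put $H_0:=\{|x|>R\}$, and choose $h_j\in H_j$ for $j\ge1$, $h_0:=0$. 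Let $A_j:=X^{-1}(H_j)$, $B_j:=Y^{-1}(H_j)$, let $X'$ be the simple map equal to $h_j$ on $A_j$ and $Y'$ the one equal to $h_j$ on $B_j$; then $\|X-X'\|_\cH^2\le2\eps^2$, $\|Y-Y'\|_\cH^2\le2\eps^2$, and — the decisive point — $\P(A_j)=\mu(H_j)=\P(B_j)$ for every $j$. Exploiting nonatomicity, for each $j$ with $\P(A_j)>0$ there is a measure preserving isomorphism $B_j\to A_j$; patching these produces $\sfg\in\rmG(\OOmega)$ with $\sfg(B_j)=A_j$ mod $\P$-null sets, so $\sfg^*X'=Y'$. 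As $\sfg^*$ is a linear isometry of $\cH$, $\|\sfg^*X-Y\|_\cH\le\|X-X'\|_\cH+\|Y'-Y\|_\cH\le2\sqrt2\,\eps$; letting $\eps=1/n$ gives the desired sequence. This is the step I expect to be the main obstacle, being the one where nonatomicity is genuinely used and where one must handle the fact that the orbit $\{\sfg^*X\}$ is in general dense but not closed in $\iota^{-1}(\mu)$.

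Finally I would deduce \eqref{eq:66}. For any $Z\in\cH$ with $\iota(Z)=\mu_2$, \eqref{eq:8} and \eqref{eq:63} give $\langle X_1,Z\rangle_\cH\le\msc{\mu_1}{\mu_2}$ and $\|X_1-Z\|_\cH^2\ge\sfw_2^2(\mu_1,\mu_2)$, hence $\sup_{\sfg}\langle X_1,\sfg^*X_2\rangle_\cH\le\msc{\mu_1}{\mu_2}$. For the reverse, take $\ggamma\in\Gamma_o(\mu_1,\mu_2)$ and, by surjectivity of $\iota^2$, a pair $(Y_1,Y_2)$ with $\iota^2(Y_1,Y_2)=\ggamma$, so $\langle Y_1,Y_2\rangle_\cH=\msc{\mu_1}{\mu_2}$ by \eqref{eq:33*} and $\iota(Y_1)=\mu_1=\iota(X_1)$; the density statement yields $\sfp_n\in\rmG(\OOmega)$ with $\sfp_n^*Y_1\to X_1$. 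Put $Z_n:=\sfp_n^*Y_2$, so $\iota(Z_n)=\mu_2$, $\|Z_n\|_\cH=\sfm_2(\mu_2)$, and
\[
  \langle X_1,Z_n\rangle_\cH=\langle\sfp_n^*Y_1,\sfp_n^*Y_2\rangle_\cH+\langle X_1-\sfp_n^*Y_1,Z_n\rangle_\cH=\msc{\mu_1}{\mu_2}+\langle X_1-\sfp_n^*Y_1,Z_n\rangle_\cH,
\]
whose last term is at most $\|X_1-\sfp_n^*Y_1\|_\cH\,\sfm_2(\mu_2)\to0$. Thus $\sup\{\langle X_1,Z\rangle_\cH:Z\in\cH,\ \iota(Z)=\mu_2\}=\msc{\mu_1}{\mu_2}$, and since $\{\sfg^*X_2:\sfg\in\rmG(\OOmega)\}$ is dense in $\{Z:\iota(Z)=\mu_2\}$ (density statement) and $Z\mapsto\langle X_1,Z\rangle_\cH$ is continuous, the supremum over $\sfg$ equals $\msc{\mu_1}{\mu_2}$, which is the first identity. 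The second follows from $\|X_1-\sfg^*X_2\|_\cH^2=\sfm_2^2(\mu_1)+\sfm_2^2(\mu_2)-2\langle X_1,\sfg^*X_2\rangle_\cH$, so that minimizing the left side is the same as maximizing the pairing, and \eqref{eq:W-decomp} identifies the common optimal value as $\sfw_2^2(\mu_1,\mu_2)$.
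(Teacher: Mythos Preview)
Your proposal is correct and self-contained. The paper does not actually supply a proof of this lemma: it simply writes ``We refer to \cite[Sect.~3]{CSS2} for the proof,'' so there is no in-paper argument to compare against. Your route---surjectivity via the standard Borel isomorphism with $([0,1],\Leb1)$, the equivalences \eqref{eq:33*} from the algebraic identity linking $\|X_1-X_2\|_\cH^2-\sfw_2^2$ and $\msc{\mu_1}{\mu_2}-\langle X_1,X_2\rangle_\cH$, the orbit-density via simple-function approximation and patching of measure-preserving isomorphisms between equal-mass level sets, and then \eqref{eq:66} by density plus continuity---is the standard argument one would expect to find in the cited reference, and each step is sound as written.
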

Every function $\phi:\cP_2(\rmH)\to\Rinf$ induces a function
$\hat\phi:=\phi\circ\iota$ on $\cH$ which satisfies the obvious
law-invariance property
\begin{equation}
  \label{eq:62}
  \iota(X)=\iota(Y)\quad\Rightarrow\quad
  \hat\phi(X)=\hat\phi(Y),
\end{equation}
and, in particular, is invariant by the action of
measure-preserving isomorphisms:
\begin{equation}
  \label{eq:12}
  \hat \phi(\sfg^* X)=\hat\phi(X)\quad
  \text{for every }\sfg\in \rmG(\OOmega).
\end{equation}
\begin{lemma}
  \label{le:invariance}
  Let $\phi:\rW2\to\Rinf$ and let $\hat \phi:=\phi\circ\iota:\cH\to\Rinf$. We have
  \begin{enumerate}
  \item $\phi$ is proper if and only if $\hat\phi$ is proper.
  \item $\phi$ is l.s.c.~if and only if $\hat\phi$ is l.s.c.
  \item $\phi$ is continuous if and only if $\hat\phi$ is continuous.
  \item $\phi$ is $L$-Lipschitz if and only if $\hat\phi$ is $L$-Lipschitz.
  \end{enumerate}
  Moreover, suppose that $\Phi:\cH\to\Rinf$ is
  proper, l.s.c.~and invariant by measure preserving isomorphisms,
  i.e.~$\Phi\circ\sfg^*=\Phi$.
  Then $\Phi$ is law-invariant and $\Phi=\phi\circ\iota$ for a
  (unique) proper, l.s.c.~function $\phi:\rW2\to\Rinf$.
\end{lemma}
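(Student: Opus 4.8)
The plan is to prove the four equivalences and the final "moreover" separately, in each case transferring the property between $\cP_2(\rmH)$ and $\cH$ via the law‑pushforward map $\iota$, using that $\iota$ is $1$‑Lipschitz and surjective, and that every $X\in\cH$ is mapped to $\iota(X)=X_\sharp\P$.

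First I would dispose of the four equivalences for $\phi$ versus $\hat\phi=\phi\circ\iota$. For properness: if $\phi(\mu)<\infty$ for some $\mu$, pick $X\in\cH$ with $\iota(X)=\mu$ (surjectivity) to get $\hat\phi(X)=\phi(\mu)<\infty$; conversely if $\hat\phi(X)<\infty$ then $\phi(\iota(X))<\infty$. Since $\phi,\hat\phi>-\infty$ is part of the standing "$\Rinf$‑valued" convention, properness transfers. For lower semicontinuity: $\hat\phi=\phi\circ\iota$ is l.s.c.\ whenever $\phi$ is, because $\iota$ is continuous ($1$‑Lipschitz). For the converse, I would use surjectivity together with the final assertion of Lemma~\ref{le:param}: given $\mu_n\to\mu$ in $\rW2$, lift to $X_n,X\in\cH$ with $\iota(X_n)=\mu_n$, $\iota(X)=\mu$; the subtle point is that $X_n$ need not converge to $X$ in $\cH$. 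To fix this, observe that by \eqref{eq:66} we may, after replacing $X_n$ by $\sfg_n^* X_n$ for suitable $\sfg_n\in\rmG(\OOmega)$ (which does not change $\iota(X_n)$ nor $\hat\phi(X_n)$ by \eqref{eq:12}), arrange $\|X_n-X\|_\cH\le \sfw_2(\mu_n,\mu)+1/n\to0$; then $\hat\phi(X)\le\liminf_n\hat\phi(\sfg_n^*X_n)=\liminf_n\hat\phi(X_n)$, hence $\phi(\mu)\le\liminf_n\phi(\mu_n)$. Continuity and the $L$‑Lipschitz property transfer by the same recipe: the "easy" direction is immediate from $\sfw_2(\iota X_1,\iota X_2)\le\|X_1-X_2\|_\cH$ (giving Lipschitzness of $\hat\phi$ from that of $\phi$, and upper semicontinuity), and the converse uses the identity $\sfw_2^2(\mu_1,\mu_2)=\inf_{\sfg}\|X_1-\sfg^*X_2\|_\cH^2$ from \eqref{eq:66} to produce, for any $\eps>0$, representatives with $\|X_1-X_2\|_\cH^2\le\sfw_2^2(\mu_1,\mu_2)+\eps$, whence $|\phi(\mu_1)-\phi(\mu_2)|=|\hat\phi(X_1)-\hat\phi(X_2)|\le L\|X_1-X_2\|_\cH\le L(\sfw_2^2(\mu_1,\mu_2)+\eps)^{1/2}$, and let $\eps\downarrow0$.

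Next I would prove the "moreover" part. Given $\Phi:\cH\to\Rinf$ proper, l.s.c., and invariant under $\rmG(\OOmega)$, I must first show $\Phi$ is law‑invariant, i.e.\ $\iota(X)=\iota(Y)$ implies $\Phi(X)=\Phi(Y)$. This is exactly where the last sentence of Lemma~\ref{le:param} is used: if $\iota(X)=\iota(Y)$ there is a sequence $\sfg_n\in\rmG(\OOmega)$ with $\sfg_n^*X\to Y$ strongly; by m.p.i.‑invariance $\Phi(\sfg_n^*X)=\Phi(X)$ for all $n$, and by lower semicontinuity $\Phi(Y)\le\liminf_n\Phi(\sfg_n^*X)=\Phi(X)$. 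Swapping the roles of $X$ and $Y$ (there is likewise a sequence $\sfh_n\in\rmG(\OOmega)$ with $\sfh_n^*Y\to X$) gives the reverse inequality, so $\Phi(X)=\Phi(Y)$. Law‑invariance means $\Phi$ factors through $\iota$: define $\phi:\rW2\to\Rinf$ by $\phi(\mu):=\Phi(X)$ for any $X\in\iota^{-1}(\mu)$, which is well‑defined by law‑invariance and nonempty by surjectivity of $\iota$; then $\Phi=\phi\circ\iota$ and $\phi$ is unique (its values are forced on all of $\rW2$ since $\iota$ is onto). Finally, $\phi$ is proper because $\Phi$ is and $\iota$ is onto, and $\phi$ is l.s.c.\ by the equivalence established in item~(2) applied in the direction "$\hat\phi$ l.s.c.\ $\Rightarrow\phi$ l.s.c."

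The main obstacle is the "converse" direction of semicontinuity/continuity and the law‑invariance step in the "moreover": in all of these the naive lift of a convergent sequence in $\rW2$ need not converge in $\cH$, so one cannot directly pull back the topological property. The resolution in every case is the same — use \eqref{eq:66} (or the final approximation statement of Lemma~\ref{le:param}) to realign representatives by measure‑preserving isomorphisms so that $\cH$‑convergence holds up to an arbitrarily small error, and invoke \eqref{eq:12} to see that this realignment leaves $\hat\phi$ (resp.\ $\Phi$) unchanged. Once this observation is in place, each of the five assertions is a short routine argument.
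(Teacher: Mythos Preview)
Your proposal is correct and follows essentially the same approach as the paper: both use surjectivity and $1$-Lipschitzness of $\iota$ for the easy directions, the realignment trick via \eqref{eq:66} (and the approximation statement of Lemma~\ref{le:param}) for the converse directions, and the identical l.s.c.\ plus m.p.i.-invariance argument for the ``moreover'' part. The paper streamlines two of your steps slightly---it derives Claim~3 by applying Claim~2 to $\phi$ and $-\phi$, and for Claim~4 it invokes the \emph{exact} optimal lifting from \eqref{eq:33*} (so no $\eps\downarrow0$ is needed)---but these are cosmetic differences, not a different strategy.
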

\begin{proof}
  The first claim and all the left-to-right implications of Claims
  2,3,4 are trivial, thanks to \eqref{eq:8}. We thus consider only the right-to-left implications.

  Concerning Claim 2, let us suppose that $\hat\phi$ is lower semicontinuous and let 
  $(\mu_n)_{n\in\N}$ converging to $\mu$ in $\cP_2(\rmH)$
  with $\phi(\mu_n)\le c$. By \eqref{eq:66}
  there exists a sequence $X_n$ converging to $X$ in $\cH$ such that
  $\mu_n=\iota(X_n)$ and $\mu=\iota(X)$. Since $\hat\phi$ is lower
  semicontinuous we deduce that $\phi(\mu)=\hat\phi(X)\le c$ as well.

  Claim 3 immediately follows by Claim 2 applied to $\phi$ and
  $-\phi$.
  Claim 4 follows from \eqref{eq:33*}.

  Let now $\Phi$ be as in the last statement of the Lemma
  and let $X_i\in \cH$ with $\iota(X_1)=\iota(X_2)$.
  By Lemma \ref{le:param} we can find a sequence $\sfg_n\in
  \rmG(\OOmega)$ such that
  $\sfg_n^*X_1\to X_2$ as $n\to\infty$ so that
  \begin{displaymath}
    \Phi(X_2)\le \liminf_{n\to\infty}\Phi(\sfg_n^* X_1)=
    \liminf_{n\to\infty}\Phi( X_1)=\Phi(X_1).   
  \end{displaymath}
  Inverting the role of $X_1$ and $X_2$ we deduce that $\Phi(X_1)=\Phi(X_2).$
\end{proof}

\subsection{Lagrangian representation of the laws of random measures of \texorpdfstring{$\cPP_2(\rmH)$}{}}
\label{subsec:L-representation}

The $1$-Lipschitz and surjective law map $\iota:\cH\to \cP_2(\rmH)$
provides a natural way 
to construct measures 
in $\cPP_2(\rmH)$
(the space of laws of random measures in 
$\cP_2(\rmH)$) starting
from measures in $\cP_2(\cH)$
(the space of laws of random Lagrangian maps in $\cH$).
In fact, the corresponding push-forward transform $\iota_\sharp$ is
a surjective map from $\cP_2(\cH)$
to $\RW2$ so that 
\begin{equation}
    \label{eq:obvious}
    \text{for every $\frm\in \cP_2(\cH)$
    the push-forward
    $\ttM=\iota_\sharp \frm$ belongs to 
    $\cPP_2(\rmH).$}
\end{equation}
In this way, we will use (maps from) $(\OOmega,\P)$ as 
a sort of ``labelling'' space for measures 
in $\rmH$
and not as a source of randomness. 
The latter can be introduced by using a further measure $\frm$, not in $\OOmega$ but in the Hilbert space $L^2(\OOmega,\P; \rmH)$;
in turn $\frm$ can be represented
as the law of a random vector $\xxi$
defined in another 
(standard Borel)
probability space
$(\Thetao,\cF,\Q)$. 
Thus in many situations
it will be useful to 
deal with the pair of spaces 
$(\OOmega,\cF_\OOmega,\P)$ and $(\Thetao,\cF,\Q)$.
Such a construction has been used, with different aims, in various contexts,
see e.g.~\cite{Konarovskyi-VonRenesse24},
\cite{VonRenesse-Sturm09,Sturm11}.

Let us briefly describe this simple construction: we can assume that 
\begin{equation}
    \label{eq:representation}
    \begin{gathered}
        \text{the measure $\frm\in \cP_2(\cH)$ 
is the law of a $\cH$-valued random vector $\xxi$ defined 
in }(\Thetao,\cF,\Q):
        \\
        \text{$\frm=\xxi_\sharp \Q$
        \quad 
and
\quad $\ttM=(\iota\circ \xxi)_\sharp \Q$}
\text{ is the law of the random measure }
\mu_\cdot =\xxi[\cdot]_\sharp \P.
    \end{gathered}
\end{equation}
We first represent $\frm$ as 
the distribution of a $\cF\otimes \cF_\OOmega$-measurable stochastic process.
\begin{lemma}
    \label{le:representation}
    If $\xxi:\Omega\to \cH$
    is a Borel vector field satisfying \eqref{eq:representation}
    then there exists a 
    $\cF\otimes \cF_\OOmega$-measurable stochastic process
    $\Xi:\Thetao\times \OOmega\to \rmH$
such that 
\begin{equation}
    \label{eq:joint}
    \text{for $\Q$-a.e.~$\thetao\in \Thetao$}
    \quad
    \Xi(\thetao,\cdot)=
    \xxi[\thetao]
    \quad \text{$\P$-a.e.~in $\OOmega.$}
\end{equation}
\end{lemma}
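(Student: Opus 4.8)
The content of the lemma is a joint-measurability statement: for each $\thetao$ the object $\xxi[\thetao]$ is a priori only an equivalence class in $\cH=L^2(\OOmega,\cF_\OOmega,\P;\rmH)$, and we must produce a genuine point function $\Xi\colon\Thetao\times\OOmega\to\rmH$, measurable for the product $\sigma$-algebra $\cF\otimes\cF_\OOmega$, whose $\thetao$-section is a representative of $\xxi[\thetao]$ for ($\Q$-almost) every $\thetao$. Two observations make this routine. First, since $(\OOmega,\cF_\OOmega,\P)$ is standard Borel with nonatomic $\P$ and $\rmH$ is separable, $\cH$ is a \emph{separable} Hilbert space. Second, since $\frm=\xxi_\sharp\Q\in\cP_2(\cH)$ we have $\int_\Thetao\|\xxi[\thetao]\|_\cH^2\,\d\Q(\thetao)=\int_\cH\|X\|_\cH^2\,\d\frm(X)<\infty$, so $\xxi\in L^2(\Thetao,\Q;\cH)$. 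The quickest route is then to invoke the standard Bochner--Fubini isometry $L^2(\Thetao,\Q;L^2(\OOmega,\P;\rmH))\cong L^2(\Thetao\times\OOmega,\Q\otimes\P;\rmH)$ (valid because the target spaces are separable and the measures are finite) and to take $\Xi$ to be the element of the right-hand side corresponding to $\xxi$; unwinding the identification gives exactly \eqref{eq:joint}.

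For a self-contained argument I would instead build $\Xi$ by countably-valued approximation. Using separability of $\cH$, for each $n\in\N$ fix a countable Borel partition $\{\cH_{n,j}\}_{j\in\N}$ of $\cH$ into pieces of diameter $\le 2^{-n}$ together with base points $h_{n,j}\in\cH_{n,j}$, and define the Borel map $\xxi_n\colon\Thetao\to\cH$ by $\xxi_n[\thetao]:=h_{n,j}$ on $\xxi^{-1}(\cH_{n,j})$, so that $\|\xxi_n[\thetao]-\xxi[\thetao]\|_\cH\le 2^{-n}$ for \emph{every} $\thetao$. The range $\{h_{n,j}:n,j\in\N\}$ is a \emph{countable} subset of $\cH$, so for each of these classes I may simply pick, once and for all, a Borel representative $\widehat{h_{n,j}}\colon\OOmega\to\rmH$ — this uses no measurable-selection theorem — and set
\[
\Xi_n(\thetao,\oomega):=\sum_{j\in\N}\mathbbm 1_{\xxi^{-1}(\cH_{n,j})}(\thetao)\,\widehat{h_{n,j}}(\oomega).
\]
Each summand is the product of an $\cF$-measurable indicator and an $\cF_\OOmega$-measurable map, and the sum is countable, so $\Xi_n$ is $\cF\otimes\cF_\OOmega$-measurable; moreover, its section $\Xi_n(\thetao,\cdot)$ represents $\xxi_n[\thetao]$ in $\cH$ for every $\thetao$.

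It then remains to pass to the limit. For a fixed $\thetao$ the sections satisfy $\|\Xi_n(\thetao,\cdot)-\Xi_{n+1}(\thetao,\cdot)\|_\cH=\|\xxi_n[\thetao]-\xxi_{n+1}[\thetao]\|_\cH\le 2^{-n}+2^{-n-1}$, hence $\sum_n|\Xi_n(\thetao,\oomega)-\Xi_{n+1}(\thetao,\oomega)|$ has finite $\P$-integral; so the pointwise limit $\Xi(\thetao,\oomega):=\lim_n\Xi_n(\thetao,\oomega)$ exists in $\rmH$ for $\P$-a.e.\ $\oomega$, and there it coincides $\P$-a.e.\ with a representative of $\xxi[\thetao]$, because $\Xi_n(\thetao,\cdot)\to\xxi[\thetao]$ in $\cH$ forces the $\P$-a.e.\ limit to agree $\P$-a.e.\ with the $L^2$-limit. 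The set where the pointwise limit exists equals $\bigcap_k\bigcup_N\bigcap_{m,n\ge N}\{(\thetao,\oomega):|\Xi_n(\thetao,\oomega)-\Xi_m(\thetao,\oomega)|\le 1/k\}$ and is therefore $\cF\otimes\cF_\OOmega$-measurable, so $\Xi$, extended by $0$ off that set, is a well-defined $\cF\otimes\cF_\OOmega$-measurable process satisfying \eqref{eq:joint}; the argument is dimension-free and applies verbatim to infinite-dimensional $\rmH$. The only point requiring care is the bookkeeping in the approximation step: one must notice that the representatives $\widehat{h_{n,j}}$ are selected along a \emph{countable} family, which is precisely why no measurable selection is needed and the joint measurability of each $\Xi_n$ — and hence of $\Xi$ — is elementary.
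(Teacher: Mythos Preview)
Your proof is correct and takes a genuinely different route from the paper's. The paper reduces to the scalar case by expanding along an orthonormal basis $(\sfh_k)$ of $\rmH$, obtaining Borel maps $\xi_k=\langle\xxi,\sfh_k\rangle:\Thetao\to L^2(\OOmega,\P)$, and then invokes Doob's measurable Radon--Nikodym theorem to produce jointly measurable densities $f_k:\Thetao\times\OOmega\to\R$; the process $\Xi$ is the limit of the partial sums $\sum_{k\le n} f_k\sfh_k$. Your argument is more elementary: you approximate $\xxi$ uniformly by countably-valued Borel maps, lift each value to a fixed pointwise representative (no selection theorem needed, as you stress), and pass to the limit using the summability of $\|\Xi_n(\thetao,\cdot)-\Xi_{n+1}(\thetao,\cdot)\|_{\cH}$. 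This avoids Doob's theorem entirely, does not use the second-moment bound on $\xxi$, and in fact yields \eqref{eq:joint} for \emph{every} $\thetao$ rather than just $\Q$-a.e. The paper's approach, on the other hand, makes the coordinate structure of $\rmH$ explicit from the outset, which dovetails with the later Gaussian examples (Section~\ref{subsec:examples}) where one works directly with a Karhunen--Lo\`eve expansion of $\Xi$ along a prescribed orthonormal basis.
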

\begin{proof}
    We select an orthonormal basis
    $(\sfh_k)_{k\in \N}$ of 
    $\rmH$
    and set $\xi_k:=\langle \xxi,\sfh_k\rangle$.
    $\xi_k$ is a Borel map from 
    $\Thetao$ to $L^2(\OOmega,\P)$
    satisfying
    \begin{equation}
        \label{eq:bounds}
        \sum_k \int 
        \|\xi_k[\thetao]\|_{L^2(\OOmega,\P)}^2\,\d\Q(\thetao)
        =
        \int \|\xxi[\thetao]\|_{\cH}^2
        \,\d\Q(\thetao)=
        \int \|X\|_\cH^2\,\d\frm(X)<\infty.
    \end{equation}
    We can then set $\nu_k(\thetao,B):=
    \int \xi_k[\thetao]\nchi_B\,\d\P$
    for every $\thetao\in \Thetao$ and
    $B\in \cF_\OOmega$
    obtaining a family of signed measures
    $\nu_k(\thetao,\cdot)$ on $\cF_\OOmega$
    depending on $\thetao$ in a 
    $\cF$-measurable way, with
    $$ |\nu_k|(\thetao,B)\le 
    \|\xi_k[\thetao]\|_{L^2(\OOmega,\P)}
    \P(B)$$
        By the Doob's measurable Radon-Nykodim theorem
    \cite[Thm. 58]{Dellacherie-MeyerB}
    (see also 
    \cite[Ex.~6.10.72]{Bogachev07})
    we can 
    find a 
    $\cF\times \cF_\OOmega$-measurable density
    $f_k:\Thetao\times \OOmega\to\R$
    such that 
    $\nu_k(\thetao,\cdot)=
    f_k(\thetao,\cdot)\P$
    so that 
    \begin{displaymath}
        \xi_k[\thetao]=
        f_k(\thetao,\cdot)
        \quad 
        \text{$\P$-a.e.~and}
        \quad 
        \int f_k^2(\thetao,\oomega)
        \,\d\P(\oomega)=
        \|\xi_k[\thetao]\|_{L^2(\OOmega,\P)}^2
        \quad \text{for 
        $\Q$-a.e.~$\thetao\in \Thetao.$}
    \end{displaymath}
    Setting
    \begin{displaymath}
        \Xi_n(\thetao,\oomega):=
        \sum_{k=1}^nf_k(\thetao,\oomega)\sfh_k
    \end{displaymath}
    and using \eqref{eq:bounds}
    it is not difficult to check that 
    $\Xi_n$ converges pointwise 
    $\Q\times \P$-a.e.~to 
    an element $\Xi\in L^2(\Thetao\times
    \OOmega,\Q\times \P;\rmH)$
    which satisfies \eqref{eq:joint}.
\end{proof}
We can use $\Xi$ to respresent $\ttM$
and its $k$-projections:
for every $\thetao\in \Thetao$ 
the measure $\mu_\thetao=
\iota(\xxi(\thetao))
\in \cP(\rmH)$
satisfies
\begin{equation}
    \label{eq:push2}
    \int_\rmH \zeta(x)\,\d\mu_\thetao(x)=
    \int_\OOmega
    \zeta(\Xi(\thetao,\oomega))\,\d
    \P(\oomega)
\end{equation}
and we can recover 
$\ttM$ as the law with respect to $\Q$
of the random measure $\mu_\thetao$:
\begin{equation}
    \label{eq:M}
    \ttM=\int_\Thetao \delta_{\mu_\thetao}
    \,\d \Q(\thetao)
    =
    \int_\Thetao \delta_{\Xi(\thetao,\cdot)_\sharp \P}
    \,\d \Q(\thetao).
\end{equation}
By using $\Xi$ we can easily express 
$k$-projections of $\ttM$. 
For every $k\in \N$ and $\ttM\in \cPP_2(\rmH) $ let us define
\begin{equation}
    \label{eq:k-proj}
    \operatorname{pr}^k[ \ttM]:=
    \int \mu^{\otimes k}\,\d \ttM(\mu)
    \in \cP_2(\rmH^k),
\end{equation}
which satisfies
\begin{equation}
    \label{eq:k-integrals1}
    \int_{\rmH^k}
    \zeta\,\d
    \operatorname{pr}^k[ \ttM]=
    \int\Big(
    \int_{\rmH^k}
    \zeta\,\d\mu^{\otimes k}\Big)
    \,\d \ttM(\mu)
    \quad \text{for every bounded Borel }\zeta:\rmH^k\to \R.
\end{equation}
We then have
\begin{equation}
     \label{eq:k-integrals}
    \int_{\rmH^k}
    \zeta\,\d
    \operatorname{pr}^k[ \ttM]=
    \int_\Thetao 
    \Big(
    \int_{\OOmega}
    \zeta(\Xi(\thetao,\oomega_1),
    \cdots,\Xi(\thetao,\oomega_k))
    \,\d\P^{\otimes k}(\oomega_1,\cdots,\oomega_k)\Big)
    \,\d \Q(\thetao),
\end{equation}
i.e.~
\begin{equation}
    \label{eq:k3}
    \operatorname{pr}^k[ \ttM]=\Xi^k_\sharp (\Q\otimes \P^{\otimes k})
    \quad\text{where}\quad
\Xi^k(\thetao,\oomega_1,\cdots,\oomega_k):=(\Xi(\thetao,\oomega_1),\cdots,\Xi(\thetao,\oomega_k)).
\end{equation}
\begin{example}[Laws of Gaussian generated random measures (LGRRM)]
    \label{ex:gaussian}
    Let us focus on the particular case
 of a nondegenerate centered Gaussian measure 
$\frg\sim N(0,K)$ in $\cH$
with covariance operator $K$.
By Karhunen-Lo\`eve expansion, 
we can find 
\begin{enumerate}
    \item an orthonormal basis $(\sfE_n)_{n\in \N_+}$ of $\cH$,
    given by the eigenvectors of $K$;
    \item the corresponding sequence of 
    nonnegative eigenvalues 
    of $K$ $(\lambda^2_n)_{n\in \N_+}$
    with 
    $K\sfE_n=\lambda^2_n\sfE_n$ and 
    $\Lambda:=\sum_n \lambda_n^2<\infty $    
    \item a sequence of 
    independent Gaussian random variables 
    $(\xi_n)_{n\in \N_+}$
    defined in $\Thetao$
    with $\xi_n\sim N(0,\lambda_n^2)$
\end{enumerate}
such that 
\begin{equation}
\label{eq:G-representation}
    \frg=\xxi_\sharp \Q,\quad 
    \xxi:=\sum_n \xi_n \sfE_n.
\end{equation}
Since 
$\Xi_n(\thetao,\oomega):=\xi_n(\thetao)
\sfE_n(\oomega)$
is an orthogonal system in 
$L^2(\Thetao\times \OOmega,\Q\otimes \P)$
the series
\begin{equation}
    \label{eq:joint-expansion}
    \Xi(\thetao,\oomega):=
    \sum_n \Xi_n(\thetao,\oomega)=
    \sum_n \xi_n(\thetao)
\sfE_n(\oomega),\quad
\|\Xi_n\|_{L^2(\Thetao\times \OOmega,\Q\otimes \P)}=
\lambda_n
\end{equation}
converges in 
$L^2(\Thetao\times \OOmega,\Q\otimes \P)$
and provides a measurable version
satisfying \eqref{eq:joint}.
\end{example}
\begin{example}
    \label{ex:gaussian2}
We can also proceed in a slightly different way:
assuming that $\OOmega$ is a compact metrizable space, we can start from a $\rmH$-valued
measurable process $\Xi_\oomega=\Xi(\cdot,\oomega)$
indexed by $\oomega\in \OOmega$
with continuous paths, i.e.~$\oomega\mapsto 
\Xi(\thetao,\oomega)\in \rmC^0(\OOmega)$
for $\Q$-a.e.~$\thetao$.
In this way the map $\xxi:\thetao \to 
\Xi(\thetao,\cdot)$ 
can be considered as a measurable map from 
$\Thetao$ to the Banach space
$\cB:=\rmC^0(\OOmega;\rmH)$;
assuming that 
$\xxi\in L^2(\Thetao,\Q;\cB)$, the distribution 
of the process $\frm=\xxi_\sharp\Q$
is a Radon measure in $\cP_2(\cB)$.
Any choice of diffuse measure 
$\P\in \cP(\OOmega)$
induces a push-forward map
$\iota_\P(X):=X_\sharp\P$ 
and a measure $\ttM=
(\iota_\P)_\sharp \frm$ as in \eqref{eq:M}.
If in particular $\Xi$ is a Gaussian process,
then $\frm$ is a Gaussian measure in $\cB$
and thus also in $\cH$.
\end{example}
\section{Totally convex functionals}
\label{sec:totally-convex}
We first recall the definition of totally convex functionals on
$\cP_2(\rmH)$ \cite[Sec.~5]{CSS22}
\begin{definition}[Totally convex functionals]
  \label{def:totally-convex}
  A functional $\phi:\cP_2(\rmH)\to \Rinf$ is \emph{totally
    convex}
  if for every coupling $\mmu\in \cP_2(\rmH\times \rmH)$
  and $t\in [0,1]$ 
  \begin{equation}
    \label{eq:1}
    \phi(\mu_t)\le (1-t)\phi(\mu_0)+\phi(\mu_1)\quad
    t\in [0,1],\quad
    \mu_t=(\pi^{1\shortrightarrow2}_t)_\sharp\mmu,\quad
    \pi^{1\shto2}_t(x_1,x_2):=(1-t)x_1+tx_2.
  \end{equation}
  Equivalently, the lifted functional $\hat\phi:=\phi\circ\iota$ is
  convex in $\cH$.

  We say that $\phi$ is totally $\lambda$-convex if
  $\phi-\frac \lambda 2\sfm_2^2$ is totally convex
  (equivalently $\hat \phi$ is $\lambda$-convex in $\cH$).
\end{definition}
Notice that the lifting $\phi\to\hat\phi$  inherits also properness
and lower semicontinuity.
Conversely, if $\Phi:\cH\to \Rinf$ is convex, lower semicontinuous and
invariant by m.p.i.~then there exists a unique totally convex
functional $\phi$ such that $\Phi=\hat \phi=\phi\circ \iota$.

If we restrict the convexity inequality \eqref{eq:1} to displacement
interpolation induced by \emph{optimal couplings} $\mmu$, we obtain a
larger class of functionals, which are called \emph{geodesically (or
  displacement)} convex, according to \cite{McCann97}.
However if $\dim(\rmH)\ge2$, every \emph{continuous} geodesically convex functionals
$\phi$ is also totally convex \cite[Thm.~5.5]{CSS22}.
We quote here a few important examples: 
\begin{example}[Potential energy]
  If $f:\rmH\to \Rinf$ is $\lambda$-convex (proper, l.s.c.), then 
  $$V_f:\mu\mapsto \int_\rmH f(x)\,\d\mu(x)$$
  is totally $\lambda$-convex (proper, l.s.c.) in $\cP_2(\rmH)$.
  In particular, $\sfm_2^2$ (corresponding to $f(x)=|x|^2$) is totally
  $1$-convex.
\end{example}
\begin{example}[Multiple interaction energy]
  If $g:\rmH^k\to \Rinf$ is proper, convex, l.s.c., and invariant
  respect to arbitrary permutations of its entries, then
  $$W_g:\mu\mapsto \int g(x_1,x_2,\cdots,x_k)\,\d\mu^{\otimes
    k}(x_1,\cdots,x_k)$$
  is totally convex (proper and
  l.s.c.) in $\cP_2(\rmH)$.
\end{example}

\normalcolor

\begin{example}
      For every $\nu\in \cP_2(\rmH)$ we 
      define the maximal pairing functional
  $\mathsf k_\nu:\cP_2(\rmH)\to\R$ defined by 
  \begin{equation}
    \label{eq:7}
    \mathsf k_\nu(\mu):= 
    \msc \mu\nu\quad \text{for every }\mu\in \cP_2(\rmH).
  \end{equation}
\end{example}
\begin{proposition}
    \label{prop:total-convexity-C}
    For every $\nu\in \cP_2(\rmH)$
    the function
    $\mathsf k_\nu$ 
    is $\sfm_2(\nu)$-Lipschitz in $\cP_2(\rmH)$ and
    totally convex.
\end{proposition}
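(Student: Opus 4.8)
The plan is to prove the two assertions separately, exploiting the Lagrangian lifting machinery already set up in Section~\ref{subsec:lagrangian}.

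\textbf{Lipschitz estimate.} First I would establish the $\sfm_2(\nu)$-Lipschitz bound directly in $\cP_2(\rmH)$. Given $\mu_1,\mu_2\in\cP_2(\rmH)$, pick any coupling $\ssigma\in\Gamma(\mu_1,\mu_2)$ attaining $\sfw_2(\mu_1,\mu_2)$, and let $\mmu^{(1)}\in\Gamma(\mu_1,\nu)$ be optimal for $\msc{\mu_1}\nu$. Using a gluing lemma, produce a three-marginal plan $\aalpha\in\cP_2(\rmH^3)$ with $\pi^{12}_\sharp\aalpha=\ssigma$ and $\pi^{13}_\sharp\aalpha=\mmu^{(1)}$; then $\pi^{23}_\sharp\aalpha\in\Gamma(\mu_2,\nu)$, so
\begin{equation*}
\msc{\mu_2}\nu\ge\int\la x_2,x_3\ra\,\d\aalpha=\msc{\mu_1}\nu+\int\la x_2-x_1,x_3\ra\,\d\aalpha\ge\msc{\mu_1}\nu-\Big(\int|x_1-x_2|^2\,\d\aalpha\Big)^{1/2}\sfm_2(\nu),
\end{equation*}
by Cauchy--Schwarz, and the first factor equals $\sfw_2(\mu_1,\mu_2)$. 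Swapping the roles of $\mu_1,\mu_2$ gives $|\mathsf k_\nu(\mu_1)-\mathsf k_\nu(\mu_2)|\le\sfm_2(\nu)\,\sfw_2(\mu_1,\mu_2)$.

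\textbf{Total convexity.} By Definition~\ref{def:totally-convex}, it suffices to show that the lift $\hat{\mathsf k}_\nu=\mathsf k_\nu\circ\iota$ is convex on $\cH$. Fix $Y\in\cH$ with $\iota(Y)=\nu$. The key point is the variational formula \eqref{eq:66}: for every $X\in\cH$,
\begin{equation*}
\hat{\mathsf k}_\nu(X)=\msc{\iota(X)}\nu=\sup_{\sfg\in\rmG(\OOmega)}\la X,\sfg^*Y\ra_\cH.
\end{equation*}
For each fixed $\sfg$, the map $X\mapsto\la X,\sfg^*Y\ra_\cH$ is linear, hence convex (indeed affine) on $\cH$; taking a pointwise supremum of convex functions preserves convexity, so $\hat{\mathsf k}_\nu$ is convex on $\cH$. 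This is exactly total convexity of $\mathsf k_\nu$. (As a byproduct one also recovers the Lipschitz bound, since each $\la X,\sfg^*Y\ra_\cH$ is $\|Y\|_\cH=\sfm_2(\nu)$-Lipschitz and the sup of $L$-Lipschitz functions is $L$-Lipschitz.)

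\textbf{Main obstacle.} The substantive ingredient is the representation \eqref{eq:66} from Lemma~\ref{le:param}, which converts the Wasserstein-side quantity $\msc\cdot\nu$ into a supremum of linear functionals on the Hilbert space $\cH$; everything else is routine. If one prefers to avoid invoking \eqref{eq:66}, an alternative is to verify convexity of $\hat{\mathsf k}_\nu$ by hand: given $X_0,X_1\in\cH$ and $t\in[0,1]$, set $X_t=(1-t)X_0+tX_1$, choose $\sfg$ nearly optimal in the formula for $\hat{\mathsf k}_\nu(X_t)$, and estimate $\la X_t,\sfg^*Y\ra_\cH=(1-t)\la X_0,\sfg^*Y\ra_\cH+t\la X_1,\sfg^*Y\ra_\cH\le(1-t)\hat{\mathsf k}_\nu(X_0)+t\hat{\mathsf k}_\nu(X_1)$, each term bounded again by \eqref{eq:66}; but this is really the same argument in disguise. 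The conceptual content is simply that the maximal correlation pairing, once lifted, is a supremum of inner products — the Lagrangian analogue of the elementary fact that $x\mapsto\max_i\la a_i,x\ra$ is convex.
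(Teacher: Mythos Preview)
Your proposal is correct and essentially matches the paper's second argument: the paper also fixes $Y\in\cH$ with $\iota(Y)=\nu$, invokes the representation \eqref{eq:66} to write $\hat{\mathsf k}_\nu(X)=\sup_{\sfg\in\rmG(\OOmega)}\la X,\sfg^*Y\ra_\cH$, and concludes both convexity and the $\sfm_2(\nu)$-Lipschitz bound from the supremum-of-linear-functionals structure. Your separate direct Lipschitz argument via gluing and Cauchy--Schwarz is correct but redundant (as you yourself note); the paper skips it and extracts Lipschitz from the lifting as well. The paper additionally offers a first, purely Wasserstein-side proof of total convexity (without lifting), building a tri-plan in $\Gamma(\mu_1,\mu_2,\nu)$ via \cite[Prop.~7.3.1]{AGS08} to linearize $\msc{\mu_t}{\nu}$ along the interpolation---this is close in spirit to your gluing argument for Lipschitz, but applied to convexity instead.
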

\begin{proof}
  We give two proofs. The first one is direct: let us fix $\nu,\mu_1,\mu_2\in \cP_2(\rmH)$
    and let 
    $\mmu^{12}\in \Gamma(\mu_1,\mu_2)$.
    Let us set
    \begin{equation}
    \pi^{1\shto 2}_t:\rmH\times \rmH\to \rmH,\quad
    \pi^{1\shto2}_t(x_1,x_2):=(1-t)x_1+tx_2,\quad
    t\in [0,1],
    \label{eq:47}
  \end{equation}
    and $\mu^{1\shortrightarrow2}_t:=(\pi^{1\shto2}_t)_\sharp \mmu.$
    We have to prove that 
    \begin{equation}
        \label{eq:to-prove}
        \mathsf k_\nu(\mu^{1\shortrightarrow2}_t)\le 
        (1-t)\mathsf k_\nu(\mu_1)+t\mathsf k_\nu(\mu_2)
        \quad \text{for every }t\in [0,1].
    \end{equation}
    We select $\mmu_t\in \Gamma_o(\mu_t,\nu)$
    and we apply 
    \cite[Prop.~7.3.1]{AGS08}
    to find $\mmu\in \Gamma(\mu_1,\mu_2,\nu) \subset \cP_2(\rmH\times\rmH \times\rmH)$
    such that 
    $\pi^{12}_\sharp \mmu=\mmu^{12}$ and
    $(\pi^{1\shto2}_t,\pi^3)_\sharp\mmu=\mmu_t$.
    It follows that 
    \begin{align*}
        \mathsf k_\nu(\mu^{1\shortrightarrow2}_t)
        &=
        \int \la x,z\ra\,\d\mmu_t(x,z)
        =
        \int \la \pi^{1\shto2}_t(x_1,x_2),x_3\ra\,\d\mmu (x_1,x_2,x_3)
        \\&=
        \int \la (1-t) x_1+t x_2, x_3\ra\,\d\mmu (x_1,x_2,x_3)
      \\&=
        (1-t)
        \int \la x_1,x_3\ra\,\d\mmu (x_1,x_2,x_3)
        +
        t
        \int \la x_2,x_3\ra\,\d\mmu (x_1,x_2,x_3)
        \\&\le 
        (1-t)\mathsf k_\nu(\mu_1)+t\mathsf k_\nu(\mu_2).
    \end{align*}
    The second argument uses the representation result \eqref{eq:66};
    we fix $Y\in \cH$ such that $\iota(Y)=\nu$ and observe that 
    \begin{equation}
      \label{eq:68}
      \hat{\mathsf k}_\nu(X)={\mathsf k}_\nu(\iota(X))=\sup\Big\{\la
      X,Y\circ\sfg\ra:\sfg\in \rmG(\OOmega)\Big\},
    \end{equation}
    so that $\hat{\mathsf k}_\nu$ is the supremum of a family of
    $L$-Lipschitz (with $L=\|Y\|_\cH=\sfm_2(\nu)$) and
    linear functionals: therefore, it is convex and $L$-Lipschitz as well.
  \end{proof}
  \subsection{A Kantorovich version of the Legendre-Fenchel transform
    in \texorpdfstring{$\cP_2(\rmH)$}{}.}
    \label{subsec:KLF}
Recall that the Legendre-Fenchel transform of a function $\Phi:\cH\to
\Rinf$ is defined as
\begin{equation}
  \label{eq:11}
  \Phi^{\Ast}(Y):=\sup_{X\in \cH}\la Y,X\ra_\cH-\Phi(X).
\end{equation}
Inspired by this formula we define a corresponding transformation
for functionals on $\rW2$.
\begin{definition}[Kantorovich-Legendre-Fenchel transformation in $\cP_2(\rmH)$]
  \label{def:measure-conj}
  Let $\phi:\cP_2(\rmH)\to (-\infty,+\infty]$
  be a proper function.
  We call Kantorovich-Legendre-Fenchel conjugate of 
$\phi$ 
  the function   $\phi^\star:\cP_2(\rmH)\to (-\infty,+\infty]$ defined by
  \begin{equation}
    \label{eq:MLF}
    \phi^\star(\nu):=
    \sup_{\mu\in \cP_2(\rmH)}
    \msc\nu\mu-\phi(\mu).
  \end{equation}
\end{definition}
It is clear that $\phi^\star$ is totally convex and lower
semicontinuous, as the supremum of totally convex and continuous functions. It is also proper (i.e.~not identically $=\infty$)
if $\phi$ satisfies the lower bound
  \begin{equation}
  \label{eq:lower-B}
  \phi(\mu)\ge -a+\msc \mu\nu\quad
  \text{for every }\mu\in \cP_2(\rmH)
\end{equation}
for some $a\in \R$ and $\nu\in \cP_2(\rmH)$.

Every geodesically convex function
$\phi:\cP_2(\rmH)\to\Rinf$
is
linearly bounded from below \cite{Muratori-Savare20}, in the sense
that
there exist constants $a,b\ge0$ such that
\begin{equation}
  \label{eq:linear-BB}
  \phi(\mu)\ge -a-b\,\sfm_2(\mu)\quad 
\end{equation}
When $\phi$ is totally convex, we immediately get
a refined lower bound.
\begin{lemma}
  \label{le:lower-B}
  If $\phi:\cP_2(\rmH)\to\Rinf$ is totally convex and lower semicontinuous then
  there exist $a\in\R$ and $\nu\in \cP_2(\rmH)$ such that
  \eqref{eq:lower-B} holds.
In particular $\phi^\star(\nu)\le a$ so that $\phi^\star$ is proper.
\end{lemma}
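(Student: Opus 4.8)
The statement to prove is Lemma~\ref{le:lower-B}: if $\phi:\cP_2(\rmH)\to\Rinf$ is totally convex and lower semicontinuous, then there exist $a\in\R$ and $\nu\in\cP_2(\rmH)$ such that $\phi(\mu)\ge -a+\msc\mu\nu$ for all $\mu$, and consequently $\phi^\star(\nu)\le a$. The natural strategy is to transfer everything to the Hilbert lifting $\hat\phi=\phi\circ\iota$ on $\cH$, which by Definition~\ref{def:totally-convex} is convex and (by Lemma~\ref{le:invariance}) proper and lower semicontinuous. On a Hilbert space, a proper convex l.s.c.\ function admits a continuous affine minorant: there exist $Y_0\in\cH$ and $c\in\R$ with $\hat\phi(X)\ge \la X,Y_0\ra_\cH - c$ for every $X\in\cH$. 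This is the classical consequence of the Hahn--Banach separation theorem applied to the epigraph. Now set $\nu:=\iota(Y_0)\in\cP_2(\rmH)$. For any $\mu\in\cP_2(\rmH)$, pick $X\in\cH$ with $\iota(X)=\mu$; then $\phi(\mu)=\hat\phi(X)\ge\la X,Y_0\ra_\cH - c$. Here is the key point where total convexity (as opposed to mere geodesic convexity) does the work: by the last inequality in \eqref{eq:8}, $\la X,Y_0\ra_\cH\le\msc{\iota(X)}{\iota(Y_0)}=\msc\mu\nu$ — but that inequality goes the wrong way. Instead I should exploit the $\rmG(\OOmega)$-invariance.

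**Fixing the direction.** The affine minorant can be improved using invariance: since $\hat\phi\circ\sfg^*=\hat\phi$ for every $\sfg\in\rmG(\OOmega)$ (equation~\eqref{eq:12}), we get $\hat\phi(X)=\hat\phi(\sfg^*X)\ge\la\sfg^*X,Y_0\ra_\cH-c=\la X,(\sfg^{-1})^*Y_0\ra_\cH - c$ for all $\sfg$ (using that $\sfg^*$ is an isometry and $(\sfg^*)^{-1}=(\sfg^{-1})^*$ on $\rmG(\OOmega)$, or more carefully just $\la\sfg^*X,Y_0\ra_\cH=\la X,\sfg^{\#}Y_0\ra_\cH$ for the adjoint isometry $\sfg^{\#}$). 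Taking the supremum over $\sfg$ and invoking the representation formula \eqref{eq:66}, namely $\msc{\mu}{\nu}=\sup_{\sfg\in\rmG(\OOmega)}\la X,\sfg^*Y_0\ra_\cH$ when $\iota(X)=\mu$, $\iota(Y_0)=\nu$, we obtain precisely
\begin{equation*}
  \phi(\mu)=\hat\phi(X)\ge\sup_{\sfg\in\rmG(\OOmega)}\la X,\sfg^*Y_0\ra_\cH - c=\msc\mu\nu - c.
\end{equation*}
So \eqref{eq:lower-B} holds with $a:=c$ and this $\nu$. (One should double-check that the family $\{\sfg^*Y_0:\sfg\in\rmG(\OOmega)\}$ is exactly the one appearing in \eqref{eq:66}; since \eqref{eq:66} is stated symmetrically one may instead write $\msc\mu\nu=\sup_\sfg\la\sfg^*X,Y_0\ra_\cH$, which is the form that drops out directly from the invariance computation above, so no adjoint bookkeeping is even needed.)

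**Conclusion and the obstacle.** Given \eqref{eq:lower-B}, the final assertion is immediate from the definition \eqref{eq:MLF}: $\phi^\star(\nu)=\sup_\mu\big(\msc\nu\mu-\phi(\mu)\big)\le\sup_\mu\big(\msc\nu\mu-\msc\mu\nu+a\big)=a$, since $\msc\cdot\cdot$ is symmetric; hence $\phi^\star(\nu)\le a<\infty$, so $\phi^\star$ is proper. The only genuinely substantive step is the existence of the continuous affine minorant for $\hat\phi$ on $\cH$, which I would obtain by the standard epigraph separation argument: the epigraph $\mathrm{epi}(\hat\phi)\subset\cH\times\R$ is nonempty (properness), closed (lower semicontinuity), and convex (convexity), so for a point strictly below it one separates by a closed hyperplane, and a short argument rules out a vertical separating functional to get the affine bound in the desired form. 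I expect this to be entirely routine — it is a textbook fact — so the "main obstacle" is really just the bookkeeping of matching the invariance-improved minorant to the representation formula \eqref{eq:66}; once that alignment is made, everything closes. An alternative, even shorter route: invoke \eqref{eq:linear-BB} (valid since totally convex $\Rightarrow$ geodesically convex) to get $\phi(\mu)\ge-a-b\,\sfm_2(\mu)$, then choose $\nu=b\,\delta_{x_0/|x_0|}$-type measures — but this does not directly produce the $\msc\mu\nu$ form without essentially redoing the Hilbertian argument, so I would present the lifting proof above as the clean one.
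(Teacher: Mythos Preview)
Your proposal is correct and follows essentially the same route as the paper's proof: lift to $\hat\phi$ on $\cH$, take the standard affine minorant $\hat\phi(X)\ge\la X,Y_0\ra_\cH-c$ for a proper convex l.s.c.\ function on a Hilbert space, then exploit the $\rmG(\OOmega)$-invariance of $\hat\phi$ together with the representation formula \eqref{eq:66} to upgrade the linear term to $\msc\mu\nu$. The paper's write-up is terser (and contains a harmless sign typo in the first displayed inequality), but the argument is identical.
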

\begin{proof}
  It is sufficient to observe that $\hat\phi:=\phi\circ\iota$ is
  convex and lower semicontinuous in $\cH$ so that
  there exist $a\ge 0$ and $Y\in \cH$ such that
  \begin{displaymath}
    \hat\phi(X)\ge -a-\la X,Y\ra\quad\text{for every }X\in \cH.
  \end{displaymath}
  Setting $\nu:=\iota(Y)\in \cP_2(\rmH)$ and
  using the fact that $\hat\phi$ is invariant
  w.r.t.~measure-preserving isomorphisms, we get
  for every $\mu=\iota(X)\in \cP_2(\rmH)$
  and $\sfg\in \rmG(\rmH)$
  \begin{displaymath}
    \phi(\mu)=\hat\phi(X)
    =\hat\phi(X\circ \sfg^{-1})\ge -a+\la  X\circ\sfg^{-1},Y\ra=
    -a+\la  X,Y\circ\sfg\ra.
  \end{displaymath}
  Taking the supremum w.r.t.~$\sfg\in \rmG(\rmH)$
  and recalling 
  \eqref{eq:8} and \eqref{eq:66}
  we get
  \begin{displaymath}
    \phi(\mu)\ge 
    -a+
    \sup_{\sfg\in \rmG(\rmH)}
    \la  X,Y\circ\sfg\ra=
    -a+\msc \nu\mu. 
  \end{displaymath}
\end{proof}
We collect a few simple but relevant properties of the
Kantorovich-Legendre-Fenchel transform in the following Theorem.
\begin{theorem}
  \label{thm:obvious}
  Let $\phi:\cP_2(\rmH)\to\Rinf$ be a proper function satisfying
  \eqref{eq:lower-B} for some $a\in \R$ and $\nu\in \cP_2(\rmH)$.
  \begin{enumerate}
  \item The function $\phi^\star$ is proper,
    totally convex and lower semicontinuous.
  \item $\phi^\star$ satisfies the commutation property
    \begin{equation}
      \label{eq:9}
      (\phi^\star)\circ\iota=(\phi\circ\iota)^\Ast
    \end{equation}
    and it is the unique function satisfying \eqref{eq:9}.
    \item The function $\phi^{\star\star}=(\phi^\star)^\star$ is the
      largest totally convex and lower semicontinuous function
      dominated by $\phi$.
    \end{enumerate}
\end{theorem}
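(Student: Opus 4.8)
The plan is to reduce everything to classical convex analysis on the Hilbert space $\cH$ through the law map $\iota$, the only non-formal ingredient being the representation formula \eqref{eq:66} of Lemma~\ref{le:param}. Claim~1 has essentially been recorded already before the statement: each competitor $\nu\mapsto\msc\nu\mu-\phi(\mu)$ is, up to an additive constant, the functional $\mathsf k_\mu$ of Proposition~\ref{prop:total-convexity-C}, which is $\sfm_2(\mu)$-Lipschitz and totally convex, so $\phi^\star$ is a supremum of totally convex continuous functions, hence totally convex and lower semicontinuous; properness is the content of Lemma~\ref{le:lower-B} together with properness of $\phi$. I therefore regard Claims~2 and~3 as the substance of the theorem.

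For Claim~2 I would fix $Y\in\cH$, set $\nu:=\iota(Y)$, and use the surjectivity of $\iota$ to reparametrize the supremum defining $\phi^\star(\nu)$ over $\mu=\iota(X)$, $X\in\cH$, so that $\phi^\star(\iota(Y))=\sup_{X\in\cH}\big(\msc{\iota(Y)}{\iota(X)}-\hat\phi(X)\big)$ with $\hat\phi:=\phi\circ\iota$. I would then insert \eqref{eq:66} in the form $\msc{\iota(Y)}{\iota(X)}=\sup_{\sfg\in\rmG(\OOmega)}\la Y,\sfg^{*}X\ra_\cH$ and use the measure-preserving invariance \eqref{eq:12}, $\hat\phi(X)=\hat\phi(\sfg^{*}X)$, to rewrite the quantity as $\sup_{X\in\cH}\sup_{\sfg\in\rmG(\OOmega)}\big(\la Y,\sfg^{*}X\ra_\cH-\hat\phi(\sfg^{*}X)\big)$. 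Since $\{\sfg^{*}X:X\in\cH,\ \sfg\in\rmG(\OOmega)\}=\cH$ (take $\sfg=\id$), this double supremum equals $\sup_{Z\in\cH}\big(\la Y,Z\ra_\cH-\hat\phi(Z)\big)=(\phi\circ\iota)^{\Ast}(Y)$, which is precisely \eqref{eq:9}. Uniqueness is then immediate from surjectivity of $\iota$: if $\psi\circ\iota=(\phi\circ\iota)^{\Ast}=\phi^\star\circ\iota$ then $\psi=\phi^\star$.

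For Claim~3 I would first observe that $\phi^\star$, being totally convex and lower semicontinuous, satisfies \eqref{eq:lower-B} by Lemma~\ref{le:lower-B}, so Claim~2 applies to $\phi^\star$ as well; applied twice it gives $\widehat{\phi^{\star\star}}=(\widehat{\phi^\star})^{\Ast}=(\hat\phi)^{\Ast\Ast}$. Because \eqref{eq:8} and \eqref{eq:lower-B} provide a continuous affine minorant of the proper function $\hat\phi$, the Fenchel--Moreau theorem in $\cH$ identifies $(\hat\phi)^{\Ast\Ast}$ with the largest convex lower semicontinuous function on $\cH$ that is dominated by $\hat\phi$. Transferring back through the surjection $\iota$: from $(\hat\phi)^{\Ast\Ast}\le\hat\phi$ we get $\phi^{\star\star}\le\phi$, and $\phi^{\star\star}$ is totally convex and lower semicontinuous by Claim~1 applied to $\phi^\star$; conversely, if $\psi:\cP_2(\rmH)\to\Rinf$ is totally convex, lower semicontinuous and $\psi\le\phi$, then $\hat\psi$ is proper, convex, lower semicontinuous and $\le\hat\phi$, so $\hat\psi=\hat\psi^{\Ast\Ast}\le(\hat\phi)^{\Ast\Ast}=\widehat{\phi^{\star\star}}$ by Fenchel--Moreau and monotonicity of the biconjugate, whence $\psi\le\phi^{\star\star}$ again by surjectivity of $\iota$. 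I do not expect a real obstacle in any of this: the analytic weight is entirely carried by the already-proved identity \eqref{eq:66}, and the only points demanding attention are bookkeeping ones --- namely verifying, each time the transform is iterated, that the hypothesis \eqref{eq:lower-B} (equivalently, an affine minorant of the lift) is still available, which Lemma~\ref{le:lower-B} guarantees.
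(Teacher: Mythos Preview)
Your argument is correct and follows the same route as the paper: Claim~1 is immediate from the supremum representation, Claim~2 is the same computation via \eqref{eq:66} and the $\rmG(\OOmega)$-invariance of $\hat\phi$, and Claim~3 is the Fenchel--Moreau theorem in $\cH$ transferred back through $\iota$, which the paper summarizes as ``an obvious consequence of Claim 2 and the corresponding property for the Legendre--Fenchel transformation in $\cH$''. Your write-up of Claim~3 is in fact more explicit than the paper's, and your remark about checking that \eqref{eq:lower-B} survives each iteration is well taken.
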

\begin{proof}
  Claim 1 is a direct consequence of Lemma \ref{le:lower-B}
  (for properness) and Proposition \ref{prop:total-convexity-C} (for
  convexity and lower semicontinuity).

  In order to check \eqref{eq:9} of Claim 2, we consider $Y\in \cH$ and
  $\nu=\iota(Y)$; \eqref{eq:66} yields
  \begin{align*}
    \phi^\star(\nu)
    &=
      \sup_{\mu\in \cH_2(\rmH)}
      \msc \nu\mu-\phi(\mu)
      =
      \sup_{X\in \cH_2(\rmH)}
      \msc \nu{\iota( X)}-\phi(\iota(X))
     =  
    \sup_{\sfg\in \rmG(\OOmega)}\sup_{X\in \cH_2(\rmH)}
    \la Y,X\circ g\ra-\phi(\iota(X))
    \\&=
    \sup_{\sfg\in \rmG(\OOmega)}\sup_{X\in \cH_2(\rmH)}
    \la Y,X\circ \sfg\ra-\phi(\iota(X\circ \sfg))
    =
    \sup_{X'\in \cH_2(\rmH)}
    \la Y,X'\ra-\phi(\iota(X'))
      =(\phi\circ\iota)^\Ast(Y).
  \end{align*}
  The uniqueness follows by the law invariance of $(\phi\circ \iota)^*$.
 %
 Claim 3 is then an obvious consequence of Claim 2 and the corresponding property
 for the Legendre-Fenchel transformation in $\cH$. 
\end{proof}
We can easily derive natural properties from the Hilbertian theory.
\begin{corollary}[Kantorovich-Fenchel inequality]
  \label{cor:KF-inequality}
  For every $\ggamma\in \Gamma(\mu,\nu)$ we have
  \begin{gather}
    \label{eq:23}
    \int_{\rmH^2} \la x,y\ra\,\d\ggamma(x,y)\le \msc\mu\nu\le \phi(\mu)+\phi^\star(\nu)\\ 
    \phi^\star(\nu) = \sup
    \Big\{\int_{\rmH^2} \la x,y \ra \, \d\ggamma(x,y) - \phi(\pi^1_\#\ggamma)
    :\ggamma\in \cP_2(\rmH\times \rmH),\ 
    \pi^2_\sharp\ggamma=\nu\Big\}.
  \end{gather}
  Moreover
    \begin{equation}
    \label{eq:23bis}
    \int_{\rmH^2} \la x,y\ra\,\d\ggamma(x,y)=
    \phi(\mu)+\phi^\star(\nu)
    \quad
    \Leftrightarrow\quad
    \begin{cases}
    \phi(\mu)+\phi^\star(\nu) =\msc\mu\nu,
    \\
    \ggamma\in \Gamma_o(\mu,\nu).
    \end{cases}
  \end{equation}
\end{corollary}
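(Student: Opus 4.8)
The plan is to derive all three assertions by unwinding the definitions of the maximal correlation pairing $\msc\cdot\cdot$ (Definition~\ref{def:MC-function}) and of the conjugate $\phi^\star$ (Definition~\ref{def:measure-conj}), using only that $\Gamma_o(\mu,\nu)$ is non-empty (recalled in \S\ref{subsec:W2}) and that, by the observation following Definition~\ref{def:MC-function} (equivalently by \eqref{eq:W-decomp}), a coupling lies in $\Gamma_o(\mu,\nu)$ exactly when it attains the maximum in \eqref{eq:MC}; I shall also use that $\msc\cdot\cdot$ is symmetric, which follows by pushing a competitor forward through the coordinate exchange on $\rmH\times\rmH$. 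For \eqref{eq:23}: if $\ggamma\in\Gamma(\mu,\nu)$ then $\int_{\rmH^2}\la x,y\ra\,\d\ggamma\le\msc\mu\nu$ because $\ggamma$ competes in the maximum defining $\msc\mu\nu$; and testing the supremum \eqref{eq:MLF} defining $\phi^\star(\nu)$ with $\mu$ gives $\phi^\star(\nu)\ge\msc\nu\mu-\phi(\mu)=\msc\mu\nu-\phi(\mu)$, i.e.\ $\msc\mu\nu\le\phi(\mu)+\phi^\star(\nu)$ — valid also when $\phi(\mu)=+\infty$ or $\phi^\star(\nu)=+\infty$, and meaningful because $\phi^\star(\nu)>-\infty$ since $\phi$ is proper. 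An alternative route uses the lifting: by Theorem~\ref{thm:obvious}\,\eqref{eq:9} one has $(\phi^\star)\circ\iota=(\phi\circ\iota)^{\Ast}$, so for $X,Y\in\cH$ with $\iota(X)=\mu$, $\iota(Y)=\nu$ the Fenchel--Young inequality in $\cH$ gives $\la X,\sfg^*Y\ra\le(\phi\circ\iota)(X)+(\phi\circ\iota)^{\Ast}(Y)=\phi(\mu)+\phi^\star(\nu)$ for every $\sfg\in\rmG(\OOmega)$, and taking the supremum over $\sfg$ recovers $\msc\mu\nu\le\phi(\mu)+\phi^\star(\nu)$ via \eqref{eq:66}.

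For the representation formula in \eqref{eq:23}, denote by $S(\nu)$ its right-hand side. If $\ggamma\in\cP_2(\rmH\times\rmH)$ has $\pi^2_\sharp\ggamma=\nu$ and we set $\mu:=\pi^1_\sharp\ggamma$, then $\ggamma\in\Gamma(\mu,\nu)$ and the inequality just proved yields $\int_{\rmH^2}\la x,y\ra\,\d\ggamma-\phi(\mu)\le\msc\mu\nu-\phi(\mu)\le\phi^\star(\nu)$; taking the supremum over such $\ggamma$ gives $S(\nu)\le\phi^\star(\nu)$. Conversely, for each $\mu\in\cP_2(\rmH)$ choose $\ggamma_\mu\in\Gamma_o(\mu,\nu)$; since $\ggamma_\mu$ attains the maximum in \eqref{eq:MC}, it is an admissible competitor for $S(\nu)$ with value $\int_{\rmH^2}\la x,y\ra\,\d\ggamma_\mu-\phi(\mu)=\msc\mu\nu-\phi(\mu)$, so $S(\nu)\ge\sup_{\mu\in\cP_2(\rmH)}\big(\msc\mu\nu-\phi(\mu)\big)=\phi^\star(\nu)$ (the same computation covers the case $\phi^\star(\nu)=+\infty$). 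Hence $S(\nu)=\phi^\star(\nu)$.

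For \eqref{eq:23bis}, fix $\ggamma\in\Gamma(\mu,\nu)$. The estimate \eqref{eq:23} gives the chain $\int_{\rmH^2}\la x,y\ra\,\d\ggamma\le\msc\mu\nu\le\phi(\mu)+\phi^\star(\nu)$, whose leftmost term is finite. If the two extreme terms coincide, every inequality in the chain is an equality: the first one reads $\int_{\rmH^2}\la x,y\ra\,\d\ggamma=\msc\mu\nu$, which by the characterization of $\Gamma_o$ means $\ggamma\in\Gamma_o(\mu,\nu)$, while the second one is precisely $\phi(\mu)+\phi^\star(\nu)=\msc\mu\nu$. The converse is immediate: if $\ggamma\in\Gamma_o(\mu,\nu)$ and $\phi(\mu)+\phi^\star(\nu)=\msc\mu\nu$, then $\int_{\rmH^2}\la x,y\ra\,\d\ggamma=\msc\mu\nu=\phi(\mu)+\phi^\star(\nu)$.

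I do not expect a genuine obstacle here: the statement is exactly the transfer of the Fenchel--Young inequality and its equality case from $\cH$ to $\cP_2(\rmH)$. The only points needing a little attention are the non-emptiness of $\Gamma_o(\mu,\nu)$, which furnishes the optimal competitors appearing in the representation formula and in the equality case, and the bookkeeping of possibly infinite values of $\phi$ and $\phi^\star$; for the latter it suffices to recall that $\phi^\star$ never equals $-\infty$ (because $\phi$ is proper) and that every displayed inequality remains meaningful with the value $+\infty$ on its right-hand side.
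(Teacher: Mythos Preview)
Your proof is correct and complete. The paper does not give an explicit proof of this corollary, merely prefacing it with ``We can easily derive natural properties from the Hilbertian theory,'' which points to the lifting route via \eqref{eq:9} that you include as an alternative; your primary argument by direct unwinding of Definitions~\ref{def:MC-function} and~\ref{def:measure-conj} is equally valid and is precisely the elementary verification the paper leaves to the reader.
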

\begin{corollary}
  \label{cor:convex}
  Let $\phi: \cP_2(\rmH)\to\Rinf$
  be a proper function. 
  The following properties are equivalent:
  \begin{enumerate}
  \item $\phi$ is totally convex and lower
    semicontinuous;
    \item $\phi\circ\iota$ is convex and lower semicontinuous in $\cH$;
    \item
      $\phi=\phi^{\star\star}$.
          \item $\phi$ 
    is $\msc\cdot\cdot$-convex, i.e. there exists a set
      $G\subset \cP_2(\rmH)\times \R$ such that
      \begin{equation}
        \label{eq:13}
        \phi(\mu)=\sup_{(\nu,a)\in G}\msc \mu\nu-a.
      \end{equation}
  \end{enumerate}
\end{corollary}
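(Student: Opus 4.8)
The plan is to establish the cycle $(2)\Rightarrow(1)\Rightarrow(3)\Rightarrow(4)\Rightarrow(2)$, with the first equivalence being essentially definitional and all the substance funnelled through the Kantorovich--Legendre--Fenchel transform of Definition~\ref{def:measure-conj} and Theorem~\ref{thm:obvious}. The equivalence $(1)\Leftrightarrow(2)$ requires no work beyond unwinding definitions: by Definition~\ref{def:totally-convex}, $\phi$ is totally convex exactly when $\hat\phi=\phi\circ\iota$ is convex on $\cH$, and by Lemma~\ref{le:invariance}(2), $\phi$ is lower semicontinuous iff $\hat\phi$ is; since $\phi$ is assumed proper, $\hat\phi$ is proper as well.

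For $(1)\Rightarrow(3)$ I would first apply Lemma~\ref{le:lower-B} to produce $a\in\R$ and $\nu\in\cP_2(\rmH)$ for which the lower bound \eqref{eq:lower-B} holds; this is precisely the standing hypothesis of Theorem~\ref{thm:obvious}. Claim~3 of that theorem exhibits $\phi^{\star\star}$ as the largest totally convex and lower semicontinuous functional dominated by $\phi$, and since $\phi$ itself lies in that class and dominates itself, we conclude $\phi=\phi^{\star\star}$. For $(3)\Rightarrow(4)$, observe that if $\phi$ is proper and $\phi=\phi^{\star\star}$, then $\phi^\star$ cannot be identically $+\infty$ (otherwise $\phi^{\star\star}\equiv-\infty$), so the set $G:=\{(\nu,\phi^\star(\nu)):\nu\in\cP_2(\rmH),\ \phi^\star(\nu)<+\infty\}$ is nonempty; unwinding \eqref{eq:MLF} gives $\phi(\mu)=\phi^{\star\star}(\mu)=\sup_{\nu\in\cP_2(\rmH)}\msc\mu\nu-\phi^\star(\nu)=\sup_{(\nu,a)\in G}\msc\mu\nu-a$, which is exactly \eqref{eq:13}. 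Finally, for $(4)\Rightarrow(2)$ I would note that each summand $\mu\mapsto\msc\mu\nu-a$ equals $\mathsf k_\nu(\mu)-a$, whose lift $\hat{\mathsf k}_\nu-a$ is convex and, being Lipschitz, continuous on $\cH$ by Proposition~\ref{prop:total-convexity-C}, \eqref{eq:68}, and Lemma~\ref{le:invariance}(4); hence $\hat\phi$ is a supremum of continuous convex functions, so convex and lower semicontinuous, and proper because $\phi$ is. This closes the cycle, and combined with $(1)\Leftrightarrow(2)$ it yields the full equivalence.

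The argument is mostly bookkeeping on top of results already in place, so there is no serious obstacle; the one point that deserves attention is that Theorem~\ref{thm:obvious} is available only under the lower bound \eqref{eq:lower-B}. In the direction $(1)\Rightarrow(3)$ this is supplied by Lemma~\ref{le:lower-B}, whereas in the remaining directions one avoids invoking Theorem~\ref{thm:obvious} on $\phi$ itself and instead works with the explicit supremum representations, for which total convexity and lower semicontinuity follow from Proposition~\ref{prop:total-convexity-C} alone. A secondary subtlety, handled above, is excluding the degenerate case $\phi^\star\equiv+\infty$, which is ruled out precisely by the properness of $\phi$ together with $(3)$.
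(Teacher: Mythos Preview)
Your proof is correct and follows essentially the same route as the paper: both establish $(1)\Leftrightarrow(2)$ directly from Definition~\ref{def:totally-convex} and Lemma~\ref{le:invariance}, use Theorem~\ref{thm:obvious}(3) for $(1)\Rightarrow(3)$, read off $(3)\Rightarrow(4)$ from the definition of the biconjugate, and close the loop via the total convexity and continuity of $\mu\mapsto\msc\mu\nu$ from Proposition~\ref{prop:total-convexity-C}. You are in fact a bit more careful than the paper in explicitly invoking Lemma~\ref{le:lower-B} to justify applying Theorem~\ref{thm:obvious}, and in ruling out $\phi^\star\equiv+\infty$ in the step $(3)\Rightarrow(4)$.
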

\begin{proof}
  The implication 1$\Leftrightarrow$2~is a consequence of the
  definition of total convexity and Lemma \ref{le:invariance}.

  1$\Leftrightarrow$3~is a consequence of Claim 3 of Theorem \ref{thm:obvious}.
  3$\Rightarrow$4 is also immediate by the definition of
  $(\phi^\star)^\star$; the converse implication
  is a consequence of the fact that the map $\mu\mapsto \msc\mu\nu$ is
  totally convex and continuous.
\end{proof}
Thanks to the commutation identity
\eqref{eq:9}, 
it is possible to 
derive interesting calculus rules 
for $\phi^\star$ 
from the corresponding 
formulae for $\hat \phi^\Ast$. We 
present two simple examples  that will turn to be useful in the following.
Let us first introduce the
dilation maps 
and the Moreau-Yosida regularizations of $\phi.$
\begin{definition}
    \label{def:dil-Yos}
    For every $a>0$ we set
    $\frd_a:\cP_2(\rmH)\to
    \cP_2(\rmH)$
    \begin{equation}
        \label{eq:dilation}
        \frd_a[\mu]:=(a \ii)_\sharp \mu,\quad
        \frd_a[\mu](B)=\mu(a^{-1}B)
        \quad\text{for every Borel set }
        B\subset \rmH.
    \end{equation}
    For every $\tau>0$ 
    and every proper 
    l.s.c.~and totally convex function 
    $\phi:\cP_2(\rmH)
    \to\Rinf$
    the $\tau$-Moreau-Yosida regularization of 
    $\phi$ is defined by 
    \begin{equation}
        \label{eq:Yosida}
        \phi_\tau(\mu):=
        \min_{\nu\in \cP_2(\rmH)}
        \frac 1{2\tau}\sfw_2^2(\mu,\nu)+
        \phi(\nu).
    \end{equation}
\end{definition}
Dilations are clearly related to 
scalar multiplication of Lagrangian maps via the formula
\begin{equation}
    \label{eq:identity-dil}
    \mu=\iota(X)\quad\Rightarrow\quad
    \frd_a[\mu]=\iota(a\,X).
\end{equation}
The Yosida regularization \eqref{eq:Yosida}
plays a crucial role in evolution problems via the JKO-Minimizing Movement scheme
\cite{JKO,AGS08}. 
Existence of a minimizer of 
\eqref{eq:Yosida} for an arbitrary geodesically convex functional follows by the results of \cite{Naldi-Savare21}; in the present case, we can invoke the following important commutation property 
\begin{equation}
    \label{eq:identity-Y}
    \widehat{\phi_\tau}=
    \hat\phi_\tau
\end{equation}
where $\hat\phi_\tau$ is
the Yosida regularization of $\hat\phi$
in $\cH$ defined by
\begin{equation}
    \label{eq:Y-in-cH}
    \hat\phi_\tau(X)=
    \min_{Y\in \cH}
    \frac 1{2\tau}\|X-Y\|^2_{\cH}+
    \hat \phi(Y),
\end{equation}
which also shows that $\phi_\tau$ is 
totally convex as well. 
In order to check \eqref{eq:identity-Y}
we first oberve that \eqref{eq:8} yields
\begin{displaymath}
    \mu=\iota(X)\quad\Rightarrow\quad
    \hat\phi_\tau(X)\ge 
    \phi_\tau(\mu);
\end{displaymath}
the converse inequality follows by the 
law invariance of $\hat \phi$ and 
\eqref{eq:66}.
\begin{corollary}
\label{cor:calculus-tools}
    Let 
    $\phi:\cP_2(\rmH)
    \to\Rinf$ be a proper 
    l.s.c.~and totally convex function.
    For every $a,b>0$ 
    we have
    \begin{equation}
        \label{eq:conj1}
        (a\phi)^\star=
        a\phi^\star\circ\frd_{a^{-1}},
    \end{equation}
    \begin{equation}
        \label{eq:conj2}
        \Big(a\phi+\frac b2 \sfm_2^2
        \Big)^\star=
        a(\phi^\star)_{b/a}\circ \frd_{a^{-1}}
    \end{equation}
\end{corollary}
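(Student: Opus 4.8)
The plan is to transport both identities to the Hilbert space $\cH$ and read them off from the ordinary Legendre--Fenchel calculus there, exactly as anticipated in the paragraph preceding the statement. The bridge is the commutation identity \eqref{eq:9}, $(\phi^\star)\circ\iota=(\phi\circ\iota)^\Ast$, which replaces the Kantorovich--Legendre--Fenchel transform of $\phi$ by the usual conjugate of $\Phi:=\hat\phi=\phi\circ\iota$, a proper, convex, lower semicontinuous function on $\cH$. Besides \eqref{eq:9} I shall use: that $\widehat{\sfm_2^2}=\|\cdot\|_\cH^2$, by \eqref{eq:8}; that $\iota(a^{-1}X)=\frd_{a^{-1}}[\iota(X)]$ for all $X\in\cH$, by \eqref{eq:identity-dil}; the commutation between Moreau--Yosida regularizations \eqref{eq:identity-Y}, valid for every proper lower semicontinuous totally convex function; and the surjectivity of $\iota\colon\cH\to\cP_2(\rmH)$, which is what promotes an equality ``after lifting'' to an equality on all of $\cP_2(\rmH)$.

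Two elementary facts about the Legendre--Fenchel transform in $\cH$ are needed. First, for $c>0$ and any proper $\Lambda\colon\cH\to\Rinf$, directly from \eqref{eq:11}, $(c\Lambda)^\Ast(Y)=c\,\Lambda^\Ast(c^{-1}Y)$. Second, for proper convex lower semicontinuous $\Lambda$ and $\tau>0$, the Moreau--Yosida regularization $\Lambda_\tau=\min_Z\{\tfrac1{2\tau}\|\cdot-Z\|_\cH^2+\Lambda(Z)\}$ is the infimal convolution of $\Lambda$ with $\tfrac1{2\tau}\|\cdot\|_\cH^2$, hence $(\Lambda_\tau)^\Ast=\Lambda^\Ast+\tfrac\tau2\|\cdot\|_\cH^2$, and conjugating once more (Fenchel--Moreau) $\bigl(\Lambda^\Ast+\tfrac\tau2\|\cdot\|_\cH^2\bigr)^\Ast=\Lambda_\tau$. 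For \eqref{eq:conj1} the second fact is not even necessary and one can argue directly on measures: the scaling identity $\msc{\frd_{a^{-1}}\nu}{\mu}=a^{-1}\msc\nu\mu$ (obtained by pushing a coupling of $\frd_{a^{-1}}\nu$ and $\mu$ forward by $(a\ii)\times\ii$, a bijection of the corresponding coupling sets) plugged into \eqref{eq:MLF} gives $(a\phi)^\star(\nu)=\sup_\mu\bigl(\msc\nu\mu-a\phi(\mu)\bigr)=a\sup_\mu\bigl(\msc{\frd_{a^{-1}}\nu}{\mu}-\phi(\mu)\bigr)=a\,\phi^\star(\frd_{a^{-1}}\nu)$, which is \eqref{eq:conj1}; equivalently, lift and use the first Hilbertian fact with $\Lambda=\Phi$.

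For \eqref{eq:conj2} set $\psi:=a\phi+\tfrac b2\sfm_2^2$. It is proper, lower semicontinuous, and totally convex (its lifting $\hat\psi=a\Phi+\tfrac b2\|\cdot\|_\cH^2$ is convex); by Lemma~\ref{le:lower-B} it satisfies \eqref{eq:lower-B}, so Theorem~\ref{thm:obvious} applies and \eqref{eq:9} gives $\psi^\star\circ\iota=\hat\psi^\Ast=\bigl(a\Phi+\tfrac b2\|\cdot\|_\cH^2\bigr)^\Ast$. Now apply the first Hilbertian fact with $c=a$ and $\Lambda=\Phi+\tfrac b{2a}\|\cdot\|_\cH^2$ to get $\hat\psi^\Ast(Y)=a\bigl(\Phi+\tfrac b{2a}\|\cdot\|_\cH^2\bigr)^\Ast(a^{-1}Y)$, and the second fact with $\Lambda=\Phi^\Ast$ and $\tau=b/a$ (using $(\Phi^\Ast)^\Ast=\Phi$) to get $\bigl(\Phi+\tfrac b{2a}\|\cdot\|_\cH^2\bigr)^\Ast=(\Phi^\Ast)_{b/a}$. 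Hence $\psi^\star(\iota(X))=a\,(\Phi^\Ast)_{b/a}(a^{-1}X)$ for every $X\in\cH$. Finally $\Phi^\Ast=\widehat{\phi^\star}$ by \eqref{eq:9}, and since $\phi^\star$ is proper, lower semicontinuous and totally convex (Theorem~\ref{thm:obvious}, Claim~1), the commutation \eqref{eq:identity-Y} applied to $\phi^\star$ gives $(\Phi^\Ast)_{b/a}=\widehat{(\phi^\star)_{b/a}}=(\phi^\star)_{b/a}\circ\iota$; combined with $\iota(a^{-1}X)=\frd_{a^{-1}}[\iota(X)]$ this yields $\psi^\star(\iota(X))=a\,(\phi^\star)_{b/a}\bigl(\frd_{a^{-1}}[\iota(X)]\bigr)$, and surjectivity of $\iota$ completes \eqref{eq:conj2}.

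The one genuinely delicate point — everything else being routine once Theorem~\ref{thm:obvious}, \eqref{eq:identity-Y} and \eqref{eq:identity-dil} are available — is reconciling the two parameters appearing in \eqref{eq:conj2}: the dilation factor $a^{-1}$ and the Moreau--Yosida time $b/a$. The clean way to pin these down is to verify the purely Hilbertian identity $\bigl((a\Phi)^\Ast\bigr)_b(Y)=a\,(\Phi^\Ast)_{b/a}(a^{-1}Y)$ by the change of variable $Z=aW$ inside the infimum defining the left-hand side; once this is checked, \eqref{eq:conj2} is precisely this identity read through \eqref{eq:9} and \eqref{eq:identity-Y}. Should one prefer to avoid the lifting in \eqref{eq:conj2} as well, the same computation runs on measures using the scaling rules $\msc\nu{\frd_a\mu}=a\,\msc\nu\mu$ and $\sfw_2(\frd_a\mu,\frd_a\nu)=a\,\sfw_2(\mu,\nu)$ together with the (already needed) commutation \eqref{eq:identity-Y}.
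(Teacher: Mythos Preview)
Your proof is correct and follows essentially the same route as the paper: lift to $\cH$ via \eqref{eq:9}, apply the standard Legendre--Fenchel identities $(a\Phi)^\Ast(Y)=a\Phi^\Ast(a^{-1}Y)$ and $\bigl(\Phi+\tfrac\tau2\|\cdot\|_\cH^2\bigr)^\Ast=(\Phi^\Ast)_\tau$, and read the result back on $\cP_2(\rmH)$ through \eqref{eq:identity-dil} and \eqref{eq:identity-Y}. Your write-up is in fact more complete than the paper's, which simply records the Hilbertian formula \eqref{eq:useful-for-dopo} without spelling out the descent step via \eqref{eq:identity-Y} or the surjectivity of $\iota$; your direct measure-level argument for \eqref{eq:conj1} is a nice bonus.
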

\begin{proof}
    \eqref{eq:conj1} follows from the
    corresponding identity for 
    $\hat\phi$:
    \begin{displaymath}
        (a\hat\phi)^\Ast(X)=
        a\hat\phi^\Ast(a^{-1}X).
    \end{displaymath}
    \eqref{eq:conj2} follows by the 
    fact that
    \begin{displaymath}
        \Big(a\phi
        + \frac b2 \sfm_2^2
        \Big)^{\wedge}=
       a\hat\phi+ \frac b2 \|\cdot\|_\cH^2
    \end{displaymath}
    and by the corresponding formula
    for the Legendre transform
    of a quadratic perturbation
    of $\hat\phi$ in $\cH$ (which is a
    particular case of infimal convolution):
    \begin{equation}
    \label{eq:useful-for-dopo}
        \Big(a\hat\phi+
        \frac b2 \tau\|\cdot\|_\cH^2\Big)^\Ast=
        a(\hat\phi^\Ast)_{b/a}(\cdot/a).
    \end{equation}
\end{proof}
\subsection{\texorpdfstring{$\sfc$}{}-concave functions and their \texorpdfstring{$\sfc$}{}-super
  differentials}
  \label{subsec:c-concave}
As in the case of functions defined in a Hilbert space,
we can connect the notion of totally convex functionals and
Kantorovich-Legendre-Fenchel transform
in $\rW2$ 
with the corresponding notion of $\sfc$-concavity and $\sfc$-transform
for the cost $\sfc:=\frac 12\sfw_2^2$.

Let us first recall the main definition for a continuous
and symmetric cost function $\sfc:\cP_2(\rmH)\times \cP_2(\rmH)\to \R$:
in our case we will mainly use the choice $\sfc:=\frac 12\sfw_2^2$.
\begin{definition}[Concave $\sfc$-transform, $\sfc$-concavity and
  $\sfc$-superdifferential]
  \label{def:c-concave}
  Let $f:\cP_2(\rmH)\to \R\cup\{-\infty\}$ be a proper function.
  Its concave $\sfc$-transform
  $f^\sfc:\cP_2(\rmH)\to \R\cup\{-\infty\}$ is defined by
  \begin{equation}
    \label{eq:34-1}
    f^\sfc(\nu):=\inf_{\mu\in \cP_2(\rmH)}\sfc(\mu,\nu)-f(\mu).
  \end{equation}
  $f$ is $\sfc$-concave if
\begin{equation}
  \label{eq:33}
  \exists\,A\subset \cP_2(\rmH)\times \R:\quad
  f(\mu)=\inf_{(\nu,a)\in A}\sfc(\mu,\nu)-a.
\end{equation}
If $\mu,\nu\in \cP_2(\rmH)$ with $f(\mu)\in \R$, we say
that $\nu$ belongs to the $\sfc$-superdifferential of $f$ at $\mu$,
denoted by
$\partial_{\sfc}^+f(\mu)$, if
\begin{equation}
  \label{eq:35}
  f(\mu')-f(\mu)\le \sfc(\mu',\nu)-\sfc(\mu,\nu)\quad\text{for every
  }\mu'
  \in \cP_2(\rmH).
\end{equation}
\end{definition}
We collect a series of well known properties of the above notion.
\begin{theorem}
  \label{thm:c-conc}
  Let $f:\cP_2(\rmH)\to \R\cup\{-\infty\}$ be a proper function.
  \begin{enumerate}
  \item $f$ is $\sfc$-concave iff $f=g^\sfc$ for some function
    $g:\cP_2(\rmH)\to\R\cup\{-\infty\}$,
  \item $f$ is $\sfc$-concave if and only if
    $f=f^{\sfc\sfc}=(f^\sfc)^\sfc$.
  \item for every $\mu,\nu\in \cP_2(\rmH)$, $f(\mu)+f^{\sfc}(\nu)\le \sfc(\mu,\nu)$
  \item $\nu\in \partial_\sfc^{+}f(\mu)$ if and only if
    $f(\mu)+f^{\sfc}(\nu)= \sfc(\mu,\nu)$.
  \end{enumerate}
\end{theorem}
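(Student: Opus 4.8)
The plan is to treat Theorem~\ref{thm:c-conc} as a purely formal exercise in the calculus of $\sfc$-transforms, entirely parallel to the classical Legendre--Fenchel theory and to the Kantorovich--Legendre--Fenchel transform of Section~\ref{subsec:KLF}; the only point needing care is the bookkeeping of $-\infty$ values, which is harmless here because $\sfc=\frac12\sfw_2^2$ is finite and continuous on $\cP_2(\rmH)\times\cP_2(\rmH)$. I would establish the four items in the order (3), (4), (1), (2).

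First, claim~(3) is immediate from the definition~\eqref{eq:34-1}: for any $\mu$ with $f(\mu)\in\R$ one has $f^\sfc(\nu)\le\sfc(\mu,\nu)-f(\mu)$ by using $\mu$ itself as a competitor, and properness of $f$ guarantees $f^\sfc(\nu)<+\infty$, so no indeterminate form occurs; the case $f(\mu)=-\infty$ is trivial. Claim~(4) is then a one-line rearrangement: the defining inequality of $\nu\in\partial_\sfc^+f(\mu)$ can be rewritten as $\sfc(\mu,\nu)-f(\mu)\le\sfc(\mu',\nu)-f(\mu')$ for all $\mu'$, and passing to the infimum over $\mu'$ gives $\sfc(\mu,\nu)-f(\mu)\le f^\sfc(\nu)$, which together with~(3) forces equality $f(\mu)+f^\sfc(\nu)=\sfc(\mu,\nu)$; conversely, from this equality and~(3) applied at an arbitrary $\mu'$ one recovers $f(\mu')-f(\mu)\le\sfc(\mu',\nu)-\sfc(\mu,\nu)$, i.e.~$\nu\in\partial_\sfc^+f(\mu)$.

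For~(1), the nontrivial (``only if'') direction uses the standard device of collapsing the real parameter: given $f(\mu)=\inf_{(\nu,a)\in A}\sfc(\mu,\nu)-a$, set $g(\nu):=\sup\{a:(\nu,a)\in A\}$ with $\sup\emptyset=-\infty$; properness of $f$ forces $g<+\infty$ everywhere, and a routine interchange of infima yields $f=g^\sfc$. The ``if'' direction is trivial, taking $A=\{(\nu,g(\nu)):g(\nu)\in\R\}$. For~(2), I would first record the two abstract facts valid for \emph{any} $h:\cP_2(\rmH)\to[-\infty,+\infty]$: the map $h\mapsto h^\sfc$ is order-reversing, and $h^{\sfc\sfc}\ge h$ (the latter by evaluating the inner supremum in $h^{\sfc\sfc}(\mu)=\inf_\nu\sup_{\mu'}[\sfc(\mu,\nu)-\sfc(\mu',\nu)+h(\mu')]$ at the diagonal point $\mu'=\mu$). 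Applying these to $h=g$ and to $h=g^\sfc$ gives the three-fold identity $g^{\sfc\sfc\sfc}=g^\sfc$; combining with~(1), whenever $f$ is $\sfc$-concave we have $f=g^\sfc$ for some $g$, hence $f^{\sfc\sfc}=g^{\sfc\sfc\sfc}=g^\sfc=f$, and the converse is immediate since $f=f^{\sfc\sfc}=(f^\sfc)^\sfc$ exhibits $f$ as a $\sfc$-transform, hence $\sfc$-concave by~(1).

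I expect \emph{no genuinely hard step}: the statement is ``well known'' precisely because it is formal. The only real care-point, and the thing I would double-check most carefully, is that every infimum/supremum manipulation stays away from $\infty-\infty$; this is exactly where properness of $f$ and finiteness of $\sfc$ on all of $\cP_2(\rmH)\times\cP_2(\rmH)$ enter, and where one should verify along the way that $f^\sfc$ (and hence $f^{\sfc\sfc}$) is again proper, so that $\partial_\sfc^+$ and the biconjugate are meaningful.
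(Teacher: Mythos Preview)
Your proof is correct. The paper does not actually prove Theorem~\ref{thm:c-conc}: it is introduced with the sentence ``We collect a series of well known properties of the above notion'' and stated without argument, so there is no paper-side proof to compare against. Your self-contained verification is exactly the standard one, and each step (including the handling of $-\infty$ values via properness of $f$ and finiteness of $\sfc$) is sound.
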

We could introduce the analogous concepts of $\sfc$-convexity, $\sfc$-convex transform and
$\sfc$-subdifferential: since in this context we will only use the
pseudo-scalar cost $\msc\cdot\cdot$,
$\msc\cdot\cdot$-convexity is equivalent to total convexity by
Corollary \eqref{cor:convex}, so that 
we will keep the notation $\phi^\star$
of Definition \ref{def:measure-conj} for the $\msc\cdot\cdot$-conjugate.
We will just introduce the notion
of $\msc\cdot\cdot$-subdifferential: for every $\mu\in \cP_2(\rmH)$ with $\phi(\mu)\in \R$, we also set 
\begin{equation}
  \label{eq:34}
  \partial^-\phi(\mu):=\Big\{
  \nu\in \cP_2(\rmH):
  \phi(\mu')-\phi(\mu)\ge
  \msc{\mu'}\nu-\msc\mu\nu
  \quad\text{for every }\mu'\in \cP_2(\rmH)\Big\}.
\end{equation}
As for property 4 of Theorem \ref{thm:c-conc} above, we easily get
\begin{equation}
  \label{eq:36}
  \nu\in \partial^-\phi(\mu)\quad
  \Leftrightarrow
  \quad
  \msc\mu\nu=\phi(\mu)+\phi^\star(\nu).
\end{equation}
As in the case of the $L^2$-Wasserstein metric in $\rmH$, we have the
following simple but crucial properties.
\begin{corollary}
  \label{cor:semiconc}
 Let us consider the cost function $\sfc:=\frac 12\sfw_2^2$, let
  $\phi,\psi:\cP_2(\rmH)\to\Rinf$
  and $\cU$, $\cV$ be defined as
  \begin{equation}
    \label{eq:37}
    \cU:=\frac 12\sfm_2^2-\phi,\quad
    \cV:=\frac 12\sfm_2^2-\psi.
  \end{equation}
  The following hold true:
  \begin{enumerate}
  \item
    \begin{equation}
    \label{eq:16}
    \psi=\phi^\star\quad\Leftrightarrow\quad 
    \cV=\cU^\sfc
  \end{equation}
\item
  $\phi$ is totally convex and lower semicontinuous
  if and only if
  $\cU$ is $\sfc$-concave (and thus upper semicontinuous).
\item For every $\mu,\nu\in \cP_2(\rmH)$ with $\phi(\mu)\in \R$
  (and thus $\cU(\mu)\in \R$ as well)
  \begin{equation}
    \label{eq:38}
    \nu\in \partial^-\phi(\mu)
    \quad\Leftrightarrow\quad
    \nu\in \partial_\sfc^+\cU(\mu).
  \end{equation}
  \end{enumerate}
\end{corollary}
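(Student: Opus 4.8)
The whole statement rests on the elementary decomposition $\sfc(\mu,\nu)=\tfrac12\sfw_2^2(\mu,\nu)=\tfrac12\sfm_2^2(\mu)+\tfrac12\sfm_2^2(\nu)-\msc\mu\nu$ provided by \eqref{eq:W-decomp}, together with the fact that $\sfm_2^2$ is everywhere finite on $\cP_2(\rmH)$, so that adding or subtracting $\tfrac12\sfm_2^2$ is an order-preserving involution on functions $\cP_2(\rmH)\to\Rinf$ that never creates an indeterminate form. The plan for Claim~1 is to compute the concave $\sfc$-transform of $\cU=\tfrac12\sfm_2^2-\phi$ directly. For every $\nu\in \cP_2(\rmH)$ one has
\begin{align*}
  \cU^\sfc(\nu)
  &=\inf_{\mu\in \cP_2(\rmH)}\big(\sfc(\mu,\nu)-\cU(\mu)\big)
   =\inf_{\mu\in \cP_2(\rmH)}\big(\tfrac12\sfm_2^2(\nu)-\msc\mu\nu+\phi(\mu)\big)\\
  &=\tfrac12\sfm_2^2(\nu)-\sup_{\mu\in \cP_2(\rmH)}\big(\msc\nu\mu-\phi(\mu)\big)
   =\tfrac12\sfm_2^2(\nu)-\phi^\star(\nu),
\end{align*}
so $\cU^\sfc=\tfrac12\sfm_2^2-\phi^\star$. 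Consequently $\cV=\cU^\sfc$ holds if and only if $\tfrac12\sfm_2^2-\psi=\tfrac12\sfm_2^2-\phi^\star$, i.e.\ $\psi=\phi^\star$, which is exactly Claim~1.

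For Claim~2 I would iterate the identity just obtained. Applying it with $\phi^\star$ in place of $\phi$ yields $\cU^{\sfc\sfc}=\tfrac12\sfm_2^2-\phi^{\star\star}$, and hence $\cU=\cU^{\sfc\sfc}$ is equivalent to $\phi=\phi^{\star\star}$ (again because $\tfrac12\sfm_2^2$ is finite and may be cancelled). By Theorem~\ref{thm:c-conc}(2) the first identity characterises $\sfc$-concavity of $\cU$, and by Corollary~\ref{cor:convex} the second characterises total convexity and lower semicontinuity of $\phi$; the upper semicontinuity of a $\sfc$-concave function is automatic, since such a function is an infimum of the continuous functions $\mu\mapsto\sfc(\mu,\nu)-a$. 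The one point requiring care here is that the iteration is only legitimate when $\phi^\star$ (equivalently $\cU^\sfc$) is proper: this is granted by the linear lower bound of Lemma~\ref{le:lower-B} once $\phi$ is totally convex and l.s.c., and, in the converse direction, it follows directly from $\sfc$-concavity of $\cU$, which forces $\phi(\mu)\ge\msc\mu{\nu_0}-a$ for a suitable $\nu_0\in\cP_2(\rmH)$ and $a\in\R$.

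Claim~3 combines the two (super/sub)differential characterisations already at hand. On the one hand, \eqref{eq:36} gives $\nu\in\partial^-\phi(\mu)$ iff $\msc\mu\nu=\phi(\mu)+\phi^\star(\nu)$. On the other hand, since $\phi(\mu)\in\R$ forces $\cU(\mu)\in\R$, Theorem~\ref{thm:c-conc}(4) gives $\nu\in\partial_\sfc^+\cU(\mu)$ iff $\cU(\mu)+\cU^\sfc(\nu)=\sfc(\mu,\nu)$. Substituting $\cU^\sfc=\tfrac12\sfm_2^2-\phi^\star$ from Claim~1 and the decomposition of $\sfc$, the latter becomes $\tfrac12\sfm_2^2(\mu)-\phi(\mu)+\tfrac12\sfm_2^2(\nu)-\phi^\star(\nu)=\tfrac12\sfm_2^2(\mu)+\tfrac12\sfm_2^2(\nu)-\msc\mu\nu$, i.e.\ once more $\msc\mu\nu=\phi(\mu)+\phi^\star(\nu)$. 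The two conditions coincide, so $\partial^-\phi(\mu)=\partial_\sfc^+\cU(\mu)$.

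There is essentially no substantial obstacle here: every step is either the arithmetic of \eqref{eq:W-decomp} or a direct quotation of Theorem~\ref{thm:c-conc}, Corollary~\ref{cor:convex} and \eqref{eq:36}. The only genuinely delicate bookkeeping is making sure the conjugate functions remain proper (never identically $+\infty$ or $-\infty$) before iterating the transforms in Claim~2, which is precisely the role played by the lower bound of Lemma~\ref{le:lower-B}.
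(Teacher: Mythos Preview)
Your proof is correct and follows essentially the same approach as the paper: the direct computation of $\cU^\sfc$ for Claim~1, the iteration combined with Theorem~\ref{thm:c-conc}(2) and Corollary~\ref{cor:convex} for Claim~2, and the reduction to the common identity $\msc\mu\nu=\phi(\mu)+\phi^\star(\nu)$ via \eqref{eq:36} and Theorem~\ref{thm:c-conc}(4) for Claim~3. Your explicit discussion of properness in Claim~2 is a welcome bit of care that the paper leaves implicit.
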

\begin{proof}
  We repeatedly use the identity in \eqref{eq:W-decomp}.
  
  Claim 1.
  For
  every $\nu\in \cP_2(\rmH)$
  \eqref{eq:37} yields
  \begin{align*}
    \cU^\sfc(\nu)
    &=
      \inf_{\mu\in \cP_2(\rmH)} \frac 12\sfw_2^2(\mu,\nu)-\cU(\mu)
      =
     \inf_{\mu\in \cP_2(\rmH)} \frac 12\sfw_2^2(\mu,\nu)-\Big(\sfm_2^2(\mu)- \phi(\mu)\Big)
     \\& =
    \inf_{\mu\in \cP_2(\rmH)}
    \frac 12\sfm_2^2(\nu)-\msc\mu\nu+\phi(\mu)
    =
    \frac 12\sfm_2^2(\nu)+  \inf_{\mu\in
    \cP_2(\rmH)}\Big(-\msc\mu\nu+\phi(\mu)\Big)
    \\&=
    \frac 12\sfm_2^2(\nu)- \sup_{\mu\in
    \cP_2(\rmH)}\Big(\msc\mu\nu-\phi(\mu)\Big)
    \\&=
    \frac 12\sfm_2^2(\nu)-\phi^\star(\nu).
  \end{align*}
  The second claim immediately follows
  by the first one, Claim 3 of Corollary \ref{cor:convex} and Claim 2
  of Theorem \ref{thm:c-conc}.

  Claim 3 then follows by \eqref{eq:36} and
  Claim 4 of Theorem \ref{thm:c-conc}.
\end{proof}
In the next section we will discuss a more refined representation of
$\partial^-\phi$, where we
replace $\cP_2(\rmH)\times \cP_2(\rmH)$ by $\cP_2(\rmH\times \rmH)$.

\section{Multivalued probability 
  fields}
\label{sec:MPVF}
\newcommand{\brmF}{\boldsymbol{\rmF}}
\newcommand{\brmP}{\boldsymbol{\rmP}}

We recall the notion of multivalued probability vector field (MPVF)
$\brmF$, introduced in a different context by
\cite{Piccoli19} and in full generality by \cite{CSS1}:
they are a natural extension  of the usual notion of vector field for studying
the evolution of probability measures.

The simplest way to define a MPVF $\brmF$ is just to consider it as a nonempty
subset of $\cP_2(\rmH\times \rmH)$.
We set $D(\brmF):=\Big\{\pi^1_\sharp\ggamma:\ggamma\in \brmF\Big\}$
and for every $\mu\in D(\brmF)$ we consider the sections $\brmF[\mu]:=\Big\{\ggamma:
\pi^1_\sharp\ggamma=\mu\Big\}$.
Sections define a multivaled map $\brmF[\cdot]:\rW2\to
2^{\cP_2(\rmH\times \rmH)}$ with the property
\begin{displaymath}
  \ggamma\in \brmF[\mu]\quad \Rightarrow\quad
  \pi^1_\sharp\ggamma=\mu.
\end{displaymath}
When $\brmF[\cdot]$ is single valued we say that $\brmF$ is a
\emph{probability vector field (PVF).} A particular case, that play a
crucial role, is given by
\emph{deterministic} PVFs:
they are characterized by the property
\begin{equation}
  \label{eq:70}
  \brmF\subset \cPdet2(\rmH^2)\quad
  \text{so that }
  \brmF[\mu]=(\ii\times \ff)_\sharp \mu
  \quad\text{for a (unique) $\rmH$-valued map }\ff\in L^2(\rmH,\mu;\rmH),
\end{equation}
where we adopted the obvious notation
$\cPdet2(\rmH^2):=\cP_2(\rmH^2)\cap \cPdet{}(\rmH^2)$.
If $\brmF$ is a deterministic PVF, for every $\mu\in D(\brmF)$ we can
thus define
a nonlocal vectorfield 
\begin{equation}
  \label{eq:72}
  \ff[\mu]=\ff(\cdot,\mu)\in L^2(\rmH,\mu;\rmH)\quad
  \text{such that 
  \eqref{eq:70} holds;}
\end{equation}
notice that
\begin{equation}
  \label{eq:73}
  \pi^2_\sharp \brmF[\mu]=
  \ff[\mu]_\sharp\mu.
\end{equation}
Every MPVF $\brmF$
admits a Lagrangian representation (or lifting)
$\LF\subset \cH\times \cH$ defined by
\begin{equation}
  \label{eq:19}
  (X,Y)\in \LF\quad\Leftrightarrow\quad
  \iota^2(X,Y)=(X,Y)_\sharp\P\in \brmF.
\end{equation}
It is immediate to check that $\LF$ is law invariant
\begin{equation}
  \label{eq:74}
  (X,Y)\in \LF, \ \iota^2(X',Y')=\iota^2(X,Y)\quad\Rightarrow\quad
  (X',Y')\in \LF  
\end{equation}
and thus invariant with respect to the action of measure-preserving
isomorphisms:
\begin{equation}
  \label{eq:75}
  (X,Y)\in \LF\quad\Rightarrow\quad
  (\sfg^*X,\sfg^*Y)\in \LF\quad\text{for every }\sfg\in \rmG(\OOmega).
\end{equation}
Conversely, if a subset $\bB\subset \cH\times \cH$ is
invariant by m.p.i.~and \emph{closed}, then it is also law invariant
and it is the Lagrangian representation of a unique MPVF $\brmF$
\cite{CSS2}.
\subsection{Totally monotone and cyclically monotone MPVF}
\label{susbsec:totally-monotone}
Inspired by the the definition of totally monotone MPVF (introduced in
\cite{CSS22}) we 
introduce here the 
corresponding notion of totally cyclically monotone MPVF. We will adopt the
notation
\begin{equation}
  \label{eq:31}
  \ppi^n:(\rmH^2)^N\to\rmH^2,\quad
  \ppi^n((x_1,y_1),(x_2,y_2),\cdots,(x_N,y_N))=(x_n,y_n),\quad
  n=1,\cdots,N.
\end{equation}
\begin{definition}
[Totally monotone and cyclically monotone MPVF]
  \label{def:monotone}
  A multivalued probability vector field (MPVF) $\brmF\subset \cP_2(\rmH^2)$
  is totally monotone if
  for every $\ttheta\in \cP_2(\rmH^2\times \rmH^2)$
  with $\ppi^{1}_\sharp\ttheta\in \brmF,\ \ppi^{2}_\sharp\ttheta\in
  \brmF$ we have
  \begin{equation}
    \label{eq:18bis}
    \int \la y_2-y_1,x_2-x_1\ra\,\d\ttheta(x_1,y_1;x_2,y_2)\ge 0.
  \end{equation}
  $\brmF$ is a maximal totally monotone MPVF if
  it is totally monotone and
  the inclusion $\brmF\subset \boldsymbol{\rmG}$, $\boldsymbol{\rmG}$
  totally monotone, yields $\brmF=\boldsymbol{\rmG}$.

  $\brmF$
  is totally cyclically monotone if
  for every $N\in \N$, $\ttheta\in \cP_2((\rmH^2)^N)$,
  with $\ppi^n_\sharp\ttheta\in \brmF$, $n=1,\cdots,N$,
  and $\sigma\in \rmS_N=\operatorname{Sym}(\{1,\cdots,N\})$ permutation
  \begin{equation}
    \label{eq:18}
    \sum_{n=1}^N\int \la y_n,x_n-x_{\sigma(n)}\ra\,\d\ttheta
    (x_1,y_1;x_2,y_2;\cdots;x_N,y_N)\ge0.
  \end{equation}
\end{definition}
In the case of a deterministic PVF
induced by a nonlocal vector field
$\ff$
as in \eqref{eq:70},\eqref{eq:72}, the total monotonicity condition
\eqref{eq:18} reads as
\begin{equation}
  \label{eq:77}
  \int \la
  \ff(x_2,\mu_2)-\ff(x_1,\mu_1),x_2-x_1\ra\,\d\mmu(x_1,x_2)\ge 0
  \quad\text{for every }
  \mu_i\in D(\brmF),\ \mmu\in \Gamma(\mu_1,\mu_2);
\end{equation}
Similarly, \eqref{eq:18bis} reads as
 \begin{equation}
    \label{eq:77bis}
    \sum_{n=1}^N\int \la
    \ff(x_n,\mu_n),x_n-x_{\sigma(n)}\ra\,\d\mmu\ge0
    \quad\text{for every }
      \mu_i\in D(\brmF),\ \mmu\in \Gamma(\mu_1,\mu_2,\cdots,\mu_N).
  \end{equation}
Let $\LF$ be the Lagrangian representation of $\brmF$; it is easy to check that
\begin{equation}
  \label{eq:20}
  \begin{aligned}
    \brmF\text{ is totally monotone}\quad&\Leftrightarrow\quad \LF\text{ is
      monotone in }\cH\times \cH,
  \end{aligned}
\end{equation}
and, according to the main result of \cite{CSS2}, we also have
\begin{equation}
  \label{eq:21-1}
    \brmF\text{ is maximal totally monotone}\quad\Leftrightarrow\quad \LF\text{ is
     maximal  monotone in }\cH\times \cH.
 \end{equation}
A maximal totally monotone MPVF has many important properties, see
\cite{CSS22}.
We quote here the most relevant ones for our discussion; the first one involves the 
(Borel) barycentric map 
$\bb:\rmH\times \cP_2(\rmH\times \rmH)\to\rmH$
defined by using the universal disintegration kernel \eqref{eq:61}:
\begin{equation}
    \label{eq:barycentric}
    \bb(x,\ggamma):=
    \int y\,\d(\cK(x,\ggamma)(y))=
    \int y\,\d\kappa_x(y),\quad
\end{equation}
where $\kappa_x=\cK(x,\ggamma) $
      is the disintegration of $\ggamma$ w.r.t.~its first marginal.
\begin{proposition}
  \label{prop:tot-max-mon}
  Let $\brmF$ be a maximal totally monotone MPVF.
  \begin{enumerate}
  \item
    If $\ggamma\in \brmF$ and 
    $\mu=\pi^1_\sharp\ggamma\in D(\brmF)$,
    then the barycentric projection 
    $\bb(\cdot,\ggamma)$ of $\ggamma$
    satisfies
    \begin{equation}
      \label{eq:64}
      (\ii\times \bb(\cdot,\ggamma))_\sharp\mu\in \brmF[\mu].
    \end{equation}
    \item There exists a unique minimal section $\brmF^\circ\subset
      \brmF\cap
      \cPdet2(\rmH\times\rmH)$ 
      and a Borel nonlocal vector field
      $\ff^\circ=
      \bb(\cdot,\brmF^\circ)
      :\rmH\times D(\brmF)
      \to \rmH$ such that
      for every $\mu\in D(\brmF)$
      we have
      $\brmF^\circ[\mu]=(\ii\times \ff^\circ(\cdot,\mu))_\sharp\mu$ and 
      \begin{equation}
        \label{eq:79}
        \int_\rmH \big|\ff^\circ(x,\mu)\big|^2\,\d\mu(x)
        \le
        \int_\rmH \big|\bb(x,\ggamma)\big|^2\,\d\mu(x)
        \le
        \int_\rmH \big|y\big|^2\,\d\ggamma(x,y)
        \quad\text{for every }\ggamma\in \brmF[\mu].
      \end{equation}
    \item If $\LF$ is the Lagrangian representation of $\brmF$, then
      $D(\LF)=(\iota_\sharp)^{-1}(D(\brmF))$ and the minimal section
      $\LF{}^\circ$
      of $\LF$
      satisfies
      \begin{equation}
        \label{eq:80}
        Y=\LF{}^\circ(X)\quad\Leftrightarrow\quad
        Y(\oomega)=\ff^\circ(X(\oomega),\mu)=
        \ff^\circ[\mu](X(\oomega))\quad
        \text{for $\P$-a.e.~$\oomega$}, \ \mu=\iota(X).
      \end{equation}
  \end{enumerate}
\end{proposition}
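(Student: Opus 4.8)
The plan is to push the whole statement to the Hilbert space $\cH$ by means of the Lagrangian representation $\LF$, using the equivalence \eqref{eq:21-1} which makes $\LF$ a maximal monotone operator on $\cH$. Once this is done I would invoke the classical theory of maximal monotone operators: for every $X\in D(\LF)$ the value $\LF(X)$ is a nonempty closed convex subset of $\cH$, so it has a unique element of minimal norm $\LF{}^\circ(X)$ (the nearest point of $\LF(X)$ to $0$); the Yosida approximations $\LF_\lambda$ are everywhere defined and Lipschitz and converge strongly to $\LF{}^\circ(X)$ on $D(\LF)$ as $\lambda\downarrow0$; and the graph of $\LF$ is closed in the strong$\times$weak topology of $\cH\times\cH$. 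The only genuinely new input I need, and it is precisely Claim 1, is that $\LF$ is \emph{stable under conditioning}: if $(X,Y)\in\LF$ then $(X,\E[Y\mid\sigma(X)])\in\LF$, where $\sigma(X)\subseteq\cF_\OOmega$ is the $\sigma$-algebra generated by $X$ and $\E[\cdot\mid\sigma(X)]$ is the (Bochner) conditional expectation in $\cH$.

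First I would prove Claim 1 directly at the level of $\brmF$, using only its maximality as a totally monotone MPVF. Writing $\ggamma=\mu\otimes\kappa$ and $\ggamma^b:=(\ii\times\bb(\cdot,\ggamma))_\sharp\mu$ (so $\bb(x,\ggamma)=\int y\,\d\kappa_x(y)$, cf.\ \eqref{eq:barycentric}), it suffices by maximality to check that $\brmF\cup\{\ggamma^b\}$ is still totally monotone, i.e.\ that \eqref{eq:18bis} holds for every $\ttheta\in\cP_2(\rmH^2\times\rmH^2)$ with $\ppi^1_\sharp\ttheta,\ppi^2_\sharp\ttheta\in\brmF\cup\{\ggamma^b\}$, at least one of them being $\ggamma^b$. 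Given such a $\ttheta$, I would disintegrate it over the first-coordinate variable(s) $x_i$ of the slot(s) equal to $\ggamma^b$; there $y_i=\bb(x_i,\ggamma)$ is a Borel function of $x_i$, so one may replace $\delta_{\bb(x_i,\ggamma)}$ by the full conditional law $\kappa_{x_i}$, drawing the fresh variable conditionally independently of everything else given the $x$-variables. The resulting plan $\ttheta'$ has both $\ppi^1_\sharp\ttheta',\ppi^2_\sharp\ttheta'\in\brmF$, hence \eqref{eq:18bis} holds for $\ttheta'$; and since $\int y\,\d\kappa_{x_i}(y)=\bb(x_i,\ggamma)$, conditioning on the $x$-variables shows that the integrand of \eqref{eq:18bis} has the same mean under $\ttheta'$ and under $\ttheta$, which closes the argument. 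Taking Lagrangian lifts and observing that $\bb(X(\cdot),\ggamma)=\E[Y\mid\sigma(X)]$ whenever $\iota^2(X,Y)=\ggamma$ then gives the stability of $\LF$ under conditioning.

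Next I would treat the minimal section. For $X\in D(\LF)$, Claim 1 together with the conditional Jensen bound $\|\E[Y\mid\sigma(X)]\|_\cH\le\|Y\|_\cH$ and uniqueness of the minimal-norm element of $\LF(X)$ force $\LF{}^\circ(X)=\E[\LF{}^\circ(X)\mid\sigma(X)]$, hence $\LF{}^\circ(X)$ is $\sigma(X)$-measurable, i.e.\ $\LF{}^\circ(X)=h_X\circ X$ for some Borel $h_X\colon\rmH\to\rmH$, and $\iota^2(X,\LF{}^\circ(X))=(\ii\times h_X)_\sharp\mu$ is deterministic ($\mu:=\iota(X)$). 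I would then show $\brmF^\circ[\mu]:=\iota^2(X,\LF{}^\circ(X))$ does not depend on the choice of $X$ with $\iota(X)=\mu$: given $\iota(X_1)=\iota(X_2)=\mu$, Lemma \ref{le:param} yields $\sfg_n\in\rmG(\OOmega)$ with $\sfg_n^*X_1\to X_2$ strongly; since $\LF$ is m.p.i.-invariant the isometry $\sfg_n^*$ carries minimal-norm elements to minimal-norm elements, so $\LF{}^\circ(\sfg_n^*X_1)=\sfg_n^*\LF{}^\circ(X_1)$ has constant norm, and a weak limit $Z$ of this sequence satisfies $(X_2,Z)\in\LF$ by strong$\times$weak closedness with $\|Z\|_\cH\le\|\LF{}^\circ(X_1)\|_\cH$; running the argument symmetrically gives $\|\LF{}^\circ(X_1)\|_\cH=\|\LF{}^\circ(X_2)\|_\cH$, whence $Z=\LF{}^\circ(X_2)$ and $\sfg_n^*\LF{}^\circ(X_1)\to\LF{}^\circ(X_2)$ strongly, and the $\sfw_2$-Lipschitz bound and m.p.i.-invariance of $\iota^2$ (cf.\ \eqref{eq:8}, \eqref{eq:63}) give $\iota^2(X_1,\LF{}^\circ(X_1))=\iota^2(X_2,\LF{}^\circ(X_2))$. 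Thus $\brmF^\circ$ is a well-defined deterministic section; defining $\ff^\circ(x,\mu):=\bb(x,\brmF^\circ[\mu])$ one gets $\ff^\circ(\cdot,\mu)=h_X$ $\mu$-a.e.\ and $\brmF^\circ[\mu]=(\ii\times\ff^\circ(\cdot,\mu))_\sharp\mu$, with $\ff^\circ$ Borel on $\rmH\times D(\brmF)$ by a standard measurable-selection argument (using the universal disintegration \eqref{eq:61}, Theorem \ref{thm:VN}, and Borel-measurability of $X\mapsto\LF{}^\circ(X)$ as a pointwise strong limit of the continuous maps $\LF_\lambda$). For \eqref{eq:79}, the right inequality is conditional Jensen, and the left one follows because by Claim 1 $(\ii\times\bb(\cdot,\ggamma))_\sharp\mu\in\brmF[\mu]$ lifts to a $\sigma(X)$-measurable element of $\LF(X)$, whose squared norm equals $\int_\rmH|\bb(x,\ggamma)|^2\,\d\mu(x)$ and is $\ge\|\LF{}^\circ(X)\|_\cH^2=\int_\rmH|\ff^\circ(x,\mu)|^2\,\d\mu(x)$; uniqueness of $\brmF^\circ$ is then forced by convexity of $\LF(X)$ and strict convexity of $\|\cdot\|_\cH$.

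Finally, Claim 3 is quick: $D(\LF)\subseteq\iota^{-1}(D(\brmF))$ since $\iota(X)=\pi^1_\sharp\iota^2(X,Y)$, and conversely, if $\iota(X)=\mu\in D(\brmF)$, one lifts some $\ggamma\in\brmF[\mu]$ to $(X'',Y'')$, uses Lemma \ref{le:param} to get $\sfg_n^*X''\to X$, notes $(\sfg_n^*X'',\sfg_n^*Y'')\in\LF$ with $\|\sfg_n^*Y''\|_\cH$ constant, and extracts a weak limit, again using strong$\times$weak closedness, to conclude $X\in D(\LF)$; \eqref{eq:80} is then immediate from the construction, since $\LF{}^\circ(X)=h_X\circ X$ with $h_X=\ff^\circ(\cdot,\mu)$ $\mu$-a.e. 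I expect the main obstacle to be the combination of the conditioning-stability in Claim 1 (the re-randomization must be arranged so that refreshing the barycentric coordinate keeps \emph{both} marginals of the plan in $\brmF$ while preserving the mean of the monotonicity integrand) and the ensuing \emph{locality} of the minimal section --- that $\LF{}^\circ(X)(\oomega)$ depends on $\oomega$ only through $X(\oomega)$ and $\iota(X)$ --- which leans on Lemma \ref{le:param} and the demiclosedness of maximal monotone graphs; by comparison the measurable-selection bookkeeping needed to realize $\ff^\circ$ as a genuine Borel field on $\rmH\times D(\brmF)$ is routine.
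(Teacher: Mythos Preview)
The paper does not prove this proposition: it is quoted from \cite{CSS22} with the lead-in ``A maximal totally monotone MPVF has many important properties, see \cite{CSS22}. We quote here the most relevant ones for our discussion.'' There is therefore no proof in the paper to compare against.

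Your proposed argument is sound and follows the spirit of the lifting machinery developed in \cite{CSS22} and summarized in Section~\ref{subsec:lagrangian} and \eqref{eq:20}--\eqref{eq:21-1}. The re-randomization step for Claim~1 is correct: since the monotonicity integrand $\langle y_2-y_1,x_2-x_1\rangle$ is affine in the refreshed $y$-variable(s), replacing $\delta_{\bb(x_i,\ggamma)}$ by $\kappa_{x_i}$ conditionally on the $x$-marginals preserves the mean while restoring the $\ppi^i$-marginals to $\brmF$, and maximality then forces $\ggamma^b\in\brmF$. The passage from Claim~1 to ``$\LF{}^\circ(X)$ is $\sigma(X)$-measurable'' via conditional Jensen and strict convexity of the norm is exactly right, and your use of Lemma~\ref{le:param} together with the strong$\times$weak demiclosedness of maximal monotone graphs to establish law-invariance of $\brmF^\circ$ (and the domain identity in Claim~3) is the natural route. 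One small point worth making explicit: in the law-invariance step you extract a weakly convergent subsequence of $(\sfg_n^*\LF{}^\circ(X_1))_n$, identify its limit as $\LF{}^\circ(X_2)$, and then upgrade to strong convergence of the full sequence using norm preservation under $\sfg_n^*$ plus uniqueness of the weak limit; this is fine but should be stated, since the conclusion $\iota^2(X_1,\LF{}^\circ(X_1))=\iota^2(X_2,\LF{}^\circ(X_2))$ via \eqref{eq:8} needs strong convergence of the pair.
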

Let us focus now on totally cyclically monotone MPVFs: first of all, we have a simple lifting result.
\begin{proposition}[Lifting of totally cyclically monotone MPVF]
    \label{prop:trivial}
    A MPVF $\brmF$ is totally cyclically monotone
    if and only if $\LF$ is
          cyclically monotone in $\cH\times \cH.$
\end{proposition}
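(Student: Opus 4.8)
The plan is to transfer both implications through the defining relation \eqref{eq:19} for the Lagrangian lifting $\LF$, using only that relation together with the surjectivity of the maps $\iota^k$ from Lemma \ref{le:param}. The pivotal (and elementary) observation is that whenever a test measure $\ttheta\in\cP_2((\rmH^2)^N)$ is realized as a joint law of Lagrangian maps, say $\ttheta=(X_1,Y_1,\dots,X_N,Y_N)_\sharp\P$ with $X_n,Y_n\in\cH$, then for every permutation $\sigma\in\rmS_N$
\begin{equation}
  \label{eq:cycmon-translation}
  \sum_{n=1}^N\int\la y_n,x_n-x_{\sigma(n)}\ra\,\d\ttheta(x_1,y_1;\cdots;x_N,y_N)
  =\sum_{n=1}^N\la Y_n,X_n-X_{\sigma(n)}\ra_\cH,
\end{equation}
since the scalar product of $\cH$ is the $\P$-integral of the pointwise scalar product and $\ppi^n_\sharp\ttheta=(X_n,Y_n)_\sharp\P$. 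Thus the cyclic monotonicity inequality for $\LF$ and the total cyclic monotonicity inequality \eqref{eq:18} for $\brmF$ are, term by term, the same inequality, and the proof reduces to matching up admissible test objects on the two sides.

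For the implication ``$\brmF$ totally cyclically monotone $\Rightarrow$ $\LF$ cyclically monotone'' I would take arbitrary $(X_n,Y_n)\in\LF$ ($n=1,\dots,N$) and $\sigma\in\rmS_N$, and set $\ttheta:=(X_1,Y_1,\dots,X_N,Y_N)_\sharp\P$, which lies in $\cP_2((\rmH^2)^N)$ because $X_n,Y_n\in\cH$. By \eqref{eq:19} we have $\ppi^n_\sharp\ttheta=\iota^2(X_n,Y_n)\in\brmF$, so $\ttheta$ is admissible in Definition \ref{def:monotone}; applying \eqref{eq:18} and reading \eqref{eq:cycmon-translation} from left to right gives $\sum_{n=1}^N\la Y_n,X_n-X_{\sigma(n)}\ra_\cH\ge0$. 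This direction uses nothing beyond the definitions.

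For the converse, given $\ttheta\in\cP_2((\rmH^2)^N)$ with $\ppi^n_\sharp\ttheta\in\brmF$ and given $\sigma\in\rmS_N$, I would use the surjectivity of $\iota^{2N}:\cH^{2N}\to\cP_2(\rmH^{2N})$ (Lemma \ref{le:param}) to write $\ttheta=(X_1,Y_1,\dots,X_N,Y_N)_\sharp\P$ for suitable $X_n,Y_n\in\cH$, after identifying $(\rmH^2)^N$ with $\rmH^{2N}$ and pairing coordinates consecutively. Then $\iota^2(X_n,Y_n)=\ppi^n_\sharp\ttheta\in\brmF$, hence $(X_n,Y_n)\in\LF$ by \eqref{eq:19}, and the cyclic monotonicity of $\LF$ together with \eqref{eq:cycmon-translation} yields $\sum_{n=1}^N\int\la y_n,x_n-x_{\sigma(n)}\ra\,\d\ttheta\ge0$; as $\ttheta$ and $\sigma$ were arbitrary, $\brmF$ is totally cyclically monotone.

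The argument has no genuine obstacle; the only step worth spelling out is the realization of $\ttheta$ in the converse, which is exactly the surjectivity of $\iota^k$ proved in Lemma \ref{le:param}. I would add one remark: the equivalence is insensitive to whether ``cyclically monotone in $\cH\times\cH$'' is stated with the cyclic shift only or with all permutations $\sigma\in\rmS_N$, since the two are equivalent in any Hilbert space (a permutation splits into disjoint cycles, and one sums the corresponding cyclic inequalities); transported through \eqref{eq:cycmon-translation}, the same observation shows that Definition \ref{def:monotone} is unaffected by this choice. In particular the statement is the exact cyclic-monotone counterpart of the totally monotone/monotone equivalence \eqref{eq:20}.
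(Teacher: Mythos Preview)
Your proof is correct and essentially identical to the paper's own argument: both directions hinge on the surjectivity of $\iota^{2N}$ (Lemma \ref{le:param}) to represent any admissible $\ttheta$ as $(X_1,Y_1,\dots,X_N,Y_N)_\sharp\P$, and on the identity \eqref{eq:cycmon-translation} translating the integral inequality \eqref{eq:18} into the $\cH$-scalar-product inequality for $\LF$. Your added remark on cycles versus general permutations is a harmless extra observation not present in the paper.
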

\begin{proof}
    It is sufficient to observe that 
    for every $\ttheta\in \cP_2((\rmH^2)^N)$
    with $\ppi^n_\sharp\ttheta\in \brmF$, $n=1,\cdots,N$,
    we can find a Borel map
    $Z=((X_1,Y_1),\cdots,(X_N,Y_N)):\OOmega\to 
    (\rmH^2)^N$
    such that 
    $Z_\sharp\P=\ttheta$, so that 
    $(X_n,Y_n)\in \LF$ for every $n\in \{1,\cdots,N\}.$
    Conversely, if 
    $(X_n,Y_n)\in \LF$ then 
    $\ttheta=Z_\sharp\P$
    belongs to 
    $\cP_2((\rmH^2)^N)$ and 
    $\ppi^n_\sharp\ttheta\in \brmF$.

    For every permutation $\sigma\in \rmS_N$ 
    we can then use the identity
    \begin{displaymath}
         \sum_{n=1}^N\int \la y_n,x_n-x_{\sigma(n)}\ra\,\d\ttheta
    =
    \sum_{n=1}^N\int \la Y_n,X_n-X_{\sigma(n)}\ra\,\d\P=
    \sum_{n=1}^N \la Y_n,X_n-X_{\sigma(n)}\ra_\cH
    \end{displaymath}
    which shows the equivalence between condition
    \eqref{eq:18}
    and the corresponding condition expressing the cyclical monotonicity of $\LF$ in $\cH\times\cH$.
    \end{proof}
As in the Hilbertian framework, we will show that totally cyclically monotone MPVFs
are strictly linked with the notion of 
\emph{total subdifferential}.
Let us first recall the definition
(we refer to 
\cite{CSS22} for a detailed comparison with other notion of subdifferentiability, in particular the ones introduced in 
\cite{AGS08}, from a metric perspective).
\begin{definition}[Total subdifferential]
  \label{def:total-sub}
  The total subdifferential of
  $\phi:\cP_2(\rmH)\to\Rinf$
  is the set $\bpartialt\phi\subset \cP_2(\rmH\times \rmH)$ characterized by the following property:
  a plan $\ggamma\in \cP_2(\rmH^2)$ belongs to $\bpartialt\phi$ if and only if
  $\mu=\pi^1_\sharp\ggamma\in D(\phi)$ and for every $\nu\in D(\phi)$
  and $\ttheta\in \Gamma(\ggamma,\nu)$ we have
  \begin{equation}
    \label{eq:21}
    \phi(\nu)-\phi(\mu)\ge
    \int_{\rmH^2\times \rmH}\la y_1,x_2-x_1\ra\,\d\ttheta(x_1,y_1;x_2).
  \end{equation}
\end{definition}
Notice that 
if $\bpartialt\phi$ is not empty then
$\phi$ satisfies 
  \eqref{eq:lower-B}:
  if $\ggamma\in
  \bpartialt\phi[\mu]$, 
  $\mu':=\pi^2_\sharp\ggamma$,
  $\ggamma'\in \Gamma_o(\nu,\mu')$
  we can apply the glueing Lemma to select 
  $\ttheta\in\cP_2(\rmH^3)$ 
  with $\pi^{1\,2}_\sharp \ttheta=\ggamma$,
  $\pi^{2\,3}_\sharp\ttheta=\ggamma'$
  so that 
  \eqref{eq:21} yields
  \begin{displaymath}
    \phi(\nu)\ge \phi(\mu)-
    \int_{\cH^2}\la y,x\ra\,\d\ggamma(x,y)+
    \int_{\cH^2}
    \la y,z\ra\,\d\ggamma'(y,z)
    = -a+\msc\nu{\mu'}
  \end{displaymath}
  where $a:= \int_{\cH^2}\la y,x\ra\,\d\ggamma(x,y)-\phi(\mu)$.

Applying 
\cite[Proposition 5.2]{CSS22} 
we can show that there is a strong relation between the total subdifferential
of $\phi$ and the 
(convex) subdifferential of its Lagrangian lifting $\hat \phi=\phi\circ\iota$ in $\cH$.
\begin{theorem}[Lifting of total subdifferentials]
  \label{prop:collection}
    Let $\phi:\rW2\to\Rinf$ be a proper lower semicontinuous function
  with Lagrangian lift $\hat\phi:=\phi\circ\iota$.
  The (possible empty) convex subdifferential $\partial\hat\phi\subset \cH\times \cH$
  is law invariant and coincides with the Lagrangian representation of $\bpartialt\phi$, i.e.
  \begin{equation}
      \label{eq:subdifferential-commutation}
      \widehat{\bpartialt\phi}=
      \partial\hat\phi.
  \end{equation}
  In particular
  $\bpartialt\phi$ is totally cyclically monotone and
  \begin{equation}
      \label{eq:inclusions}
      \bpartialt\phi\subset 
  \bpartialt(\phi^{\star\star}),
  \qquad
  \bpartialt\phi[\mu]\neq\emptyset
  \quad \Leftrightarrow\quad
  \phi(\mu)=\phi^{\star\star}(\mu),\ 
  \bpartialt\phi[\mu]=
  \bpartialt\phi^{\star\star}[\mu]\neq \emptyset.
  \end{equation}
   If moreover $\phi$ is also totally convex then $\bpartialt\phi$ 
   is maximal totally
  monotone and its minimal section $\bpartialt^\circ\phi$
  is associated with the minimal section $\partial^\circ\hat\phi$ through \eqref{eq:80}.
\end{theorem}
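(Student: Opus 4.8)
The plan is to transfer the whole statement to classical convex analysis on the Hilbert space $\cH$ through the Lagrangian lift, the pivot being the commutation identity \eqref{eq:subdifferential-commutation}, which is essentially \cite[Proposition 5.2]{CSS22}. I would first record two elementary facts about $\hat\phi=\phi\circ\iota$, which is proper and lower semicontinuous by Lemma \ref{le:invariance}: the convex subdifferential $\partial\hat\phi\subset\cH\times\cH$ has closed graph (along a convergent graph sequence $(X_n,Y_n)\to(X,Y)$ with $Y_n\in\partial\hat\phi(X_n)$, testing the subgradient inequality at $Z=X$ forces $\hat\phi(X_n)\to\hat\phi(X)\in\R$, after which one passes to the limit at an arbitrary $Z$); and $\partial\hat\phi$ is invariant under every unitary $\sfg^*$, since $\hat\phi\circ\sfg^*=\hat\phi$ by \eqref{eq:12}. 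By the closedness and measure-preserving-invariance criterion of \cite{CSS2} recalled after \eqref{eq:75}, $\partial\hat\phi$ is then law invariant, hence the Lagrangian representation $\widehat{\brmF}$ of a unique MPVF $\brmF$.

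Next I would identify $\brmF$ with $\bpartialt\phi$ by checking the two inclusions directly. If $\ggamma=(X,Y)_\sharp\P\in\bpartialt\phi$, so that $\mu=\iota(X)=\pi^1_\sharp\ggamma\in D(\phi)$, then for every $Z\in\cH$ the plan $(X,Y,Z)_\sharp\P$ belongs to $\Gamma(\ggamma,\iota(Z))$, and \eqref{eq:21} gives $\hat\phi(Z)\ge\hat\phi(X)+\langle Y,Z-X\rangle_\cH$, i.e.~$(X,Y)\in\partial\hat\phi$; this shows $\widehat{\bpartialt\phi}\subset\partial\hat\phi$. Conversely, given $(X,Y)\in\partial\hat\phi$, $\nu\in D(\phi)$ and $\ttheta\in\Gamma((X,Y)_\sharp\P,\nu)$, I would use the surjectivity of $\iota^3$ from Lemma \ref{le:param} to write $\ttheta=\iota^3(X',Y',Z')$; then $\iota^2(X',Y')=\pi^{12}_\sharp\ttheta=(X,Y)_\sharp\P$, so $(X',Y')\in\partial\hat\phi$ by law invariance, and its subgradient inequality tested against $Z'$ (with $\iota(Z')=\nu$) is exactly \eqref{eq:21}. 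Hence $\partial\hat\phi\subset\widehat{\bpartialt\phi}$ as well, so $\widehat{\bpartialt\phi}=\partial\hat\phi$ and $\bpartialt\phi$ is law invariant.

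The remaining assertions are then purely Hilbertian transfers. Since $\partial\hat\phi$ is cyclically monotone in $\cH\times\cH$ by Rockafellar's theorem, Proposition \ref{prop:trivial} makes $\bpartialt\phi$ totally cyclically monotone. For \eqref{eq:inclusions}, applying the commutation \eqref{eq:9} twice gives $\widehat{\phi^{\star\star}}=\hat\phi^{\Ast\Ast}$, the largest convex lower semicontinuous minorant of $\hat\phi$; the classical facts that $\partial\hat\phi\subset\partial\hat\phi^{\Ast\Ast}$ and that $\partial\hat\phi(X)\neq\emptyset$ precisely when $\hat\phi(X)=\hat\phi^{\Ast\Ast}(X)$ and $\partial\hat\phi^{\Ast\Ast}(X)\neq\emptyset$, with $\partial\hat\phi(X)=\partial\hat\phi^{\Ast\Ast}(X)$ in that case, then translate back through $\widehat{\bpartialt\phi}=\partial\hat\phi$, through $\partial\widehat{\phi^{\star\star}}=\widehat{\bpartialt\phi^{\star\star}}$ (the commutation identity applied to the totally convex $\phi^{\star\star}$), and through the surjectivity and law invariance of $\iota$ and $\iota^2$, recalling $\phi(\mu)=\hat\phi(X)$ and $\phi^{\star\star}(\mu)=\hat\phi^{\Ast\Ast}(X)$ for any $X$ with $\iota(X)=\mu$. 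Finally, when $\phi$ is also totally convex, $\hat\phi$ is proper, convex and lower semicontinuous (Lemma \ref{le:invariance}), so $\partial\hat\phi$ is maximal monotone by the Rockafellar--Minty theorem; by $\widehat{\bpartialt\phi}=\partial\hat\phi$ and \eqref{eq:20}--\eqref{eq:21-1}, $\bpartialt\phi$ is maximal totally monotone, and Proposition \ref{prop:tot-max-mon} provides the unique minimal section $\bpartialt^\circ\phi$, whose Lagrangian counterpart — by part~(3) of that Proposition applied with $\LF=\partial\hat\phi$, whose pointwise minimal-norm selection is $\partial^\circ\hat\phi$ — is $\partial^\circ\hat\phi$, related to $\bpartialt^\circ\phi$ through \eqref{eq:80}.

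The hard part will be the reverse inclusion $\partial\hat\phi\subset\widehat{\bpartialt\phi}$: it hinges on relifting an arbitrary three-marginal plan $\ttheta\in\Gamma((X,Y)_\sharp\P,\nu)$ to a Lagrangian triple $(X',Y',Z')$ whose first two components reproduce the joint law of $(X,Y)$, and then on the law invariance of $\partial\hat\phi$ to move the subgradient inequality from $(X,Y)$ to $(X',Y')$. The surjectivity of $\iota^k$ (Lemma \ref{le:param}) together with the closed-graph-plus-unitary-invariance argument above are precisely the ingredients that make this step work; everything else is a mechanical dictionary between $\cP_2(\rmH)$ and $\cH$.
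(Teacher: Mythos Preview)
Your proposal is correct and follows the same strategy as the paper: both reduce everything to classical convex analysis on $\cH$ via the commutation identity \eqref{eq:subdifferential-commutation}. The paper simply cites \cite[Proposition~5.2]{CSS22} for that identity and for law invariance of $\partial\hat\phi$, whereas you have essentially unpacked the proof of that proposition (closedness of $\partial\hat\phi$, m.p.i.-invariance $\Rightarrow$ law invariance, then the two inclusions via Lemma~\ref{le:param}); the remaining transfers---cyclical monotonicity via Proposition~\ref{prop:trivial}, \eqref{eq:inclusions} from the Hilbertian facts about $\partial\hat\phi$ vs.\ $\partial\hat\phi^{\Ast\Ast}$, and maximality plus the minimal-section correspondence via \eqref{eq:21-1} and Proposition~\ref{prop:tot-max-mon}---match the paper's line by line.
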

\begin{proof}
  Since $\hat\phi$ is l.s.c., 
  $\partial\hat\phi$ is a closed 
  subset of $\cH\times \cH$.
  By Claim 1 of \cite[Proposition 5.2]{CSS22}
  we deduce that $\partial\hat\phi$ is law invariant
  and 
  we can then apply Claim 3 of \cite[Proposition 5.2]{CSS22}
  (which uses only the law invariance of $\partial\hat\phi$)
  to deduce 
    that
  $\partial\hat\phi$ is the
  Lagrangian representation of
  $\bpartialt\phi$ according to \eqref{eq:subdifferential-commutation}.
  \eqref{eq:inclusions} then follow
  by the corresponding well known properties
  of $\partial\hat\phi$.

  When $\phi$ is also totally convex, 
  the same Proposition 5.2 shows that
  $\bpartialt\phi$ is maximal totally monotone and \eqref{eq:20} shows
  that it is totally cyclically monotone (since $\partial\hat\phi$ is
  cyclically monotone in $\cH\times \cH$).
  We conclude by applying Claim 3 of Proposition \ref{prop:tot-max-mon}.
\end{proof}
Thanks to the previous result, we can now extend the celebrated Rockafellar Theorem to totally
cyclically monotone MPVF.
\begin{theorem}[Totally cyclically maximal monotone MPVFs are total subdifferentials]
  \label{thm:Rockafellar}
  If $\brmF$ is a totally cyclically monotone MPVF in
  $\cP_2(\rmH^2)$ then there exists
  a proper, totally convex and lower semicontinuous function
  $\phi:\cP_2(\rmH)\to\Rinf$
  such that $\brmF\subset \bpartialt\phi$.
  In particular, $\brmF$ has a maximal 
  totally monotone
  extension which is cyclically monotone
  and every totally cyclically 
  maximal monotone MPVF is the total
  subdifferential
  of a totally convex function.
\end{theorem}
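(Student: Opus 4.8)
The plan is to transfer the problem to the Hilbert space $\cH$ via the Lagrangian representation, apply there a measure-preserving-invariant version of Rockafellar's theorem, and descend the resulting potential back to $\cP_2(\rmH)$ through the lifting dictionary already established.

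First I would pass to the Lagrangian representation $\LF\subset\cH\times\cH$ of $\brmF$. It is nonempty (because $\brmF\neq\emptyset$) and invariant under the action of measure-preserving isomorphisms by \eqref{eq:74}--\eqref{eq:75}, and by Proposition \ref{prop:trivial} the total cyclical monotonicity of $\brmF$ is equivalent to the cyclical monotonicity of $\LF$ in $\cH\times\cH$. The classical Rockafellar theorem then produces a proper, convex, lower semicontinuous function $\Psi_0\colon\cH\to\Rinf$ with $\LF\subset\partial\Psi_0$; fixing a base point $(X_0,Y_0)\in\LF$ we normalize so that $\Psi_0(X_0)=0$.

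Since $\Psi_0$ need not be law-invariant, the crucial step is to symmetrize it by setting $\Psi(X):=\sup_{\sfg\in\rmG(\OOmega)}\Psi_0(\sfg^*X)$. As a supremum of convex and lower semicontinuous functions $\Psi$ is convex and lower semicontinuous, and since $\rmG(\OOmega)$ is a group acting on $\cH$ by the isometries $\sfg^*$, $\Psi$ is invariant by measure-preserving isomorphisms. The delicate point is properness, because $\rmG(\OOmega)$ is not compact: using that $(\sfg^*X_0,\sfg^*Y_0)\in\LF\subset\partial\Psi_0$ and that $\sfg^*$ is an isometry, the subgradient inequality at $\sfg^*X_0$ gives $\Psi_0(\sfg^*X_0)\le\la\sfg^*Y_0,\sfg^*X_0-X_0\ra_\cH=\la Y_0,X_0\ra_\cH-\la\sfg^*Y_0,X_0\ra_\cH\le\la Y_0,X_0\ra_\cH+\|Y_0\|_\cH\|X_0\|_\cH$, a bound independent of $\sfg$, so $0=\Psi_0(X_0)\le\Psi(X_0)<\infty$ and $\Psi$ is proper. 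Finally $\LF\subset\partial\Psi$: for $(X,Y)\in\LF$ and arbitrary $X'\in\cH$, applying the subgradient inequality for $(\sfg^*X,\sfg^*Y)\in\LF\subset\partial\Psi_0$ and the isometry property yields $\Psi_0(\sfg^*X')\ge\Psi_0(\sfg^*X)+\la Y,X'-X\ra_\cH$ for every $\sfg$, and taking the supremum over $\sfg$ on both sides gives $\Psi(X')\ge\Psi(X)+\la Y,X'-X\ra_\cH$. This invariant Rockafellar construction is in the spirit of \cite{CSS22}.

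It remains to descend and conclude. By Lemma \ref{le:invariance} together with the remark following Definition \ref{def:totally-convex}, the proper, convex, lower semicontinuous, measure-preserving-invariant function $\Psi$ is of the form $\hat\phi=\phi\circ\iota$ for a unique proper, lower semicontinuous, totally convex $\phi\colon\cP_2(\rmH)\to\Rinf$. By Theorem \ref{prop:collection} we then have $\widehat{\bpartialt\phi}=\partial\hat\phi=\partial\Psi\supset\LF$, and since an MPVF is recovered from its Lagrangian representation through $\iota^2$ (see \eqref{eq:19} and Lemma \ref{le:param}), the inclusion $\LF\subset\widehat{\bpartialt\phi}$ forces $\brmF\subset\bpartialt\phi$. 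For the final assertions, Theorem \ref{prop:collection} also guarantees, $\phi$ being totally convex, that $\bpartialt\phi$ is maximal totally monotone and totally cyclically monotone; hence it is the sought cyclically monotone, maximal totally monotone extension of $\brmF$, and if $\brmF$ is itself maximal totally monotone then the inclusion of $\brmF$ into the totally monotone set $\bpartialt\phi$ gives $\brmF=\bpartialt\phi$. The main obstacle is precisely the symmetrization step: descending a Rockafellar potential requires it to be law-invariant, and securing an invariant convex potential with the correct subdifferential while controlling properness over the non-compact group $\rmG(\OOmega)$ is the only estimate that is not a direct application of the lifting dictionary of Proposition \ref{prop:trivial} and Theorem \ref{prop:collection}.
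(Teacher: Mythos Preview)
Your proof is correct and follows the same overall lifting strategy as the paper, but the two approaches diverge at the step where one secures a measure-preserving-invariant Rockafellar potential on $\cH$. You first take an arbitrary Rockafellar potential $\Psi_0$ for $\LF$ and then symmetrize it to $\Psi(X)=\sup_{\sfg\in\rmG(\OOmega)}\Psi_0(\sfg^*X)$, which forces you to check separately that the supremum remains proper and that the subgradient inclusion survives the symmetrization. The paper instead writes down the \emph{explicit} Rockafellar functional
\[
\Phi(X)=\sup\Big\{\la X-X_N,Y_N\ra_\cH+\sum_{n=1}^N\la X_n-X_{n-1},Y_{n-1}\ra_\cH:\ N\in\N,\ (X_n,Y_n)\in\LF\Big\}
\]
and observes that, because $\LF$ is itself $\rmG(\OOmega)$-invariant, replacing each $(X_n,Y_n)$ by $(\sfg^*X_n,\sfg^*Y_n)$ in the supremum immediately yields $\Phi(\sfg^*X)\ge\Phi(X)$, and applying this with $\sfg^{-1}$ gives the reverse inequality; invariance is thus built in and properness comes for free from Rockafellar's theorem, with no extra estimate needed. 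Your symmetrization argument is more flexible (it would apply to any potential with $\LF\subset\partial\Psi_0$, not just the canonical one), while the paper's direct verification is shorter and sidesteps the properness bound over the non-compact group that you correctly flag as the only nontrivial point of your route.
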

\begin{proof}
  Let us denote by $\LF$ the Lagrangian representation of $\brmF$ in
  $\cH^2$; we know that $\LF$ is invariant by measure preserving
  isomorphisms, i.e.
  \begin{equation}
    \label{eq:26}
    (X,Y)\in \LF\quad\Rightarrow\quad
    (X\circ\sfg,Y\circ \sfg)\in \LF\quad\text{for every }\sfg\in \rmG(\OOmega).
  \end{equation}
  We fix $(X_0,Y_0)\in \LF$ and use Rockafellar construction
  \cite{Rockafellar66} to define
  \begin{equation}
    \label{eq:27}
    \Phi(X):=\sup\Big\{\la X-X_N,Y_N\ra_\cH+
    \sum_{n=1}^N \la X_n-X_{n-1},Y_{n-1}\ra_\cH:
    N\in \N,\ (X_n,Y_n)\in \LF\Big\}.
  \end{equation}
  We know that $\Phi$ is convex, proper, lower semicontinuous, and $\LF\subset \partial\Phi$.
  Let us prove that $\Phi$ is invariant by measure-preserving
  isomorphisms:
  we use the fact that for every $N$-tuple $(X_n,Y_n)\in \LF$
  we also have $(X_n\circ \sfg,Y_n\circ\sfg)\in \LF$ so that
  \begin{align*}
    \Phi(X\circ\sfg)
    &\ge
      \la X\circ\sfg -X_N\circ\sfg ,Y_N\circ\sfg\ra_\cH+
    \sum_{n=1}^N \la
      X_n\circ\sfg-X_{n-1}\circ\sfg,Y_{n-1}\circ\sfg\ra_\cH
    \\&=
    \la X -X_N ,Y_N\ra_\cH+
    \sum_{n=1}^N \la
      X_n-X_{n-1},Y_{n-1}\ra_\cH
  \end{align*}
  so that taking the supremum with respect to $(X_n,Y_n)\in \LF$ and
  $N\in \N$ we get
  \begin{displaymath}
    \Phi(X\circ\sfg)\ge \Phi(X).
  \end{displaymath}
  Applying the same inequality to $X\circ\sfg$ with $\sfg$ replaced by
  $\sfg^{-1}$ we also obtain
  $\Phi(X\circ\sfg\circ\sfg^{-1}) =\Phi(X) \ge \Phi(X\circ\sfg)$
  so that $\Phi(X\circ\sfg)=\Phi(X)$ for every $X$ and every $\sfg\in
  \rmG(\OOmega)$.
  Since $\Phi$ is lower semicontinuous and invariant by the action
  of $\rmG(\OOmega)$, Lemma \ref{le:invariance} shows that 
  $\Phi=\phi\circ\iota$ for a lower semicontinuous
  function
  $\phi:\cP_2(\rmH)\to\Rinf$, which is clearly totally convex and
  satisfies $\brmF\subset \bpartialt\phi$ by Proposition \ref{prop:collection}.
\end{proof}
We briefly discuss the case when $\phi$ 
is differentiable, in a suitable Wasserstein sense.
First of all 
by using a (Borel) barycentric map $\bb$ as in 
\eqref{eq:barycentric} we can define the 
nonlocal deterministic field 
$\nabla_W\phi:\rmH\times D(\bpartialt\phi)\to\rmH$
as in Proposition \ref{prop:tot-max-mon}:
\begin{equation}
    \label{eq:W-gradient}
    \nabla_W\phi(x,\mu)=
    \nabla_W\phi[\mu](x):=
    \bb(x,\bpartialt^\circ\phi[\mu]).
\end{equation}
Recall that if a convex function $\psi:\cH\to \Rinf$ is Gateaux differentiable at 
$X\in D(\psi)$ then 
$\partial\psi(X)$ is a singleton
(thus coinciding with the minimal section
$\partial^\circ\psi$). The converse is also true if in addition $\psi$ is continuous at $X$
\cite[Chap.~I, Prop.~5.3]{Ekeland-Temam76}.
We can say that 
\begin{equation}
    \label{eq:defW}
    \text{$\phi$ is
$W$-differentiable at $\mu$ if 
$\bpartialt\phi[\mu]$ 
contains a unique element},
\end{equation} 
which in turn coincides with the minimal section,  expressed through $\nabla_W\phi$. 
\begin{proposition}
    \label{prop:diff}
    Let $\phi:\cP_2(\rmH)\to\Rinf$
    be a proper, l.s.c., and totally convex function.
    \begin{enumerate}
        \item $\phi$ is $W$-differentiable 
        at $\mu$ if and only if
        $\partial\hat \phi$ 
        is a singleton at every 
        $X\in \iota^{-1}(\mu).$
        \item 
        If $\phi$ is continuous 
        at $\mu$ then 
        it is 
        $W$-differentiable if and only if 
        $\hat\phi$ is Gateaux-differentiable
        at every point of 
        $\iota^{-1}(\mu)$.
        \item For every $\tau>0$
        $\phi_\tau$ is 
        $W$-differentiable everywhere,
        the map $(x,\mu)\mapsto \nabla_W\phi_\tau(x,\mu)$
        is everywhere defined and continuous
        in 
        \begin{equation}
        \label{eq:natural-domain-Cmaps}
            \cS(\rmH):=\Big\{(x,\mu)\in \rmH\times\cP_2(\rmH):x\in \supp(\mu)\Big\}
        \end{equation}
        for every $\mu\in \cP_2(\R^d)$ 
        the map $x\mapsto \nabla_W\phi_\tau(x,\mu)$
        is $\tau^{-1}$-Lipschitz in $\supp(\mu)$,
        and the map
        $\mu\mapsto 
        \nabla_W\phi_\tau(\cdot,\mu)_\sharp\mu$ is $\tau^{-1}$-Lipschitz
        in $\cP_2(\rmH).$
    \end{enumerate}
\end{proposition}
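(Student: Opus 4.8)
The plan is to push everything to the Hilbertian lift $\hat\phi=\phi\circ\iota$ via Theorem \ref{prop:collection}, which identifies $\partial\hat\phi$ with the Lagrangian representation of $\bpartialt\phi$: every $\ggamma\in\bpartialt\phi[\mu]$ equals $\iota^2(X,Y)$ for some $(X,Y)\in\partial\hat\phi$ with $\iota(X)=\mu$, and conversely. For the implication ``$\bpartialt\phi[\mu]$ is a singleton $\Rightarrow$ $\partial\hat\phi(X)$ is a singleton for every $X\in\iota^{-1}(\mu)$'' in Claim 1, I fix such an $X$ and $Y_1,Y_2\in\partial\hat\phi(X)$; by convexity of the subdifferential $\tfrac12(Y_1+Y_2)\in\partial\hat\phi(X)$ too, and the three plans $\iota^2(X,Y_1),\iota^2(X,Y_2),\iota^2(X,\tfrac12(Y_1+Y_2))$ all lie in $\bpartialt\phi[\mu]$, hence coincide; equality of the first two forces $\|Y_1\|_\cH=\|Y_2\|_\cH$, equality with the third forces $\|\tfrac12(Y_1+Y_2)\|_\cH=\|Y_1\|_\cH$, and the parallelogram identity gives $Y_1=Y_2$. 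For the converse, assuming $\partial\hat\phi(X)=\{Y_X\}$ for every $X\in\iota^{-1}(\mu)$, I take $X,X'\in\iota^{-1}(\mu)$, use the last assertion of Lemma \ref{le:param} to pick $\sfg_n\in\rmG(\OOmega)$ with $\sfg_n^*X'\to X$, note that $(\sfg_n^*X',\sfg_n^*Y_{X'})\in\partial\hat\phi$ by law invariance while $\|\sfg_n^*Y_{X'}\|_\cH=\|Y_{X'}\|_\cH$ stays bounded, extract a weak limit $Y^*$, invoke weak--strong closedness of the maximal monotone graph $\partial\hat\phi$ to get $Y^*=Y_X$, obtain $\|Y_{X'}\|_\cH=\|Y_X\|_\cH$ by the symmetric argument (so the convergence is strong), and conclude $\iota^2(X',Y_{X'})=\iota^2(X,Y_X)$ by continuity and law invariance of $\iota^2$; since every element of $\bpartialt\phi[\mu]$ has this form, it is a singleton.

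For Claim 2, if $\phi$ is continuous at $\mu$ then $\hat\phi$ is continuous at every $X\in\iota^{-1}(\mu)$ (as $\iota$ is continuous), so by \cite[Chap.~I, Prop.~5.3]{Ekeland-Temam76} Gateaux differentiability of $\hat\phi$ at such $X$ is equivalent to $\partial\hat\phi(X)$ being a singleton, and Claim 1 completes the chain of equivalences. For Claim 3, I start from the commutation identity \eqref{eq:identity-Y}, $\widehat{\phi_\tau}=\hat\phi_\tau$, and the classical properties of the Moreau--Yosida regularization \eqref{eq:Y-in-cH} in $\cH$: $\hat\phi_\tau$ is real-valued, continuous, Fréchet differentiable on all of $\cH$, with $\nabla\hat\phi_\tau=\tfrac1\tau(\id-\sfJ_\tau)$ being $\tau^{-1}$-Lipschitz, $\sfJ_\tau:=(\id+\tau\partial\hat\phi)^{-1}$, and $\nabla\hat\phi_\tau=\partial^\circ\hat\phi\circ\sfJ_\tau$. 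Since $\hat\phi_\tau$ is then Gateaux differentiable everywhere and $\phi_\tau$ is continuous and totally convex, Claim 2 yields that $\phi_\tau$ is $W$-differentiable everywhere, and Theorem \ref{prop:collection} makes $\bpartialt\phi_\tau$ maximal totally monotone, so Proposition \ref{prop:tot-max-mon} and \eqref{eq:80} apply to it.

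It remains to produce the everywhere-defined field and its estimates. Fixing $X\in\iota^{-1}(\mu)$ and setting $\nu_\tau:=\iota(\sfJ_\tau X)$, identity \eqref{eq:80} for $\phi_\tau$ gives $\nabla_W\phi_\tau(X(\oomega),\mu)=\nabla\hat\phi_\tau(X)(\oomega)$ for $\P$-a.e.~$\oomega$, while \eqref{eq:80} for $\phi$ gives $\nabla\hat\phi_\tau(X)(\oomega)=\partial^\circ\hat\phi(\sfJ_\tau X)(\oomega)=\ff^\circ(\sfJ_\tau X(\oomega),\nu_\tau)$, whence $X(\oomega)=\sfJ_\tau X(\oomega)+\tau\,\ff^\circ(\sfJ_\tau X(\oomega),\nu_\tau)$ $\P$-a.e. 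Using that $\ff^\circ(\cdot,\nu_\tau)$ is monotone on $\supp\nu_\tau$ (a property of maximal totally monotone MPVFs, cf.~\cite{CSS22} and Proposition \ref{prop:tot-max-mon}), a one-line computation with $u_i=\sfJ_\tau X(\oomega_i)$, $v_i=\nabla\hat\phi_\tau(X)(\oomega_i)$, $x_i=u_i+\tau v_i$ gives $|v_1-v_2|\le\tau^{-1}|x_1-x_2|$; since $\{X(\oomega):\oomega\in\OOmega\}$ is $\P$-essentially dense in $\supp\mu$, the map $x\mapsto\nabla_W\phi_\tau(x,\mu)$ extends uniquely to a $\tau^{-1}$-Lipschitz map on $\supp\mu$, taken as its everywhere-defined value, which proves the Lipschitz-in-$x$ assertion. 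For joint continuity on $\cS(\rmH)$, given $(x_n,\mu_n)\to(x,\mu)$ with $x_n\in\supp\mu_n$ I choose, via \eqref{eq:66}, $X_n\in\iota^{-1}(\mu_n)$ with $X_n\to X$ in $\cH$, $\iota(X)=\mu$; then $\nabla\hat\phi_\tau(X_n)\to\nabla\hat\phi_\tau(X)$ in $\cH$, so along a subsequence $\nabla_W\phi_\tau(X_n(\oomega),\mu_n)\to\nabla_W\phi_\tau(X(\oomega),\mu)$ and $X_n(\oomega)\to X(\oomega)$ for $\P$-a.e.~$\oomega$, and combining this with the $\tau^{-1}$-Lipschitz dependence on $x$ and the fact that $\P(|X-x|<\delta)>0$ for every $\delta>0$ forces $\nabla_W\phi_\tau(x_n,\mu_n)\to\nabla_W\phi_\tau(x,\mu)$. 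Finally, for $\mu_0,\mu_1\in\cP_2(\rmH)$ I pick by Lemma \ref{le:param} maps $X_i\in\iota^{-1}(\mu_i)$ with $\|X_0-X_1\|_\cH=\sfw_2(\mu_0,\mu_1)$; then $\nabla_W\phi_\tau(\cdot,\mu_i)_\sharp\mu_i=\iota(\nabla\hat\phi_\tau(X_i))$, and the $1$-Lipschitz bound for $\iota$ together with the $\tau^{-1}$-Lipschitz bound for $\nabla\hat\phi_\tau$ give $\sfw_2\big(\nabla_W\phi_\tau(\cdot,\mu_0)_\sharp\mu_0,\nabla_W\phi_\tau(\cdot,\mu_1)_\sharp\mu_1\big)\le\tau^{-1}\sfw_2(\mu_0,\mu_1)$.

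The main obstacle is Claim 3: while ``$W$-differentiable everywhere'' follows routinely from the classical smoothing properties of $\hat\phi_\tau$ and Claim 2, upgrading the merely a.e.-defined barycentric field $\nabla_W\phi_\tau(\cdot,\mu)$ to an everywhere-defined, $\tau^{-1}$-Lipschitz and jointly continuous object rests on the pointwise structural identity $X=\sfJ_\tau X+\tau\,\ff^\circ(\sfJ_\tau X,\nu_\tau)$ and on the monotonicity of $\ff^\circ(\cdot,\nu_\tau)$ on $\supp\nu_\tau$; the joint-continuity step, which couples $\cH$-convergence of Lagrangian lifts with the Lipschitz-in-$x$ estimate through an a.e.-along-a-subsequence argument, is the most delicate point.
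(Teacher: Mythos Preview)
Your proof is correct. The main divergence from the paper is in Claims 1 and 3.

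For Claim 1, the paper exploits the minimal-section structure more directly: since $\bpartialt\phi$ is maximal totally monotone, Proposition~\ref{prop:tot-max-mon} supplies the nonlocal field $\nabla_W\phi(\cdot,\mu)$ and the pointwise identity \eqref{eq:80}, $\partial^\circ\hat\phi(X)(\oomega)=\nabla_W\phi(X(\oomega),\mu)$. If $\bpartialt\phi[\mu]$ is a singleton it must equal the (deterministic) minimal section $(\ii\times\nabla_W\phi(\cdot,\mu))_\sharp\mu$, so any $(X,Y)\in\partial\hat\phi$ with $\iota(X)=\mu$ has $Y=\nabla_W\phi(X,\mu)$; conversely, if $\partial\hat\phi(X)$ is always a singleton it equals $\partial^\circ\hat\phi(X)$, and \eqref{eq:80} shows $\iota^2(X,\partial^\circ\hat\phi(X))=(\ii\times\nabla_W\phi(\cdot,\mu))_\sharp\mu$ is independent of the lift $X$. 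Your parallelogram-law argument for the forward direction and the m.p.i.-approximation plus weak--strong closedness for the converse are a valid self-contained alternative that avoids invoking the minimal section; the paper's route is shorter once that machinery is in hand.

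For Claim 3, the paper simply records that $\hat\phi_\tau$ is $\rmC^{1,1}$ with $\tau^{-1}$-Lipschitz gradient and then \emph{cites} \cite[Section~4]{CSS2} and \cite[Section~5]{CSS22} for the passage from a Lipschitz law-invariant map on $\cH$ to a continuous nonlocal field on $\cS(\rmH)$ with the stated Lipschitz bounds. You instead unpack that black box by hand: your resolvent identity $X=\sfJ_\tau X+\tau\,\ff^\circ(\sfJ_\tau X,\nu_\tau)$ and the monotonicity of $\ff^\circ(\cdot,\nu_\tau)$ (which follows from Corollary~\ref{cor:trivial} rather than Proposition~\ref{prop:tot-max-mon}, since $\bpartialt^\circ\phi[\nu_\tau]\in\cP_{2,o}(\rmH\times\rmH)$ has cyclically monotone support) give the $\tau^{-1}$-Lipschitz bound in $x$, and your $\eps$--$\delta$ argument for joint continuity is a correct, if somewhat laborious, reconstruction of the cited representation theorem.
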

\begin{proof}
    To prove the first claim
    it is sufficient to recall that 
    \begin{displaymath}
        (X,Y)\in \partial\hat\phi\quad
        \Leftrightarrow\quad
        (X,Y)_
        \sharp\P\in \bpartialt\phi
    \end{displaymath}
    $\bpartial\phi[\mu]$ 
    contains just one element $\ggamma$
    if and only if it is 
    reduced to the 
    minimal section, i.e.~$\ggamma=
    (\ii\times \nabla_W\phi(\cdot,\mu))_\sharp\mu$
    so that 
    $\iota^2(X,Y)=\ggamma
    $ implies $Y=\nabla_W(X,\mu)$
    and therefore $\partial\phi(X)$
    is reduced to a singleton for every
    $X\in \iota^{-1}(\mu).$

    The second claim then follows 
    since continuity of $\phi$ at $\mu$ 
    implies continuity of $\hat\phi$ 
    at every $X\in \iota^{-1}(\mu).$

    Finally, the third claim
    follows by
    \eqref{eq:identity-Y} and 
    the well known properties of the Moreau-Yosida regularization in the Hilbert space
    $\cH$: 
    $\hat\phi_\tau$ is of class $\rmC^{1,1}$ and its differential 
    $\rmD\hat\phi_\tau$ (which 
    provides the unique element
    of $\partial\hat\phi_\tau$) is 
    $\tau^{-1}$-Lipschitz.
    We can then apply 
    the results of 
    \cite[Section 4]{CSS2} and 
    \cite[Section 5]{CSS22} to conclude.    
\end{proof}
\subsection{Marginal projections of MPVF}
\label{subsec:marginal-proj}
We notice that a MPVF $\brmF
\subset \cP_2(\rmH\times \rmH)$ induces a subset $\frF\subset
\cP_2(\rmH)\times \cP_2(\rmH)$ by applying 
marginal projections
\begin{equation}
  \label{eq:24}
  \frF=(\pi^1_\sharp,\pi^2_\sharp)\brmF,\quad
  \text{or, equivalently,}\quad 
  (\mu,\nu)\in \frF\quad\Leftrightarrow\quad
  \brmF\cap\Gamma(\mu,\nu)\neq \emptyset.  
\end{equation}
It is therefore natural to investigate the relations between
the total subdifferential $\bpartialt\phi$ (a MPVF) and
the $\msc\cdot\cdot$-subdifferential of $\phi$ (a subset of
$\cP_2(\rmH)\times \cP_2(\rmH)$).

We start with a nice characterization of the total subdifferential of $\phi$
in terms of its Kantorovich-Legendre-Fenchel transform.
\begin{theorem}[Total subdifferential and Kantorovich-Legendre-Fenchel transform]
  \label{thm:sub-KLF}
  A plan $\ggamma\in \cP_2(\rmH^2)$
  belongs to $\bpartialt\phi[\mu]$
  if and only if
  $\mu=\pi^1_\sharp \ggamma$,
  $\nu=\pi^2_\sharp\ggamma\in D(\phi^\star)$ and
  \begin{equation}
    \label{eq:22}
    \int_{\rmH^2}\la x,y\ra\,\d\ggamma(x,y)=\phi(\mu)+\phi^\star(\nu).
  \end{equation}
  In particular $\ggamma\in \Gamma_o(\mu,\nu)$
  and $\int_{\rmH^2}\la x,y\ra\,\d\ggamma(x,y)=\msc\mu\nu$.
\end{theorem}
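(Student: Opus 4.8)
The plan is to unfold Definition~\ref{def:total-sub} and, after a gluing step, collapse the whole family of inequalities defining $\bpartialt\phi[\mu]$ into the single inequality $\phi(\mu)+\phi^\star(\nu)\le\int_{\rmH^2}\la x,y\ra\,\d\ggamma$, where $\nu:=\pi^2_\sharp\ggamma$. Since $\ggamma\in\Gamma(\mu,\nu)$, the Kantorovich--Fenchel inequality of Corollary~\ref{cor:KF-inequality} supplies the reverse chain $\int_{\rmH^2}\la x,y\ra\,\d\ggamma\le\msc\mu\nu\le\phi(\mu)+\phi^\star(\nu)$, so that the three quantities coincide. This single observation gives at once \eqref{eq:22}, the membership $\nu\in D(\phi^\star)$ (the common value $\int_{\rmH^2}\la x,y\ra\,\d\ggamma-\phi(\mu)$ is a finite real because $\ggamma\in\cP_2(\rmH^2)$, by Cauchy--Schwarz, and $\mu\in D(\phi)$), and $\ggamma\in\Gamma_o(\mu,\nu)$ (as $\ggamma$ then attains the maximum defining $\msc\mu\nu$, recalling the characterisation following \eqref{eq:W-decomp}).

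For the forward implication, starting from $\ggamma\in\bpartialt\phi[\mu]$ (so $\mu=\pi^1_\sharp\ggamma\in D(\phi)$), I would fix $\nu'\in D(\phi)$ and $\ttheta\in\Gamma(\ggamma,\nu')$ and split $\int\la y_1,x_2-x_1\ra\,\d\ttheta=\int\la y_1,x_2\ra\,\d\ttheta-\int_{\rmH^2}\la x,y\ra\,\d\ggamma$, the last term being independent of $\ttheta$ (it only sees the $\rmH^2$-marginal $\ggamma$). The $(y_1,x_2)$-marginal of $\ttheta$ lies in $\Gamma(\nu,\nu')$, hence $\int\la y_1,x_2\ra\,\d\ttheta\le\msc\nu{\nu'}$; this bound is sharp, since gluing $\ggamma$ with an element of $\Gamma_o(\nu,\nu')$ along the common marginal $\nu$ produces a $\ttheta$ attaining it. Thus the defining inequalities of $\bpartialt\phi[\mu]$ are equivalent to $\phi(\nu')-\phi(\mu)\ge\msc{\nu'}\nu-\int_{\rmH^2}\la x,y\ra\,\d\ggamma$ for all $\nu'\in D(\phi)$; taking the supremum over $\nu'$ (equivalently over $\cP_2(\rmH)$, using symmetry of $\msc\cdot\cdot$) yields $\phi^\star(\nu)\le\int_{\rmH^2}\la x,y\ra\,\d\ggamma-\phi(\mu)$, and the conclusion follows from the chain displayed above.

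For the converse, assume $\mu=\pi^1_\sharp\ggamma$, $\nu=\pi^2_\sharp\ggamma\in D(\phi^\star)$ and \eqref{eq:22}. Since $\int_{\rmH^2}\la x,y\ra\,\d\ggamma$ and $\phi^\star(\nu)$ are finite, \eqref{eq:22} forces $\phi(\mu)\in\R$, i.e.\ $\mu\in D(\phi)$. Then for arbitrary $\nu'\in D(\phi)$ and $\ttheta\in\Gamma(\ggamma,\nu')$, the same splitting together with $\int\la y_1,x_2\ra\,\d\ttheta\le\msc{\nu'}\nu$, Corollary~\ref{cor:KF-inequality} and \eqref{eq:22} gives
\begin{align*}
\int\la y_1,x_2-x_1\ra\,\d\ttheta
&\le\msc{\nu'}\nu-\int_{\rmH^2}\la x,y\ra\,\d\ggamma\\
&\le\phi(\nu')+\phi^\star(\nu)-\int_{\rmH^2}\la x,y\ra\,\d\ggamma
=\phi(\nu')-\phi(\mu),
\end{align*}
which is exactly the inequality in Definition~\ref{def:total-sub}, so $\ggamma\in\bpartialt\phi[\mu]$. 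The ``in particular'' part is then read off from \eqref{eq:22} and Corollary~\ref{cor:KF-inequality} as in the first paragraph.

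Finally, I would remark on a shorter alternative via the Lagrangian lift: picking $(X,Y)\in\cH\times\cH$ with $\iota^2(X,Y)=\ggamma$ (Lemma~\ref{le:param}) one has $\iota(X)=\mu$, $\iota(Y)=\nu$ and $\la X,Y\ra_\cH=\int_{\rmH^2}\la x,y\ra\,\d\ggamma$; by Theorem~\ref{prop:collection} the condition $\ggamma\in\bpartialt\phi$ is equivalent to $(X,Y)\in\partial\hat\phi$, and by the commutation identity \eqref{eq:9} of Theorem~\ref{thm:obvious} one has $\hat\phi^\Ast(Y)=\phi^\star(\nu)$, so \eqref{eq:22} becomes the Fenchel--Young equality $\hat\phi(X)+\hat\phi^\Ast(Y)=\la X,Y\ra_\cH$ characterising membership in $\partial\hat\phi$ (in either direction $\phi$ satisfies \eqref{eq:lower-B}: from $\bpartialt\phi[\mu]\neq\emptyset$ in one case, from $\nu\in D(\phi^\star)$ in the other). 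The only genuinely delicate point, in either approach, is the gluing argument that turns the quantifier over all $\ttheta\in\Gamma(\ggamma,\nu')$ into the sharp constant $\msc\nu{\nu'}$; everything else is bookkeeping of Fenchel-type inequalities.
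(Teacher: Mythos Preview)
Your proposal is correct. The ``shorter alternative via the Lagrangian lift'' that you sketch at the end is exactly the paper's proof: it uses Theorem~\ref{prop:collection} to identify $\ggamma\in\bpartialt\phi$ with $(X,Y)\in\partial\hat\phi$, and the commutation identity \eqref{eq:9} to read the Fenchel--Young equality $\hat\phi(X)+\hat\phi^{\Ast}(Y)=\la X,Y\ra_\cH$ as \eqref{eq:22}.

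Your main argument is a genuinely different route: it stays entirely at the level of $\cP_2(\rmH)$ and never lifts to $\cH$. The key step---gluing $\ggamma$ with an optimal coupling in $\Gamma_o(\nu,\nu')$ along the shared marginal $\nu$ to attain $\int\la y_1,x_2\ra\,\d\ttheta=\msc\nu{\nu'}$---is the same device the paper uses just before Theorem~\ref{prop:collection} to derive the lower bound \eqref{eq:lower-B} from $\bpartialt\phi\neq\emptyset$, so it is entirely in the spirit of the surrounding material. What your direct approach buys is self-containment: it shows that the result follows from the intrinsic Kantorovich--Fenchel calculus (Corollary~\ref{cor:KF-inequality}) and the gluing lemma alone, without invoking the lifting theory of Theorem~\ref{prop:collection}. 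The paper's route, by contrast, is shorter and conceptually cleaner because it reduces everything to the classical Fenchel--Young characterisation of $\partial\hat\phi$ in a Hilbert space, but it relies on the heavier machinery of \cite{CSS22} packaged in Theorem~\ref{prop:collection}.
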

\begin{proof}
  We have seen that $\ggamma\in\bpartialt\phi[\mu]$ if and only if
  for every $(X,Y)\in \cH^2$ with $\iota^2(X,Y)=\ggamma$ we have
  $(X,Y)\in \partial\hat\phi$. In turn, this is equivalent to 
  \begin{displaymath}
    \la X,Y\ra_\cH=\hat\phi(X)+(\hat\phi)^\Ast(Y)=
    \phi(\mu)+\phi^\star(\nu).
  \end{displaymath}
  Since $\la X,Y\ra_\cH=\int \la x,y\ra\,\d\ggamma$ we obtain \eqref{eq:22}.
\end{proof}
Combining Theorem \ref{thm:sub-KLF} with \eqref{eq:36} we can immediately link the total subdifferential 
$\bpartialt\phi$ with the $\msc\cdot\cdot$ subdifferential $\partial^-\phi$.
\begin{corollary}[Total and $\msc\cdot\cdot$ subdifferential]
  \label{cor:total-vs-sub}
  Let $\phi:\cP_2(\rmH)\to \Rinf$ be a function satisfying the
  lower bound \eqref{eq:lower-B}, $\mu,\nu\in \cP_2(\rmH)$
  with $\phi(\mu)\in\R$.
  Then
  \begin{equation}
    \label{eq:39}
    \ggamma\in \bpartialt\phi[\mu]
    \quad
    \Leftrightarrow\quad
    \nu\in \partial^-\phi(\mu),\quad
    \ggamma\in \Gamma_o(\mu,\nu).
  \end{equation}
  In particular 
  $\bpartialt\phi$ 
  can be obtained as the image of the
  of the graph of $\partial^-\phi$ in
  $\cP_2(\rmH)\times \cP_2(\rmH)$ through the 
  multivalued map $\Gamma_o:\cP_2(\rmH)\times \cP_2(\rmH)
  \rightrightarrows
  \cP_2(\rmH\times \rmH)$.\\
  Conversely,
  the graph of $\partial^-\phi$ can be obtained as the image
  of $\bpartialt\phi\subset \rW2\times \rW2$ through the map
  $\pi^1_\sharp\times \pi^2_\sharp$.
\end{corollary}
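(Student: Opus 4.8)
The plan is to read off the equivalence directly from the two "equality in the Fenchel inequality" statements we already have: the characterization of $\bpartialt\phi[\mu]$ in Theorem~\ref{thm:sub-KLF}, and the identity \eqref{eq:36} describing $\partial^-\phi(\mu)$ via the conjugate $\phi^\star$. Both say that a certain pairing integral equals $\phi(\mu)+\phi^\star(\nu)$, so the argument is essentially a transcription.

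First I would prove the forward implication. Assume $\ggamma\in\bpartialt\phi[\mu]$. Theorem~\ref{thm:sub-KLF} gives $\mu=\pi^1_\sharp\ggamma$, $\nu:=\pi^2_\sharp\ggamma\in D(\phi^\star)$, $\ggamma\in\Gamma_o(\mu,\nu)$ and
\[
\int_{\rmH^2}\la x,y\ra\,\d\ggamma=\msc\mu\nu=\phi(\mu)+\phi^\star(\nu).
\]
The equality $\msc\mu\nu=\phi(\mu)+\phi^\star(\nu)$ is exactly the right-hand side of \eqref{eq:36}, hence $\nu\in\partial^-\phi(\mu)$; together with $\ggamma\in\Gamma_o(\mu,\nu)$ this is the claimed right-hand side of \eqref{eq:39}. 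For the converse, assume $\nu\in\partial^-\phi(\mu)$ and $\ggamma\in\Gamma_o(\mu,\nu)$. By \eqref{eq:36}, $\msc\mu\nu=\phi(\mu)+\phi^\star(\nu)$; since $\phi(\mu)\in\R$ and $\msc\mu\nu$ is finite by \eqref{eq:CS-inequality}, we get $\phi^\star(\nu)\in\R$, i.e.\ $\nu\in D(\phi^\star)$. Because $\ggamma\in\Gamma_o(\mu,\nu)$ attains the maximum in \eqref{eq:MC} (equivalently, by \eqref{eq:W-decomp}), $\int_{\rmH^2}\la x,y\ra\,\d\ggamma=\msc\mu\nu=\phi(\mu)+\phi^\star(\nu)$, and Theorem~\ref{thm:sub-KLF} yields $\ggamma\in\bpartialt\phi[\mu]$.

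The two "in particular" assertions are then immediate reformulations. Writing $\mu=\pi^1_\sharp\ggamma$, $\nu=\pi^2_\sharp\ggamma$, the equivalence \eqref{eq:39} says precisely that $\ggamma\in\bpartialt\phi$ iff $(\mu,\nu)\in\operatorname{graph}(\partial^-\phi)$ and $\ggamma\in\Gamma_o(\mu,\nu)$; that is, $\bpartialt\phi=\bigcup\{\Gamma_o(\mu,\nu):\nu\in\partial^-\phi(\mu)\}$, the image of $\operatorname{graph}(\partial^-\phi)$ under $\Gamma_o$. Conversely, since $\Gamma_o(\mu,\nu)$ is nonempty for all $\mu,\nu\in\cP_2(\rmH)$, every pair $(\mu,\nu)\in\operatorname{graph}(\partial^-\phi)$ is hit by some $\ggamma\in\bpartialt\phi$ through $\pi^1_\sharp\times\pi^2_\sharp$, while by the forward implication every $\ggamma\in\bpartialt\phi$ projects into $\operatorname{graph}(\partial^-\phi)$; hence $\operatorname{graph}(\partial^-\phi)=(\pi^1_\sharp\times\pi^2_\sharp)(\bpartialt\phi)$.

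I do not expect a genuine obstacle here; the only points that deserve a line of care are (i) checking $\nu\in D(\phi^\star)$ in the converse direction, which uses the Cauchy--Schwarz-type bound \eqref{eq:CS-inequality} together with $\phi(\mu)\in\R$, and (ii) recalling that $\partial^-\phi(\mu)$ is only defined when $\phi(\mu)\in\R$, so that the domain conventions in the last two identities cause no trouble (and, consistently, $\bpartialt\phi[\mu]=\emptyset$ when $\phi(\mu)=+\infty$). Everything else is a direct consequence of the Fenchel-equality characterizations already in hand.
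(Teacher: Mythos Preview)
Your proof is correct and follows exactly the approach the paper intends: the paper states the corollary as an immediate consequence of combining Theorem~\ref{thm:sub-KLF} with \eqref{eq:36}, and you have filled in precisely those details, including the small verification that $\nu\in D(\phi^\star)$ in the converse direction.
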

It is worth highlighting 
the following  consequence of the previous result, which does not immediately appear from the definition of total subdifferential.
\begin{corollary}[Total subdifferentials are optimal couplings]\label{cor: tot subdiff are opt}
  \label{cor:trivial}
  For every function $\phi:\cP_2(\rmH)\to \Rinf$
  its total subdifferential 
  $\bpartialt\phi$ is contained in the set of optimal couplings
  $\cP_{2,o}(\rmH\times\rmH)$.
\end{corollary}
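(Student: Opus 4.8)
The plan is to unwind the characterization of $\bpartialt\phi$ through the Kantorovich--Legendre--Fenchel transform and then invoke the Kantorovich--Fenchel inequality. First I would dispose of the trivial case: if $\bpartialt\phi=\emptyset$ the inclusion holds vacuously, so fix $\ggamma\in\bpartialt\phi$ and put $\mu:=\pi^1_\sharp\ggamma\in D(\phi)$, $\nu:=\pi^2_\sharp\ggamma$. Even without assuming $\phi$ convex or lower semicontinuous, the remark right after Definition~\ref{def:total-sub} shows that nonemptiness of $\bpartialt\phi$ already forces the lower bound \eqref{eq:lower-B}; hence $\phi^\star$ is proper and the commutation identity \eqref{eq:9} of Theorem~\ref{thm:obvious} is at our disposal.

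Next I would pass to a Lagrangian lift. By Theorem~\ref{prop:collection}, choosing any $(X,Y)\in\cH^2$ with $\iota^2(X,Y)=\ggamma$ gives $(X,Y)\in\partial\hat\phi$; since $\hat\phi(X)=\phi(\mu)\in\R$, the Fenchel--Young equality (which requires no convexity of $\hat\phi$) yields $\la X,Y\ra_\cH=\hat\phi(X)+\hat\phi^\Ast(Y)$, and \eqref{eq:9} rewrites this as
\[
\int_{\rmH^2}\la x,y\ra\,\d\ggamma(x,y)=\la X,Y\ra_\cH=\phi(\mu)+\phi^\star(\nu)
\]
(this is precisely \eqref{eq:22} in Theorem~\ref{thm:sub-KLF}). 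Now the Kantorovich--Fenchel chain \eqref{eq:23} gives, for every $\ggamma'\in\Gamma(\mu,\nu)$,
\[
\int_{\rmH^2}\la x,y\ra\,\d\ggamma'\le\msc\mu\nu\le\phi(\mu)+\phi^\star(\nu)=\int_{\rmH^2}\la x,y\ra\,\d\ggamma,
\]
so $\ggamma$ attains the maximum in \eqref{eq:MC}; by the characterization of optimal couplings recorded after \eqref{eq:W-decomp} this means $\ggamma\in\Gamma_o(\mu,\nu)$, i.e.\ $\ggamma\in\cP_{2,o}(\rmH\times\rmH)$. Equivalently, the ``in particular'' clause of Theorem~\ref{thm:sub-KLF}, or Corollary~\ref{cor:total-vs-sub}, states $\ggamma\in\Gamma_o(\mu,\nu)$ verbatim, so one may simply quote it.

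I do not expect a genuine obstacle: the statement is essentially a corollary of machinery already in place. The only point requiring care is that $\phi$ is an \emph{arbitrary} function; this is handled by the observation that $\bpartialt\phi\neq\emptyset$ is itself enough to produce \eqref{eq:lower-B} and hence the commutation \eqref{eq:9}, and by the fact that the equivalence $(X,Y)\in\partial\hat\phi\iff\la X,Y\ra_\cH=\hat\phi(X)+\hat\phi^\Ast(Y)$ holds for any proper $\hat\phi$ with $\hat\phi(X)\in\R$.
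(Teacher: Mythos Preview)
Your proof is correct and follows essentially the same route as the paper: the corollary is stated as an immediate consequence of Corollary~\ref{cor:total-vs-sub} (equivalently, of the ``in particular'' clause of Theorem~\ref{thm:sub-KLF}), and you have simply unpacked that deduction, including the observation that $\bpartialt\phi\neq\emptyset$ already yields the lower bound \eqref{eq:lower-B} needed to apply those results. One minor remark: Theorem~\ref{prop:collection} is stated for l.s.c.\ $\phi$, but the direction you use (from $\ggamma\in\bpartialt\phi$ to $(X,Y)\in\partial\hat\phi$) follows directly from the definition of $\bpartialt\phi$ by taking $\ttheta=(X,Y,X')_\sharp\P$ for an arbitrary $X'\in\cH$, and needs no lower semicontinuity---so your argument goes through for arbitrary $\phi$ as claimed.
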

The above corollary suggests a lifting procedure of a subset $\frF\subset
\cP_2(\rmH)\times \cP_2(\rmH)$ which 
defines
a MPVF $\brmF$ by the formula
\begin{equation}
  \label{eq:25}
  \brmF:=\Big\{\ggamma\in \cP_2(\rmH^2):
  \ggamma\in \Gamma_o(\mu,\nu)\quad\text{for some }(\mu,\nu)\in \frF\Big\}
  =(\pi^1_\sharp,\pi^2_\sharp)^{-1}(\frF)
  \cap\cP_{2,o}(\rmH\times \rmH).
\end{equation}
\begin{theorem}[Total cyclical monotonicity and $\sfw_2^2$-cyclical monotonicity]
  \label{thm:brmF-vs-frF}
  If the MPVF $\brmF$ is totally cyclically monotone then
  the set 
  $\frF=(\pi^1_\sharp,\pi^2_\sharp)\brmF$
  defined as in \eqref{eq:24} is $\sfw_2^2$-cyclically monotone in
  $\cP_2(\rmH)\times
  \cP_2(\rmH)$
  in the sense that for every
  $N\in \N$, every choice of pairs
  $(\mu_1,\nu_1),\cdots,(\mu_N,\nu_N)\in \frF$ and every permutation
  $\sigma\in \rmS_N$ 
  \begin{equation}
    \label{eq:15}
    \sum_{n=1}^N\sfw^2(\mu_n,\nu_n)
    \le
    \sum_{n=1}^N\sfw^2(\mu_n,\nu_{\sigma(n)}).
  \end{equation}
  Conversly, if 
  $\frF$ is  $\sfw_2^2$-cyclically monotone in
  $\cP_2(\rmH)\times
  \cP_2(\rmH)$ according to \eqref{eq:15}
  then $\brmF$ defined by \eqref{eq:25} is totally
  cyclically monotone.
\end{theorem}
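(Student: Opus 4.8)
The plan is to pass from the Wasserstein formulation to statements about the maximal correlation pairing $\msc\cdot\cdot$ using the identity \eqref{eq:W-decomp}. Since any permutation $\sigma\in\rmS_N$ satisfies $\sum_{n}\sfm_2^2(\nu_{\sigma(n)})=\sum_{n}\sfm_2^2(\nu_n)$, the $\sfw_2^2$-cyclical monotonicity inequality \eqref{eq:15} is equivalent to
\[
\sum_{n=1}^{N}\msc{\mu_n}{\nu_n}\ \ge\ \sum_{n=1}^{N}\msc{\mu_n}{\nu_{\sigma(n)}}\qquad\text{for all }\sigma\in\rmS_N ;
\]
on the other side, by \eqref{eq:19} and Proposition~\ref{prop:trivial}, total cyclical monotonicity of a MPVF is equivalent to cyclical monotonicity of its Lagrangian lift $\LF$ in $\cH\times\cH$. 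We will also repeatedly use, from \eqref{eq:8}, that $\la X,Y\ra_\cH\le\msc{\iota(X)}{\iota(Y)}$ always, with equality whenever $(X,Y)_\sharp\P$ is an optimal coupling.

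\textbf{Forward direction.} Suppose $\brmF$ is totally cyclically monotone. By the Rockafellar-type Theorem~\ref{thm:Rockafellar} there is a proper, l.s.c., totally convex $\phi:\cP_2(\rmH)\to\Rinf$ with $\brmF\subset\bpartialt\phi$. Given pairs $(\mu_n,\nu_n)\in\frF$ and $\sigma\in\rmS_N$, pick (by \eqref{eq:24}) plans $\ggamma_n\in\brmF\subset\bpartialt\phi[\mu_n]$ with marginals $\mu_n,\nu_n$. Theorem~\ref{thm:sub-KLF} gives $\mu_n\in D(\phi)$, $\nu_n\in D(\phi^\star)$, the identity $\msc{\mu_n}{\nu_n}=\phi(\mu_n)+\phi^\star(\nu_n)$, and (since this quantity is finite and $\phi,\phi^\star$ are bounded below as in Lemma~\ref{le:lower-B}) finiteness of $\phi(\mu_n)$ and $\phi^\star(\nu_n)$; meanwhile the Kantorovich--Fenchel inequality \eqref{eq:23} gives $\msc{\mu_n}{\nu_{\sigma(n)}}\le\phi(\mu_n)+\phi^\star(\nu_{\sigma(n)})$. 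Summing over $n$ and using that $\sigma$ only reshuffles the indices,
\[
\sum_{n=1}^{N}\msc{\mu_n}{\nu_{\sigma(n)}}\ \le\ \sum_{n=1}^{N}\phi(\mu_n)+\sum_{n=1}^{N}\phi^\star(\nu_n)\ =\ \sum_{n=1}^{N}\msc{\mu_n}{\nu_n},
\]
which is the reformulation of \eqref{eq:15} above. (One may also argue without $\phi$: by Corollary~\ref{cor:trivial} a totally cyclically monotone $\brmF$ is automatically contained in $\cP_{2,o}(\rmH\times\rmH)$, so any lift $(X_n,Y_n)\in\LF$ of $\ggamma_n$ has $\la X_n,Y_n\ra_\cH=\msc{\mu_n}{\nu_n}$; cyclical monotonicity of $\LF$ together with $\la Y_n,X_{\sigma(n)}\ra_\cH\le\msc{\mu_{\sigma(n)}}{\nu_n}$ from \eqref{eq:8} yields the same conclusion after relabelling.)

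\textbf{Converse direction.} Suppose $\frF$ is $\sfw_2^2$-cyclically monotone and let $\brmF$ be defined by \eqref{eq:25}. By Proposition~\ref{prop:trivial} it suffices to show that $\LF$ is cyclically monotone. Let $(X_n,Y_n)\in\LF$ for $n=1,\dots,N$; then $(X_n,Y_n)_\sharp\P\in\brmF$, so it is an optimal coupling (being in $\cP_{2,o}(\rmH\times\rmH)$ by \eqref{eq:25}) of its marginals $\mu_n:=\iota(X_n)$, $\nu_n:=\iota(Y_n)$, and $(\mu_n,\nu_n)\in\frF$; hence $\la X_n,Y_n\ra_\cH=\msc{\mu_n}{\nu_n}$ by the characterization of $\Gamma_o$ following Definition~\ref{def:MC-function}. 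For $\sigma\in\rmS_N$, \eqref{eq:8} gives $\la Y_n,X_{\sigma(n)}\ra_\cH\le\msc{\mu_{\sigma(n)}}{\nu_n}$, so
\[
\sum_{n=1}^{N}\la Y_n,X_n-X_{\sigma(n)}\ra_\cH\ \ge\ \sum_{n=1}^{N}\msc{\mu_n}{\nu_n}-\sum_{n=1}^{N}\msc{\mu_{\sigma(n)}}{\nu_n}.
\]
By \eqref{eq:W-decomp} and $\sum_n\sfm_2^2(\mu_{\sigma(n)})=\sum_n\sfm_2^2(\mu_n)$ the right-hand side equals $\tfrac12\bigl(\sum_n\sfw_2^2(\mu_{\sigma(n)},\nu_n)-\sum_n\sfw_2^2(\mu_n,\nu_n)\bigr)$; rewriting $\sum_n\sfw_2^2(\mu_{\sigma(n)},\nu_n)=\sum_n\sfw_2^2(\mu_n,\nu_{\sigma^{-1}(n)})$ and applying the $\sfw_2^2$-cyclical monotonicity \eqref{eq:15} of $\frF$ with the permutation $\sigma^{-1}$ shows this is $\ge0$. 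Thus $\LF$ is cyclically monotone and, by Proposition~\ref{prop:trivial}, $\brmF$ is totally cyclically monotone.

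\textbf{Main obstacle.} None of the steps is deep; the points demanding care are (i) the tacit fact that total cyclical monotonicity forces $\brmF\subset\cP_{2,o}(\rmH\times\rmH)$ — needed to turn the Hilbertian pairings $\la X_n,Y_n\ra_\cH$ into $\msc{\mu_n}{\nu_n}$ \emph{exactly} — which rests on Theorem~\ref{thm:Rockafellar} and Corollary~\ref{cor:trivial}; (ii) keeping the permutation bookkeeping consistent, in particular replacing $\sigma$ by $\sigma^{-1}$ and using the symmetry of $\msc\cdot\cdot$ to move between $\msc{\mu_n}{\nu_{\sigma(n)}}$ and $\msc{\mu_{\sigma(n)}}{\nu_n}$; and (iii) checking finiteness of $\phi(\mu_n)$ and $\phi^\star(\nu_n)$ so that the cancellations $\sum_n\phi(\mu_{\sigma(n)})=\sum_n\phi(\mu_n)$ are legitimate.
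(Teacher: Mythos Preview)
Your main arguments are correct and follow the paper's approach exactly: the forward direction via Theorem~\ref{thm:Rockafellar} and the Kantorovich--Fenchel identity/inequality, and the converse via the Lagrangian lift (which is equivalent to the paper's direct verification with $\ttheta$, by Proposition~\ref{prop:trivial}). The permutation bookkeeping and finiteness checks are handled correctly.

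However, the parenthetical ``alternative'' forward argument does not work as stated. From cyclical monotonicity of $\LF$ you get
\[
\sum_{n}\la Y_n,X_n\ra_\cH\ \ge\ \sum_{n}\la Y_n,X_{\sigma(n)}\ra_\cH,
\]
and optimality gives $\la Y_n,X_n\ra_\cH=\msc{\mu_n}{\nu_n}$ on the left. But the estimate $\la Y_n,X_{\sigma(n)}\ra_\cH\le\msc{\mu_{\sigma(n)}}{\nu_n}$ is an \emph{upper} bound on the right-hand side, so you cannot chain it to conclude $\sum_n\msc{\mu_n}{\nu_n}\ge\sum_n\msc{\mu_{\sigma(n)}}{\nu_n}$; the inequality points the wrong way. (You would need $\la Y_n,X_{\sigma(n)}\ra_\cH=\msc{\mu_{\sigma(n)}}{\nu_n}$, i.e.\ optimality of the cross pairs, which there is no reason to expect for arbitrary lifts.) Note also that Corollary~\ref{cor:trivial} is stated for total subdifferentials, so invoking $\brmF\subset\cP_{2,o}(\rmH\times\rmH)$ already passes through Theorem~\ref{thm:Rockafellar}; the ``without $\phi$'' route is not genuinely independent. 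This is harmless since your primary argument is complete, but the parenthetical should be dropped or replaced by the observation that this very same chain of inequalities \emph{does} work in the converse direction --- which is precisely how you (and the paper) use it there.
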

\begin{proof}
  Let us first observe that for arbitrary choice of $(\mu_n,\nu_n)\in 
\cP_2(\rmH)\times \cP_2(\rmH)$ the identity \eqref{eq:W-decomp} yields
\begin{align*}
  \frac12\sum_{n=1}^N\sfw^2(\mu_n,\nu_{\sigma(n)})-
  \frac 12 \sum_{n=1}^N\sfw^2(\mu_n,\nu_n)=
  \sum_{n=1}^N\Big(\msc{\mu_n}{\nu_n}-\msc{\mu_n}{\nu_{\sigma(n)}}\Big).
\end{align*}
  Suppose that $\brmF$ is totally cyclically monotone;
  by Theorem \ref{thm:Rockafellar} we can find a proper totally convex
  l.s.c.~function $\phi:\cP_2(\rmH)\to\Rinf$
  such that $\brmF\subset \bpartialt\phi$ so that in particular
  all the elements of $\brmF$ are optimal couplings of
  $\cP_{2,o}(\rmH\times \rmH)$.
  For every pair $(\mu_n,\nu_n)\in \frF$, $n=1,\cdots,N$
  we thus get
  \begin{align*}
    \sum_{n=1}^N \msc{\mu_n}{\nu_n}-
    \msc{\mu_n}{\nu_{\sigma(n)}}
    &\ge
      \sum_{n=1}^N \phi(\mu_n)+\phi^\star(\nu_n)-
      \Big(\phi(\mu_n)+\phi^\star(\nu_{\sigma(n)})\Big)
    \\&=
    \sum_{n=1}^N \phi^\star(\nu_n)-
    \sum_{n=1}^N \phi^\star(\nu_{\sigma(n)})= 0.
  \end{align*}
  Conversely, if $\frF$ is $\sfw_2^2$-cyclically monotone
  and $\brmF$ is defined as in \eqref{eq:25}
  we have for every $\ttheta\in \cP_2((\rmH^2)^N)$ with
  $\pi^n_\sharp\ttheta=\ggamma_n\in \brmF$
  \begin{align*}
    \sum_{n=1}^N\int \la y_n,x_n-x_{\sigma(n)}\ra\,\d\ttheta
    &=
      \sum_{n=1}^N\int \la y_n,x_n\ra\,\d\ttheta
      -
      \sum_{n=1}^N\int \la y_n,x_{\sigma(n)}\ra\,\d\ttheta
    \\&=
    \sum_{n=1}^N\int \la y,x\ra\,\d\ggamma_n(x,y)
    -
    \sum_{n=1}^N\int \la y_n,x_{\sigma(n)}\ra\,\d\ttheta
    \\&=
    \sum_{n=1}^N\msc{\mu_n}{\nu_n}
    -
    \sum_{n=1}^N\int \la y_n,x_{\sigma(n)}\ra\,\d\ttheta
    \\&\ge
    \sum_{n=1}^N\msc{\mu_n}{\nu_n}
    -
    \sum_{n=1}^N\msc{\mu_n}{\nu_{\sigma(n)}}
    \ge 0
  \end{align*}
\end{proof}
\section{\texorpdfstring{$L^2$}{}-Random Optimal Transport}
\label{sec:ROT}
In this section we will apply the main results of Section
\ref{sec:totally-convex} for totally convex functions
and of Section \ref{sec:MPVF} for totally cyclically monotone MPVF
to optimal transport in $\RW2$.
\subsection{Random couplings and couplings of random measures}
\label{susbec:random-couplings}
Let us first observe that
to every random coupling law
$\bttP\in 
\cPP_2(\rmH\times \rmH)=
\cP_2(\cP_2(\rmH\times \rmH))$ we can associate a
coupling between (laws of) random measures
$\pPi$ by the formula
\begin{equation}
  \label{eq:32}
  \pPi=(\pi^1_\sharp,\pi^2_\sharp)_\sharp \bttP,\quad
  \pPi=\int \delta_{\pi^1_\sharp\ggamma}\otimes
  \delta_{\pi^2_\sharp\ggamma}\,\d\bttP(\ggamma).
\end{equation}
If $\ttM_i=\pi^i_\sharp \pPi$,
it is not difficult to check that
\begin{equation}
  \label{eq:17}
  \sfW_2^2(\ttM_1,\ttM_2)\le
  \int \sfw_2^2(\mu_1,\mu_2)\,\d\pPi(\mu_1,\mu_2)
  \le \int \Big(\int |x_1-x_2|^2\,\d\ggamma(x_1,x_2)\Big)\,\d\bttP(\ggamma).
\end{equation}
Conversely, 
using the fact that the continuous map
\begin{equation}
    \label{eq:surj}
    \pi^1_\sharp\times \pi^2_\sharp:
    \cP_{2,o}(\rmH\times \rmH)
    \to \cP_2(\rmH)\times \cP_2(\rmH)
    \quad\text{is surjective,}
\end{equation}
given a coupling $\pPi\in
\cP_2(\cP_2(\rmH)\times \cP_2(\rmH))$
we can find a $\bttP\in \cP_2(\cP_{2,o}(\rmH\times \rmH))$
(thus concentrated on optimal couplings)
such that
\begin{equation}
  \label{eq:40}
  \pPi=(\pi^1_\sharp,\pi^2_\sharp)_\sharp \bttP,\quad
  \int \sfw_2^2(\mu_1,\mu_2)\,\d\pPi(\mu_1,\mu_2)
  = \int \Big(\int |x_1-x_2|^2\,\d\ggamma(x_1,x_2)\Big)\,\d\bttP(\ggamma).
\end{equation}
We immediately deduce an equivalent characterization of $\sfW_2$ in
terms of random coupling laws. We write
$\pi^i_{\sharp\sharp}:=(\pi^i_\sharp)_\sharp$
and we call
\begin{equation}
  \label{eq:48}
  \RGamma(\ttM_1,\ttM_2):=
  \Big\{\bttP\in \cPP_2(\rmH\times \rmH),
     \pi^i_{\sharp\sharp}\bttP=\ttM_i\Big\}.
\end{equation}
\begin{proposition}[Random couplings formulation of OT between random measures]
  \label{prop:W_2-random}
  For every $\ttM_1,\ttM_2\in \RW2$ we have
  \begin{equation}
    \label{eq:42}
    \sfW_2^2(\ttM_1,\ttM_2)=
    \min
    \Big\{
     \int \Big(\int
     |x_1-x_2|^2\,\d\ggamma(x_1,x_2)\Big)\,\d\bttP(\ggamma):
     \bttP\in \RGamma(\ttM_1,\ttM_2)
    \Big\}.
  \end{equation}
  Moreover, $\bttP$ is optimal for \eqref{eq:42} if and only if
  $\bttP$ is concentrated on the optimal couplings of
  $\cP_{2,o}(\rmH\times \rmH)$ and 
  $\pPi=(\pi^1_\sharp,\pi^2_\sharp)_\sharp \bttP\in \Gamma_o(\ttM_1,\ttM_2)$.

  We denote by $\RGamma_o(\ttM_1,\ttM_2)$ the set of optimal random couplings
  for \eqref{eq:42}.
\end{proposition}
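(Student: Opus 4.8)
The plan is to derive the identity \eqref{eq:42} by squeezing $\sfW_2^2(\ttM_1,\ttM_2)$ between the upper bound already recorded in \eqref{eq:17} and the lower bound furnished by the disintegration construction \eqref{eq:40}, and then to read off the characterization of optimal random couplings directly from the equality case. All the real work (the measurable selection producing a $\bttP$ concentrated on optimal couplings, via the surjectivity \eqref{eq:surj}) is contained in \eqref{eq:40}, which is already available; what remains is essentially bookkeeping.

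First I would prove ``$\le$'' in \eqref{eq:42}. Fix any $\bttP\in\RGamma(\ttM_1,\ttM_2)$ and set $\pPi:=(\pi^1_\sharp,\pi^2_\sharp)_\sharp\bttP$; since the push-forwards commute, $\pi^i_\sharp\pPi=\pi^i_{\sharp\sharp}\bttP=\ttM_i$, so $\pPi\in\Gamma(\ttM_1,\ttM_2)$, and the chain \eqref{eq:17} gives $\sfW_2^2(\ttM_1,\ttM_2)\le\int\big(\int|x_1-x_2|^2\,\d\ggamma\big)\,\d\bttP(\ggamma)$; taking the infimum over $\bttP\in\RGamma(\ttM_1,\ttM_2)$ yields the bound. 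For ``$\ge$'' together with attainment of the minimum, I would start from an optimal plan $\pPi\in\Gamma_o(\ttM_1,\ttM_2)$ realizing $\sfW_2^2(\ttM_1,\ttM_2)$ (it exists because $\sfW_2$ is a genuine minimum, see \eqref{eq:5}), apply \eqref{eq:40} to produce $\bttP\in\cP_2(\cP_{2,o}(\rmH\times\rmH))$ with $(\pi^1_\sharp,\pi^2_\sharp)_\sharp\bttP=\pPi$ and $\int\sfw_2^2(\mu_1,\mu_2)\,\d\pPi=\int\big(\int|x_1-x_2|^2\,\d\ggamma\big)\,\d\bttP$, and note once more that $\pi^i_{\sharp\sharp}\bttP=\pi^i_\sharp\pPi=\ttM_i$, so $\bttP\in\RGamma(\ttM_1,\ttM_2)$ and it achieves the cost $\int\sfw_2^2\,\d\pPi=\sfW_2^2(\ttM_1,\ttM_2)$. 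Combined with the first step this proves \eqref{eq:42} and that the minimum is attained.

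For the optimality characterization I would use two elementary facts: writing $\pPi:=(\pi^1_\sharp,\pi^2_\sharp)_\sharp\bttP$, one has $\int\sfw_2^2(\mu_1,\mu_2)\,\d\pPi=\int\sfw_2^2(\pi^1_\sharp\ggamma,\pi^2_\sharp\ggamma)\,\d\bttP(\ggamma)$ by the change-of-variables formula, and $\sfw_2^2(\pi^1_\sharp\ggamma,\pi^2_\sharp\ggamma)\le\int|x_1-x_2|^2\,\d\ggamma$ pointwise, with equality exactly when $\ggamma\in\cP_{2,o}(\rmH\times\rmH)$. If $\bttP\in\RGamma(\ttM_1,\ttM_2)$ is optimal, the chain \eqref{eq:17} collapses to equalities: the outer one forces $\pPi\in\Gamma_o(\ttM_1,\ttM_2)$, while the inner one, rewritten as $\int\big(\int|x_1-x_2|^2\,\d\ggamma-\sfw_2^2(\pi^1_\sharp\ggamma,\pi^2_\sharp\ggamma)\big)\,\d\bttP(\ggamma)=0$ with a nonnegative integrand, forces $\ggamma\in\cP_{2,o}(\rmH\times\rmH)$ for $\bttP$-a.e.\ $\ggamma$; since $\cP_{2,o}(\rmH\times\rmH)$ is closed (hence Borel) in $\cP_2(\rmH\times\rmH)$, $\bttP$ is concentrated on it. Conversely, if $\bttP$ is concentrated on $\cP_{2,o}(\rmH\times\rmH)$ and $\pPi\in\Gamma_o(\ttM_1,\ttM_2)$, then $\int\big(\int|x_1-x_2|^2\,\d\ggamma\big)\,\d\bttP=\int\sfw_2^2(\pi^1_\sharp\ggamma,\pi^2_\sharp\ggamma)\,\d\bttP=\int\sfw_2^2\,\d\pPi=\sfW_2^2(\ttM_1,\ttM_2)$, so $\bttP$ is optimal.

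I do not expect a genuine obstacle here. The only points that deserve a careful sentence are the commutation identity $\pi^i_{\sharp\sharp}\bttP=\pi^i_\sharp\big((\pi^1_\sharp,\pi^2_\sharp)_\sharp\bttP\big)$, the squeeze argument isolating $\bttP$-a.e.\ optimality of $\ggamma$, and recalling the closedness (hence measurability) of $\cP_{2,o}(\rmH\times\rmH)$ so that ``concentrated on'' is well posed; the substantive content, namely the selection of $\bttP$ concentrated on optimal couplings, is exactly \eqref{eq:40} and is assumed.
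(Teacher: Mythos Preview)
Your proof is correct and follows essentially the same route as the paper: both use the chain of inequalities \eqref{eq:17} for the lower bound and the lifting construction \eqref{eq:40} applied to an optimal $\pPi\in\Gamma_o(\ttM_1,\ttM_2)$ for the upper bound and attainment. You are more explicit than the paper in spelling out the equality case of \eqref{eq:17} to extract the full characterization of optimal $\bttP$ (in particular the $\bttP$-a.e.\ optimality of $\ggamma$ via the nonnegative-integrand argument), which the paper leaves largely implicit.
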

\begin{proof}
  By \eqref{eq:17} it clear that the infimum of the quantities in the
  right-hand side of
  \eqref{eq:42} is larger than $\sfW_2^2(\ttM_1,\ttM_2)$ and
  that if $\bttP$ satisfies the optimal condition
  \begin{displaymath}
    \sfW_2^2(\ttM_1,\ttM_2)=\int \Big(\int |x_1-x_2|^2\,\d\ggamma(x_1,x_2)\Big)\,\d\bttP(\ggamma),
  \end{displaymath}
  then $\pPi=(\pi^1_\sharp,\pi^2_\sharp)_\sharp \bttP$ is optimal thanks
  to \eqref{eq:17}.
  
  On the other hand, we can prove the equality by choosing an optimal coupling $\pPi\in
  \Gamma_o(\ttM_1,\ttM_2)$
  and a corresponding lift $\bttP$ as in \eqref{eq:40}.
\end{proof}

\subsection{\texorpdfstring{$L^2$}{}-Optimal Transport for laws of random measures}
\label{subsec:L2OT}
Let us first introduce the natural  pairing in $\RW2$
associated with
the maximal correlation pairing
$\msc\cdot\cdot$.
\begin{definition}
  \label{def:msc2}
  For every $\ttM_1,\ttM_2\in \RW2$ we set
  \begin{equation}
    \label{eq:30}
    \mmsc {\ttM_1}{\ttM_2}:=
    \max\Big\{\int_{\cP_2(\rmH)\times \cP_2(\rmH)} \msc\mu\nu\,\d\pPi(\mu,\nu):
    \pPi\in \Gamma(\ttM_1,{\ttM_2})\Big\}.
  \end{equation}
\end{definition}
\noindent
Recalling the identity \eqref{eq:W-decomp} we immediately have
the corresponding property at the level of $\sfW_2$:
\begin{lemma}
  \label{le:trivial}
  For every $\ttM_1,\ttM_2\in \RW2$ we have
  \begin{equation}
    \label{eq:41}
    \sfW_2^2(\ttM_1,{\ttM_2})=\sfM_2^2(\ttM_1)+\sfM_2^2({\ttM_2})-
    2\mmsc {\ttM_1}{\ttM_2}.
  \end{equation}
  In particular, the class of optimal couplings for \eqref{eq:30}
  coincides with
  the class $\Gamma_o(\ttM_1,\ttM_2)$ of optimal couplings for \eqref{eq:5}.
  Moreover, we have the equivalent formulation
  \begin{equation}
    \label{eq:81}
    \mmsc {\ttM_1}{\ttM_2}=
    \max
    \Big\{
     \int \Big(\int
     \langle x_1,x_2\rangle \,\d\ggamma(x_1,x_2)\Big)\,\d\bttP(\ggamma):
     \bttP\in \RGamma(\ttM_1,\ttM_2)
    \Big\}
  \end{equation}
  whose solution is provided by the same class
    $\RGamma_o(\ttM_1,\ttM_2)$  of optimal random couplings of \eqref{eq:42}.
  \end{lemma}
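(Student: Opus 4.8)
The plan is to reduce all three assertions to the scalar decomposition \eqref{eq:W-decomp}, integrated inside the definitions of $\sfW_2$, of $\mmsc{\cdot}{\cdot}$, and of the two formulations in Proposition \ref{prop:W_2-random}, and then to invoke that the relevant extremal problems are already known to be attained. The only point that genuinely requires care is the integrability of the pairing terms, which is supplied by the Cauchy--Schwarz bound \eqref{eq:CS-inequality}; beyond this the computation is formal, so I do not expect a substantial obstacle.

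For Claims 1 and 2 I would fix an arbitrary $\pPi\in\Gamma(\ttM_1,\ttM_2)$. Since $\ttM_i\in\RW2$ we have $\sfM_2^2(\ttM_i)<\infty$, and by \eqref{eq:CS-inequality} one has $|\msc{\mu_1}{\mu_2}|\le\tfrac12\big(\sfm_2^2(\mu_1)+\sfm_2^2(\mu_2)\big)$ while $\sfw_2^2(\mu_1,\mu_2)\le 2\sfm_2^2(\mu_1)+2\sfm_2^2(\mu_2)$, so that all the integrals below are finite. Integrating \eqref{eq:W-decomp} against $\pPi$ and using $\pi^i_\sharp\pPi=\ttM_i$ together with the definition \eqref{eq:6} of $\sfM_2^2$ then yields
\begin{equation}
  \int\sfw_2^2(\mu_1,\mu_2)\,\d\pPi(\mu_1,\mu_2)
  =\sfM_2^2(\ttM_1)+\sfM_2^2(\ttM_2)
  -2\int\msc{\mu_1}{\mu_2}\,\d\pPi(\mu_1,\mu_2).
\end{equation}
Consequently, minimizing the left-hand side over $\Gamma(\ttM_1,\ttM_2)$ is equivalent to maximizing $\pPi\mapsto\int\msc{\mu_1}{\mu_2}\,\d\pPi$ over the same set, and the two problems share the same optimizers. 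The minimum in \eqref{eq:5} is attained (Section \ref{subsec:W2}), hence so is the maximum, which by Definition \ref{def:msc2} equals $\mmsc{\ttM_1}{\ttM_2}$; substituting produces \eqref{eq:41} and identifies $\Gamma_o(\ttM_1,\ttM_2)$ with the set of maximizers in \eqref{eq:30}.

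For Claim 3 I would run the same argument one level higher, starting from Proposition \ref{prop:W_2-random}. For $\bttP\in\RGamma(\ttM_1,\ttM_2)$ and $\bttP$-a.e.\ $\ggamma$, writing $\mu_i=\pi^i_\sharp\ggamma$ and expanding $|x_1-x_2|^2=|x_1|^2+|x_2|^2-2\la x_1,x_2\ra$, integration in $\ggamma$ gives $\int|x_1-x_2|^2\,\d\ggamma=\sfm_2^2(\mu_1)+\sfm_2^2(\mu_2)-2\int\la x_1,x_2\ra\,\d\ggamma$, where $\big|\int\la x_1,x_2\ra\,\d\ggamma\big|\le\tfrac12\big(\sfm_2^2(\mu_1)+\sfm_2^2(\mu_2)\big)$; integrating against $\bttP$ and using $\pi^i_{\sharp\sharp}\bttP=\ttM_i$, so that $\int\sfm_2^2(\pi^i_\sharp\ggamma)\,\d\bttP(\ggamma)=\int\sfm_2^2(\mu)\,\d\ttM_i(\mu)=\sfM_2^2(\ttM_i)$ by the push-forward change of variables, yields
\begin{equation}
  \int\Big(\int|x_1-x_2|^2\,\d\ggamma\Big)\,\d\bttP(\ggamma)
  =\sfM_2^2(\ttM_1)+\sfM_2^2(\ttM_2)
  -2\int\Big(\int\la x_1,x_2\ra\,\d\ggamma\Big)\,\d\bttP(\ggamma).
\end{equation}
Hence minimizing the left-hand side over $\RGamma(\ttM_1,\ttM_2)$ is equivalent to maximizing $\bttP\mapsto\int\big(\int\la x_1,x_2\ra\,\d\ggamma\big)\,\d\bttP$, with the same optimizers $\RGamma_o(\ttM_1,\ttM_2)$. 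Since by Proposition \ref{prop:W_2-random} the left-hand minimum equals $\sfW_2^2(\ttM_1,\ttM_2)$, comparing with \eqref{eq:41} identifies this maximum with $\mmsc{\ttM_1}{\ttM_2}$, which is precisely \eqref{eq:81}. Thus no step presents a real difficulty once the integrability bounds of \eqref{eq:CS-inequality} and the attainment statements of Section \ref{subsec:W2} and Proposition \ref{prop:W_2-random} are in place.
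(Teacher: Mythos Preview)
Your proposal is correct and follows exactly the approach the paper indicates: the paper does not write out a proof for this lemma, but introduces it with ``Recalling the identity \eqref{eq:W-decomp} we immediately have the corresponding property at the level of $\sfW_2$,'' which is precisely the integration-of-\eqref{eq:W-decomp} argument you spell out. Your added care about integrability via \eqref{eq:CS-inequality} and your use of Proposition~\ref{prop:W_2-random} for the random-coupling formulation make the details explicit but do not deviate from the intended line.
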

The general duality result for Optimal Transport and 
Corollary \ref{cor:semiconc} then yield:
\begin{theorem}[Optimal Kantorovich potentials for ROT]
  \label{thm:duality}
  For every proper lower semicontinuous
  function $\zeta:\cP_2(\rmH)\to\Rinf$
  \begin{equation}
    \label{eq:44}
    \begin{aligned}
      \int \zeta(\mu)\,\d \ttM_1(\mu)+ \int \zeta^\star(\nu)\,\d \ttM_2(\nu)
      \ge
      \mmsc {\ttM_1}{\ttM_2}
    \end{aligned}
  \end{equation}
  and there exists a totally convex, lower
    semicontinuous and proper function
    $\phi:\cP_2(\rmH)\to\Rinf$ such that
    \begin{equation}
      \label{eq:28bis}
      \int \phi(\mu)\,\d \ttM_1(\mu)+
      \int\phi^\star(\nu)\,\d \ttM_2(\nu)=\mmsc {\ttM_1}{\ttM_2}.
    \end{equation}
    The corresponding potential $\cU
    =\frac 12 \sfm_2^2-\phi$ defined as in \eqref{eq:37} 
    satisfy
    \begin{equation}
      \label{eq:29}
      \cV=\cU^\sfc=
      \frac 12\sfm_2^2-\phi^\star,\quad
      \int \cU(\mu)\,\d \ttM_1(\mu)+
      \int \cU^\sfc(\nu)\,\d \ttM_2(\nu)=\frac 12\sfW_2^2({\ttM_1},{\ttM_2}),
    \end{equation}
    with respect to the cost
    $\sfc:=\frac 12\sfw_2^2$ 
\end{theorem}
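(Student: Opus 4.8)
The plan is to obtain the statement from the classical Kantorovich duality theorem applied to the cost $\sfc:=\tfrac12\sfw_2^2$ on the Polish space $(\cP_2(\rmH),\sfw_2)$, combined with the dictionary of Corollary~\ref{cor:semiconc} and the decomposition of Lemma~\ref{le:trivial}. First I would settle the inequality \eqref{eq:44}: if $\zeta$ fails the lower bound \eqref{eq:lower-B} then $\zeta^\star\equiv+\infty$ and \eqref{eq:44} is trivial, while otherwise $\zeta$ and $\zeta^\star$ are linearly bounded below, so that $\int\zeta\,\d\ttM_1$ and $\int\zeta^\star\,\d\ttM_2$ are well defined in $(-\infty,+\infty]$ (recall $\int\sfm_2(\mu)\,\d\ttM_i<\infty$); integrating the pointwise Kantorovich--Fenchel inequality $\msc\mu\nu\le\zeta(\mu)+\zeta^\star(\nu)$ of Corollary~\ref{cor:KF-inequality} against any $\pPi\in\Gamma(\ttM_1,\ttM_2)$ and taking the supremum over $\pPi$ yields \eqref{eq:44}.

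For the existence part, note that $\sfc=\tfrac12\sfw_2^2$ is a continuous, nonnegative cost on $\cP_2(\rmH)$ satisfying $\sfc(\mu,\nu)\le\sfm_2^2(\mu)+\sfm_2^2(\nu)$ (by \eqref{eq:W-decomp} and \eqref{eq:CS-inequality}), where $\mu\mapsto\sfm_2^2(\mu)\in L^1(\ttM_1)$ and $\nu\mapsto\sfm_2^2(\nu)\in L^1(\ttM_2)$ since $\ttM_i\in\RW2$, and that $\tfrac12\sfW_2^2(\ttM_1,\ttM_2)=\min_{\pPi\in\Gamma(\ttM_1,\ttM_2)}\int\sfc\,\d\pPi<\infty$. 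Under these hypotheses the classical Kantorovich duality theorem, in the version that also produces a maximizer of the dual problem, furnishes a $\sfc$-concave function $\cU:\cP_2(\rmH)\to\R\cup\{-\infty\}$, with $\cU\in L^1(\ttM_1)$ and $\cU^\sfc\in L^1(\ttM_2)$, such that
\[
  \tfrac12\sfW_2^2(\ttM_1,\ttM_2)=\int\cU\,\d\ttM_1+\int\cU^\sfc\,\d\ttM_2 .
\]
Since $\cU\in L^1(\ttM_1)$ and $\cU^\sfc\in L^1(\ttM_2)$ neither is identically $-\infty$, and then $\cU(\mu)+\cU^\sfc(\nu)\le\sfc(\mu,\nu)$ (Theorem~\ref{thm:c-conc}) forces both to be everywhere $<+\infty$ on $\cP_2(\rmH)$; hence $\phi:=\tfrac12\sfm_2^2-\cU$ (equivalently $\cU=\tfrac12\sfm_2^2-\phi$ as in \eqref{eq:37}) and $\psi:=\tfrac12\sfm_2^2-\cU^\sfc$ are proper $\Rinf$-valued functions.

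Then I would translate back via Corollary~\ref{cor:semiconc}: its Claim~2 gives that $\phi$ is totally convex and lower semicontinuous, because $\cU$ is $\sfc$-concave; and its Claim~1, applied with $\cV:=\cU^\sfc$ and the above $\psi$, gives $\psi=\phi^\star$, that is $\cV=\cU^\sfc=\tfrac12\sfm_2^2-\phi^\star$, which is the first identity of \eqref{eq:29}, the displayed duality identity being the second one. Substituting $\cU=\tfrac12\sfm_2^2-\phi$ and $\cU^\sfc=\tfrac12\sfm_2^2-\phi^\star$ into it, using $\int\tfrac12\sfm_2^2\,\d\ttM_i=\tfrac12\sfM_2^2(\ttM_i)<\infty$ together with the decomposition $\tfrac12\sfW_2^2(\ttM_1,\ttM_2)=\tfrac12\sfM_2^2(\ttM_1)+\tfrac12\sfM_2^2(\ttM_2)-\mmsc{\ttM_1}{\ttM_2}$ of Lemma~\ref{le:trivial}, the finite moment terms cancel and leave exactly \eqref{eq:28bis}; the two integrals there are in fact finite, since $\phi$ and $\phi^\star$ are totally convex and lower semicontinuous, hence linearly bounded below by Lemma~\ref{le:lower-B}, while $|\mmsc{\ttM_1}{\ttM_2}|\le\sfM_2(\ttM_1)\sfM_2(\ttM_2)<\infty$ by \eqref{eq:CS-inequality}.

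The only genuinely non-elementary ingredient is the invocation of the general Kantorovich duality theorem together with attainment by an $L^1$, $\sfc$-concave dual maximizer; the main point to verify carefully is therefore that the continuity of $\sfc=\tfrac12\sfw_2^2$ on the Polish space $\cP_2(\rmH)$ and the moment domination $\sfc(\mu,\nu)\le\sfm_2^2(\mu)+\sfm_2^2(\nu)$ place us squarely within the hypotheses of that result, everything else being bookkeeping with Corollaries~\ref{cor:KF-inequality} and~\ref{cor:semiconc}, Lemma~\ref{le:trivial}, and the finiteness of second moments.
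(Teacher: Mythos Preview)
Your proposal is correct and follows precisely the route the paper indicates: the statement is introduced with the sentence ``The general duality result for Optimal Transport and Corollary~\ref{cor:semiconc} then yield,'' and no further proof is given. You have simply written out those two ingredients in full---the classical Kantorovich duality on the Polish space $(\cP_2(\rmH),\sfw_2)$ with cost $\sfc=\tfrac12\sfw_2^2$ dominated by $\sfm_2^2(\mu)+\sfm_2^2(\nu)\in L^1(\ttM_1)\oplus L^1(\ttM_2)$, followed by the translation of Corollary~\ref{cor:semiconc} between $\sfc$-concavity of $\cU$ and total convexity of $\phi$, and the passage from \eqref{eq:29} to \eqref{eq:28bis} via Lemma~\ref{le:trivial}.
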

We collect now the main results concerning optimality and duality.
\begin{theorem}[Optimality conditions]
  \label{thm:Random-Brenier}
  Let $\ttM_1,\ttM_2\in \RW2$,
  $\pPi\in \Gamma(\ttM_1,\ttM_2)\subset
  \cP_2(\cP_2(\rmH)\times \cP_2(\rmH))$,
  $\bttP\in \RWW2$ supported in
  $ \cP_{2,o}(\rmH\times \rmH)$
  and associated with $\pPi$ via $(\pi^1_\sharp,\pi^2_\sharp)_\sharp\bttP=
  \pPi$ as in \eqref{eq:40}, so that 
  in particular 
  \begin{equation}
      \label{eq:support-relations}
      (\pi^1_\sharp,\pi^2_\sharp)(\supp \bttP)
      \subset \supp\pPi
      \subset 
      \overline{(\pi^1_\sharp,\pi^2_\sharp)(\supp \bttP)},\quad 
      \supp\bttP
      \subset 
      (\pi^1_\sharp,\pi^2_\sharp)^{-1}(\supp\pPi)\cap\cP_{2,o}(\rmH^2).      
  \end{equation}
  The following properties are equivalent:
  \begin{enumerate}
  \item
    $\pPi$ is an optimal plan in $\Gamma_o({\ttM_1},{\ttM_2})$
    for $\sfW_2$ or, equivalently, for $\mmsc\cdot\cdot$.
  \item $\supp(\pPi)$ is $\sfw_2^2$-cyclically monotone (recall Theorem \ref{thm:brmF-vs-frF}).
  \item 
  $\bttP$ is an optimal random coupling law in $\RGamma_o(\ttM_1,\ttM_2)$
  for $\sfW_2$ (according to 
  \eqref{eq:42}) or, equivalently, 
  for $\mmsc\cdot\cdot$ (according to 
  \eqref{eq:81}).
  \item $\supp (\bttP)$ is totally cyclically monotone.
  \item There exists a totally convex, lower
    semicontinuous and proper function
    $\phi:\cP_2(\rmH)\to\Rinf$ such that 
    \begin{equation}
      \label{eq:28}
      \phi(\mu)+\phi^\star(\nu)=\msc\mu\nu
      \quad\text{for $\pPi$-a.e.~$(\mu,\nu)\in \cP_2(\rmH)\times \cP_2(\rmH)$},
    \end{equation}
    i.e.~$\supp(\pPi)\subset 
    \partial^-\phi$. Moreover
    such a property holds for 
    \emph{every} pair of optimal Kantorovich potentials satisfying \eqref{eq:28bis}.
    \item There exists a totally convex, lower
    semicontinuous and proper function
    $\phi:\cP_2(\rmH)\to\Rinf$ such that
    \begin{equation}
      \label{eq:28-2}
      \phi(\pi^1_\sharp\ggamma)+\phi^\star(\pi^2_\sharp\ggamma)=\int \la x_1,x_2\ra\,\d\ggamma 
      \quad\text{for $\bttP$-a.e.~$\ggamma\in \cP_2(\rmH\times \rmH)$},
    \end{equation}
    i.e.~$\supp(\bttP)\subset \bpartialt\phi.$
    Moreover, such a property holds for every pair of optimal Kantorovich potentials satisfying 
    \eqref{eq:28bis}.
  \end{enumerate}
\end{theorem}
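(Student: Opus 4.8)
The plan is to prove the equivalences around a single loop: $1\Leftrightarrow3$, then $\{1,3\}\Rightarrow\{5,6\}$, then $6\Rightarrow4$ and $4\Rightarrow3$ (closing a chain on $1,3,4,6$), and finally $5\Rightarrow2$ together with $2\Leftrightarrow4$ to fold in the last two statements.

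\emph{Step 1: $1\Leftrightarrow3$.} Since $\bttP$ is concentrated on $\cP_{2,o}(\rmH\times\rmH)$ and $(\pi^1_\sharp,\pi^2_\sharp)_\sharp\bttP=\pPi$, for $\bttP$-a.e.\ $\ggamma$ one has $\int|x_1-x_2|^2\,\d\ggamma=\sfw_2^2(\pi^1_\sharp\ggamma,\pi^2_\sharp\ggamma)$, hence $\int\big(\int|x_1-x_2|^2\,\d\ggamma\big)\,\d\bttP=\int\sfw_2^2(\mu,\nu)\,\d\pPi$; together with \eqref{eq:17} and Proposition \ref{prop:W_2-random} this yields $\pPi\in\Gamma_o(\ttM_1,\ttM_2)$ iff $\bttP\in\RGamma_o(\ttM_1,\ttM_2)$, and by Lemma \ref{le:trivial} these also coincide with the optimizers of $\mmsc\cdot\cdot$ in \eqref{eq:30} and \eqref{eq:81}.

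\emph{Step 2: $\{1,3\}\Rightarrow\{5,6\}$, with the ``moreover'' clauses.} Fix \emph{any} totally convex, l.s.c., proper $\phi$ satisfying \eqref{eq:28bis} (one exists by Theorem \ref{thm:duality}; $\phi^\star$ is proper by Lemma \ref{le:lower-B}, and both integrals $\int\phi\,\d\ttM_1$, $\int\phi^\star\,\d\ttM_2$ are finite by the linear lower bounds of Lemma \ref{le:lower-B} together with $\ttM_i\in\RW2$). Optimality of $\pPi$ gives, via Lemma \ref{le:trivial}, $\int\msc\mu\nu\,\d\pPi=\mmsc{\ttM_1}{\ttM_2}=\int\big(\phi(\mu)+\phi^\star(\nu)\big)\,\d\pPi$; since $\msc\mu\nu\le\phi(\mu)+\phi^\star(\nu)$ pointwise by Corollary \ref{cor:KF-inequality}, the integrand $\phi(\mu)+\phi^\star(\nu)-\msc\mu\nu\ge0$ must vanish $\pPi$-a.e., which is \eqref{eq:28}. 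As $\msc\cdot\cdot$ is continuous on $\cP_2(\rmH)\times\cP_2(\rmH)$ (by \eqref{eq:W-decomp}) and $\phi+\phi^\star$ is l.s.c., the equality set is closed, so $\supp\pPi\subset\partial^-\phi$ via \eqref{eq:36} --- this is $5$, for \emph{every} optimal $\phi$. Passing to $\bttP$ as in \eqref{eq:40}: $\bttP$-a.e.\ $\ggamma$ is optimal with marginals in $\partial^-\phi$, so Theorem \ref{thm:sub-KLF} and Corollary \ref{cor:total-vs-sub} give $\ggamma\in\bpartialt\phi$ and $\int\la x_1,x_2\ra\,\d\ggamma=\phi(\pi^1_\sharp\ggamma)+\phi^\star(\pi^2_\sharp\ggamma)$, i.e.\ \eqref{eq:28-2}; using that $\ggamma\mapsto\int\la x_1,x_2\ra\,\d\ggamma$ is continuous on $\cP_2(\rmH^2)$, together with Corollary \ref{cor:KF-inequality} and l.s.c.\ of $\phi,\phi^\star$, the set $\bpartialt\phi$ is closed in $\cP_2(\rmH^2)$, whence $\supp\bttP\subset\bpartialt\phi$ --- this is $6$.

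\emph{Step 3: closing the loop.} $6\Rightarrow4$: if $\supp\bttP\subset\bpartialt\phi$ with $\phi$ totally convex l.s.c., then $\supp\bttP$ is totally cyclically monotone because $\bpartialt\phi$ is, by Theorem \ref{prop:collection}. $4\Rightarrow3$: if $\supp\bttP$ is totally cyclically monotone, Theorem \ref{thm:Rockafellar} produces a totally convex l.s.c.\ proper $\phi$ with $\supp\bttP\subset\bpartialt\phi$, so by Theorem \ref{thm:sub-KLF} we have $\int\la x_1,x_2\ra\,\d\ggamma=\phi(\pi^1_\sharp\ggamma)+\phi^\star(\pi^2_\sharp\ggamma)$ for $\bttP$-a.e.\ $\ggamma$; integrating against $\bttP\in\RGamma(\ttM_1,\ttM_2)$ gives $\int\big(\int\la x_1,x_2\ra\,\d\ggamma\big)\,\d\bttP=\int\phi\,\d\ttM_1+\int\phi^\star\,\d\ttM_2\ge\mmsc{\ttM_1}{\ttM_2}$ by \eqref{eq:44}, while the reverse inequality holds by \eqref{eq:81}, so equality forces $\bttP$ optimal. $5\Rightarrow2$: if $\supp\pPi\subset\partial^-\phi$, then $\partial^-\phi$ is $\sfw_2^2$-cyclically monotone, since for $(\mu_n,\nu_n)\in\partial^-\phi$ and $\sigma\in\rmS_N$, \eqref{eq:36} and Corollary \ref{cor:KF-inequality} give $\phi(\mu_n)+\phi^\star(\nu_n)=\msc{\mu_n}{\nu_n}$ and $\phi(\mu_n)+\phi^\star(\nu_{\sigma(n)})\ge\msc{\mu_n}{\nu_{\sigma(n)}}$; subtracting, summing, and invoking the identity in the proof of Theorem \ref{thm:brmF-vs-frF} yields $\sum_n\big(\sfw_2^2(\mu_n,\nu_{\sigma(n)})-\sfw_2^2(\mu_n,\nu_n)\big)\ge0$. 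Finally $2\Leftrightarrow4$ is Theorem \ref{thm:brmF-vs-frF} applied to the MPVF $\supp\bttP$, combined with \eqref{eq:support-relations}: total cyclical monotonicity of $\supp\bttP$ makes $(\pi^1_\sharp,\pi^2_\sharp)(\supp\bttP)$, hence its closure $\supseteq\supp\pPi$, $\sfw_2^2$-cyclically monotone (the property is closed since $\sfw_2^2$ is continuous), and conversely $\sfw_2^2$-cyclical monotonicity of $\supp\pPi$ makes the MPVF built from it via \eqref{eq:25} totally cyclically monotone, and $\supp\bttP$ is contained in it by \eqref{eq:support-relations}.

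\emph{Main obstacle.} The delicate points are the two ``upgrade'' arguments, where a $\pPi$-a.e.\ (resp.\ $\bttP$-a.e.) identity is promoted to an inclusion of the \emph{support} into $\partial^-\phi$ (resp.\ $\bpartialt\phi$): this requires the continuity of $(\mu,\nu)\mapsto\msc\mu\nu$ and of $\ggamma\mapsto\int\la x_1,x_2\ra\,\d\ggamma$ for the relevant $\cP_2$-topologies --- equivalently, uniform integrability of the bilinear terms along sequences with converging second moments --- so that the equality sets are closed, and one must also track that $\int\phi\,\d\ttM_i$ and $\int\phi^\star\,\d\ttM_i$ are finite in order to legitimize the duality-gap squeeze in Steps 2 and 3. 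The remaining content is a direct assembly of the duality theorem, the Rockafellar-type theorem, and the correspondences between total subdifferentials, $\msc\cdot\cdot$-subdifferentials and Lagrangian liftings established earlier.
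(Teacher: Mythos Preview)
Your proof is correct and follows essentially the same route as the paper: the equivalences are assembled from Proposition~\ref{prop:W_2-random} (for $1\Leftrightarrow3$), Theorem~\ref{thm:brmF-vs-frF} together with the support relations~\eqref{eq:support-relations} (for $2\Leftrightarrow4$), Theorem~\ref{thm:duality} plus the Kantorovich--Fenchel inequality (for $1\Rightarrow5,6$), and the converse direction via~\eqref{eq:44}. The only noteworthy difference is that the paper disposes of $1\Leftrightarrow2$ in one line by appealing to ``the general theory of optimal transport'', whereas you recover it through $5\Rightarrow2$ and $2\Rightarrow4\Rightarrow3\Rightarrow1$; also, you are more explicit than the paper about the closedness argument that upgrades the $\pPi$-a.e.\ (resp.\ $\bttP$-a.e.) equalities to genuine inclusions of supports in $\partial^-\phi$ (resp.\ $\bpartialt\phi$), which the paper states without further comment.
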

\begin{proof}
  The equivalence 1$\Leftrightarrow$2 follows by the general theory of
  optimal transport.

  The equivalence 1$\Leftrightarrow$3 follows by 
  Proposition \ref{prop:W_2-random}.

  Clearly 2$\Rightarrow$3
  by the last inclusion of 
  \eqref{eq:support-relations} 
  and the second part of Theorem \ref{thm:brmF-vs-frF}.
  On the other hand, if $\supp(\bttP)$ is totally cyclically monotone
  the same Theorem 
  \ref{thm:brmF-vs-frF} and 
  the inclusion 
  $\supp\pPi \subset 
  \overline{(\pi^1_\sharp,\pi^2_\sharp)
  \supp\bttP}$ 
  shows that 
  $\supp\pPi $ is $\sfw_2^2$-cyclically
  monotone, and therefore $\pPi$ is optimal. 
  
  In order to prove the implication
  1$\Rightarrow$5 it is sufficient to select
  an optimal totally convex proper and l.s.c.~function $\phi$
  satisfying
  \eqref{eq:28bis}.
  The optimality of $\pPi$ yields
  \begin{equation}
    \label{eq:45}
    \int \Big(\phi(\mu_1)+\phi^\star(\mu_2)-\msc{\mu_1}{\mu_2}\Big)\,\d\pPi(\mu_1,\mu_2)=0
  \end{equation}
  which implies \eqref{eq:28} thanks to the Kantorovich-Fenchel
  inequality \eqref{eq:23}.
  On the other hand,
  if $\pPi$ satisfies \eqref{eq:28} we get
  \eqref{eq:45} and the optimality of $\pPi$ thanks to \eqref{eq:44}.

  A similar argument shows the equivalence with Claim 6.
\end{proof}
As in the usual deterministic case,
when $\ttM_2$ is concentrated on a set of measures with
uniformly bounded quadratic moment, we
can find a Lipschitz totally convex optimal Kantorovich potential.
\begin{corollary}
  \label{cor:Lip}
  Let us suppose that there exists $R>0$ such that 
  \begin{equation}
    \label{eq:100}
    \sfm_2(\mu)\le R\quad\text{for $\ttM_2$-a.e.~$\mu\in \cP_2(\rmH)$}.
  \end{equation}
  Then we can find a totally convex $R$-Lipschitz function
  $\phi_R:\cP_2(\rmH)\to \R$ satisfying
  \eqref{eq:28bis}.
\end{corollary}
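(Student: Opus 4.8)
The plan is to start from an arbitrary optimal totally convex, proper, l.s.c.\ potential $\phi$ satisfying \eqref{eq:28bis} (which exists by Theorem \ref{thm:duality}), and then to truncate it so as to gain the Lipschitz bound without destroying optimality. The key observation is that the dual functional $\int\phi\,\d\ttM_1+\int\phi^\star\,\d\ttM_2$ only ``sees'' the values of $\phi^\star$ on the support of $\ttM_2$, and by \eqref{eq:100} that support consists of measures $\nu$ with $\sfm_2(\nu)\le R$. For such $\nu$, the defining supremum $\phi^\star(\nu)=\sup_\mu \msc\nu\mu-\phi(\mu)$ interacts with the growth of $\phi$ only through the region where $\msc\nu\mu$ grows at most like $R\,\sfm_2(\mu)$ by \eqref{eq:CS-inequality}. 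This suggests replacing $\phi$ by its ``$R$-truncation'' $\phi_R:=(\phi^\star\wedge a_R)^\star$ for a suitable large constant, or more directly by the double transform of $\phi\vee(\text{a linear lower bound})$; either way one obtains a totally convex, l.s.c.\ function which agrees with $\phi^\star$ on $\{\sfm_2(\nu)\le R\}$ after conjugation and is $R$-Lipschitz.

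Concretely, first I would pass to the Lagrangian picture via $\hat\phi=\phi\circ\iota$: by Lemma \ref{le:invariance} and Definition \ref{def:totally-convex}, $\hat\phi$ is convex, proper, l.s.c.\ on $\cH$, and \eqref{eq:28bis} reads $\int\hat\phi\,\d\frm_1+\int\hat\phi^\Ast\,\d\frm_2 = \langle\!\langle\cdot,\cdot\rangle\!\rangle$-type pairing, where $\frm_i$ are any lifts of $\ttM_i$ to $\cP_2(\cH)$ (obtained through $\iota_\sharp$ being surjective) and, by \eqref{eq:8}, $\frm_2$ is concentrated on $\{X\in\cH:\|X\|_\cH\le R\}$. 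Now I would use the standard Hilbert-space fact that the inf-convolution $\hat\phi_R:=\hat\phi\,\Box\,(R\|\cdot\|_\cH)$, i.e.\ $\hat\phi_R(X)=\inf_{Y}\hat\phi(Y)+R\|X-Y\|_\cH$, is convex, $R$-Lipschitz, and satisfies $(\hat\phi_R)^\Ast = \hat\phi^\Ast + \chi_{\{\|\cdot\|\le R\}}$ (the Legendre transform of the inf-convolution is the sum of the transforms, and $(R\|\cdot\|_\cH)^\Ast$ is the indicator of the ball of radius $R$). Crucially $\hat\phi_R$ is again invariant under measure-preserving isomorphisms, because inf-convolution with the $\rmG(\OOmega)$-invariant functional $R\|\cdot\|_\cH$ preserves invariance (here one uses $\|\sfg^*X-\sfg^*Y\|_\cH=\|X-Y\|_\cH$). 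Hence by the last part of Lemma \ref{le:invariance} there is a unique proper l.s.c.\ $\phi_R:\cP_2(\rmH)\to\R$ with $\hat\phi_R=\phi_R\circ\iota$, and by Definition \ref{def:totally-convex} $\phi_R$ is totally convex; by Claim 4 of Lemma \ref{le:invariance} it is $R$-Lipschitz, and in particular finite everywhere.

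Finally I would check that $\phi_R$ still satisfies \eqref{eq:28bis}. Since $\hat\phi_R\le\hat\phi$ pointwise (take $Y=X$ in the inf-convolution) we have $\phi_R\le\phi$, hence $\phi_R^\star\ge\phi^\star$ by antimonotonicity of $\star$ (Definition \ref{def:measure-conj}); conversely, from $(\hat\phi_R)^\Ast=\hat\phi^\Ast+\chi_{\{\|\cdot\|\le R\}}$ and the commutation identity \eqref{eq:9} of Theorem \ref{thm:obvious}, we get $\phi_R^\star(\nu)=\phi^\star(\nu)$ for every $\nu$ with $\sfm_2(\nu)\le R$ and $\phi_R^\star(\nu)=+\infty$ otherwise. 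Therefore, using \eqref{eq:100},
\begin{equation}
  \int\phi_R^\star(\nu)\,\d\ttM_2(\nu)=\int\phi^\star(\nu)\,\d\ttM_2(\nu),
\end{equation}
and since $\phi_R\le\phi$ gives $\int\phi_R\,\d\ttM_1\le\int\phi\,\d\ttM_1$, the left-hand side of \eqref{eq:28bis} for $\phi_R$ is $\le\mmsc{\ttM_1}{\ttM_2}$; the reverse inequality is exactly \eqref{eq:44}. Hence equality holds and $\phi_R$ is the desired potential.

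The main obstacle I anticipate is the bookkeeping for why $\int\phi_R\,\d\ttM_1$ is not strictly smaller than $\int\phi\,\d\ttM_1$: a priori truncating $\phi$ on the ``large'' part of its domain could lower its integral against $\ttM_1$, which is \emph{not} assumed to have bounded moments. The resolution is that one never needs $\phi_R=\phi$ on the support of $\ttM_1$; the inequality $\int\phi_R\,\d\ttM_1\le\int\phi\,\d\ttM_1$ together with the exact identity for the $\phi^\star$-integral and the universal bound \eqref{eq:44} forces the sum to be exactly $\mmsc{\ttM_1}{\ttM_2}$ — so in fact \emph{a posteriori} $\phi_R=\phi$ $\ttM_1$-a.e., but we do not need to prove this directly. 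One should double-check the Legendre-duality formula for the inf-convolution in the setting where $\hat\phi$ may take the value $+\infty$ (it holds with no qualification conditions for the sup-side, which is all that is used here) and that $\hat\phi_R>-\infty$, which follows from the linear lower bound of Lemma \ref{le:lower-B} applied to $\hat\phi$.
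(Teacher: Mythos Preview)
Your proof is correct and follows essentially the same approach as the paper: both constructions replace the conjugate $\phi^\star$ by $\phi^\star+\chi_{\{\sfm_2\le R\}}$ and then conjugate back, yielding an $R$-Lipschitz potential that still attains \eqref{eq:28bis}. The only cosmetic difference is that you carry this out in the Lagrangian lift $\cH$ via the inf-convolution $\hat\phi\,\Box\,R\|\cdot\|_\cH$ (whose Legendre transform is $\hat\phi^\Ast+\chi_{\{\|\cdot\|\le R\}}$), whereas the paper works directly in $\cP_2(\rmH)$ by setting $\psi_R:=\phi^\star$ on $\{\sfm_2\le R\}$, $+\infty$ elsewhere, and defining $\phi_R:=(\psi_R)^\star$; your detour through $\cH$ and the lifts $\frm_i$ is unnecessary but harmless.
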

\begin{proof}
  Let $\phi$ as in Claim 5 of Theorem \ref{thm:Random-Brenier}
  and let us set
  \begin{displaymath}
    \psi_R(\nu):=
    \phi^\star(\nu)\quad\text{if }\sfm_2(\nu)\le R,\quad
    \psi_R(\nu):=+\infty\text{ otherwise.}
  \end{displaymath}
  Clearly $\psi_R$ is proper, totally convex and lower semicontinuous;
  the pair $(\phi,\psi_R)$ satisfies
  \begin{displaymath}
    \phi(\mu)+\psi_R(\nu)\ge \msc\mu\nu,\quad
    \phi(\mu)+\psi_R(\nu)= \msc\mu\nu
    \quad\text{for $\pPi$-a.e.~$(\mu,\nu)\in \cP_2(\rmH)
      \times \cP_2(\rmH)$}.
  \end{displaymath}
  
  It is easy to check that $\phi_R:=(\psi_R)^\star$ still provides
  an optimal totally convex Kantorovich potential
  with
  $(\phi_R)^\star=\psi_R$; $\phi_R$ is also 
  $R$-Lipschitz since
  \begin{equation}
    \label{eq:101}
    \phi_R(\mu)=\sup\Big\{\msc\mu\nu-\phi^\star(\nu):
    \sfm_2(\nu)\le R\Big\}.
  \end{equation}
\end{proof}
\subsection{The structure of minimal geodesics in \texorpdfstring{$\cPP_2(\rmH)$}{}}
\label{subsec:G-structure}
We can apply 
the previous results to identify
minimal geodesics 
in $\cPP_2(\rmH)$
and to characterize their
transport structure.
Let us first recall that 
a minimal geodesic
$(\ttM_t)_{t\in [0,1]}$
connecting $\ttM_0$ to $\ttM_1$
is a curve in $\cPP_2(\rmH)$ 
satisfying
\begin{equation}
\label{eq:geodesic}
    \sfW_2(\ttM_s,\ttM_t)
    =
    |t-s|\sfW_2(\ttM_0,\ttM_1)
    \quad\text{for every $s,t\in [0,1]$.}
\end{equation}
\let\tbar\tilde
We also say that $\tbar\ttM$ is a $t$-intermediate point between $\ttM_0$ and $\ttM_1$, $t\in (0,1)$, if 
\begin{equation}
        \label{eq:intermediate}
        \sfW_2(\ttM_0,\tbar\ttM)=t
        \sfW_2(\ttM_0,\ttM_1),\quad
        \sfW_2(\tbar\ttM,\ttM_1)=(1-t)
        \sfW_2(\ttM_0,\ttM_1).
    \end{equation}
We will use the interpolating maps
$\pi^{1\shto2}_t$ of 
\eqref{eq:disp-interpolation-intro}
and we will consider Borel maps defined
in sets of the form
(recall \eqref{eq:natural-domain-Cmaps})
\begin{equation}
    \label{eq:domain}
    \cS(\rmH,D):=
    \Big\{(x,\mu)\in \rmH\times D:
    x\in \supp\mu\Big\},\quad
    D\text{ Borel subset of }\cP_2(\rmH),
    \quad 
    \cS(\rmH)=\cS(\rmH,\cP_2(\rmH)),
\end{equation}
which are Borel subsets of 
$\rmH\times \cP_2(\rmH)$
\cite[(4.23)]{CSS22}.

\begin{theorem}[Structure of minimal geodesics in $\cPP_2(\rmH)$]
    \label{thm:geo1}
    Let $\ttM_0,\ttM_1\in \cPP_2(\rmH)$,
    let $\phi,\phi^\star$
    be a pair of optimal Kantorovich potentials for $\ttM_i$,
    and let $\brmF=\bpartialt\phi\subset 
    \cP_2(\rmH\times \rmH).$
    \begin{enumerate}
        \item 
        For every optimal random coupling law
    $\bttP\in \RGamma_o(\ttM_0,\ttM_1)$
    the curve
    \begin{equation}
        \ttM_t:=(\pi_t^{1\shto2})_{\sharp\sharp}\bttP,\ t\in [0,1]\quad\text{is a minimal geodesic.}        
    \end{equation}
    \item For every $t\in (0,1)$
    the set 
    $\brmF_t:=(\pi^{1\shto2}_t)_\sharp(\brmF)$
    is closed in $\cP_2(\rmH)$
    and there exists two uniquely characterized continuous maps
    $\ff_{t,i}:\cS(\rmH,\brmF_t)\to 
    \rmH
    $, $i=0,1,$
    inverting $(\pi^{1\shto2}_t)_\sharp$ in the sense that 
    \begin{equation}
        \label{eq:inversion}
        \ggamma\in \brmF,\quad 
        \mu=(\pi^{1\shto2}_t)_\sharp\ggamma
        \quad\Rightarrow\quad
        \ggamma=(\ff_{t,0}(\cdot,\mu),
        \ff_{t,1}(\cdot,\mu))_\sharp\mu.
    \end{equation}
    Moreover, 
    $\ff_{t,i}(\cdot,\mu)$ is Lipschitz 
    in $\supp(\mu)$ and cyclically monotone
    in $\rmH$,
    the maps 
    $\FF_{t,i}:\mu\mapsto 
    \ff_{t,i}(\cdot,\mu)_\sharp\mu$
    are Lipshitz from
    $\brmF_t$ to $\cP_2(\rmH)$
    and cyclically monotone in $\cP_2(\rmH).$
  \item 
    If $t\in (0,1)$ and
    $\tbar\ttM$ is a $t$-intermediate point between $\ttM_0$ and $\ttM_1$ 
    then 
    $\supp(\tbar\ttM)\subset \brmF_t$ 
    and 
    the formula
     \begin{equation}
        \label{eq:P-from-barM}
        \tbar\bttP=
        (\GG_t)_\sharp \tbar\ttM
        \quad\text{where}
        \quad
        \GG_t(\mu):=
        (\ff_{t,0}(\cdot,\mu),
        \ff_{t,1}(\cdot,\mu))_\sharp\mu
    \end{equation}
    provides the unique 
    $\tbar\bttP\in \RGamma_o(\ttM_0,\ttM_1)$
    such that 
    $\tbar \ttM=(\pi_t^{1\shto2})_{\sharp\sharp}\tbar\bttP$
    and correspondingly the unique geodesic
    $(\ttM_s)_{s\in [0,1]}$
    connecting $\ttM_0$ to $\ttM_1$
    such that $\ttM_t=\tbar\ttM$.   
    Moreover 
    $\RGamma_o(\tbar\ttM,\ttM_i)$,
    $i=0,1$, 
    contains the unique element
    $\tbar\bttP_{t,i}$ given by
    \begin{equation}
    \label{eq:bttp-i}
    \begin{aligned}
    \tbar\bttP_{t,0}&=
    (\pi^1,\pi^{1\shto2}_t)_{\sharp\sharp}\tbar\bttP=
        (\GG_{t,0})_\sharp\tbar\ttM,\quad
        \GG_{t,0}(\mu)=
        (\ff_{t,0}(\cdot,\mu),\ii)_\sharp\mu\\
    \tbar\bttP_{t,1}&=
    (\pi^{1\shto2}_t,\pi^2)_{\sharp\sharp}\tbar\bttP=
        (\GG_{t,1})_\sharp\tbar\ttM,\quad
        \GG_{t,1}(\mu)=
        (\ii,\ff_{t,1}(\cdot,\mu))_\sharp\mu
    \end{aligned}
    \end{equation}
    which is concentrated on 
    deterministic optimal couplings, and 
    \begin{equation}
        \label{eq:ppi-i}
        \tbar\pPi_{t,i}=(\pi^1_\sharp,\pi^2_\sharp)_\sharp
    \tbar\bttP_{t,i}=
    (\id\times \FF_{t,i})_\sharp\tbar\ttM,
    \quad i=0,1,
    \end{equation}
    is the unique optimal coupling
    in $\Gamma_o(\tbar\ttM,\ttM_i)$.
        \item 
    For every 
    $t\in (0,1)$ the conjugate
    functions (recall the definition of Moreau-Yosida regularization
    \eqref{eq:Yosida})
    \begin{equation}
        \label{eq:pair1}
        \phi^t:=\frac {1-t}2\sfm_2^2+
    t\phi,\quad 
    (\phi^t)^\star=
    t(\hat \phi^\star)_{1/t-1}\circ \frd_{t^{-1}}
    \end{equation}
    provide a pair of optimal Kantorovich potentials for $\ttM_0$
 and any $t$-intermediate point 
 $\tbar \ttM$ between $\ttM_0$ and $\ttM_1.$
 Similarly 
 \begin{equation}
        \label{eq:pair2}
    (1-t)\phi_{t/(1-t)}\circ \frd_{(1-t)^{-1}},\quad 
    (1-t)\phi^\star+\frac t2\sfm_2^2
    \end{equation}
    is a pair of optimal Kantorovich potentials for $\tbar \ttM$
 and $\ttM_1$.
    \end{enumerate}
\end{theorem}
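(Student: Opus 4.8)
The strategy is to push the whole problem to the Lagrangian Hilbert space $\cH$, where geodesics are line segments and the optimal potential becomes a maximal monotone operator with well–behaved resolvents. Fix once and for all a pair of optimal Kantorovich potentials $\phi,\phi^\star$ for $\ttM_0,\ttM_1$ as provided by Theorem \ref{thm:duality}, so that $\hat\phi=\phi\circ\iota$ is proper, convex and l.s.c.\ on $\cH$ and, by Theorem \ref{prop:collection}, $\widehat{\bpartialt\phi}=\partial\hat\phi$ is maximal monotone; set $\brmF:=\bpartialt\phi\subset\cP_2(\rmH\times\rmH)$. Claim~1 is the easy part: if $\bttP\in\RGamma_o(\ttM_0,\ttM_1)$ then by Proposition \ref{prop:W_2-random} it is concentrated on $\cP_{2,o}(\rmH\times\rmH)$, every such $\ggamma$ generates the constant–speed $\sfw_2$–geodesic $s\mapsto(\pi^{1\shto2}_s)_\sharp\ggamma$, and for $\ttM_s:=(\pi^{1\shto2}_s)_{\sharp\sharp}\bttP$ one has $\sfW_2^2(\ttM_s,\ttM_r)\le\int\sfw_2^2\big((\pi^{1\shto2}_s)_\sharp\ggamma,(\pi^{1\shto2}_r)_\sharp\ggamma\big)\,\d\bttP=(r-s)^2\int\!\!\int|x_1-x_2|^2\,\d\ggamma\,\d\bttP=(r-s)^2\sfW_2^2(\ttM_0,\ttM_1)$, which combined with the triangle inequality forces equality, i.e.\ $(\ttM_s)$ is a minimal geodesic.

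The geometric core, from which everything else follows, is that the interpolation point lies in a shifted subdifferential. If $(X,Y)\in\partial\hat\phi$ and $Z:=(1-t)X+tY$, then $Z\in\big((1-t)\id+t\,\partial\hat\phi\big)(X)=\partial\widehat{\phi^t}(X)$ with $\phi^t=\frac{1-t}2\sfm_2^2+t\phi$, and symmetrically $Z\in\big(t\,\id+(1-t)\,\partial\hat\phi^\star\big)(Y)=\partial\hat\psi(Y)$ with $\psi=\frac t2\sfm_2^2+(1-t)\phi^\star$. Both $\widehat{\phi^t}$ and $\hat\psi$ are strongly convex, so $\partial\widehat{\phi^t}$ and $\partial\hat\psi$ have single–valued inverse relations: $X=R_{t,0}(Z):=\nabla\widehat{(\phi^t)^\star}(Z)$ and $Y=R_{t,1}(Z):=\nabla\widehat{\psi^\star}(Z)$ are $\tfrac1{1-t}$– resp.\ $\tfrac1t$–Lipschitz, law–invariant, cyclically monotone maps on $\cH$ (gradients of the $C^{1,1}$ conjugates, via the commutation \eqref{eq:9}). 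By Corollary \ref{cor:calculus-tools} their potentials are dilated Moreau–Yosida regularizations, $(\phi^t)^\star=t(\phi^\star)_{1/t-1}\circ\frd_{t^{-1}}$ and $\psi^\star=(1-t)\phi_{t/(1-t)}\circ\frd_{(1-t)^{-1}}$, hence Proposition \ref{prop:diff}(3) applies and yields the everywhere–defined continuous nonlocal fields $\ff_{t,0}:=\nabla_W(\phi^t)^\star$, $\ff_{t,1}:=\nabla_W\psi^\star$ on $\cS(\rmH)$, with the announced Lipschitz and cyclical–monotonicity properties and with $\FF_{t,i}(\mu)=\ff_{t,i}(\cdot,\mu)_\sharp\mu$ descending from $R_{t,i}$. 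Writing $\ggamma=\iota^2(X,Y)\in\brmF$, $\mu=\iota(Z)$ and using the pointwise identities $X(\oomega)=\ff_{t,0}(Z(\oomega),\mu)$, $Y(\oomega)=\ff_{t,1}(Z(\oomega),\mu)$ ($\P$–a.e.), which also give $(1-t)\ff_{t,0}(\cdot,\mu)+t\ff_{t,1}(\cdot,\mu)=\ii$ on $\supp\mu$, we obtain \eqref{eq:inversion} and the injectivity of $(\pi^{1\shto2}_t)_\sharp$ on $\brmF$; closedness of $\brmF_t$ is then immediate (in fact $\brmF_t=\cP_2(\rmH)$, since $\id+\lambda\,\partial\hat\phi$ is onto by Minty's theorem). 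This proves Claim~2.

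For Claim~3, let $\tbar\ttM$ be a $t$–intermediate point; glue an optimal $\pPi^-\in\Gamma_o(\ttM_0,\tbar\ttM)$ and an optimal $\pPi^+\in\Gamma_o(\tbar\ttM,\ttM_1)$ over their common marginal $\tbar\ttM$ to get $\Pi^3\in\cP_2(\cP_2(\rmH)^3)$. Minkowski's inequality together with \eqref{eq:41} and the definition of a $t$–intermediate point forces, $\Pi^3$–a.e., that $\tbar\mu$ is the time–$t$ point of a $\sfw_2$–geodesic between $\mu_0$ and $\mu_1$, while optimality of $\pi^{13}_\sharp\Pi^3$ and Theorem \ref{thm:Random-Brenier}(5) force $(\mu_0,\mu_1)\in\partial^-\phi$, whence $\Gamma_o(\mu_0,\mu_1)\subset\bpartialt\phi$ by Corollary \ref{cor:total-vs-sub}. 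Any $\ggamma\in\Gamma_o(\mu_0,\mu_1)$ with $(\pi^{1\shto2}_t)_\sharp\ggamma=\tbar\mu$ therefore lies in $\brmF$ and, by the injectivity from Claim~2, equals $\GG_t(\tbar\mu)$, so $\mu_i=\FF_{t,i}(\tbar\mu)$ $\Pi^3$–a.e.; this identifies $\Pi^3=(\FF_{t,0},\id,\FF_{t,1})_\sharp\tbar\ttM$, gives $\ttM_i=(\FF_{t,i})_\sharp\tbar\ttM$ and $\supp\tbar\ttM\subset\brmF_t$, and shows that $\tbar\bttP:=(\GG_t)_\sharp\tbar\ttM$ — concentrated on $\bpartialt\phi$, with $(\pi^1_\sharp,\pi^2_\sharp)_\sharp\tbar\bttP$ supported in $\partial^-\phi$ and with the correct marginals — belongs to $\RGamma_o(\ttM_0,\ttM_1)$ and satisfies $(\pi^{1\shto2}_t)_{\sharp\sharp}\tbar\bttP=\tbar\ttM$. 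Uniqueness of $\tbar\bttP$, hence of the geodesic through $\tbar\ttM$, follows because any competitor is concentrated on $\bpartialt\phi$ (Theorem \ref{thm:Random-Brenier}) and is thus reconstructed from $\tbar\ttM$ by the injective $\GG_t$; equivalently, it follows from the strong monotonicity of $\partial\widehat{\phi^t}$. The formulas \eqref{eq:bttp-i}–\eqref{eq:ppi-i} are then direct computations from $\GG_t(\mu)=(\ff_{t,0}(\cdot,\mu),\ff_{t,1}(\cdot,\mu))_\sharp\mu$ and $(1-t)\ff_{t,0}(\cdot,\mu)+t\ff_{t,1}(\cdot,\mu)=\ii$, together with the same rigidity applied to the pairs $(\ttM_0,\tbar\ttM)$ and $(\tbar\ttM,\ttM_1)$.

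Finally, Claim~4: the conjugation identities are exactly Corollary \ref{cor:calculus-tools} (together with $\phi^{\star\star}=\phi$). To see that $(\phi^t,(\phi^t)^\star)$ is an optimal pair for $(\ttM_0,\tbar\ttM)$, integrate the Lagrangian equality $\widehat{\phi^t}(X)+\widehat{(\phi^t)^\star}(Z)=\la X,Z\ra_\cH$ — valid because $Z\in\partial\widehat{\phi^t}(X)$ — against the optimal random coupling $\tbar\bttP_{t,0}$: using $\widehat{(\phi^t)^\star}=(\widehat{\phi^t})^\Ast$ from Theorem \ref{thm:obvious}, the fact that $\tbar\bttP_{t,0}$ is concentrated on optimal couplings (so $\la X,Z\ra_\cH=\msc{\iota X}{\iota Z}$ by Lemma \ref{le:param}), and the marginal conditions, one gets $\int\phi^t\,\d\ttM_0+\int(\phi^t)^\star\,\d\tbar\ttM=\int\msc{\cdot}{\cdot}\,\d\tbar\pPi_{t,0}=\mmsc{\ttM_0}{\tbar\ttM}$, which with \eqref{eq:44} yields optimality; the pair for $(\tbar\ttM,\ttM_1)$ is handled identically from $Z\in\partial\hat\psi(Y)$. \emph{The main obstacle} is Claim~2: upgrading the easy ``for each fixed $\mu$'' inversion estimate to genuine joint continuity of $(x,\mu)\mapsto\ff_{t,i}(x,\mu)$ on the Borel set $\cS(\rmH,\brmF_t)$ and checking that this nonlocal field represents $R_{t,i}$ pointwise in the randomness variable — exactly the structural content of \cite[Sect.~4]{CSS2} and \cite[Sect.~5]{CSS22} already invoked for Proposition \ref{prop:diff}(3). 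The remainder is bookkeeping of the dilation/Moreau–Yosida exponents and of the $\GG_{t,i}$, $\FF_{t,i}$, $\tbar\pPi_{t,i}$ identities.
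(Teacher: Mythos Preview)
Your proof is essentially correct and follows the same Lagrangian lifting philosophy as the paper, but with two notable differences worth recording.

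For Claim~2, you take a more explicit route: you identify the inverse maps $R_{t,i}$ directly as gradients of the conjugates $(\phi^t)^\star$ and $\psi^\star$, invoke Corollary~\ref{cor:calculus-tools} to recognise these as dilated Moreau--Yosida regularisations, and then read off the nonlocal fields $\ff_{t,i}=\nabla_W(\phi^t)^\star$, $\nabla_W\psi^\star$ from Proposition~\ref{prop:diff}(3). The paper instead derives Lipschitz maps $F_{t,i}:\hat\brmF_t\to\cH$ from the bare monotonicity estimate
\[
\|\bar X-\bar X'\|_\cH^2\ge (1-t)\|X_0-X_0'\|_\cH^2+t\|X_1-X_1'\|_\cH^2
\]
and then appeals to the general representation theorem for law--invariant Lipschitz maps \cite[Thm.~4.8]{CSS2} to produce the continuous $\ff_{t,i}$. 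Your approach is more concrete and has the pleasant side effect of unifying Claims~2 and~4 (the optimal potentials for the intermediate problems already appear in the construction of $\ff_{t,i}$); your observation that $\brmF_t=\cP_2(\rmH)$ by Minty's theorem is also sharper than the paper's ``closed''.

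For Claim~3 there is a genuine, if minor, gap. After the gluing at the level of $(\cP_2(\rmH))^3$ and the equality case of Minkowski, you obtain only the \emph{metric} statement that $\tbar\mu$ is a $t$-intermediate point between $\mu_0$ and $\mu_1$ in $(\cP_2(\rmH),\sfw_2)$, $\Pi^3$-a.e. You then write ``any $\ggamma\in\Gamma_o(\mu_0,\mu_1)$ with $(\pi^{1\shto2}_t)_\sharp\ggamma=\tbar\mu$'' without checking that such a $\ggamma$ exists; this is precisely the structure theorem for geodesics in $\cP_2(\rmH)$ (every intermediate point is a displacement interpolant), which is standard but external to the paper. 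The paper sidesteps this by performing the gluing one level higher, in $\cPP_2(\rmH^3)$: it lifts the tri-plan $\pPi$ to a measure $\bttP$ on the set $\cQ$ of tri-couplings $\ggamma\in\cP_2(\rmH^3)$ with optimal $\pi^{1,2}$- and $\pi^{2,3}$-projections, and then the equality case of $(a+b)^2\le\tfrac1t a^2+\tfrac1{1-t}b^2$ applied \emph{pointwise inside each} $\ggamma$ directly forces $x_2=(1-t)x_1+tx_3$ on $\supp\ggamma$. This yields $\tbar\bttP=\pi^{1,3}_{\sharp\sharp}\bttP\in\RGamma_o(\ttM_0,\ttM_1)$ with $(\pi^{1\shto2}_t)_{\sharp\sharp}\tbar\bttP=\tbar\ttM$ and, simultaneously, $\pi^{1,2}_{\sharp\sharp}\bttP=\tbar\bttP_{t,0}$, $\pi^{2,3}_{\sharp\sharp}\bttP=\tbar\bttP_{t,1}$, with no appeal to geodesic structure in $\cP_2(\rmH)$. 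Your argument is easily repaired by adopting this lift.
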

\begin{remark}
    \label{rem:comparison}
    The above Theorem recovers in a much more precise form
    various results that 
    hold for $\cP_2(\rmX)$ 
    in suitable classes of 
    metric spaces $\rmX$, see 
    \cite[Chap.~7]{Villani09}.
    See in particular the 
    nonbranching property 
    (stated in locally compact spaces)
    \cite[Corollary 7.32]{Villani09}
    for Claim 2, 
     the 
    ``interpolation of prices''
    \cite[Theorem 7.36]{Villani09}
    concerning Claim 3, and the related bibliographical notes.    
\end{remark}
\begin{proof}
    \underline{Claim 1.}
    For $0\le s< t\le 1$
    we define 
    the maps
    $\pi^{1\shto2}_{s,t}:\rmH\times \rmH\to\rmH\times\rmH$ 
    \begin{displaymath}
        \pi^{1\shto2}_{s,t}(x_0,x_1):=
        (\pi^{1\shto2}_s(x_0,x_1),
        \pi^{1\shto2}_t(x_0,x_1));
    \end{displaymath}
    we observe that 
    \begin{displaymath}
        |\pi^{1\shto2}_s(x_0,x_1)-\pi^{1\shto2}_t(x_0,x_1)|=|t-s|
    \cdot |x_1-x_0|
    \end{displaymath}
    so that 
    the random coupling
    $\bttP_{s,t}=
    (\pi^{1\shto2}_{s,t})_{\sharp\sharp}\bttP
    $
    belongs to $\RGamma(\ttM_s,\ttM_t)$
    and thus yields
    \begin{displaymath}
        \sfW_2(\ttM_s,\ttM_t)
        \le |t-s|\sfW_2(\ttM_0,\ttM_1)
        \quad\text{if }0\le s<t\le 1.
    \end{displaymath}
    The triangle inequality then yields 
    \eqref{eq:geodesic}.

    \smallskip\noindent
    \underline{Claim 2.}
    Let us consider the
    Lagrangian lifting 
$\hat\brmF=
\partial\hat \phi\subset \cH\times \cH$
of 
the total subdifferential 
$\brmF=\bpartialt\phi$ of $\phi$ and let us set
\begin{displaymath}
    I_t(X_0,X_1):=
    (1-t)X_0+
    tX_1,\quad 
    \hat\brmF_t=I_t(\hat\brmF)=\Big\{\tbar X=(1-t)X_0+
    tX_1:(X_0,X_1)\in \hat\brmF\Big\}
\end{displaymath}
which is clearly a set invariant by m.p.i.
By monotonicity, if 
$\tbar X=I_t(X_0,X_1),$ $\tbar X'=I_t(X_0',X_1')$ for $(X_0,X_1),(X_0',X_1')\in \hat\brmF$
    we have
    \begin{equation}
        \label{eq:estimate}
        |\tbar X-\tbar X'|^2\ge (1-t)|X_0-X_0'|^2+
        t|X_1-X_1'|^2,
    \end{equation}
which shows that $\hat\brmF_t$ is closed
and there exist Lipschitz maps
$F_{t,i}:\hat\brmF_t\to\rmH$ such that
\begin{equation}
    \tbar X=I_t(X_0,X_1),\ 
    (X_0,X_1)\in \hat\brmF\quad\Rightarrow\quad
    X_i=F_{t,i}(\tbar X).
\end{equation}
Since $F_{t,i}$ are also invariant by m.p.i., 
the general extension and representation Theorem 
4.8 of \cite{CSS2} 
shows that there is a unique pair of continuous maps 
$\ff_{t,i}:\cS(\rmH,\brmF_t)\to \rmH$
representing $F_{t,i}$ as 
\begin{equation}
    \label{eq:representation-f}
    F_{t,i}[\tbar X](\oomega)=
    \ff_{t,i}(\tbar X(\oomega),\iota(\tbar X))
\end{equation}
with the properties stated in Claim 2. 
\eqref{eq:representation-f} clearly yields
\eqref{eq:inversion} since
$\supp(\mu)\subset \brmF_t\subset \iota(\hat\brmF_t)$.
The cyclical monotonity of $\ff_{t,i}(\cdot,\mu)$
and of $\FF_{t,i}$ follows from the corresponding
cyclical monotonicity of 
the maps $F_{t,i}$ in $\cH$, which 
in turn follows by the fact that 
they are the inverse of 
the cyclically monotone sets 
\begin{equation}
    \label{eq:restriction}
    \begin{aligned}
    \hat\brmF_{t,0}:={}& (I_0,I_t)(\hat\brmF)=
    \Big\{(X_0,(1-t)X_0+tX_1):(X_0,X_1)\in 
    \hat\brmF \Big\},\\
     \hat\brmF_{t,1}:={}& (I_1,I_t)(\hat\brmF)=
    \Big\{(X_1,(1-t)X_0+tX_1)):(X_0,X_1)\in 
    \hat\brmF \Big\}.
    \end{aligned}
\end{equation}

    \smallskip
    \noindent\underline{Claim 3.}
    Let $\pPi_{0}\in \Gamma_o(\ttM_0,\tbar\ttM)$
    and $\pPi_{1}\in \Gamma_o(
    \tbar\ttM,\ttM_1)$.
    By the glueing Lemma 
    we find a tri-plan
    $\pPi\in \Gamma(\ttM_0,
    \tbar\ttM,\ttM_1)$
    such that 
    $\pi^{1,2}_\sharp \pPi=
    \pPi_0$,
    $\pi^{2,3}_\sharp\pPi=\pPi_1.$
    Consider now the closed set 
    \begin{equation}
        \label{eq:multiple-R}
        \cQ:=
        \Big\{
        \ggamma\in 
        \cP_2(\rmH\times \rmH\times\rmH):
        \pi^{1,2}_\sharp\ggamma
        \text{ and }
        \pi^{2,3}_\sharp\ggamma
        \text{ belong to } \cP_{2,o}(\rmH\times \rmH)\Big\}.
    \end{equation}
    Since the map 
    $(\pi^1_\sharp\times 
    \pi^2_\sharp\times \pi^3_\sharp):
    \cQ\to \big(\cP_2(\rmH)\big)^3$
    is surjective, we can find
    $\bttP\in \cP_2(\cQ)
    \subset \cPP_2(\rmH\times \rmH\times\rmH)$
    such that 
    $(\pi^1_\sharp\times 
    \pi^2_\sharp\times \pi^3_\sharp)_\sharp\bttP= \pPi$.

    We have
    \begin{displaymath}
        \pi^{1,2}_{\sharp\sharp}(\bttP)
        \in \RGamma_o(\ttM_0,\tbar\ttM),\quad
        \pi^{2,3}_{\sharp\sharp}(\bttP)
        \in \RGamma_o(\tbar \ttM,\ttM_1)
    \end{displaymath}
    and by the elementary  inequality
    $(a+b)^2\le \frac 1ta^2+\frac1{1-t}b^2$
    \begin{align*}
           \sfW_2^2(\ttM_0,\ttM_1)
           &\le
           \int \int |x_1-x_3|^2\,\d\ggamma\bttP(\ggamma)
           \\&
           \le 
            \frac 1t\int \int |x_1-x_2|^2\,\d\ggamma\bttP(\ggamma)
            +
             \frac 1{1-t}\int \int |x_2-x_3|^2\,\d\ggamma\bttP(\ggamma)
             \\&=
             \frac 1t\sfW_2^2(\ttM_0,\tbar\ttM)+
             \frac 1{1-t}
             \sfW_2^2(\tbar\ttM,\ttM_1)=
             \sfW_2^2(\ttM_0,\ttM_1)
    \end{align*}
we deduce that 
$\tbar\bttP:=\pi^{1,3}_{\sharp\sharp}\bttP\in \RGamma_o(\ttM_0,\ttM_1)$
and 
\begin{displaymath}
    |x_1-x_3|^2=
    \frac 1t |x_1-x_2|^2+
    \frac 1{1-t}|x_2-x_3|^2
    \text{ on }\supp(\ggamma)
    \quad\text{for $\bttP$-a.e.~$\ggamma$},
\end{displaymath}
so that 
$\bttP$-a.e.~$\ggamma$ is supported in the set 
\begin{equation}
    \label{eq:linear}
    \rmH^3_t:=
    \Big\{(x_1,x_2,x_3)\in \rmH^3:
    x_2=(1-t)x_1+tx_3\Big\}
\end{equation}
and therefore 
$\tbar\ttM=(\pi^{1\shto2}_t)_{\sharp\sharp}\tbar\bttP.$

In order to show that $\tbar\bttP$ is unique,
we observe that any random coupling
in $\RGamma_o(\ttM_0,\ttM_1)$
has support in $\brmF$ 
so that $\tbar\ttM$ has support
in $\brmF_t$ and 
we can then apply \eqref{eq:inversion}
which yields \eqref{eq:P-from-barM}.
The above discussion also shows that 
$\RGamma_o(\tbar\ttM,\ttM_i)$
and $\Gamma_o(\tbar\ttM,\ttM_i)$
are uniqueley characterized by 
\eqref{eq:bttp-i} and \eqref{eq:ppi-i}

  \smallskip
    \noindent\underline{Claim 4.}
    In order to check \eqref{eq:pair1} 
    (the argument for \eqref{eq:pair2} is similar)
    we use the fact that 
    $\tilde\ttM$ is supported
    in $\brmF_t=\iota(\hat\brmF_t)$.
    On the other hand,
    the set  
    $\hat \brmF_{t,0}$ defined by 
    \eqref{eq:restriction}
    is the graph of the 
    subdifferential of
    the Lagrangian lifting of $\phi^t$
    \begin{displaymath}
        \hat\phi^t(X):=\frac {1-t}2\|X\|_{\cH}^2
        +t\hat \phi(X)
    \end{displaymath}
    whose Legendre-Fenchel transform
    can be expressed in terms of 
    the Moreau-Yosida regularization of 
    $\hat\phi^\Ast$ (recall \eqref{eq:Y-in-cH}
    and \eqref{eq:useful-for-dopo})
    and corresponds to the Lagrangian lifting
    of the functional $(\phi^t)^\star$ given in 
    \eqref{eq:pair1}.
    Since
    \begin{displaymath}
        \hat\phi^t(X)+
        (\hat\phi^t)^\Ast(Y)=
        \langle X,Y\rangle_\cH=
        \msc{\iota(X)}{\iota(Y)}
        \quad \text{for every }
        (X,Y)\in \hat\brmF_{t,0}
    \end{displaymath}
    we get the proof of the claim. 
\end{proof}

\subsection{Lifting (laws of) random measures of \texorpdfstring{$\RW2$}{} to measures on Lagrangian maps in \texorpdfstring{$\cP_2(\cH)$}{}.}
\label{subsec:lifting}
In the previous sections, we exploited the lifting
technique of Proposition \ref{prop:W_2-random}
in order to describe optimal couplings in $\cP_2(\rW2\times \rW2)$
in terms of laws of random optimal couplings in $\RWW2$.

There is another lifting technique which is induced by 
the $1$-Lipschitz and surjective law map $\iota:\cH\to \cP_2(\rmH)$.
The corresponding push-forward transformation $\iota_\sharp$ still provides
a surjective map from $\cP_2(\cH)$
(the space of measures on Lagrangian maps of $\cH$)
to $\RW2$ (the space of (laws of) random measures), so that
it is natural to study the relations between optimal transport
problems
in $\RW2$ and in $\cP_2(\cH)$.

First of all, since $\iota$ is $1$-Lipschitz,
we observe that for every $\frm_i\in \cP_2(\cH)$, $i=1,2$, we have
\begin{equation}
  \label{eq:43}
  \begin{aligned}
    \ttM_i=\iota_\sharp {\frm_i}\quad&\Rightarrow\quad
    \sfW_2(\ttM_1,\ttM_2)\le \sfW_{2,\cH}({\frm_1},{\frm_2}).
  \end{aligned}
\end{equation}
Similarly, it is not difficult to check that given a coupling
$\frp\in \cP_2(\cH\times \cH)$ and setting
$\iota_i:=\iota\circ\pi^i$,
we have 
\begin{equation}
  \label{eq:43bis}
  \begin{aligned}
   \pPi=(\iota_1,\iota_2)_\sharp{\frp}
    \quad&\Rightarrow\quad
    \int \sfw_2^2(\mu_1,\mu_2)\,\d\pPi(\mu_1,\mu_2)
    \le
    \int \|X_1-X_2\|_\cH^2 \,\d{\frp}(X_1,X_2).\\
  \end{aligned}
\end{equation}
Eventually, still starting from
$\frp\in \cP_2(\cH\times \cH)$ and using
$\iota^2(X,Y):=(X,Y)_\sharp\P$, we have
\begin{equation}
  \label{eq:43tris}
  \begin{aligned}
    \bttP=\iota^2_\sharp {\frp}
    \quad&\Rightarrow\quad
    \int \int |x_1-x_2|^2\,\d\ggamma\,\d\bttP(\ggamma)
    =
    \int \|X_1-X_2\|_\cH^2 \,\d{\frp}(X_1,X_2).
  \end{aligned}
\end{equation}
Since $\iota^2$ is surjective from $\cH\times \cH$ to
$\cP_2(\rmH\times \rmH)$, 
$\iota^2_\sharp$ is surjective as well, so that given  $\bttP\in
\cP_2(\cP_2(\rmH\times\rmH))$
it is always possible to find
a lifting $\frp^\ell$ such that
$\bttP=\iota^2_\sharp {\frp^\ell}$.

If we want to 
lift $\pPi$ so that 
\eqref{eq:43bis} holds as an equality, 
we can first select 
$\bttP\in \cP_2(\cP_{2,o}(\rmH\times
\rmH))$
so that \eqref{eq:40} holds: the lifting $\frp^\ell$
satisfies the identity
\begin{equation}
  \label{eq:43bis-o}
  \begin{aligned}
   \pPi=(\iota_1,\iota_2)_\sharp\frp^\ell,
    \quad
    \int \sfw_2^2(\mu_1,\mu_2)\,\d\pPi(\mu_1,\mu_2)
    =
    \int \|X_1-X_2\|_\cH^2 \,\d\frp^\ell(X_1,X_2).
  \end{aligned}
\end{equation}
If moreover $\pPi\in \Gamma_o(\ttM_1,\ttM_2)$
then $\bttP\in \RGamma_o(\ttM_1,\ttM_2)$
and setting $\frm_1^\ell=\pi^1_\sharp \frp^\ell,
\frm_2^\ell=\pi^2_\sharp\frp^\ell$ we get
\begin{equation}
  \label{eq:43-o}
  \begin{aligned}
    \ttM_i=\iota_\sharp \frm_i^\ell,\quad
    \sfW_2(\ttM_1,\ttM_2)=\sfW_{2,\cH}(\frm_1^\ell,\frm_2^\ell).
  \end{aligned}
\end{equation}
We recap the above argument in the next proposition.
\begin{proposition}[From random OT to OT in $\cH$]
  \label{prop:lift-cH}
  For every pair $\ttM_1,\ttM_2\in \RW2$ there exists a pair
  $\frm_1^\ell,\frm_2^\ell\in \cP_2(\cH)$ such that \eqref{eq:43-o} holds,
  so that
  \begin{equation}
    \label{eq:83}
    \sfW_2(\ttM_1,\ttM_2)=
    \min\Big\{\sfW_{2,\cH}(\frm_1,\frm_2):\frm_i\in \cP_2(\cH),\
    \iota_\sharp \frm_i=\ttM_i\Big\}.
  \end{equation}
  If $\frm_1^\ell,\frm_2^\ell\in \cP_2(\cH)$ 
  are minimizers of \eqref{eq:83} and $\frp^\ell\in \Gamma_o(\frm_1^\ell,\frm_2^\ell)$ in $\cP_2(\cH\times \cH)$
  then
  $\bttP=\iota^2_\sharp\frp^\ell\in \RGamma_o(\ttM_1,\ttM_2)$
  and
  $\text{$\mathit\pPi$}=(\pi^1_\sharp,\pi^2_\sharp)\bttP=(\iota_1,\iota_2)_\sharp\frp^\ell\in \Gamma_o(\ttM_1,\ttM_2)$.
\end{proposition}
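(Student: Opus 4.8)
The plan is to reduce the statement to two facts already available in this section: the correspondence between random couplings of $\ttM_1,\ttM_2$ and couplings of \emph{optimal} plans in $\rmH^2$ (Proposition~\ref{prop:W_2-random}, in the form \eqref{eq:40}), and the surjectivity of the iterated law map $\iota^2$. First I would record the ``easy'' half: by \eqref{eq:43}, for \emph{every} pair $\frm_i\in\cP_2(\cH)$ with $\iota_\sharp\frm_i=\ttM_i$ one has $\sfW_2(\ttM_1,\ttM_2)\le\sfW_{2,\cH}(\frm_1,\frm_2)$ (take $\frp\in\Gamma_o(\frm_1,\frm_2)$, set $\pPi:=(\iota_1,\iota_2)_\sharp\frp$, note $\pi^i_\sharp\pPi=\iota_\sharp\pi^i_\sharp\frp=\ttM_i$, and combine \eqref{eq:17} with \eqref{eq:8}). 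Hence the right-hand side of \eqref{eq:83} is always $\ge\sfW_2(\ttM_1,\ttM_2)$, and the whole point is to produce a pair attaining the value.

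For the construction I would start from an optimal plan $\pPi\in\Gamma_o(\ttM_1,\ttM_2)$ (nonempty, since $(\cP_2(\rmH),\sfw_2)$ is Polish), pass via \eqref{eq:40} to a random coupling law $\bttP\in\cP_2(\cP_{2,o}(\rmH^2))$ with $(\pi^1_\sharp,\pi^2_\sharp)_\sharp\bttP=\pPi$ and $\int\sfw_2^2\,\d\pPi=\int\big(\int|x_1-x_2|^2\,\d\ggamma\big)\,\d\bttP$, and then lift $\bttP$ through $\iota^2$: since $\iota^2:\cH\times\cH\to\cP_2(\rmH^2)$ is Borel and surjective, Von Neumann selection (Theorem~\ref{thm:VN}, applied to $f=\iota^2$ and a Borel set carrying $\bttP$) furnishes $\frp^\ell\in\cP(\cH\times\cH)$ with $\iota^2_\sharp\frp^\ell=\bttP$; the norm identities $\int_{\rmH^2}|x|^2\,\d(\iota^2(X,Y))=\|X\|_\cH^2$ and $\int_{\rmH^2}|y|^2\,\d(\iota^2(X,Y))=\|Y\|_\cH^2$ show that $\frp^\ell\in\cP_2(\cH\times\cH)$, its second moment being $\sfM_2^2(\ttM_1)+\sfM_2^2(\ttM_2)<\infty$. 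Setting $\frm_i^\ell:=\pi^i_\sharp\frp^\ell$ and using $\pi^i_\sharp\circ\iota^2=\iota\circ\pi^i$ gives $\iota_\sharp\frm_i^\ell=\pi^i_{\sharp\sharp}\bttP=\ttM_i$, while \eqref{eq:43tris} gives $\int\|X_1-X_2\|_\cH^2\,\d\frp^\ell=\int\big(\int|x_1-x_2|^2\,\d\ggamma\big)\,\d\bttP=\sfW_2^2(\ttM_1,\ttM_2)$. Comparing with the inequality of the first paragraph forces $\sfW_{2,\cH}(\frm_1^\ell,\frm_2^\ell)=\sfW_2(\ttM_1,\ttM_2)$ and hence $\frp^\ell\in\Gamma_o(\frm_1^\ell,\frm_2^\ell)$; this is \eqref{eq:43-o}, and \eqref{eq:83} follows.

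For the last assertion I would run this in reverse. Given minimizers $\frm_1^\ell,\frm_2^\ell$ of \eqref{eq:83} and $\frp^\ell\in\Gamma_o(\frm_1^\ell,\frm_2^\ell)$, put $\bttP:=\iota^2_\sharp\frp^\ell$; then $\pi^i_{\sharp\sharp}\bttP=(\iota\circ\pi^i)_\sharp\frp^\ell=\iota_\sharp\frm_i^\ell=\ttM_i$, so $\bttP\in\RGamma(\ttM_1,\ttM_2)$, and by \eqref{eq:43tris} together with the already-proved identity $\sfW_{2,\cH}(\frm_1^\ell,\frm_2^\ell)=\sfW_2(\ttM_1,\ttM_2)$ one gets $\int\big(\int|x_1-x_2|^2\,\d\ggamma\big)\,\d\bttP=\sfW_2^2(\ttM_1,\ttM_2)$, i.e. $\bttP$ attains the minimum in \eqref{eq:42}; Proposition~\ref{prop:W_2-random} then yields $\bttP\in\RGamma_o(\ttM_1,\ttM_2)$ and $\pPi:=(\pi^1_\sharp,\pi^2_\sharp)_\sharp\bttP\in\Gamma_o(\ttM_1,\ttM_2)$, while $(\pi^1_\sharp,\pi^2_\sharp)\circ\iota^2=(\iota_1,\iota_2)$ identifies $\pPi$ with $(\iota_1,\iota_2)_\sharp\frp^\ell$. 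The only genuinely delicate point is the lifting step $\iota^2_\sharp\frp^\ell=\bttP$ with finite second moment — it rests on the measurable selection of Theorem~\ref{thm:VN} plus the norm identities to transfer moments; everything else is bookkeeping with push-forwards through $\pi^i_\sharp$, $\iota$ and $\iota^2$.
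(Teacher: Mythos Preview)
Your proof is correct and follows essentially the same route as the paper: the proposition is stated there as a recap of the preceding discussion, which builds $\frp^\ell$ by first passing from an optimal $\pPi$ to a random coupling law $\bttP$ via \eqref{eq:40} and then lifting $\bttP$ through the surjective map $\iota^2$ (implicitly via Theorem~\ref{thm:VN}), exactly as you do. You are a bit more explicit than the paper in checking that $\frp^\ell\in\cP_2(\cH\times\cH)$ and that $\frp^\ell$ is optimal between its own marginals (the squeeze $\sfW_{2,\cH}^2\le\int\|X_1-X_2\|^2\,\d\frp^\ell=\sfW_2^2\le\sfW_{2,\cH}^2$), which is welcome.
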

We want to highlight that the optimal lifted measures
$\frm_1^\ell,\frm_2^\ell$ given by the previous proposition
typically depend on both the measures $\ttM_1,\ttM_2$
and in general we cannot fix an arbitrary $\frm_1^\ell$ such that
$\iota_\sharp \frm_1^\ell=\ttM_1$. 
We want to find a sufficient condition on $\ttM_1,\ttM_2$ for which
the following property holds:
\begin{equation}
  \label{eq:50}
  \begin{gathered}
    \text{for every $\frm_1\in \cP_2(\cH)$ such that\quad 
      $\iota_\sharp \frm_1=\ttM_1$\quad  there exists $\frm_2\in \cP_2(\cH)$ with}\\
    \iota_\sharp \frm_2=\ttM_2,\quad
    \sfW_2(\ttM_1,\ttM_2)=\sfW_{2,\cH}({\frm_1},{\frm_2}).
  \end{gathered}
\end{equation}
A crucial role in this respect is played by 
the set of ``deterministic'' couplings
$\cP_2^{\rm det}(\rmH\times \rmH)$ 
which are concentrated on maps:
%
%
\begin{equation}
  \label{eq:49}
  \begin{aligned}
    \cPdet2(\rmH\times \rmH):={}& \Big\{(\ii\times
    \ff)_\sharp\mu:\mu\in \cP_2(\rmH),\ \ff\in L^2(\rmH,\mu;\rmH)\Big\},
    \\\cP_{2,o}^{\rm det}(\rmH\times \rmH):={}& \cP_2^{\rm
      det}(\rmH\times \rmH)\cap \cP_{2,o}(\rmH\times \rmH).
  \end{aligned}
\end{equation}
First of all, we will show a simple condition for which
there exists an optimal coupling $\bttP\in \RGamma_o(\ttM_1,\ttM_2)$
which is concentrated on $\cPdet{2,o}(\rmH\times \rmH)$.
\begin{lemma}
  \label{le:selection}
  An optimal coupling $\pPi\in \cP_{2,o}(\cP_2(\rmH)\times
  \cP_2(\rmH))$ satisfies the property
  \begin{equation}
    \label{eq:51}
    \text{for $\pPi$-a.e.~$(\mu_1,\mu_2)$}
    \quad
    \Gamma_o(\mu_1,\mu_2)\cap \cP_2^{\rm det}(\rmH\times \rmH)\neq \emptyset
  \end{equation}
  if and only if
  \begin{equation}
  \text{there exists
    $\bttP\in \cP_2(\cP_{2,o}^{\rm det}(\rmH\times \rmH))$ such that}
  \quad
  \pPi=(\pi^1_\sharp,\pi^2_\sharp)_\sharp\bttP.
  \label{eq:52}
\end{equation}  
\end{lemma}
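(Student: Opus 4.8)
The plan is to derive both implications from a single elementary observation about the (continuous) marginal map $q:=(\pi^1_\sharp,\pi^2_\sharp)\colon\cP_2(\rmH\times\rmH)\to\cP_2(\rmH)\times\cP_2(\rmH)$. Writing $\cX:=\cP_{2,o}^{\rm det}(\rmH\times\rmH)=\cP_{2,o}(\rmH\times\rmH)\cap\cP_2^{\rm det}(\rmH\times\rmH)$, one has, for every $\mu_1,\mu_2\in\cP_2(\rmH)$, the identity $q^{-1}(\mu_1,\mu_2)\cap\cX=\Gamma_o(\mu_1,\mu_2)\cap\cP_2^{\rm det}(\rmH\times\rmH)$; hence \eqref{eq:51} says precisely that $\pPi$ is concentrated on $A:=q(\cX)$. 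Before proceeding I would record, once and for all, that $\cX$ is a Borel (indeed $G_\delta$) subset of the Polish space $\cP_2(\rmH\times\rmH)$ — hence itself a Polish space — since $\cP_{2,o}(\rmH\times\rmH)$ is closed (optimality passes to limits, because the two marginals and the cost $\ggamma\mapsto\int|x_1-x_2|^2\,\d\ggamma$ are continuous on $\cP_2(\rmH\times\rmH)$) while $\cP_2^{\rm det}(\rmH\times\rmH)$ is $G_\delta$ by \cite{LS25}. In particular $A$ is analytic, being the continuous image of a Borel subset of a Polish space.

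The implication \eqref{eq:52}$\Rightarrow$\eqref{eq:51} is then immediate: given $\bttP\in\cP_2(\cX)$ with $q_\sharp\bttP=(\pi^1_\sharp,\pi^2_\sharp)_\sharp\bttP=\pPi$, inner regularity of $\bttP$ yields a $\sigma$-compact set $K\subset\cX$ with $\bttP(K)=1$; then $q(K)$ is $\sigma$-compact, hence Borel, and $\pPi(q(K))=\bttP(q^{-1}(q(K)))\ge\bttP(K)=1$. Since each point of $q(K)\subset A$ is a pair $(\mu_1,\mu_2)=q(\ggamma)$ with $\ggamma\in\cX$, i.e.\ $\ggamma\in\Gamma_o(\mu_1,\mu_2)\cap\cP_2^{\rm det}(\rmH\times\rmH)$, the Borel set $q(K)$ is contained in the set appearing in \eqref{eq:51} and has full $\pPi$-measure, which establishes \eqref{eq:51}.

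The reverse implication \eqref{eq:51}$\Rightarrow$\eqref{eq:52} is the substantive half, and I would prove it via the Von Neumann selection Theorem \ref{thm:VN}. By \eqref{eq:51} there is a Borel set $B$ with $\pPi(B)=1$ on which the property in \eqref{eq:51} holds, so that $B\subset A$. Applying Theorem \ref{thm:VN} with the Polish space $\cX$ playing the role of $\rmX$, with $\cP_2(\rmH)\times\cP_2(\rmH)$ in the role of $\rmY$, with the Borel map $f:=q|_{\cX}$ (which satisfies $f(\cX)=A\supset B$), with $\nu:=\pPi$ and with the Borel set $B$, I obtain a probability measure $\bttP\in\cP(\cX)$ concentrated on $f^{-1}(B)\subset\cP_{2,o}^{\rm det}(\rmH\times\rmH)$ and satisfying $q_\sharp\bttP=\pPi$, i.e.\ $(\pi^1_\sharp,\pi^2_\sharp)_\sharp\bttP=\pPi$. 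It only remains to note that $\bttP$ has finite quadratic moment: since the squared second moment of $\ggamma\in\cP_2(\rmH\times\rmH)$ is $\sfm_2^2(\pi^1_\sharp\ggamma)+\sfm_2^2(\pi^2_\sharp\ggamma)$ and $q_\sharp\bttP=\pPi$, we get $\int\sfm_2^2(\ggamma)\,\d\bttP(\ggamma)=\int\big(\sfm_2^2(\mu_1)+\sfm_2^2(\mu_2)\big)\,\d\pPi(\mu_1,\mu_2)<\infty$ because $\pPi\in\cP_2(\cP_2(\rmH)\times\cP_2(\rmH))$; thus $\bttP\in\cP_2(\cP_{2,o}^{\rm det}(\rmH\times\rmH))$, as required. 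I expect the only genuinely delicate point to be the descriptive-set-theoretic bookkeeping that precedes the selection — verifying that $\cX$ is Polish and that $\pPi$ is supported on a Borel subset of the analytic set $A$ — since this is exactly what makes Theorem \ref{thm:VN} applicable; the measure-theoretic identities for marginals and moments are routine.
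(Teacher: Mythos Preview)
Your proof is correct and follows essentially the same route as the paper's: both identify the set $\cO=q(\cX)$ as analytic (Souslin), observe that \eqref{eq:51} is precisely the assertion that $\pPi$ is concentrated on $\cO$, and then invoke Theorem~\ref{thm:VN} for the nontrivial direction. Your version is simply more explicit --- you spell out why $\cX$ is Polish, why a Borel $B\subset\cO$ of full $\pPi$-measure exists, why the ``easy'' implication holds, and why $\bttP$ has finite quadratic moment --- whereas the paper compresses all of this into a couple of sentences.
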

\begin{proof}
    Let us set $\cO:=
    \Big\{(\mu_1,\mu_2)\in \cP_2(\rmH)
    \times \cP_2(\rmH): 
    \Gamma_o(\mu_1,\mu_2)\cap \cP_2^{\rm det}(\rmH\times \rmH)\neq \emptyset\Big\}.$
    We can equivalently characterize $\cO$ 
    as the image 
    of the Borel set $\cP_{2,o}^{\rm det}(\rmH\times \rmH)$ through the continuous map 
    $\pi^1_\sharp\times \pi^2_\sharp$,
    so that $\cO$ is a Souslin (and therefore universally measurable) set.  

    \eqref{eq:51} just says that  $\pPi$ is concentrated on $\cO$, and therefore its equivalence with \eqref{eq:52} follows
    by Theorem \ref{thm:VN}. 
\end{proof}
\begin{theorem}
  \label{thm:det-is-useful}
  Let $\text{$\mathit \Pi$}\in \Gamma_o(\ttM_1,\ttM_2)$ satisfy \eqref{eq:51},
  let $\bttP$ as in \eqref{eq:52}, 
  and let $\frm_1\in \cP_2(\cH)$ such that
  $\iota_\sharp\frm_1=\ttM_1$.
  Then there exists
  $\frp^\ell\in \cP_{2,o}(\cH\times \cH)$ such that
  $\pi^1_\sharp \frp^\ell=\frm_1$ and 
  $\iota^2_\sharp\frp^\ell=\bttP$, so that, in particular,
  $(\iota_1,\iota_2)_\sharp\frp=\pPi$
  and setting $\frm_2=\pi^2_\sharp\frp^\ell$ we have
  \begin{equation}
    \label{eq:53}
    \iota_\sharp\frm_2=\ttM_2,\quad
    \sfW_2(\ttM_1,\ttM_2)=\sfW_{2,\cH}(\frm_1,\frm_2).
  \end{equation}
\end{theorem}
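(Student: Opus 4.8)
The plan is to build the lifting $\frp^\ell$ explicitly by a fibrewise gluing over $\ttM_1$, using the given $\frm_1$ on the source side and the barycentric maps of the deterministic couplings carried by $\bttP$ on the target side. First I would disintegrate the two data over $\ttM_1$. Since $\iota:\cH\to\cP_2(\rmH)$ is continuous and $\iota_\sharp\frm_1=\ttM_1$, the disintegration theorem provides a Borel family $\mu\mapsto\frm_{1,\mu}\in\cP(\cH)$ with $\frm_{1,\mu}$ concentrated on $\iota^{-1}(\mu)$ and $\frm_1=\int\frm_{1,\mu}\,\d\ttM_1(\mu)$. Since $\pi^1_{\sharp\sharp}\bttP=\pi^1_\sharp\pPi=\ttM_1$, I would likewise disintegrate $\bttP=\int\bttP_\mu\,\d\ttM_1(\mu)$ along the Borel map $\ggamma\mapsto\pi^1_\sharp\ggamma$, so that $\bttP_\mu$ is concentrated on $\{\ggamma\in\cP_{2,o}^{\rm det}(\rmH\times\rmH):\pi^1_\sharp\ggamma=\mu\}$ for $\ttM_1$-a.e.\ $\mu$. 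Then I would set $\Theta:=\int_{\cP_2(\rmH)}(\frm_{1,\mu}\otimes\bttP_\mu)\,\d\ttM_1(\mu)$, a measure on $\cH\times\cP_{2,o}^{\rm det}(\rmH\times\rmH)$ whose two marginals are $\frm_1$ and $\bttP$ and which is concentrated on the set $\{(X,\ggamma):\iota(X)=\pi^1_\sharp\ggamma,\ \ggamma\in\cP_{2,o}^{\rm det}(\rmH\times\rmH)\}$.

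Next I would define the target map through the Borel barycentric map $\bb$ of \eqref{eq:barycentric}: for $(X,\ggamma)$ in the support of $\Theta$ set $F(X,\ggamma):=\bb(X(\cdot),\ggamma)\in\cH$. Because $\ggamma\in\cP_2^{\rm det}(\rmH\times\rmH)$, the kernel $\cK(\cdot,\ggamma)$ is $\pi^1_\sharp\ggamma$-a.e.\ a Dirac mass (recall \eqref{eq:91}), so $\bb(\cdot,\ggamma)$ is precisely the map on whose graph $\ggamma$ is concentrated and $(\ii\times\bb(\cdot,\ggamma))_\sharp(\pi^1_\sharp\ggamma)=\ggamma$; since $X_\sharp\P=\iota(X)=\pi^1_\sharp\ggamma$ $\Theta$-a.s., this gives $\iota^2(X,F(X,\ggamma))=(\ii\times\bb(\cdot,\ggamma))_\sharp(X_\sharp\P)=\ggamma$. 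I then set $\frp^\ell:=(\pi^1,F)_\sharp\Theta$, the law of $(X,F(X,\ggamma))$. It belongs to $\cP_2(\cH\times\cH)$ since $\int\|X\|_\cH^2\,\d\frm_1<\infty$ and $\int\|F(X,\ggamma)\|_\cH^2\,\d\Theta=\int\sfm_2^2(\pi^2_\sharp\ggamma)\,\d\bttP<\infty$ because $\bttP\in\RWW2$. Moreover $\pi^1_\sharp\frp^\ell=\frm_1$, and since $\iota^2\circ(\pi^1,F)=\pi^2$ $\Theta$-a.e.\ we get $\iota^2_\sharp\frp^\ell=\bttP$; hence, with $\iota_i=\iota\circ\pi^i$ and $\frm_2:=\pi^2_\sharp\frp^\ell$, $(\iota_1,\iota_2)_\sharp\frp^\ell=(\pi^1_\sharp,\pi^2_\sharp)_\sharp\bttP=\pPi$ and $\iota_\sharp\frm_2=\pi^2_{\sharp\sharp}\bttP=\ttM_2$.

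It remains to check optimality by an energy computation. For $\Theta$-a.e.\ $(X,\ggamma)$, writing $\mu=\iota(X)=\pi^1_\sharp\ggamma$, one has $\|X-F(X,\ggamma)\|_\cH^2=\int_\rmH|x-\bb(x,\ggamma)|^2\,\d\mu(x)=\int_{\rmH\times\rmH}|x-y|^2\,\d\ggamma(x,y)=\sfw_2^2(\pi^1_\sharp\ggamma,\pi^2_\sharp\ggamma)$, the last equality because $\ggamma$ is an optimal coupling. Integrating against $\Theta$ and using $(\pi^1_\sharp,\pi^2_\sharp)_\sharp\bttP=\pPi\in\Gamma_o(\ttM_1,\ttM_2)$ yields $\int_{\cH\times\cH}\|X-Y\|_\cH^2\,\d\frp^\ell=\int\sfw_2^2(\mu_1,\mu_2)\,\d\pPi=\sfW_2^2(\ttM_1,\ttM_2)$. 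Combining this with $\sfW_2^2(\ttM_1,\ttM_2)\le\sfW_{2,\cH}^2(\frm_1,\frm_2)\le\int_{\cH\times\cH}\|X-Y\|_\cH^2\,\d\frp^\ell$ — the first inequality from \eqref{eq:43} since $\iota$ is $1$-Lipschitz with $\iota_\sharp\frm_i=\ttM_i$, the second because $\frp^\ell\in\Gamma(\frm_1,\frm_2)$ — forces all three quantities to coincide. Hence $\sfW_{2,\cH}(\frm_1,\frm_2)=\sfW_2(\ttM_1,\ttM_2)$ and $\frp^\ell\in\Gamma_o(\frm_1,\frm_2)\subset\cP_{2,o}(\cH\times\cH)$, which is exactly \eqref{eq:53}.

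The genuinely delicate point, rather than the geometric content, is the joint Borel measurability of $(X,\ggamma)\mapsto F(X,\ggamma)=\bb(X(\cdot),\ggamma)$ as a map into $\cH$, needed for $\frp^\ell$ to be a well-defined Borel measure; this is where I expect the main technical work, following the representation and extension machinery of \cite{CSS2}. From joint measurability of the evaluation $(X,\oomega)\mapsto X(\oomega)$ and of the universal kernel $\cK$ one obtains joint measurability of $(X,\ggamma,\oomega)\mapsto\bb(X(\oomega),\ggamma)$, and Borel measurability into $\cH$ then follows from separability of $\cH$ by testing against a countable dense family $h\in\cH$ via $\langle F(X,\ggamma),h\rangle_\cH=\int\langle\bb(X(\oomega),\ggamma),h(\oomega)\rangle\,\d\P(\oomega)$ and Fubini. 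Everything else — the Borel dependence of $\mu\mapsto\frm_{1,\mu}$ and $\mu\mapsto\bttP_\mu$, and the passage from the energy identity to the optimality of $\frp^\ell$ — is routine.
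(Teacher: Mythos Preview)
Your proposal is correct and shares the paper's overall architecture: glue $\frm_1$ and $\bttP$ along their common $\ttM_1$-marginal, then lift to $\cH\times\cH$. The genuine difference is in the lifting step. The paper introduces the continuous map $\rmA(X,Y)=(X,\iota(X),\iota^2(X,Y))$, checks that its range contains the deterministic fibre $\cA^{\rm det}$, and then invokes the Von Neumann selection theorem (Theorem~\ref{thm:VN}) to pull the glued measure back to $\cH\times\cH$; measurability is thereby absorbed into the selection black box. You instead exploit that on $\cA^{\rm det}$ the preimage is \emph{unique} and given explicitly by $Y=\bb(X(\cdot),\ggamma)$, and push $\Theta$ forward through $(\pi^1,F)$. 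This is more constructive and also makes the final energy identity and optimality of $\frp^\ell$ transparent (the paper leaves that step implicit). The cost is that you must verify by hand the Borel measurability of $F$, which you rightly flag as the only delicate point. Your sketch there needs a small repair: there is no jointly Borel evaluation $(X,\oomega)\mapsto X(\oomega)$ on $\cH\times\OOmega$, since elements of $\cH$ are equivalence classes. The fix is to route through the continuous embedding $X\mapsto(\id_\OOmega,X)_\sharp\P\in\cP_2(\OOmega\times\rmH)$ and integrate the Borel function $(\oomega,x,\ggamma)\mapsto\langle\bb(x,\ggamma),h(\oomega)\rangle$ against it, then run a monotone-class argument; your Fubini-and-pairing idea then goes through. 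So both approaches work: the paper's is cleaner on measurability, yours is more explicit about what the lift actually is.
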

\begin{proof}
  Let us first consider the
  closed subset $\cA$ of $\cH\times \cP_2(\rmH)\times \cP_2(\rmH\times \rmH) $ defined by 
  \begin{equation}
    \label{eq:84}
    \cA:=\Big\{(X,\mu,\ggamma)\in \cH\times \cP_2(\rmH)\times\cP_2(\rmH\times \rmH):
    \iota(X)=\mu=\pi^1_\sharp(\ggamma)\Big\},
  \end{equation}
  and the map $\rmA:\cH\times \cH\to \cA$
  \begin{equation}
    \label{eq:85}
    \rmA(X,Y):=(X,\iota(X),\iota^2(X,Y))=
    (X,X_\sharp\P,(X,Y)_\sharp\P).
  \end{equation}
  $\rmA$ is continuous but in general it is not surjective.
  However, it is not difficult to check that the image of $\rmA$
  contains the Borel set
  \begin{equation}
    \label{eq:86}
    \begin{aligned}
      \cA^{\rm det}:=
      {}&\Big\{(X,\mu,\ggamma)\in \cH\times
      \cP_2(\rmH)\times \cPdet2(\rmH\times \rmH):
      \iota(X)=\mu=\pi^1_\sharp(\ggamma)\Big\}
      \\={}& \cA\cap \Big(\cH\times
      \cP_2(\rmH)\times \cPdet2(\rmH\times \rmH)\Big).
    \end{aligned}
  \end{equation}
  In fact, if $(X,\mu,\ggamma)\in \cA^{\rm det}$ then
  $\mu=\pi^1_\sharp\ggamma$
  and we can find a map $f_\ggamma\in L^2(\rmH,\mu;\rmH)$ such that
  $\ggamma=(\ii\times f_\ggamma)_\sharp\mu$.
  Defining $Y:=f_\ggamma\circ X$
  we immediately see that $\iota^2(X,Y)=\ggamma$, so that
  $\rmA(X,Y)=(X,\mu,\ggamma)$.

  Let us now set
  $\bar \frm_1:=(\id\times\iota)_\sharp \frm_1
  \in \cP_2(\cH\times \cP_2(\rmH))$
  and
  $\bar \bttP:=(\pi^1_\sharp\times \id)_\sharp\bttP
  \in \cP_2(\cP_2(\rmH)\times \cPdet2(\rmH\times \rmH))$.
  By assumption
  \begin{displaymath}
    \pi^2_\sharp\bar \frm_1=\iota_\sharp \frm_1=\ttM_1,\quad
    \pi^1_\sharp \bar\bttP=(\pi^1_\sharp)_\sharp\bttP=\ttM_1
  \end{displaymath}
  so that, by the gluing Lemma, we can find
  a plan $\qQ\in \cP_2(\cH\times \cP_2(\rmH)\times \cPdet2(\rmH\times
  \rmH))$
  such that $\pi^{1\,2}_\sharp\qQ=\bar \frm_1$ and
  $\pi^{2\,3}_\sharp \qQ=\bar\bttP$.
  By construction, $\bar \frm_1$ is concentrated
  on the set
  $$\Big\{(X,\mu)\in \cH\times \rW2:\iota(X)=\mu\Big\}$$
  and
  $\bar\bttP$ is concentrated on the set
  $$\Big\{(\mu,\ggamma)\in \cH\times \cPdet2(\rmH\times \rmH):
  \mu=\pi^1_\sharp\ggamma\Big\}$$
  we deduce that $\qQ$ is concentrated on $\cA^{\rm det}$.

  By Theorem \ref{thm:VN}
  we can find a probability measure
  $\frp^\ell\in \cP_2(\cH\times\cH)$ such that
  $\rmA_\sharp \frp^\ell=\qQ$.
  By the very definition of $\rmA$ we get
  \begin{displaymath}
    \pi^1_\sharp \frp^\ell=(\pi^1\circ\rmA)_\sharp \frp^\ell=
    \pi^1_\sharp \qQ=\frm_1,\quad
    \iota^2_\sharp \frp^\ell=(\pi^3\circ\rmA)_\sharp\bttP=
    \pi^3_\sharp\qQ=\bttP
  \end{displaymath}
  and the thesis follows.
\end{proof}
\begin{corollary}
  \label{cor:regular1}
  Let $\ttM_1,\ttM_2\in \RW2$
  and let us suppose that $\ttM_1$ is concentrated on $\cP_2^r(\rmH)$. 
  Then \eqref{eq:50} holds.  
\end{corollary}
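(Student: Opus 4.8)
The plan is to reduce the whole statement to Theorem~\ref{thm:det-is-useful} by producing one optimal coupling $\pPi\in\Gamma_o(\ttM_1,\ttM_2)$ that satisfies the deterministic-selection property \eqref{eq:51}; regularity of $\ttM_1$ will force \eqref{eq:51} via Br\'enier's Theorem~\ref{thm:Brenier}, and then Lemma~\ref{le:selection} and Theorem~\ref{thm:det-is-useful} do the rest, for \emph{every} lift $\frm_1$ of $\ttM_1$.

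First I would fix an arbitrary optimal coupling $\pPi\in\Gamma_o(\ttM_1,\ttM_2)$, which exists by compactness of $\Gamma(\ttM_1,\ttM_2)$ and lower semicontinuity of the cost. Since $\pi^1_\sharp\pPi=\ttM_1$ is concentrated on $\cP_2^r(\rmH)$, for $\pPi$-a.e.\ $(\mu_1,\mu_2)$ the first marginal $\mu_1$ is regular. For every such pair, Br\'enier's Theorem~\ref{thm:Brenier} asserts that $\Gamma_o(\mu_1,\mu_2)$ is a singleton and its unique element is deterministic; in particular $\Gamma_o(\mu_1,\mu_2)\cap\cPdet2(\rmH\times\rmH)\neq\emptyset$. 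Hence $\{(\mu_1,\mu_2):\mu_1\in\cP_2^r(\rmH)\}$ is contained in the set $\cO$ from the proof of Lemma~\ref{le:selection}, so $\pPi$ is concentrated on $\cO$ (legitimately, since $\cO$ is Souslin, hence universally measurable: it is enough that $\pPi$ charge a Borel subset of full mass sitting inside $\cO$, which is provided by a Borel subset of $\cP_2^r(\rmH)$ of full $\ttM_1$-mass). Thus $\pPi$ satisfies \eqref{eq:51}.

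Next, Lemma~\ref{le:selection} yields $\bttP\in\cP_2(\cP_{2,o}^{\rm det}(\rmH\times\rmH))$ with $\pPi=(\pi^1_\sharp,\pi^2_\sharp)_\sharp\bttP$. Now let $\frm_1\in\cP_2(\cH)$ be any measure with $\iota_\sharp\frm_1=\ttM_1$. Applying Theorem~\ref{thm:det-is-useful} to this $\pPi$, $\bttP$ and $\frm_1$ gives $\frp^\ell\in\cP_{2,o}(\cH\times\cH)$ with $\pi^1_\sharp\frp^\ell=\frm_1$ and $\iota^2_\sharp\frp^\ell=\bttP$; setting $\frm_2:=\pi^2_\sharp\frp^\ell$ we obtain $\iota_\sharp\frm_2=\ttM_2$ and $\sfW_2(\ttM_1,\ttM_2)=\sfW_{2,\cH}(\frm_1,\frm_2)$, which is precisely \eqref{eq:50}. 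Since $\pPi$ and $\bttP$ were chosen once and for all, independently of $\frm_1$, this works for every such $\frm_1$, so \eqref{eq:50} holds.

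There is essentially no hard step: the corollary is an assembly of Br\'enier's theorem, Lemma~\ref{le:selection} and Theorem~\ref{thm:det-is-useful}. The only point demanding a little care is the meaning of ``concentrated on $\cP_2^r(\rmH)$'' in the infinite-dimensional setting, where $\cP_2^r(\rmH)$ is not known to be Borel (Proposition~\ref{prop:measurability} is stated only for $\dim\rmH<\infty$); I would therefore phrase the concentration argument in terms of $\pPi$ charging a Borel subset of the universally measurable set $\cO$, which is all that Lemma~\ref{le:selection} and Theorem~\ref{thm:det-is-useful} actually require.
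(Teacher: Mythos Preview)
Your proposal is correct and is exactly the argument the paper intends: the corollary is stated without proof in the paper, immediately after Theorem~\ref{thm:det-is-useful}, precisely because it follows by combining Br\'enier's Theorem~\ref{thm:Brenier} (to verify \eqref{eq:51}), Lemma~\ref{le:selection}, and Theorem~\ref{thm:det-is-useful} in the way you outline. Your remark on the measurability of $\cP_2^r(\rmH)$ in infinite dimensions is also well placed; the paper handles the same point explicitly in the proof of Proposition~\ref{prop:correspondence}, by working with a Borel set of full measure inside $\cP_2^r(\rmH)$ rather than $\cP_2^r(\rmH)$ itself.
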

\section{Random Gaussian-null sets and strict Monge formulation of OT via nonlocal totally cyclically monotone fields}
\label{sec:Monge}

In this last section we want to address the uniqueness and the 
Monge formulation of the $L^2$-OT problem in $\RW2$.
These questions can be settled at the usual level of
couplings of (laws of) random measures. We then seek for conditions
on $\ttM_i\in \RW2$ ensuring that
the class of optimal coupling
$\Gamma_o(\ttM_1,\ttM_2)$ contains a unique element $\pPi$
which is concentrated on the graph of a Borel map
$\FF:\rW2\to\rW2$, so that
\begin{equation}
  \label{eq:88}
  \pPi=(\id\times \FF)_\sharp \ttM_1,\quad
  \ttM_2=\FF_\sharp \ttM_1,\quad
  \sfW_2^2(\ttM_1,\ttM_2)=\int_{\rW2}
  \sfw_2^2(\mu,\FF(\mu))\,\d \ttM_1(\mu);
\end{equation}
this property is equivalent to asking for
$\pPi\in \cPdet2(\rW2\times\rW2)$.

A second formulation involves random couplings and provides a more
refined description of $\FF$: we look for conditions ensuring
that $\RGamma_o(\ttM_1,\ttM_2)$ contains a unique element $\bttP$
that is concentrated on the graph of a deterministic
totally cyclically monotone field $\ff:\rmH\times \cP_2(\rmH)\to\rmH$.

\subsection{The strict Monge formulation}
\label{subsec:strict-Monge}

In order to describe such a construction, we first observe that every
$\bttP\in \cPP_2(\rmH\times \rmH)$ can be disintegrated with respect to
$\ttM_1=\pi^1_{\sharp\sharp}\bttP$ to obtain a
Borel family $\bttP_\mu\in \RWW2$ indexed by $\mu\in \rW2$ and
concentrated on the set of couplings
$\Gamma(\mu):=\Big\{\ggamma\in \cP_2(\rmH\times\rmH):
\pi^1_\sharp\ggamma=\mu\Big\}$
for $\ttM_1$-a.e.~$\mu\in \cP_2(\rmH)$.
When $\supp(\bttP)\subset 
\cP_{2,o}(\rmH\times \rmH)$ 
(as in the case of optimal couplings)
then 
$\supp(\bttP_\mu)\subset 
\Gamma_o(\mu):=\Gamma(\mu)\cap\cP_{2,o}(\rmH\times \rmH) $.

If $\pPi=(\pi^1_\sharp,\pi^2_\sharp)_\sharp\bttP$ 
then $\bttP_\mu$ 
characterizes the 
family of measures in 
$\cPP_2(\rmH)$ arising from the disintegration
$(\pPi_\mu)_{\mu\in \cP_2(\rmH)}$
of $\pPi$ with respect to its first marginal
through the formula
\begin{equation}
    \label{eq:disintegrations}
    \pPi_\mu=\pi^2_{\sharp\sharp}\bttP_\mu
    \quad
    \text{for $\ttM_1$-a.e.~$\mu$}.
\end{equation}

If $\pi^1_\sharp$ is essentially injective with respect to $\bttP$ then there is a Borel map
$\GG:\cP_2(\rmH)\to 
\cPP_2(\rmH\times \rmH)$ such that 
for $\ttM_1$-a.e.~$\mu$
$\bttP_\mu$ is concentrated on
a unique coupling $\ggamma=\GG(\mu)$ with first marginal $\mu$, so
that $\pPi$ is deterministic as in \eqref{eq:88} and 
we have
\begin{equation}
  \label{eq:92}
  \bttP_\mu=\delta_{\GG(\mu)},
  \quad \GG(\mu)\in   \Gamma(\mu,\FF(\mu)),\quad
  \FF(\mu)=\pi^2_{\sharp}\GG(\mu),\quad
  \ttM_2=(\pi^2_\sharp\circ\GG)_\sharp \ttM_1=
  \pi^2_{\sharp\sharp}\GG_\sharp \ttM_1.
\end{equation}
We can then
apply to $\bttP_\mu$
the disintegration map $\cK$, obtaining the decomposition
$\GG(\mu)=\mu\otimes \kappa_{x,\mu}$,
$\kappa_{x,\mu}=\cK(x,\GG(\mu))$.
If $\bttP$ is concentrated on
$\cPdet2(\rmH\times \rmH)$ then
$\bttP_\mu$ is concentrated on $\cPdet2(\rmH\times \rmH)$ as well,
so that $\kappa_{x,\mu}=\delta_{\ff(x,\mu)}$ for some Borel map
$\ff:\rmH\times \cP_2(\rmH)\to \rmH$.
We then obtain
\begin{equation}
  \label{eq:92bis}
  \GG(\mu)=(\ii\times \ff(\cdot,\mu))_\sharp \mu ,\quad 
  \FF(\mu)=\ff(\cdot,\mu)_\sharp \mu.
\end{equation}
Since $\FF(\mu)\in \cP_2(\rmH)$ and
$\ttM_2\in \RW2$ we have
\begin{equation}
  \label{eq:93}
  \begin{aligned}
    \int_{\rmH}|\ff(x,\mu)|^2\,\d\mu(x)&=
    \sfm_2^2(\FF(\mu))<\infty
    \quad\text{for $\ttM_1$-a.e.~$\mu\in \cP_2(\rmH)$}\\
    \int_{\rW2}
    \Big(\int_{\rmH}|\ff(x,\mu)|^2\,\d\mu(x)\Big)
    \,\d \ttM_1(\mu)&= \int_{\rW2} \sfm_2^2(\FF(\mu))\,\d
    \ttM_1(\mu)=\sfM_2^2(\ttM_2)<\infty.
  \end{aligned}
\end{equation}
%
%
%
It is then convenient to represent $\ff$ as a $\rmH$-valued $L^2$ map
of the unfolded measure
\begin{equation}
  \label{eq:55}
  \bar \ttM_1:=\int (\mu\otimes \delta_\mu)\,\d \ttM_1(\mu)\in
  \cP_2(\rmH\times \cP_2(\rmH)), 
\end{equation}
which satisfies $\pi^2_\sharp\bar \ttM_1=\ttM_1$.
In fact if $\ff\in L^2(\bar \ttM_1;\rmH)$ then
\begin{equation}
  \label{eq:56}
  \|\ff\|_{L^2(\bar \ttM_1;\rmH)}^2=
  \int_{\cP_2(\rmH)}\int_\rmH |\ff(x,\mu)|^2\,\d\mu\,\d \ttM_1(\mu)<\infty
\end{equation}
and we can represent the 
corresponding unfolded measure $\bar \ttM_2$ as 
\begin{equation}
    \label{eq:unfolded-representation}
    \bar \ttM_2=(\ff,\FF)_\sharp \bar \ttM_1,
    \quad 
    \bar \ttM_i=
    \int (\mu\otimes \delta_\mu)\,\d \ttM_i(\mu),
\end{equation}
since
\begin{align*}
    \int \xi(y,\nu)\,\d\bar \ttM_2(y,\nu)
    &=
    \int \Big(
    \int \xi(y,\nu)\,\d\nu(y)\Big)
    \,\d \ttM_2(\nu)
    \\&=
    \int \Big(
    \int \xi(y,\FF(\mu))\,\d\big(\FF(\mu)\big)(y)\Big)
    \,\d \ttM_1(\mu)
     \\&=
    \int \Big(
    \int \xi(\ff(x,\mu),\FF(\mu))\,\d\mu(x)\Big)
    \,\d \ttM_1(\mu)
    \\&=
    \int  \xi(\ff(x,\mu),\FF(\mu))\,\d\bar \ttM_1(x,\mu).
\end{align*}
The above remarks justify the following definition.
\begin{definition}[Fully deterministic random couplings]
  \label{def:detRC}
  We say that a random coupling law $\bttP\in \RWW2$
  is fully deterministic if
  \begin{equation}
    \label{eq:94}
    \pi^1_\sharp\text{ is $\bttP$-essentially injective and }
    \bttP\text{ is concentrated on }\cPdet2(\rmH\times \rmH).
  \end{equation}
  We denote by $\RWWdet2$ the set of fully deterministic random couplings.
\end{definition}
\begin{lemma}[Representation of fully deterministic random couplings]
  \label{le:fdet}
  A random coupling law \\
  $\bttP\in \RWW2$
  with first random marginal $\ttM_1=\pi^1_{\sharp\sharp}\bttP$
  is fully deterministic
  if and only if
  there exists a Borel map $\ff\in L^2(\bar \ttM_1;\rmH)$ such that
  \begin{equation}
    \label{eq:95}
    \bttP=\int \delta_{(\ii\times
      \ff(\cdot,\mu))_\sharp\mu}\,\d \ttM_1(\mu)=\GG_\sharp \ttM_1,\quad
    \GG(\mu):=(\ii\times
      \ff(\cdot,\mu))_\sharp\mu.
    \end{equation}
    In this case setting
    $\FF(\mu):=\ff(\cdot,\mu)_\sharp\mu$
    we have
    \begin{equation}
      \label{eq:96}
      \begin{gathered}
        \ttM_2=\pi^2_{\sharp\sharp}\bttP=\FF_\sharp \ttM_1\quad
       ,
        \bar \ttM_2=(\ff,\FF)_\sharp \bar \ttM_1
        \\
        \int_{\cP_2(\rmH\times
          \rmH)} \Big(\int_{\rmH\times \rmH}
        |x-y|^2\,\d\ggamma\Big)\,\d\bttP(\ggamma)= \int_{\rmH\times
          \cP_2(\rmH)} |\ff(x,\mu)-x|^2\,\d\mu(x)\,\d \ttM_1(\mu).
      \end{gathered}
    \end{equation}
  \end{lemma}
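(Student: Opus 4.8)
The plan is to establish the equivalence by treating the two implications separately, and then to read off the identities collected in \eqref{eq:96} from the elementary changes of variables already displayed before \eqref{eq:55} and \eqref{eq:unfolded-representation}.

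For the sufficiency, given a Borel map $\ff\in L^2(\bar\ttM_1;\rmH)$ I would set $\GG(\mu):=(\ii\times\ff(\cdot,\mu))_\sharp\mu$ and first check, by testing against continuous bounded functions, that $\mu\mapsto\GG(\mu)$ is Borel and that, by \eqref{eq:56}, $\ff(\cdot,\mu)\in L^2(\rmH,\mu;\rmH)$ hence $\GG(\mu)\in\cPdet2(\rmH\times\rmH)$ for $\ttM_1$-a.e.\ $\mu$. Then $\bttP:=\GG_\sharp\ttM_1$ is concentrated on $\cPdet2(\rmH\times\rmH)$ and has finite quadratic moment, since $\sfM_2^2(\bttP)=\sfM_2^2(\ttM_1)+\|\ff\|_{L^2(\bar\ttM_1;\rmH)}^2<\infty$. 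Because $\pi^1_\sharp\GG(\mu)=\mu$, its first random marginal is $\pi^1_{\sharp\sharp}\bttP=(\pi^1_\sharp\circ\GG)_\sharp\ttM_1=\ttM_1$, and $\GG$ is an injective Borel right inverse of the Borel map $\ggamma\mapsto\pi^1_\sharp\ggamma$; consequently $\bttP$ is carried by the Borel set $\GG(\cP_2(\rmH))$, on which $\pi^1_\sharp$ coincides with $\GG^{-1}$ and is therefore injective. This is exactly full determinism in the sense of Definition~\ref{def:detRC}, together with \eqref{eq:95}.

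For the necessity, starting from a fully deterministic $\bttP\in\RWW2$ with $\ttM_1=\pi^1_{\sharp\sharp}\bttP$, I would disintegrate $\bttP$ with respect to the Borel map $\ggamma\mapsto\pi^1_\sharp\ggamma$ (whose push-forward of $\bttP$ is precisely $\ttM_1$), obtaining the Borel family $(\bttP_\mu)$ with $\bttP_\mu$ carried by $\Gamma(\mu)=\{\ggamma:\pi^1_\sharp\ggamma=\mu\}$ and $\bttP=\int\bttP_\mu\,\d\ttM_1(\mu)$. Essential injectivity of $\pi^1_\sharp$ forces at most one point of each fibre $\Gamma(\mu)$ to lie in a fixed $\bttP$-conull Borel set, hence $\bttP_\mu=\delta_{\GG(\mu)}$ for $\ttM_1$-a.e.\ $\mu$, with $\GG$ Borel (the composition of the Borel map $\mu\mapsto\bttP_\mu$ with the Borel inverse of the Dirac embedding on the Borel set of Dirac masses). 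Concentration of $\bttP$ on $\cPdet2(\rmH\times\rmH)$ then gives $\GG(\mu)\in\cPdet2(\rmH\times\rmH)$ for $\ttM_1$-a.e.\ $\mu$, so that the disintegration kernel of $\GG(\mu)$ with respect to its first marginal is a Dirac mass: writing $\ff(x,\mu):=\bb(x,\GG(\mu))$ for the barycentric map of \eqref{eq:barycentric}, which is jointly Borel since both $\bb$ and $\GG$ are Borel, one gets $\cK(x,\GG(\mu))=\delta_{\ff(x,\mu)}$ for $\mu$-a.e.\ $x$ and hence $\GG(\mu)=(\ii\times\ff(\cdot,\mu))_\sharp\mu$, which is \eqref{eq:95}.

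It then remains to verify integrability and the remaining identities. Setting $\FF(\mu):=\ff(\cdot,\mu)_\sharp\mu=\pi^2_\sharp\GG(\mu)$, the measure $\ttM_2:=\pi^2_{\sharp\sharp}\bttP$ belongs to $\RW2$ because $\pi^2_\sharp:\cP_2(\rmH\times\rmH)\to\cP_2(\rmH)$ does not increase the quadratic moment, whence $\|\ff\|_{L^2(\bar\ttM_1;\rmH)}^2=\int\sfm_2^2(\FF(\mu))\,\d\ttM_1(\mu)=\sfM_2^2(\ttM_2)<\infty$; the identities $\ttM_2=\FF_\sharp\ttM_1$, $\bar\ttM_2=(\ff,\FF)_\sharp\bar\ttM_1$ and the cost identity in \eqref{eq:96} then follow from $\bttP=\GG_\sharp\ttM_1$, $\GG(\mu)=(\ii\times\ff(\cdot,\mu))_\sharp\mu$ and the definition \eqref{eq:55} of $\bar\ttM_1$, exactly as in the displays preceding the statement. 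The only genuinely delicate point — and the step I expect to require the most care — is this measurability bookkeeping: the Borel selection of $\GG$ out of the disintegration and the assembly of the fibrewise maps $\ff(\cdot,\mu)$ into a single jointly Borel function on $\rmH\times\cP_2(\rmH)$. Both are handled by the universal disintegration theorem (the Borel kernel $\cK$ of \eqref{eq:61}) and the Borel barycentric map of \eqref{eq:barycentric}; once these are in place, everything else is a routine change of variables.
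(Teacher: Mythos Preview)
Your proof is correct and follows essentially the same approach as the paper. The paper's proof explicitly says that the necessity direction was already handled in the discussion preceding Definition~\ref{def:detRC} (the disintegration of $\bttP$ with respect to $\pi^1_\sharp$, the passage to $\GG$ and then to $\ff$ via the universal kernel $\cK$), and devotes the written proof entirely to the sufficiency, where the only nontrivial point is the Borel measurability of $\GG$; this is established there via a functional monotone class argument showing that $\mu\mapsto\int z(x,\mu)\,\d\mu$ is Borel for every bounded Borel $z$. Your sketch identifies exactly the same technical crux, and your use of the barycentric map $\bb$ of \eqref{eq:barycentric} to extract a jointly Borel $\ff$ from $\GG$ is a clean way to make the necessity step precise (the paper leaves this implicit in the pre-lemma discussion).
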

  Using the unfolding $\bar \bttP$ of $\bttP$
  we can also express \eqref{eq:96} as
  \begin{equation}
      \label{eq:unfolded-full}
      \bar\bttP=
      (\ii\times \ff,\GG)_\sharp\bar\ttM_1,\quad 
      \int |x-y|^2\,\d\bar\bttP(x,y,\ggamma)=
      \int|\ff(x,\mu)-x|^2\,\d\bar \ttM_1(x,\mu).
  \end{equation}
  \begin{proof}
    Because of the previous digression, we have just to show the converse direction: given a Borel map
    $\ff\in L^2(\bar \ttM_1;\rmH)$
    the coupling $\bttP$ given by \eqref{eq:95} is well defined,
    i.e.~the map $\GG:\cP(\rmH)\to\cP(\rmH\times \rmH)$
    is Borel. 
    This property follows by standard argument, see e.g.~
    \cite[Lemma D.2 and Corollary D.7]{Pinzi-Savare25}; here is a self-contained discussion.

    Let us first consider for a bounded Borel real function
    $z:\rmH\times \cP(\rmH)\to\R$
    the functional $\cZ:\cP(\rmH)\to\R$,
    \begin{equation}
      \label{eq:97}
      \cZ(\mu):=\int z(x,\mu)\,\d\mu\quad \mu\in \cP(\rmH),
    \end{equation}
    and let us call $\BB$ the class of functions $z$
    for which $\cZ$ is Borel.

    Clearly $\BB$ contains all the bounded and continuous functions $z\in
    \rmC_b(\rmH\times \cP(\rmH))$ (for which $\cZ$ is continuous).
    It is also easy to check that $\BB$ is closed with respect to
    uniform and monotone limits. By the functional monotone class
    Theorem
    \cite[Theorem 2.12.9]{Bogachev07} we deduce that
    $\BB$ contains all the bounded Borel functions.

    It follows that for every bounded continuous (or even Borel) map
    $\zeta:\rmH\times\rmH\to \R$ the map
    \begin{equation}
      \label{eq:98}
      \mu\mapsto
      \int_{\rmH\times \rmH}\zeta\,\d\GG(\mu)=
      \int \zeta(x,\ff(x,\mu))\,\d\mu
    \end{equation}
    is Borel, so the map $\GG$ is Borel as well, since
    the functionals $\ggamma\mapsto \int \zeta,\d\ggamma$,
    $\zeta\in \rmC_b(\rmH\times \rmH)$ generates
    the weak (Polish) topology of $\cP(\rmH\times \rmH)$. 

    Now, it is immediate to conclude that $\bttP$ is fully deterministic, since $\GG$ is injective and maps $\cP_2(\rmH)$ to $\cPdet2(\rmH\times \rmH)$.
  \end{proof}
  We end up with the following stronger Monge formulation of
  OT problem between (laws of) random measures.  
\begin{problem}[Strict Monge formulation]
  \label{prob:Monge}
  Given $\ttM_1,\ttM_2\in \rW2$ find
  a Borel map $\ff\in L^2(\bar \ttM_1;\rmH)$ such that
  setting
  $\FF(\mu):=
  \ff(\cdot,\mu)_\sharp\mu$ we have
  \begin{equation}
    \label{eq:90}
   \FF_\sharp \ttM_1=\ttM_2\quad 
    \text{and}
  \quad \sfW_2^2(\ttM_1,\ttM_2)=
    \int_{\rmH\times \rW2}\big|\ff(x,\mu)-x\big|^2\,\d\bar \ttM_1(x,\mu).
  \end{equation}
\end{problem}
Notice that the transformation 
$\ff\times \FF: (x,\mu)\to 
\big(\ff(x,\mu),\ff(\cdot,\mu)_\sharp\mu\big)$
maps $\rmH\times \cP_2(\rmH)$ to itself and satisfies
\begin{equation}
  \label{eq:99}
  (\ff,\FF)_\sharp\bar \ttM_1=\bar \ttM_2
\end{equation}
where, as usual, $\FF(\mu)=\ff(\cdot,\mu)_\sharp\mu$ and
$\bar \ttM_i=\int \mu\otimes \delta_\mu\,\d \ttM_i(\mu)$.

It could seem that the strict Monge formulation is considerably more demanding than the usual Monge formulation expressed by \eqref{eq:88}. For example
in the extreme case when $\ttM_i=
\delta_{\mu_i}$ are Dirac masses
concentrated in two measures
$\mu_1,\mu_2\in \cP_2(\rmH)$
clearly there is just one 
solution to the Monge formulation
\eqref{eq:88} but 
there could be many (or even no) solutions
to the strict Monge formulation, which 
reduces to the usual
Optimal Transport problem
between $\mu_1$ and $\mu_2$ in 
$\cP_2(\rmH).$

However, when we look for 
conditions on $\ttM_1$ 
which guarantee that 
\eqref{eq:88} is solvable
\emph{for every target measures $\ttM_2$}
then the two formulations are equivalent
and force uniqueness of solutions,
as the following result shows.
\begin{theorem}[Monge vs strict Monge]
    \label{le:M=strict-M}
    Given 
    $\ttM_1\in \cPP_2(\rmH)$,
    the following two properties are equivalent:
    \begin{enumerate}
        \item 
        for every $\ttM_2\in \cPP_2(\rmH)$
    there exists a 
    Borel map $\FF=\FF_{\ttM_2}:\cP_2(\rmH)\to\cP_2(\rmH)$
    (depending on $\ttM_2$) solving
    the OT problem in Monge form
    \eqref{eq:88};
    \item 
    for every $\ttM_2\in \cPP_2(\rmH)$
    there exists a 
    Borel map $\ff =\ff_{\ttM_2}\in L^2(\bar\ttM_1;\rmH)$ 
    (depending on $\ttM_2$)
    solving 
    the OT problem in the strict Monge form
    \eqref{eq:90}.
     \end{enumerate}
    In both cases, 
    for every $\ttM_2\in \cPP_2(\rmH)$ 
    the set of optimal couplings 
    $\Gamma_o(\ttM_1,\ttM_2)$ 
    contains the unique element
    $\pPi=\pPi_{\ttM_2}=(\id\times \FF_{\ttM_2})_\sharp \ttM_1$ 
    and, for $\pPi$-a.e.~$(\mu_1,\mu_2)$, the set
    $\Gamma_o(\mu_1,\mu_2)$
    contains the unique deterministic coupling
    $\ggamma=(\ii\times \ff_{\ttM_2}(\cdot,\mu_1))_\sharp\mu_1$
    so that $\ff$ corresponds to 
    the (unique) solution of the 
    strict Monge formulation 
    given in Problem \ref{prob:Monge}.
    We also have $\ff=\nabla_W\phi$ 
    for every optimal Kantorovich potential $\phi$
    (recall \eqref{eq:W-gradient}).

    Finally, every Lipschitz totally convex
    function $\phi$ is $W$-differentiable
    at $\ttM_1$-a.e.~$\mu$, according
    to \eqref{eq:defW}.
\end{theorem}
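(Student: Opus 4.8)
The plan is to prove $(2)\Rightarrow(1)$ directly, reduce the converse and all the ``moreover'' assertions to a single differentiability statement, and then prove that statement by lifting to the Hilbert space $\cH$.

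\textbf{From strict Monge to Monge.} Given $\ff=\ff_{\ttM_2}\in L^2(\bar\ttM_1;\rmH)$ solving \eqref{eq:90}, Lemma~\ref{le:fdet} shows that $\GG(\mu):=(\ii\times\ff(\cdot,\mu))_\sharp\mu$ is Borel, that the random coupling law $\bttP=\GG_\sharp\ttM_1$ belongs to $\RWWdet2$ with $\pi^2_{\sharp\sharp}\bttP=\FF_\sharp\ttM_1=\ttM_2$ for $\FF(\mu):=\ff(\cdot,\mu)_\sharp\mu$, and that $\int\bigl(\int|x-y|^2\,\d\ggamma\bigr)\,\d\bttP(\ggamma)=\int|x-\ff(x,\mu)|^2\,\d\bar\ttM_1=\sfW_2^2(\ttM_1,\ttM_2)$. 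By Proposition~\ref{prop:W_2-random}, $\bttP\in\RGamma_o(\ttM_1,\ttM_2)$ and is concentrated on $\cP_{2,o}(\rmH\times\rmH)$, whence $\pPi=(\id\times\FF)_\sharp\ttM_1\in\Gamma_o(\ttM_1,\ttM_2)$ solves \eqref{eq:88} and $(\ii\times\ff(\cdot,\mu))_\sharp\mu\in\Gamma_o(\mu,\FF(\mu))$ for $\ttM_1$-a.e.\ $\mu$.

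\textbf{Reduction of the converse.} The key assertion is
\[(\star)\qquad\text{if $(1)$ holds, every Lipschitz totally convex }\phi:\cP_2(\rmH)\to\R\text{ is }W\text{-differentiable at }\ttM_1\text{-a.e. }\mu,\]
which is also, verbatim, the last sentence of the theorem. Granting $(\star)$: fix $\ttM_2$, pick an optimal totally convex l.s.c.\ proper potential $\phi$ (Theorem~\ref{thm:duality}) and a Monge map $\FF$ from $(1)$, so that $\pPi=(\id\times\FF)_\sharp\ttM_1$ is optimal and $\FF(\mu)\in\partial^-\phi(\mu)$ for $\ttM_1$-a.e.\ $\mu$ (Theorem~\ref{thm:Random-Brenier}). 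To bypass the possible lack of Lipschitzianity of $\phi$ I would use the truncations $\phi_R=(\psi_R)^\star$ of Corollary~\ref{cor:Lip} ($\psi_R=\phi^\star$ on $\{\sfm_2\le R\}$, $=+\infty$ otherwise; then $\phi_R$ is $R$-Lipschitz, totally convex, $\phi_R\le\phi$ and $\phi_R^\star=\phi^\star$ on $\{\sfm_2\le R\}$), observe through Theorem~\ref{thm:sub-KLF} that any $\ggamma\in\bpartialt\phi[\mu]$ with $\sfm_2(\pi^2_\sharp\ggamma)\le R$ also lies in $\bpartialt\phi_R[\mu]$ (a short computation using $\int\la x,y\ra\,\d\ggamma=\phi(\mu)+\phi^\star(\pi^2_\sharp\ggamma)=\msc{\mu}{\pi^2_\sharp\ggamma}$), apply $(\star)$ to each $\phi_n$, $n\in\N$, and intersect the resulting full-measure sets: this yields that $\bpartialt\phi[\mu]$ is a singleton for $\ttM_1$-a.e.\ $\mu$, necessarily its minimal section $(\ii\times\nabla_W\phi(\cdot,\mu))_\sharp\mu$. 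Since $\Gamma_o(\mu,\FF(\mu))\subseteq\bpartialt\phi[\mu]$ by Corollary~\ref{cor:total-vs-sub}, the target $\FF(\mu)=\nabla_W\phi(\cdot,\mu)_\sharp\mu$ is uniquely determined $\ttM_1$-a.e.; hence $\pPi$ is the unique element of $\Gamma_o(\ttM_1,\ttM_2)$, $\Gamma_o(\mu,\mu_2)$ equals the deterministic coupling $(\ii\times\nabla_W\phi(\cdot,\mu))_\sharp\mu$ for $\pPi$-a.e.\ $(\mu,\mu_2)$, and $\ff:=\nabla_W\phi$ solves the strict Monge problem because $\sfw_2^2(\mu,\FF(\mu))=\int|x-\ff(x,\mu)|^2\,\d\mu(x)$ (the minimal section is optimal by Corollary~\ref{cor: tot subdiff are opt}), giving $(2)$; as $\phi$ was an arbitrary optimal potential, $\ff=\nabla_W\phi$ holds for all of them.

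\textbf{Proof of $(\star)$ --- the main obstacle.} In the Lagrangian lift, $\hat\phi:=\phi\circ\iota$ is convex and Lipschitz on $\cH$, so by Zaj\'i\v cek's theorem its non-Gateaux-differentiability set $N\subset\cH$ is a $\sigma$-\textrm{d.c.}\ hypersurface; moreover $N$ is law-invariant because $\partial\hat\phi$ is (Theorem~\ref{prop:collection}), i.e.\ $\iota^{-1}(\iota(N))=N$. By Proposition~\ref{prop:diff}, $\phi$ fails to be $W$-differentiable precisely on $\iota(N)$, and since $\ttM_1=\iota_\sharp\frm_1$ for every lift $\frm_1\in\cP_2(\cH)$ one has $\ttM_1(\iota(N))=\frm_1(N)$; thus $(\star)$ is equivalent to $\frm_1(N)=0$ for one (hence every) lift. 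I would argue by contraposition: if $\frm_1(N)>0$, then $\frm_1$ charges a single \textrm{d.c.}\ hypersurface $N_k$ of $\cH$; after restricting and renormalizing $\ttM_1$ to a suitable piece of $\iota(N_k)$ and transporting the remaining block of $\ttM_1$ by the identity (a localization/gluing step reducing the general case to this one), one may assume $\frm_1$ concentrated on $N_k$. The decisive step is then to build, by a Hilbert-space version of Gigli's inverse--Brenier construction exploiting the genuine curvature of the sheet $N_k$, a measure $\frm_2\in\cP_2(\cH)$ for which no optimal plan in $\Gamma_o(\frm_1,\frm_2)$ is deterministic, and to conclude that $\ttM_2:=\iota_\sharp\frm_2$ admits no Monge map: a Monge $\FF$ for $\ttM_1\to\ttM_2$ would, via the liftings of Proposition~\ref{prop:lift-cH} and Theorem~\ref{thm:det-is-useful}, produce a deterministic optimal plan between $\frm_1$ and a lift of $\ttM_2$, a contradiction with $(1)$. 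Carrying out this construction in the Hilbert space $\cH$ and transferring the obstruction down to $\cPP_2(\rmH)$, together with the localization bookkeeping, is where essentially all of the difficulty lies; everything else is assembled from the duality, total-subdifferential and Lagrangian-lifting results established above.
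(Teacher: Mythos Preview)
Your reduction of $(1)\Rightarrow(2)$ to the differentiability statement $(\star)$ via the truncations $\phi_R$ is correct and elegant, but the proof of $(\star)$ itself has a genuine gap, and the proposed route is unlikely to close it. The problem is the ``transfer down'' step: even if you succeed in building $\frm_2\in\cP_2(\cH)$ with no deterministic optimal plan from $\frm_1$, a Monge map $\FF$ at the level of $\cPP_2(\rmH)$ does \emph{not} yield a deterministic optimal plan in $\cP_2(\cH\times\cH)$ with first marginal $\frm_1$. Theorem~\ref{thm:det-is-useful}, which you invoke, requires hypothesis~\eqref{eq:51} (existence of deterministic inner couplings $\Gamma_o(\mu_1,\mu_2)\cap\cPdet2(\rmH\times\rmH)\neq\emptyset$), and this is precisely what assumption (1) does \emph{not} provide --- (1) only gives a deterministic coupling at the outer level. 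Moreover, even granting the lift, its second marginal would be \emph{some} lift of $\ttM_2$, not necessarily your specific $\frm_2$, so the Gigli-type obstruction (which is against $\frm_2$ alone) does not bite. The localization/gluing bookkeeping you defer would also need to show that property (1) is inherited by restrictions $\ttM_1'\ll\ttM_1$, which is plausible but not free.

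The paper takes an entirely different route that sidesteps this difficulty: instead of proving $(\star)$ first, it proves uniqueness of the optimal random coupling directly from (1) via a geodesic midpoint argument. Given any $\bttP\in\RGamma_o(\ttM_1,\ttM_2)$, one forms the convex combination $\bttP'=\tfrac12\bttP+\tfrac12\GG^\circ_\sharp\ttM_1$ (where $\GG^\circ$ is built from the minimal section $\ff^\circ$ of $\bpartialt\phi$), pushes to the midpoint $\tbar\ttM$ by $(\pi^{1\shto2}_{1/2})_{\sharp\sharp}$, and invokes Theorem~\ref{thm:geo1}: the optimal coupling from $\ttM_1$ to any midpoint is \emph{unique}, hence by (1) deterministic, which forces the disintegration of $\bttP'$ (a nontrivial average of two pieces) to collapse to Dirac masses $\ttM_1$-a.e. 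Nonbranching then gives $\ttM_2=(\FF^\circ)_\sharp\ttM_1$ and uniqueness of $\bttP$. Only \emph{after} this is $(\star)$ obtained, and trivially: if $\bpartialt\phi$ were multivalued on a set of positive $\ttM_1$-measure, a measurable selection would produce an optimal $\bttP$ not concentrated on $\nabla_W\phi$, contradicting the uniqueness just established. The key idea you are missing is this use of geodesic-midpoint rigidity (Theorem~\ref{thm:geo1}) as a substitute for the Hilbert-space obstruction argument.
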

\begin{proof}
    Since 2.$\Rightarrow$1., it is sufficient to prove that 1.~implies 2.~and all the further
    properties stated by the Theorem.

    We thus fix $\ttM_2$,
    an optimal Kantorovich potential $\phi$ 
    for the pair $\ttM_1,\ttM_2$
    with $\brmF=\bpartialt\phi$
    and we denote by $\ff^\circ:\rmH\times \cP_2(\rmH)
    \to\rmH$ a Borel version of the minimal section
    of $\brmF$ (as in Proposition 
    \ref{prop:tot-max-mon})
    and we 
    set 
    $\FF^\circ(\mu):=\ff^\circ(\cdot,\mu)_\sharp\mu $.
    
    We then select
    an optimal random coupling law 
    $\bttP\in \RGamma_o(\ttM_1,\ttM_2)$
    with
    $\pPi=(\pi^1_\sharp,\pi^2_\sharp)_\sharp
    \bttP\in \Gamma_o(\ttM_1,\ttM_2)$;
    since $\bttP$ is optimal, 
     $\supp\bttP\subset \brmF$.
        
    By the minimality of $\ff^\circ$ 
    we have
    $\ff^\circ\in L^2(\bar\ttM_1;\rmH)$ since
    \begin{align*}
        \sfW_2^2(\ttM_1,\ttM_2)&=
        \int\int |x_2-x_1|^2\,\d\ggamma(x_1,x_2)
        \,\d\bttP(\ggamma)
        \ge 
        \int\int |\ff^\circ(x_1,\pi^1_\sharp\ggamma)-x_1|^2\,\d\ggamma(x_1,x_2)
        \,\d\bttP(\ggamma)
        \\&=
        \int\int |\ff^\circ(x_1,\mu)-x_1|^2\,\d\bar\ttM_1(x,\mu)
    \end{align*}

    We then introduce
    \begin{displaymath}
        \bttP':=\frac 12 \bttP+
        \frac 12 \GG^\circ_\sharp\ttM_1,
        \quad 
        \GG^\circ(\mu):=
        (\ii\times \ff^\circ(\cdot,\mu))_\sharp\mu,\quad 
        \ttM_2':=\pi^2_{\sharp\sharp}\bttP'.
    \end{displaymath}
    Since $\supp\bttP'\subset \brmF$
    we have
    $\bttP'\in \RGamma_o(\ttM_1,\ttM_2')$
    and
    its disintegration with respect to $\pi^1_\sharp$ 
    can be expressed via the corresponding
    disintegration
    $\bttP_\mu$ of $\bttP$ by
    \begin{displaymath}
        \bttP_\mu'=\frac12 \bttP_\mu
        +\frac 12 \delta_{\GG^\circ(\mu)}.
    \end{displaymath}
    We then select the midpoint
    $\tbar\ttM:=(\pi^{1\shto2}_t)_{\sharp\sharp}\bttP'$ between 
    $\bttM_1$ and $\bttM_2'$ 
    induced by $\bttP'$ corresponding to $t=1/2.$
    By Theorem 
    \ref{thm:geo1} 
    we know that 
    $\tbar\bttP:=(\pi^1,\pi^{1\shto2}_t)_{\sharp\sharp}
    \bttP'$ is the unique element
    of $\RGamma_o(\ttM_1,\tbar\ttM)$,
    and  
    $\tbar\pPi=(\pi^1_\sharp,\pi^2_\sharp)\tbar\bttP$
    is the unique element
    of $\Gamma_o(\ttM_1,\tbar\ttM)$
    so that 
    by assumption 
    $\tbar\pPi=(\id\times \tbar\FF)_\sharp \ttM_1$
    for $\tbar\FF=\FF_{\tbar\ttM}.$
    On the other hand
    setting
    \begin{displaymath}
        \bttP_{\mu,t}=
        (\pi^1,\pi^{1\shto2}_t)_{\sharp\sharp}
        \bttP_\mu,\
        \pPi_{\mu,t}=\pi^2_{\sharp\sharp}
        \bttP_{\mu,t},\
        \GG^\circ_t(\mu)=
        (\pi^1,\pi^{1\shto2}_t)_\sharp\GG^\circ(\mu),
        \
        \FF_t^\circ(\mu)=(\pi^{1\shto2}_t)_\sharp\GG^\circ(\mu)=
        \pi^2_\sharp \GG^\circ_t(\mu)        
    \end{displaymath}
    we have 
    \begin{displaymath}
        \tbar{\bttP}_\mu=
        \frac 12 \bttP_{\mu,t}+
        \frac 12 \delta_{\GG^\circ_t(\mu)},\quad 
        \tbar{\pPi}_\mu=
        \frac 12 \pPi_{\mu,t}+
        \frac12\delta_{\FF^\circ_t(\mu)}
        \quad\text{$\ttM_1$-a.e.}
    \end{displaymath}
    Since $\tbar\pPi_\mu=\delta_{\tbar\FF(\mu)}$,
    we deduce that $
    \pPi_{\mu,t}=\FF^\circ_t(\mu)$
    for $\ttM_1$-a.e.~$\mu$,
    so that $\tbar\ttM$ is the middle point
    also between $\ttM_1$ and $\ttM_2$
    and between $\ttM_1$ and 
    $\ttM_2^\circ=(\FF^\circ)_\sharp \ttM_1$
    (with respect to the same 
    optimal set $\brmF$).
    By the non-branching property 
    of Theorem \ref{thm:geo1}
    we deduce that $\ttM_2=\ttM_2^\circ$
    and by the strict minimality of the section $\ff^\circ$ 
    $\bttP=\GG^\circ_\sharp \ttM_1$
    is a strict Monge solution
    and is the unique element of $\RGamma_o(\ttM_1,\ttM_2)$;
    similarly $\pPi=(\id\times \FF^\circ)_\sharp\ttM_1$
    is deterministic and is 
    the unique element of $\Gamma_o(\ttM_1,\ttM_2).$

    Let us eventually check the last statement.
    We take a Lipschitz totally convex function
    $\phi$: if there exists a Borel 
    set $B\subset \cP_2(\rmH)$
    where $\bpartialt\phi$ is not a singleton
    with $\ttM_1(B)>0$,
    by a standard measurable selection
    we can construct
    an optimal random coupling law
    $\bttP$ with first random marginal $\ttM_1$
    and second random marginal $\ttM_2=
    \pi^2_{\sharp\sharp}\bttP\in \cPP_2(\rmH)$
    (thanks to the fact that $\phi$ is Lipschitz)
    which is not concentrated on
    the map $\nabla_W\phi$, contradicting the 
    above result.
\end{proof}
By the above result if we want to 
solve the Monge problem 
for arbitrary target $\ttM_2$ it seems natural
to start from measures
$\ttM_1$ concentrated on 
$\cP_2^r(\rmH)$. 
The next simpler Proposition shows that 
in this case there is also 
a one-to-one correspondence between
optimal couplings of (laws of) random measures and 
(laws of) random optimal couplings and 
for every fixed target $\ttM_2$
  the Monge and the strict Monge formulations of the $L^2$-Optimal Transport problem are equivalent
  as well.  
\begin{proposition}
  \label{prop:correspondence}
  Let $\ttM_1,\ttM_2\in \RW2$ and let us assume that
  $\ttM_1$ is concentrated on $\cP_2^r(\rmH)$, i.e.~$\mu\in \cP_2^r(\rmH)$ for
  $\ttM_1$-a.e.~$\mu$.
  Then the restriction of the map $(\pi^1_\sharp,\pi^2_\sharp)_\sharp$
  on $\RGamma_o(\ttM_1,\ttM_2)$ is injective and every
  $\bttP\in \RGamma_o(\ttM_1,\ttM_2)$ is concentrated on
  $\cP_2^{\rm det}(\rmH\times \rmH)$.
  Moreover, if
  $\bttP\in \RGamma_o(\ttM_1,\ttM_2)$ and 
  $\text{$\mathit \Pi$}=(\pi^1_\sharp,\pi^2_\sharp)_\sharp
    \bttP$
  is deterministic then $\bttP$ is fully deterministic.  
\end{proposition}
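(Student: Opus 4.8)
The plan is to reduce everything to Brenier's Theorem~\ref{thm:Brenier}: as soon as the first marginal is \emph{regular}, the optimal coupling is unique and deterministic. First I would set up a measurable selection. Fix a Borel set $B\subset\cP_2^r(\rmH)$ with $\ttM_1(B)=1$ and put $\cP_{2,o}^B:=(\pi^1_\sharp)^{-1}(B)\cap\cP_{2,o}(\rmH\times\rmH)$; this is a Borel subset of $\cP_2(\rmH\times\rmH)$ because $\cP_{2,o}(\rmH\times\rmH)$ is closed (it is the zero set of the continuous functional $\ggamma\mapsto\int|x-y|^2\,\d\ggamma-\sfw_2^2(\pi^1_\sharp\ggamma,\pi^2_\sharp\ggamma)$) and $\pi^1_\sharp$ is continuous. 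By Theorem~\ref{thm:Brenier} every $\ggamma\in\cP_{2,o}^B$ is deterministic, so $\cP_{2,o}^B\subset\cPdet2(\rmH\times\rmH)$, and the continuous map $\pi^1_\sharp\times\pi^2_\sharp$ is injective on $\cP_{2,o}^B$ (uniqueness in Brenier's theorem); by the Lusin--Souslin theorem its image $\cO:=(\pi^1_\sharp\times\pi^2_\sharp)(\cP_{2,o}^B)$ is a Borel subset of $\cP_2(\rmH)\times\cP_2(\rmH)$ containing $\{(\mu,\nu):\mu\in B\}$, and it carries a Borel right inverse $R:\cO\to\cP_{2,o}^B$, with $R(\mu,\nu)$ equal to the unique element of $\Gamma_o(\mu,\nu)$ whenever $\mu\in B$.

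Given $\bttP\in\RGamma_o(\ttM_1,\ttM_2)$, I would then argue as follows. By Proposition~\ref{prop:W_2-random}, $\bttP$ is concentrated on $\cP_{2,o}(\rmH\times\rmH)$, and since $\ttM_1=\pi^1_{\sharp\sharp}\bttP$ is concentrated on $B$, we have $\pi^1_\sharp\ggamma\in B$ for $\bttP$-a.e.\ $\ggamma$; hence $\bttP$ is concentrated on $\cP_{2,o}^B\subset\cPdet2(\rmH\times\rmH)$, which is the concentration assertion. For the injectivity of $(\pi^1_\sharp,\pi^2_\sharp)_\sharp$ on $\RGamma_o(\ttM_1,\ttM_2)$, set $\pPi:=(\pi^1_\sharp,\pi^2_\sharp)_\sharp\bttP$ and disintegrate $\bttP$ along $q:=\pi^1_\sharp\times\pi^2_\sharp$, writing $\bttP=\int\bttP_{(\mu,\nu)}\,\d\pPi(\mu,\nu)$ with $\bttP_{(\mu,\nu)}$ concentrated on $q^{-1}(\mu,\nu)\cap\cP_{2,o}(\rmH\times\rmH)=\Gamma_o(\mu,\nu)$ for $\pPi$-a.e.\ $(\mu,\nu)$. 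Since $\mu\in B$ for $\pPi$-a.e.\ $(\mu,\nu)$, one has $\Gamma_o(\mu,\nu)=\{R(\mu,\nu)\}$, hence $\bttP_{(\mu,\nu)}=\delta_{R(\mu,\nu)}$ and $\bttP=\int\delta_{R(\mu,\nu)}\,\d\pPi(\mu,\nu)$, an expression depending on $\bttP$ only through $\pPi$.

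For the last claim I would add the hypothesis that $\pPi=(\pi^1_\sharp,\pi^2_\sharp)_\sharp\bttP$ is deterministic, $\pPi=(\id\times\FF)_\sharp\ttM_1$ with $\FF:\cP_2(\rmH)\to\cP_2(\rmH)$ Borel. Substituting into the identity just obtained yields $\bttP=\GG_\sharp\ttM_1$, where $\GG(\mu):=R(\mu,\FF(\mu))$ is a Borel map taking values in $\cPdet2(\rmH\times\rmH)$ and satisfying $\pi^1_\sharp\GG(\mu)=\mu$ for $\ttM_1$-a.e.\ $\mu$. Thus $\pi^1_\sharp$ is injective on the image of $\GG$, a set of full $\bttP$-measure, and $\bttP$ is concentrated on $\cPdet2(\rmH\times\rmH)$, i.e.\ $\bttP$ meets the two requirements of Definition~\ref{def:detRC} and is fully deterministic; Lemma~\ref{le:fdet} then produces the associated Borel map $\ff\in L^2(\bar\ttM_1;\rmH)$, whose square-integrability amounts to $\int\sfm_2^2(\FF(\mu))\,\d\ttM_1(\mu)=\sfM_2^2(\ttM_2)<\infty$.

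I expect the only genuine obstacle to be the measurable-selection bookkeeping of the first step: the closedness (hence Borel character) of $\cP_{2,o}(\rmH\times\rmH)$, the reduction of ``$\mu\in\cP_2^r(\rmH)$ for $\ttM_1$-a.e.\ $\mu$'' to an honest Borel set $B\subset\cP_2^r(\rmH)$, and the construction of the Borel inverse $R$. Once these are in place, the proposition is an immediate consequence of Brenier's Theorem~\ref{thm:Brenier} and a single disintegration with respect to $\pi^1_\sharp\times\pi^2_\sharp$.
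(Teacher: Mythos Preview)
Your proof is correct and follows essentially the same route as the paper's: both reduce everything to Brenier's Theorem~\ref{thm:Brenier}, observe that on a full-measure set of couplings with regular first marginal the map $\pi^1_\sharp\times\pi^2_\sharp$ is injective (uniqueness) with image in $\cPdet2(\rmH\times\rmH)$, and then read off the essential injectivity of $\pi^1_\sharp$ when $\pPi$ is induced by a map $\FF$. Your version is slightly more explicit---you build a Borel right inverse $R$ via Lusin--Souslin and carry out the disintegration $\bttP=\int\delta_{R(\mu,\nu)}\,\d\pPi$, whereas the paper simply asserts injectivity of the push-forward from injectivity on a full-measure set---but the substance is identical, and you correctly flag the same measurability subtlety (the Borel character of the carrier set $B$) that the paper addresses only in the finite-dimensional case via Proposition~\ref{prop:measurability}.
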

\begin{proof}
  Since $\bttP\in \RGamma_o(\ttM_1,\ttM_2)$ and
  $\ttM_1$ is concentrated on $\cP_2^r(\rmH)$, thanks to Theorem \ref{thm:Brenier}
  we can find a Borel set $B\subset\cP_2(\rmH\times \rmH)$
  of full $\bttP$-measure
  such that 
  $B\subset \cP_{2,o}^{\operatorname{det}}(\rmH\times \rmH)$
  and $\pi^1_\sharp(B)\subset \cP_2^r(\rmH)$
  (when $\rmH$ has finite dimension
  we just take $B=(\pi^1_\sharp)^{-1}\big(\cP_2^r(\rmH)\big)\cap \cP_{2,o}(\rmH \times \rmH)$, 
see Proposition \ref{prop:measurability}).

  The restriction of $(\pi^1_\sharp,\pi^2_\sharp)$ to $B$
  is injective, since,
  given $(\mu_1,\mu_2)\in (\pi^1_\sharp,\pi^2_\sharp)(B)$, the set
  $\Gamma_o(\mu_1,\mu_2)$ contains a unique element and it is deterministic.
  It follows that $(\pi^1_\sharp,\pi^2_\sharp)_\sharp$ is injective as
  well
  on $\RGamma_o(\ttM_1,\ttM_2)$.
  If moreover $\pPi$ is deterministic and induced by the Borel map
  $\FF$, we see that
  for $\bttP$-a.e.~$\ggamma$, 
  $\ggamma\in \Gamma_o(\pi^1_\sharp \ggamma,\FF(\pi^1_\sharp\ggamma))$
  so that $\pi^1_\sharp$ is $\bttP$-essentially injective.
  \end{proof}

\subsection{Regular and super-regular measures in \texorpdfstring{$\RW2$}{} and solution
  to the Monge problem}
  \label{subsec:regular-measures}
We first observe that the definitions of $\sigma$-\textrm{d.c.}~hypersurfaces and Gaussian null sets given in
Section \ref{subsec:regularity} also apply to the infinite dimensional Hilbert
space $\cH$;
we keep the notation $\cP_2^r(\cH),\cP_2^{{gr}}(\cH)$ 
to denote the corresponding class of regular measures,
thus vanishing on all \textrm{d.c.}~hypersurfaces and Gaussian-null Borel subsets of $\cH$ respectively.
\begin{definition}[Random exceptional and Gaussian null sets, regular and super-regular measures]
  \label{Random Gauss-null sets}
  \ 
  \begin{itemize}
      \item[-]
  We say that a Borel set $B\subset \cP_2(\rmH)$
  is a random exceptional (resp.~G-null) set if
  $\iota^{-1}(B)$ is 
  contained in a 
  $\sigma$-d.c.~hypersurface 
  of $\cH$ (resp.~Gaussian-null in $\cH$).
  \item[-] 
  We denote by $\cP_2^r(\cP_2(\rmH))$ 
  (resp.~$\cP_2^{gr}(\cP_2(\rmH))$) the set of regular (resp.~G-regular) measures
  $\ttM\in \cP_2(\rmH)$
  such that 
  $\ttM(B)=0$ for every random
  exceptional (resp.~G-null) Borel set
  $B\subset\cP_2(\rmH)$.
  \item[-]
  The set of super-regular measures 
  $\RWr2=\cP_2^r(\cP_2^r(\rmH))$ 
  (resp.~super-G-regular measures
  $\RWgr2=
  \cP_2^{gr}(\cP_2^{r}(\rmH))$)
  is the set of regular 
  (resp.~of G-regular) measures
  concentrated on $\cP_2^r(\rmH)$.
  \end{itemize}
\end{definition}
\noindent 
It is clear that 
\begin{equation}
    \label{eq:inclusions2}
    \cP_2^{gr}(\cP_2(\rmH))
    \subset \cP_2^r(\cP_2(\rmH),
    \qquad 
    \RWgr2\subset 
    \RWr2.
\end{equation}
Let us make a few comments 
on the previous definitions.
\begin{remark}[LGGRM measures]
    \label{rem:rGnull}
    We can say that 
    a measure $\ttG\in \RW2$
    is a Law of Gaussian Generated Random Measures (LGGRM) if
    $\ttG=\iota_\sharp \frg$ 
    for some nondegenerate Gaussian measure $\frg$ in $\cH$. 
    An equivalent way to say that 
    a Borel set in $\cP_2(\rmH)$ 
    is a random G-null set is
    \begin{equation}
        \label{eq:equivalent}
        \ttG(B)=0\quad\text{for every LGGRM $\ttG.$}
    \end{equation}
\end{remark}
\begin{remark}[Random exceptional and G-null sets
  are independent of the choice of $(\OOmega,\cF_\OOmega,\P)$]
  \label{rem:invariance}
  \ \upshape\\
  It is not difficult to see that the above definitions of random exceptional and 
  G-null sets 
  (and the corresponding classes of regular and super-regular measures) are independent of the choice of
  the nonatomic standard Borel space
  $(\OOmega,\cF_\OOmega,\P)$.
  In fact, if $(\OOmega',\cF_\OOmega',\P')$ is another standard Borel measure space
  endowed with a nonatomic measure $\P'$, we can
  find a measure preserving isomorphism
  $\mathtt h:\OOmega'\to \OOmega$ such that
  $\mathtt h_\sharp\P'=\P$ and $\mathtt h^{-1}_\sharp\P=\P'$.
  $\mathtt h$ induces a linear isometry of
  $\cH$ onto $\cH'=L^2(\OOmega',\P';\rmH)$
  defined by $\mathtt h^* X:=X\circ\mathtt h$, with
  $\iota'\circ\mathtt h^*=\iota$, since
  \begin{equation}
    \label{eq:82}
    X'=X\circ \mathtt h,\quad
    \iota'(X')=(X\circ\mathtt h)_\sharp\P'=
    X_\sharp\mathtt h_\sharp\P'=
    X_\sharp\P=\iota(X).
  \end{equation} 
  Since the \textrm{d.c.}~hypersurfaces are 
  preserved by isometric isomorphisms between Hilbert spaces, using 
  \eqref{eq:82} it is immediate to check that a random exceptional set w.r.t.~$\OOmega'$
  is also exceptional w.r.t.~$\OOmega$.

  In a similar way, if $B'$ is a random Gaussian-null set
  with respect to $\OOmega',\P'$,
  i.e.~$(\iota')^{-1}(B')$ is Gaussian-null in $\cH'$
  and let $\frg$ be an arbitrary nondegenerate Gaussian measure in
  $\cH$.
  We introduce $\frg'=(\mathtt
  h^*)_\sharp\frg$ and we observe that
  $\frg'$ is a nondegenerate Gaussian measure in $\cH'$
  (since $\mathtt h^*$ is a linear surjective isometry).
  By definition
  $\frg'((\iota')^{-1}B')=0$
  and therefore
  \begin{displaymath}
    \frg(\iota^{-1}B)=
    \frg\Big(\big(\iota'\circ\mathtt h^*\big)^{-1}B\Big)=
    \frg\Big((\mathtt h^*)^{-1}(\iota')^{-1}B\Big)=
    \frg'\Big((\iota')^{-1}B\Big)=0,
  \end{displaymath}
  so that $\iota^{-1}B$ is Gaussian null in $\cH$. 
\end{remark}
\begin{remark}[Stability in the class of mutually absolutely continuous measures]
\label{rem:stab-ac}
    The super-regularity condition is stable
with respect to multiplication by an integrable factor:
\begin{equation}
    \label{eq:invariance-super}
    \ttM\in \RWr2,\quad
    \ttM'\ll \ttM
    \quad\Rightarrow\quad
    \ttM'\in \RWr2.
\end{equation}
\end{remark}
\begin{remark}
    \label{rem:simpler}
    If $\ttM$ is concentrated 
    on $\cP_2^r(\rmH)$
    then it is super-regular
    if it vanishes on 
    all exceptional subsets of $\cP_2^r(\rmH)$,
    i.e.~it is sufficient to check that 
    \begin{equation}
        \label{eq:simpler}
        B\subset \cP_2^r(\rmH),\quad 
        \iota^{-1}(B)
        \text{ exceptional in $\cH$}
        \quad\Rightarrow\quad 
        \ttM(B)=0.
    \end{equation}
    Similarly, if 
    \begin{equation}
        \label{eq:simpler2}
        B\subset \cP_2^{gr}(\rmH),\quad 
        \iota^{-1}(B)
        \text{ Gaussian null in $\cH$}
        \quad\Rightarrow\quad 
        \ttM(B)=0
    \end{equation}
    then $\ttM$ is super-G-regular.
\end{remark}
There is a simple way to generate
super-regular measures.
\begin{lemma}
  \label{le:push-regular}
  Let $\frm$ be a regular measure 
  in $\cP_2^r(\cH)$
  (respectively $G$-regular in $\cP_2^{gr}(\cH)$)
  such that 
  \begin{equation}
    \label{eq:54}
    \iota(X)=X_\sharp\P\in \cP_2^r(\rmH)
    \quad\text{for $\frm$-a.e.~}X\in \cH.
  \end{equation}
  Then $\ttM:=\iota_\sharp \frm$ is a super-regular measure
  in $\RWr2$
  (resp.~super-G-regular in $\RWgr2$).
\end{lemma}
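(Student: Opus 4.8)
The plan is to unwind the two defining requirements of super-regularity for $\ttM := \iota_\sharp\frm$ — that $\ttM$ be concentrated on $\cP_2^r(\rmH)$ and that it annihilate every random exceptional Borel set — and in each case transport the corresponding property of $\frm$ along $\iota$. The only tool needed is the elementary identity $\ttM(B) = \iota_\sharp\frm(B) = \frm(\iota^{-1}(B))$ for Borel $B \subset \cP_2(\rmH)$, together with the fact that $\iota \colon \cH \to \cP_2(\rmH)$ is continuous (indeed $1$-Lipschitz by \eqref{eq:8}), so that $\iota^{-1}(B)$ is Borel in $\cH$ whenever $B$ is Borel in $\cP_2(\rmH)$.

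First I would record that $\ttM$ is a law of a random measure with finite quadratic moment: since $\sfm_2(\iota(X)) = \|X\|_\cH$ by \eqref{eq:8}, one has $\sfM_2^2(\ttM) = \int_\cH \|X\|_\cH^2\,\d\frm(X) < \infty$ as $\frm \in \cP_2(\cH)$, so $\ttM \in \cPP_2(\rmH)$. Next, concentration on $\cP_2^r(\rmH)$. Hypothesis~\eqref{eq:54} furnishes a Borel set $E \subset \cH$ with $\frm(E) = 1$ and $\iota(X) \in \cP_2^r(\rmH)$ for every $X \in E$. The image $\iota(E)$ is an analytic (Souslin) subset of $\cP_2(\rmH)$, hence universally measurable (so $\ttM$-measurable), it is contained in $\cP_2^r(\rmH)$, and $\iota^{-1}(\iota(E)) \supset E$; therefore $\ttM(\iota(E)) = \frm(\iota^{-1}(\iota(E))) \ge \frm(E) = 1$, so $\ttM$ is concentrated on $\cP_2^r(\rmH)$. (When $\rmH$ has finite dimension one can instead invoke directly the Borel measurability of $\cP_2^r(\rmH)$ from Proposition~\ref{prop:measurability} and bypass analytic sets.)

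It then remains to show that $\ttM$ kills random exceptional sets. Let $B \subset \cP_2(\rmH)$ be Borel with $\iota^{-1}(B)$ contained in some $\sigma$-\textrm{d.c.}\ hypersurface $S \subset \cH$; note that $S$ is Borel, being a countable union of (closed) graphs, and so is $\iota^{-1}(B)$. Since $\frm \in \cP_2^r(\cH)$ vanishes on every \textrm{d.c.}\ hypersurface, by countable subadditivity it vanishes on $S$, whence $\ttM(B) = \frm(\iota^{-1}(B)) \le \frm(S) = 0$; combined with the concentration step, $\ttM \in \RWr2$. For the $G$-regular variant, if $\frm \in \cP_2^{gr}(\cH)$ then also $\frm \in \cP_2^r(\cH)$ (every \textrm{d.c.}\ hypersurface is Aronszajn null, hence Gaussian null; cf.\ Remark~\ref{rem:cases}), so the concentration step applies verbatim; and if $B$ is a random $G$-null Borel set — i.e.\ $\iota^{-1}(B)$ is Gaussian null in $\cH$ — then $\ttM(B) = \frm(\iota^{-1}(B)) = 0$ by $G$-regularity of $\frm$, so $\ttM \in \RWgr2$.

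I do not expect a genuine obstacle here: the entire content is the stability of the relevant $\sigma$-ideals (of $\sigma$-\textrm{d.c.}\ hypersurface sets, resp.\ of Gaussian-null sets) under continuous preimages, which is exactly what the definitions of random exceptional and random $G$-null set were designed to encode. The only mildly delicate point is measurability in infinite dimensions, which is why the concentration step is phrased through the analytic set $\iota(E)$ and the universal measurability of $\ttM$, rather than through $\cP_2^r(\rmH)$ itself.
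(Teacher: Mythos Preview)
Your proof is correct and follows the same approach as the paper: the push-forward identity $\ttM(B)=\frm(\iota^{-1}(B))$ immediately transfers regularity from $\frm$ to $\ttM$, and condition~\eqref{eq:54} gives concentration on $\cP_2^r(\rmH)$. You are more careful than the paper on two points it glosses over---the measurability of the concentration set in infinite dimensions (which you handle via analytic sets; an equivalent elementary route is to exhaust $E$ by compacts and use that $\iota(K_n)$ is Borel) and the explicit treatment of the $G$-regular variant---but the underlying argument is identical.
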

\begin{proof}
  It is immediate to see that 
  $\ttM:=\iota_\sharp \frm\in \cP_2^r(\cP_2(\rmH))$ is regular:
  in fact, if $B$ is a random 
  exceptional set
  \begin{displaymath}
    \ttM(B)=\frm(\iota^{-1}B)=0
  \end{displaymath}
  since $\iota^{-1}(B)$ is a $\sigma$-\textrm{d.c.}~hypersurface and $\frm$ is regular.
  Condition \eqref{eq:54} also shows that $M$
  is concentrated on $\cP_2^r(\rmH)$.
\end{proof}
It is also possible to change the reference measure $\P:$ we show two simple cases.
\begin{lemma}
    \label{le:change-P}
    Let $\P',\P''$ be atomless
    Borel probability measures on the 
    standard Borel space $(\OOmega,\cF_\OOmega)$
    with $\P''\le a\P'$ for some $a>0$
    (so that 
    the corresponding 
    Hilbert spaces $\cH',\cH''$
    satisfy $\cH'\subset \cH''$
    with continuous inclusion).
    Denote by $\iota':X\to X_\sharp\P'$,
    $\iota'':X\to X_\sharp\P''$ the corresponding
    law maps.
    
    If $\frm\in \cP_2^r(\cH')$
    and $\ttM'=\iota'_\sharp\frm$
    is super-regular 
    then also $\ttM''=\iota''_\sharp\frm$
    is super-regular.

    A similar result holds if 
    $\ttM''\ll \ttM'$,
    and $\frm\in \cP_2^r(\cB)$, for some separable Banach space $\cB\subset 
    \cH'\cap\cH''.$
\end{lemma}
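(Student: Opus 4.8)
The plan is to deduce both statements from Lemma~\ref{le:push-regular}, once we know that $\frm$, regarded as a measure on $\cH''$, is still regular there and that the associated random measure lives on $\cP_2^r(\rmH)$. Write $j\colon\cH'\to\cH''$ for the canonical bounded linear map (the identity on functions), so that $\|j\|^2\le a$ and $\iota''\circ j(X)=X_\sharp\P''$ for $X\in\cH'$; thus $\ttM''=\iota''_\sharp\frm=\iota''_\sharp(j_\sharp\frm)$. Two elementary remarks: (i) $j$ has \emph{dense range}, since for $Y\in\cH''$ the truncations $Y\,\mathbf 1_{\{|Y|\le n\}}$ lie in $\cH'$ and converge to $Y$ in $\cH''$ by dominated convergence; (ii) writing $\rho:=\d\P''/\d\P'\in[0,a]$, one has $X_\sharp\P''\ll X_\sharp\P'$ and $\int|x|^2\,\d(X_\sharp\P'')\le a\int|x|^2\,\d(X_\sharp\P')$ for every Borel $X$, so that $X_\sharp\P'\in\cP_2^r(\rmH)$ implies $X_\sharp\P''\in\cP_2^r(\rmH)$ (absolute continuity preserves regularity).

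The crucial — and hardest — step is the geometric claim that \emph{the preimage of a $\sigma$-\textrm{d.c.}~hypersurface under a bounded linear map with dense range between separable Hilbert spaces is again a $\sigma$-\textrm{d.c.}~hypersurface}. It suffices to treat one \textrm{d.c.}~hypersurface $S=\{z\in\cH'':\langle z,v\rangle=f(Pz)-g(Pz)\}$, with $v$ a unit vector, $P$ the orthogonal projection onto $v^\perp$, and $f,g$ convex $L$-Lipschitz. I would replace $v$ by a unit vector $v'\in\operatorname{range}(j)$ with $\|v'-v\|$ small enough (a threshold depending only on $L$) that $S$ is also a \textrm{d.c.}~hypersurface with normal direction $v'$, $S=\{z:\langle z,v'\rangle=h'(P'z)\}$ with $h'$ \textrm{d.c.}~Lipschitz and $P'$ the projection onto $(v')^\perp$; this small-tilt stability is standard in \textrm{d.c.}~calculus, the reparametrization of the graph over $v^\perp$ as a graph over $(v')^\perp$ being of the form \emph{identity plus contraction} once the tilt is $<c/L$ (cf.~\cite{BL00,Zajicek79}). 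Choosing $\zeta\in\cH'$ with $j\zeta=v'$ and setting $F'(z):=\langle z,v'\rangle-h'(P'z)$, the function $G:=F'\circ j$ is \textrm{d.c.}~Lipschitz on $\cH'$ and satisfies the affine identity $G(X+t\zeta)=F'(jX+tv')=G(X)+t$. Decomposing any point of $j^{-1}(S)=\{G=0\}$ as $X^\perp+t\zeta$ with $X^\perp\in\zeta^\perp$ gives $t=-G(X^\perp)$, hence
\[
 j^{-1}(S)=\bigl\{X^\perp-G(X^\perp)\,\zeta:X^\perp\in\zeta^\perp\bigr\},
\]
the graph over $\zeta^\perp$, in the orthogonal direction $\zeta$, of the \textrm{d.c.}~Lipschitz function $-G|_{\zeta^\perp}$ — a \textrm{d.c.}~hypersurface of $\cH'$; a countable union handles $\sigma$-\textrm{d.c.}~hypersurfaces.

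Granting the claim, the first assertion follows at once: since $\frm\in\cP_2^r(\cH')$ kills every $\sigma$-\textrm{d.c.}~hypersurface of $\cH'$, it follows that $j_\sharp\frm$ kills every $\sigma$-\textrm{d.c.}~hypersurface of $\cH''$, i.e.\ $j_\sharp\frm\in\cP_2^r(\cH'')$ (finite second moment by $\|j\|^2\le a$); while (ii) and the super-regularity of $\ttM'=\iota'_\sharp\frm$ give $\iota''(Y)\in\cP_2^r(\rmH)$ for $(j_\sharp\frm)$-a.e.\ $Y$. Lemma~\ref{le:push-regular}, applied to the regular measure $j_\sharp\frm$ on $\cH''$ with $(\OOmega,\cF_\OOmega,\P'')$ as reference space (legitimate by Remark~\ref{rem:invariance}), then yields that $\ttM''=\iota''_\sharp(j_\sharp\frm)$ is super-regular. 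The second assertion is obtained verbatim with $j$ replaced by the dense-range continuous inclusion $\cB\hookrightarrow\cH''$ — the same pull-back computation, with the \textrm{d.c.}~profile composed with a continuous linear map into a Banach space, shows a regular $\frm\in\cP_2^r(\cB)$ charges no random exceptional set, while the hypothesis $\ttM''\ll\ttM'$ together with the super-regularity (hence concentration on $\cP_2^r(\rmH)$) of $\ttM'$ forces $\ttM''$ to be concentrated on $\cP_2^r(\rmH)$. The only genuinely delicate point in either part is this \textrm{d.c.}-hypersurface pull-back, and within it the reduction to a normal direction inside the dense range of the map: this is what makes the defining function affine along a fixed direction and thereby turns the preimage into an honest \textrm{d.c.}~graph instead of the (possibly fat) zero set of an arbitrary \textrm{d.c.}~function.
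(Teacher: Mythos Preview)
Your argument follows the same route as the paper's own proof, which likewise reduces everything to the claim ``$S\cap\cH'$ is a $\sigma$-\textrm{d.c.}~hypersurface in $\cH'$'' and then simply asserts it without justification. You go further and try to prove this claim, correctly identifying that the dense range of $j$ is what makes it work.

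The gap is in your tilt step. Reparametrizing $S$ as a graph over $(v')^\perp$ amounts to solving the scalar equation $cs=h(Pe'+sPv')-\langle e',v\rangle$ for $s$; the contraction-mapping argument gives a unique Lipschitz solution $h'(e')$, but showing that $h'$ is again \textrm{d.c.}\ is not in \cite{BL00} or \cite{Zajicek79} --- it requires a \textrm{d.c.}\ implicit function theorem, and in infinite dimensions the class of \textrm{d.c.}\ functions is not closed under composition, so the fixed-point iteration does not obviously stay in it. A cleaner route avoids the tilt entirely. Set $\Psi(z):=\max\bigl(f(Pz),\,g(Pz)+\langle z,v\rangle\bigr)$, a Lipschitz convex function on $\cH''$; at every $z\in S$ the two branches coincide while their subdifferentials lie at different $v$-heights (one in $v^\perp$, the other in $v^\perp+v$), so $\partial\Psi(z)$ is never a singleton and $S\subset\{\Psi\text{ not Gateaux differentiable}\}$. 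Since $j$ has dense range, $j^*$ is injective; hence $\hat\Psi:=\Psi\circ j$ fails to be Gateaux differentiable at $X$ whenever $\Psi$ does at $jX$, giving $j^{-1}(S)\subset\{\hat\Psi\text{ not Gateaux differentiable}\}$, which is a $\sigma$-\textrm{d.c.}~hypersurface of $\cH'$ by Zaj\'i\v cek's theorem. With this substitution the rest of your proof goes through unchanged, and the Banach-space variant follows the same way.
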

\begin{proof}
    Let $N\subset \cH'$ 
    be a $\frm$-negligible Borel set
    such that $\iota'(X)\in \cP_2^r(\rmH)$
    for every $X\in \cH'\setminus N.$
    Since $\iota''(X)\le a\iota'(X)$
    for every $X\in \cH'$
    we deduce that 
    $\iota''(X)\in \cP_2^r(\rmH)$
    for every $X\in \cH'\setminus N$
    as well, so that $\ttM''$
    is concentrated in $\cP_2^r(\rmH).$

    If $B$ is a Borel exceptional set 
    of $\cP_2(\rmH)$
    then 
    $(\iota'')^{-1}(B)$
    is contained in a 
    $\sigma$-d.c.~hypersurface $S$ of 
    $\cH''$; since $S\cap\cH'$
    is a $\sigma$-d.c.~hypersurface as well, we
    deduce that 
    $\ttM''(B)\le 
    \frm(S)=\frm(S\cap \cH')=0$
    since $\frm$ is regular. We conclude that 
    $\ttM''$ is super-regular.
    
    A similar argument applies to the second statement.
\end{proof}

The relation between super-regular measures
and differentiability of 
Lipschitz totally displacement convex functions
is clarified by the next two results.
\begin{theorem}
    \label{thm:total-diff}
    If $\phi:\cP_2(\rmH)\to\R$
    is a totally displacement convex and Lipschitz function
    then the singular set  
    $\operatorname{Sing}^r(\phi):=\{\mu\in \cP^r_2(\rmH):
    \#\bpartialt\phi[\mu]>1\}$
    of \emph{regular} measures
    where the total subdifferential of 
    $\phi$ is not reduced to 
    a singleton (the minimal section)
    is exceptional.
\end{theorem}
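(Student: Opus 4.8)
The plan is to transfer the statement to the Hilbertian lift and then invoke Zaj\'i\v cek's theorem. Since $\phi$ is totally displacement convex and Lipschitz, by Definition~\ref{def:totally-convex} and Lemma~\ref{le:invariance} the lift $\hat\phi:=\phi\circ\iota:\cH\to\R$ is convex and Lipschitz, hence continuous and subdifferentiable at every point. First I would introduce the set
\[
N:=\{X\in\cH:\partial\hat\phi(X)\text{ is not a singleton}\}=\{X\in\cH:\hat\phi\text{ is not Gateaux-differentiable at }X\},
\]
the two descriptions coinciding by \cite[Chap.~I, Prop.~5.3]{Ekeland-Temam76} for continuous convex functions, and recall that, by Zaj\'i\v cek's theorem (the result quoted after \eqref{eq:exceptional}, \cite{Zajicek79}), $N$ is contained in a $\sigma$-\textrm{d.c.}~hypersurface $S$ of $\cH$. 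It then suffices to prove the inclusion $\iota^{-1}(\operatorname{Sing}^r(\phi))\subset N$: indeed this gives $\iota^{-1}(\operatorname{Sing}^r(\phi))\subset S$, which is precisely what it means for $\operatorname{Sing}^r(\phi)$ to be exceptional.

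The key step is the cardinality identity
\[
\#\,\bpartialt\phi[\mu]=\#\,\partial\hat\phi(X)\qquad\text{for every }\mu\in\cP^r_2(\rmH)\text{ and every }X\in\iota^{-1}(\mu),
\]
which in particular shows that the right-hand side does not depend on the chosen representative $X$ of a regular $\mu$. To establish it I would fix $\mu\in\cP_2^r(\rmH)$ and $X\in\iota^{-1}(\mu)$ and consider the map $\Phi_X:Y\mapsto\iota^2(X,Y)=(X,Y)_\sharp\P$. By the commutation $\widehat{\bpartialt\phi}=\partial\hat\phi$ of Theorem~\ref{prop:collection}, $\Phi_X$ maps $\partial\hat\phi(X)$ into $\bpartialt\phi[\mu]$. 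It is surjective: if $\ggamma\in\bpartialt\phi[\mu]$ then $\ggamma\in\cP_{2,o}(\rmH\times\rmH)$ by Corollary~\ref{cor:trivial}, so $\ggamma\in\Gamma_o(\mu,\pi^2_\sharp\ggamma)$, and since $\mu$ is regular Theorem~\ref{thm:Brenier} forces $\ggamma=(\ii\times f)_\sharp\mu$ for a Borel map $f$; then $Y:=f\circ X\in\cH$ satisfies $\iota^2(X,Y)=(\ii\times f)_\sharp\mu=\ggamma$ and $(X,Y)\in\partial\hat\phi$. It is injective: if $\iota^2(X,Y_1)=\iota^2(X,Y_2)=\ggamma$, then again $\ggamma\in\bpartialt\phi[\mu]\subset\cP_{2,o}(\rmH^2)$ is deterministic by regularity, $\ggamma=(\ii\times f)_\sharp\mu$, and $(X,Y_i)_\sharp\P=(\ii\times f)_\sharp\mu$ yields $Y_i=f\circ X$ $\P$-almost everywhere, hence $Y_1=Y_2$ in $\cH$.

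With this identity, for $\mu\in\cP_2^r(\rmH)$ one has $\mu\in\operatorname{Sing}^r(\phi)$ if and only if $\#\,\partial\hat\phi(X)>1$ for one — equivalently, for every — $X\in\iota^{-1}(\mu)$, i.e.\ if and only if $\iota^{-1}(\mu)\subset N$; taking the union over $\mu\in\operatorname{Sing}^r(\phi)$ gives $\iota^{-1}(\operatorname{Sing}^r(\phi))=\iota^{-1}(\cP_2^r(\rmH))\cap N\subset S$, as desired. The hard part is exactly the cardinality identity above, and within it the injectivity half: without it, Proposition~\ref{prop:diff} only tells us that \emph{some} representative $X$ of a singular $\mu$ lies in $N$, whereas exceptionality of $\operatorname{Sing}^r(\phi)$ requires that the \emph{whole} fibre $\iota^{-1}(\mu)$ lie in $N$ — it is the combination of Br\'enier's theorem (which makes the optimal coupling deterministic, hence independent of the lift) with the Lagrangian representation of the total subdifferential that closes this gap; everything else is routine once Zaj\'i\v cek's structure theorem is in place.
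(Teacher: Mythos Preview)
Your proof is correct and follows the same route as the paper: lift to $\cH$, use that elements of $\bpartialt\phi[\mu]$ are optimal couplings (Corollary~\ref{cor:trivial}) and hence deterministic by Brenier for regular $\mu$, produce from distinct elements of $\bpartialt\phi[\mu]$ distinct elements of $\partial\hat\phi(X)$ for \emph{every} $X\in\iota^{-1}(\mu)$, and conclude via Zaj\'i\v cek. One minor correction to your closing commentary: it is the \emph{surjectivity} of $\Phi_X$ onto $\bpartialt\phi[\mu]$, not its injectivity, that gives the needed direction $\#\partial\hat\phi(X)\ge\#\bpartialt\phi[\mu]$ and hence the inclusion $\iota^{-1}(\operatorname{Sing}^r(\phi))\subset N$; the injectivity you establish is correct but only serves the reverse inequality in your full cardinality identity, which the paper does not bother to state.
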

\begin{proof}
  Since every measure
  $\mu\in \operatorname{Sing}^r(\phi)
  $ is regular and 
  $\bpartialt\phi[\mu]
  \subset \cP_{2,o}(\rmH\times \rmH)$ by Corollary \ref{cor: tot subdiff are opt}, all the elements of
  $\bpartialt\phi[\mu]$ are 
  deterministic couplings.
  If $\bpartial\phi[\mu]$
  contains 
  at least two different elements,
  they are 
  associated with 
  two different fields $\ff_1(\cdot,\mu),\ff_2(\cdot,\mu)$ 
  in $L^2(\rmH,\mu;\rmH)$. 
  
  If $X\in \iota^{-1}(\mu)$, setting 
  $Y_i=\ff_i(X,\mu)$ we deduce that 
  $(X,Y_i)_\sharp\P\in \bpartialt\phi[\mu]  $
  and therefore   
  $Y_i\in \partial \hat\phi(X)$, where $\hat\phi=\phi\circ\iota$.
  We conclude that $\partial\hat\phi(X)$
  contains two different elements and therefore 
  $\hat\phi$ is not Gateaux-differentiable at $X$. This arguments shows that 
  $\iota^{-1}(\operatorname{Sing}^r(\phi))$ is
  a subset where $\hat\phi$ is not Gateaux-differentiable  
  and therefore is a $\sigma$-\textrm{d.c.}~hypersurface in $\cH$ since
  $\hat\phi$ is a convex Lipschitz function.
\end{proof}
Combining Proposition \ref{prop:diff}, Remark \ref{rem:simpler} and the above Theorem we immediately get:
\begin{corollary}
    \label{cor:obvious}
    If $\ttM\in \RWr2$
    and $\phi:\cP_2(\rmH)\to \R$
    is a Lipschitz 
    totally displacement convex function,
    then 
    for $\ttM$-a.e.~$\mu$ we have:
    \begin{enumerate}
        \item 
        $\bpartialt\phi[\mu]
    =
    \bpartialt^\circ\phi[\mu]$
    is reduced to a single deterministic coupling in $\cP_{2,o}^{\rm det}(\rmH^2)$
    of the form\\
    $(\ii\times \nabla_W\phi(\cdot,\mu))_\sharp\mu$;
    \item 
    $\partial^-\phi(\mu)=
    \nabla_W\phi(\cdot,\mu)_\sharp\mu$.
    \end{enumerate}
\end{corollary}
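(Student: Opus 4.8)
The plan is to reduce the two claims to the differentiability statement of Theorem~\ref{thm:total-diff} together with the structure of the minimal section of a maximal totally monotone MPVF, and then to transfer the conclusion to $\partial^-\phi$ via the correspondence of Corollary~\ref{cor:total-vs-sub}. Throughout one uses that, since $\phi$ is real-valued and Lipschitz, the lift $\hat\phi=\phi\circ\iota$ is a finite convex Lipschitz function on $\cH$; hence $\partial\hat\phi(X)\neq\emptyset$ for every $X\in\cH$, so by Theorem~\ref{prop:collection} the total subdifferential $\bpartialt\phi$ is everywhere defined on $\cP_2(\rmH)$, and, being the total subdifferential of a proper l.s.c.\ totally convex function (note $\phi=\phi^{\star\star}$), it is maximal totally monotone; moreover Lemma~\ref{le:lower-B} applies, so the lower bound \eqref{eq:lower-B} holds.

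The first step is to discard an $\ttM$-null set of measures. Since $\ttM\in\RWr2$ it is concentrated on $\cP_2^r(\rmH)$, so it suffices to argue for regular $\mu$. By Theorem~\ref{thm:total-diff} the singular set $\operatorname{Sing}^r(\phi)=\{\mu\in\cP_2^r(\rmH):\#\bpartialt\phi[\mu]>1\}$ is random exceptional: its $\iota$-preimage is contained in a $\sigma$-\textrm{d.c.}~hypersurface of $\cH$, namely one containing the non-Gateaux-differentiability set of $\hat\phi$. By the definition of super-regular measure and Remark~\ref{rem:simpler}, $\ttM$ annihilates every random exceptional subset of $\cP_2^r(\rmH)$, whence $\ttM(\operatorname{Sing}^r(\phi))=0$. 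Thus for $\ttM$-a.e.\ $\mu$ one has $\mu\in\cP_2^r(\rmH)$ and $\bpartialt\phi[\mu]$ is a singleton; equivalently, by Proposition~\ref{prop:diff}(1), $\phi$ is $W$-differentiable at $\mu$, which already gives the final assertion of the statement.

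Next comes the identification. Fix such a $\mu$. Because $\bpartialt\phi$ is maximal totally monotone, Proposition~\ref{prop:tot-max-mon}(2) provides a minimal section $\bpartialt^\circ\phi\subset\bpartialt\phi\cap\cP_2^{\rm det}(\rmH\times\rmH)$ with $\bpartialt^\circ\phi[\mu]=(\ii\times\nabla_W\phi(\cdot,\mu))_\sharp\mu$ by \eqref{eq:W-gradient}. Since $\bpartialt^\circ\phi[\mu]\subset\bpartialt\phi[\mu]$ and the latter is a singleton, the two coincide (this is exactly the content of \eqref{eq:defW}); and $\bpartialt\phi\subset\cP_{2,o}(\rmH\times\rmH)$ by Corollary~\ref{cor:trivial}, so $\bpartialt\phi[\mu]=\bpartialt^\circ\phi[\mu]\in\cP_{2,o}^{\rm det}(\rmH\times\rmH)$, giving Claim~1 (alternatively, regularity of $\mu$ and Theorem~\ref{thm:Brenier} directly force every element of $\bpartialt\phi[\mu]$ to be a deterministic optimal coupling). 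For Claim~2, Corollary~\ref{cor:total-vs-sub} identifies the graph of $\partial^-\phi$ with the image of $\bpartialt\phi$ under $\pi^1_\sharp\times\pi^2_\sharp$, so $\partial^-\phi(\mu)=\{\pi^2_\sharp\ggamma:\ggamma\in\bpartialt\phi[\mu]\}$; inserting the singleton from Claim~1 and using $\pi^2_\sharp(\ii\times\nabla_W\phi(\cdot,\mu))_\sharp\mu=\nabla_W\phi(\cdot,\mu)_\sharp\mu$ (cf.\ \eqref{eq:73}) yields $\partial^-\phi(\mu)=\nabla_W\phi(\cdot,\mu)_\sharp\mu$.

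Since all the analytic content resides in Theorem~\ref{thm:total-diff}, Proposition~\ref{prop:tot-max-mon}, and Corollary~\ref{cor:total-vs-sub}, there is no genuine obstacle here: the proof is a bookkeeping deduction. The only point requiring attention is a measurability check — Theorem~\ref{thm:total-diff} produces a $\sigma$-\textrm{d.c.}~hypersurface containing $\iota^{-1}(\operatorname{Sing}^r(\phi))$, whereas the definition of super-regularity is phrased for \emph{Borel} random exceptional subsets of $\cP_2^r(\rmH)$. I would close this by exhibiting a Borel set $B\supseteq\operatorname{Sing}^r(\phi)$ with $\iota^{-1}(B)$ still contained in a $\sigma$-\textrm{d.c.}~hypersurface, using the Borel structure and law-invariance of $\partial\hat\phi$ and of its single-valuedness locus in $\cH$ (as in Lemma~\ref{le:invariance}), or else by passing to the outer $\ttM$-measure; with either device the conclusion $\ttM(\operatorname{Sing}^r(\phi))=0$ is immediate and the rest follows as above.
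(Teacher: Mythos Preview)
Your proof is correct and follows the same route the paper takes: the paper simply writes ``Combining Proposition~\ref{prop:diff}, Remark~\ref{rem:simpler} and the above Theorem we immediately get'' before stating the corollary, and your argument spells out exactly this combination, additionally invoking Corollary~\ref{cor:total-vs-sub} and Corollary~\ref{cor:trivial} to pin down Claims~1 and~2 precisely. Your flagging of the Borel measurability of $\operatorname{Sing}^r(\phi)$ is a genuine point the paper leaves implicit; your suggested workarounds are adequate.
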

\begin{theorem}[Solutions to the strict Monge problem for super-regular measures]
  \label{thm:Monge}
  If $\ttM\in \RWr2$ and
  $\ttN\in  \RW2$, then 
  $\RGamma_o(\ttM ,\ttN )$ and $\Gamma_o(\ttM ,\ttN )$
  contain a unique element
  $\bttP$ and $\pPi=(\pi^1_\sharp,\pi^2_\sharp)_\sharp \bttP$ respectively.\\
  $\bttP$ is fully deterministic and
  there exists a unique
  Borel map $\ff\in L^2(\bar \ttM ;\rmH)$
  solving the Monge OT problem \ref{prob:Monge}.
  $\ff$ is essentially totally cyclically monotone
  and coincides with
  the minimal section
  $\nabla_W\phi$
  of an optimal Kantorovich potential.
\end{theorem}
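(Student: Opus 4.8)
Here is how I would prove Theorem~\ref{thm:Monge}. The strategy is to produce an optimal Kantorovich potential, reduce its pointwise differentiability to the Lipschitz situation already settled in Theorem~\ref{thm:total-diff}/Corollary~\ref{cor:obvious}, and then read off every structural assertion from Theorem~\ref{thm:Random-Brenier} and Proposition~\ref{prop:correspondence}.

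First I would invoke Theorem~\ref{thm:duality} to fix a proper, lower semicontinuous, totally convex optimal Kantorovich potential $\phi$ for the pair $\ttM,\ttN$, so that $\int\phi\,\d\ttM+\int\phi^\star\,\d\ttN=\mmsc\ttM\ttN$. Since $\ttN$ is not assumed to be supported on measures of uniformly bounded moment, $\phi$ need not be Lipschitz and Corollary~\ref{cor:Lip} cannot be applied directly; I would instead introduce, for every sufficiently large integer $R$, the $R$-Lipschitz totally convex potentials $\phi_R:=\bigl(\phi^\star+\chi_{B_R}\bigr)^\star$, where $B_R:=\{\nu\in\cP_2(\rmH):\sfm_2(\nu)\le R\}$ and $\chi_{B_R}$ is the convex indicator of $B_R$ — precisely the truncation used in the proof of Corollary~\ref{cor:Lip} — so that $\phi_R^\star=\phi^\star+\chi_{B_R}$ and $\phi_R\uparrow\phi$ pointwise. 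The key step is to show that $\phi$ is $W$-differentiable at $\ttM$-a.e.\ $\mu$ in the sense of \eqref{eq:defW}. Applying Corollary~\ref{cor:obvious} to each Lipschitz potential $\phi_R$ and to $\ttM\in\RWr2$ gives, for each such $R$, a Borel set of full $\ttM$-measure on which $\phi_R$ is $W$-differentiable; let $A$ be the intersection of these sets over $R\in\N$, still of full $\ttM$-measure. Passing to the liftings and using the commutation $(\phi^\star)\circ\iota=(\phi\circ\iota)^\Ast$ of Theorem~\ref{thm:obvious} together with $\widehat{\chi_{B_R}}=\chi_{\{\|\cdot\|_\cH\le R\}}$ (by \eqref{eq:8}), one has $\widehat{\phi_R}=\bigl(\hat\phi^\Ast+\chi_{\{\|\cdot\|_\cH\le R\}}\bigr)^\Ast$; the Moreau--Rockafellar sum rule in $\cH$ and the interplay of a convex function with its conjugate then yield, for every $X\in\cH$ and every $Y$ with $\|Y\|_\cH<R$, the equivalence $Y\in\partial\widehat{\phi_R}(X)\Leftrightarrow Y\in\partial\hat\phi(X)$. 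Consequently, if $\partial\hat\phi(X)$ contained two distinct points $Y_1\neq Y_2$, then for $R>\max(\|Y_1\|_\cH,\|Y_2\|_\cH)$ the set $\partial\widehat{\phi_R}(X)$ would not be a singleton, contradicting $\iota(X)\in A$ via Proposition~\ref{prop:diff}(1). Hence for $\mu\in A$ the set $\partial\hat\phi$ is a singleton at every $X\in\iota^{-1}(\mu)$, i.e.\ $\bpartialt\phi[\mu]=\bpartialt^\circ\phi[\mu]=(\ii\times\nabla_W\phi(\cdot,\mu))_\sharp\mu$, and by Corollary~\ref{cor:total-vs-sub} the $\msc\cdot\cdot$-subdifferential $\partial^-\phi(\mu)$ reduces to the single point $\FF(\mu):=\nabla_W\phi(\cdot,\mu)_\sharp\mu$.

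With this differentiability in hand I would conclude as follows. Pick any $\pPi\in\Gamma_o(\ttM,\ttN)$; by Theorem~\ref{thm:Random-Brenier} (equivalence of properties (1) and (5)) its support is contained in $\partial^-\phi$, so disintegrating $\pPi$ with respect to its first marginal gives $\pPi_\mu=\delta_{\FF(\mu)}$ for $\ttM$-a.e.\ $\mu$, whence $\pPi=(\id\times\FF)_\sharp\ttM$ and $\ttN=\FF_\sharp\ttM$; since the same $\phi$ serves every optimal coupling, $\pPi$ is uniquely determined, proving uniqueness in $\Gamma_o(\ttM,\ttN)$. Setting $\ff:=\nabla_W\phi$, the moment bound reads $\|\ff\|^2_{L^2(\bar\ttM;\rmH)}=\int\sfm_2^2(\FF(\mu))\,\d\ttM=\sfM_2^2(\ttN)<\infty$, and Borel measurability of $\mu\mapsto(\ii\times\ff(\cdot,\mu))_\sharp\mu$ and of $\FF$ follows from the functional monotone class argument in the proof of Lemma~\ref{le:fdet}. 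Because $\ttM$ is concentrated on $\cP_2^r(\rmH)$ and $\pPi$ is deterministic, Proposition~\ref{prop:correspondence} shows that the unique $\bttP\in\RGamma_o(\ttM,\ttN)$ lying above $\pPi$ is fully deterministic, $\bttP=\GG_\sharp\ttM$ with $\GG(\mu)=(\ii\times\ff(\cdot,\mu))_\sharp\mu$. Lemma~\ref{le:fdet} together with Proposition~\ref{prop:W_2-random} then yields $\sfW_2^2(\ttM,\ttN)=\int_{\rmH\times\cP_2(\rmH)}|\ff(x,\mu)-x|^2\,\d\bar\ttM(x,\mu)$, so $\ff$ solves the strict Monge Problem~\ref{prob:Monge}; any other strict-Monge solution produces, again via Lemma~\ref{le:fdet}, a fully deterministic element of $\RGamma_o(\ttM,\ttN)$, which must coincide with $\bttP$, hence agrees with $\ff$ $\bar\ttM$-a.e. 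Finally, $\ff=\nabla_W\phi$ is essentially totally cyclically monotone because $\bpartialt\phi$ is totally cyclically monotone by Theorem~\ref{prop:collection}, and for its minimal deterministic section this is exactly the inequality \eqref{eq:77bis}.

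The main obstacle is the reduction carried out in the second paragraph: transferring $W$-differentiability from the Lipschitz truncations $\phi_R$ (where Theorem~\ref{thm:total-diff} applies) to the possibly non-Lipschitz potential $\phi$. Everything hinges on the localized identification $\partial\widehat{\phi_R}(X)\cap\{\|Y\|_\cH<R\}=\partial\hat\phi(X)\cap\{\|Y\|_\cH<R\}$ — that truncating $\phi^\star$ to $B_R$ leaves the subdifferential of $\phi$ unchanged at points whose image has norm below $R$ — which rests on the sum rule for convex subdifferentials in $\cH$ and on the commutation identity of Theorem~\ref{thm:obvious}; the nonemptiness of $B_R\cap D(\phi^\star)$ for large $R$ (guaranteeing that $\phi_R$ is genuine and that the sum rule is applicable) must also be checked. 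The remaining steps are a bookkeeping combination of results already established.
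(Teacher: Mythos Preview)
Your proof is correct and follows a genuinely different route from the paper's.

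The paper reduces everything to Theorem~\ref{le:M=strict-M} by showing that every $\pPi\in\Gamma_o(\ttM,\ttN)$ is deterministic: it restricts a chosen optimal $\pPi$ to slabs $\cP_2(\rmH)\times\cB(n)$, notes that optimality is preserved and that the first marginal $\ttM_n\ll\ttM$ is still super-regular (Remark~\ref{rem:stab-ac}), then applies the bounded-target case (Corollary~\ref{cor:Lip} plus Corollary~\ref{cor:obvious}) to get Monge maps $\FF_n$, checks their consistency, and patches them into a global $\FF$. The remaining conclusions (uniqueness, strict-Monge structure, $\ff=\nabla_W\phi$) are then read off from Theorem~\ref{le:M=strict-M}, whose proof in turn rests on the geodesic/nonbranching machinery of Theorem~\ref{thm:geo1}.

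You instead fix the single optimal potential $\phi$, truncate its conjugate to obtain Lipschitz $\phi_R$, and transfer $W$-differentiability from $\phi_R$ to $\phi$ via the local identification $\partial\widehat{\phi_R}(X)\cap\{\|Y\|_\cH<R\}=\partial\hat\phi(X)\cap\{\|Y\|_\cH<R\}$, which follows from the Moreau--Rockafellar sum rule in $\cH$ together with the commutation identity of Theorem~\ref{thm:obvious}. This yields directly that $\bpartialt\phi[\mu]$ is a singleton $\ttM$-a.e., from which all structural assertions follow without invoking Theorem~\ref{le:M=strict-M} or the geodesic analysis. The paper's approach stays at the level of couplings and is closer in spirit to the classical \cite[Thm.~6.2.10]{AGS08} it cites; yours is more potential-theoretic and arguably more self-contained for this particular statement, at the price of the convex-analytic identification in $\cH$.
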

\begin{proof}
By Proposition \ref{prop:correspondence}
and Theorem \ref{le:M=strict-M}
  it is sufficient to prove that
  $\Gamma_o(\ttM ,\ttN )$ contains a 
  deterministic coupling $\pPi$.

  Let us first suppose that
  $\ttN $ is concentrated on the closed ball 
  \begin{equation}
    \label{eq:102}
    \cB(R):=\Big\{\nu\in \cP_2(\rmH):\sfm_2(\nu)\le R\Big\}.
  \end{equation}
  In this case, by Corollary \ref{cor:Lip} there is an optimal Kantorovich potential
  $\phi$ which is 
  a $R$-Lipschitz totally convex 
  function $\phi$ with  $S:=\supp(\pPi)\subset \partial^-\phi$
  and Corollary \ref{cor:obvious}
  shows that  there exists
  a (unique) strictly Monge solution
  which is given by
  \begin{equation}
    \label{eq:103}
    \ff (\cdot,\mu)=\nabla_W\phi(x,\mu).
  \end{equation}
In the general case, 
we argue as in the proof
  of Theorem 6.2.10 \cite{AGS08}.
For every $R>0$ we set
\begin{displaymath}
    \cB'(R):=\cP_2(\rmH)\times \cB(R)
\end{displaymath}
and for an optimal coupling 
$\pPi\in \Gamma_o(\ttM ,\ttN )$,
  we set for sufficiently large $n$
  \begin{equation}
    \label{eq:104}
    Z_n:=\pPi\big(\cB'(n)\big),\quad
    \pPi_n:=Z_n^{-1}\pPi\mres\cB'(n),
    \quad
    \ttM_n:=\pi^1_\sharp\pPi_n,\ \ttN _n:=
  \pi^2_\sharp \pPi_n. 
  \end{equation}
    Since optimality is preserved by
    restriction, 
    $\pPi_n\in \Gamma_o(\ttM_n,\ttN _n)$;
    since $\ttM_n\ll \ttM $ and
    $\ttN _n$ is concentrated on $\cB(n)$
    the previous argument shows that
    there exists a unique map 
    $\FF_n$ such that 
    $\pPi_n=(\id\times \FF_n)_\sharp \ttM_n.$

    Moreover, for $m>n$ we easily get
    $\FF_m=\FF_n$ $\ttM_n$-a.e., so that 
    there exists a map $\FF$ 
    such that $\FF=\FF_n$ $\ttM_n$-a.e.~for every $n\in \N$.
    Passing to the limit in the identity
    $\pPi_n=(\id\times \FF)_\sharp \ttM_n$
    we obtain $\pPi=(\id\times \FF)_\sharp \ttM .$
    It follows that $\pPi$ is deterministic.
\end{proof}

Anticipating 
some of the results of the next section, it is easy to see that 
$\RWr2$ is dense in $\RW2$.
\begin{proposition}
  \label{prop:density}
  If $\rmH$ has finite dimension then $\RWgr2$ is dense in $\RW2$.
  In particular, the class of initial measures for which the OT
  problem
  has a unique solution in (strict) Monge form is dense.
\end{proposition}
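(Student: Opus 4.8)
The plan is to deduce density from the existence of a \emph{single} well-chosen super-G-regular law of a Gaussian-generated random measure, together with an absolute-continuity argument. First I record that $\RWgr2$ is stable under passing to dominated measures of finite quadratic moment and under finite convex combinations: if $\ttN\in\RWgr2$, $\ttM\in\RW2$ and $\ttM\ll\ttN$, then $\ttM\in\RWgr2$, since both defining properties of $\RWgr2$ — being concentrated on $\cP_2^r(\rmH)$ and vanishing on every random G-null Borel subset of $\cP_2(\rmH)$ — are inherited by $\ll$-smaller measures of finite moment, and likewise for $\tfrac12(\ttN_1+\ttN_2)$ when $\ttN_1,\ttN_2\in\RWgr2$; this is the G-regular counterpart of Remark~\ref{rem:stab-ac}. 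Consequently it suffices to produce one $\ttG\in\RWgr2$ with full support, $\supp(\ttG)=\cP_2(\rmH)$. Indeed, given such a $\ttG$: an arbitrary $\ttM\in\RW2$ is a $\sfW_2$-limit of finitely supported measures $\ttM_0=\sum_{i=1}^n w_i\,\delta_{\mu_i}$ with $\mu_i\in\cP_2(\rmH)$; for each $i$ and each $r>0$ the open ball $B_r(\mu_i)=\{\nu:\sfw_2(\nu,\mu_i)<r\}$ has $\ttG(B_r(\mu_i))>0$ because $\mu_i\in\supp\ttG$, so the normalized restriction $\ttG_i^{r}:=\ttG(B_r(\mu_i))^{-1}\,\ttG\mres B_r(\mu_i)$ is well defined, lies in $\cPP_2(\rmH)$ (as $\sfm_2\le r+\sfm_2(\mu_i)$ on $B_r(\mu_i)$), is $\ll\ttG$, and satisfies $\sfW_2(\ttG_i^{r},\delta_{\mu_i})\le r$; therefore $\ttM^{r}:=\sum_i w_i\,\ttG_i^{r}\in\RWgr2$ with $\sfW_2^2(\ttM^{r},\ttM_0)\le\sum_i w_i\,\sfW_2^2(\ttG_i^{r},\delta_{\mu_i})\le r^2$, and a diagonal argument over $r\downarrow0$ combined with the finite-support approximation places $\ttM$ in the $\sfW_2$-closure of $\RWgr2$.

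It therefore remains to exhibit $\ttG$. I will take $\ttG=\iota_\sharp\frg$ for a suitable nondegenerate Gaussian measure $\frg$ on $\cH=L^2(\OOmega,\P;\rmH)$. Two observations make $\ttG$ automatically belong to $\RWgr2$: on one hand every nondegenerate Gaussian measure on $\cH$ is G-regular on $\cH$ — it vanishes on every Gaussian-null Borel set, by the very definition of the latter; on the other hand, by Lemma~\ref{le:push-regular}, it then suffices to choose $\frg$ so that $\iota(X)=X_\sharp\P\in\cP_2^r(\rmH)$ for $\frg$-a.e.\ $X$, and then $\iota_\sharp\frg$ is a super-regular LGGRM lying in $\RWgr2$. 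The existence of such a $\frg$ when $\rmH=\R^d$ is precisely what Section~\ref{sec:Monge} provides (every nondegenerate LGGRM is super-regular for $d=1$, and a large explicit class — built from Karhunen–Lo\`eve expansions as in Example~\ref{ex:gaussian} and the continuous-path processes of Example~\ref{ex:gaussian2} — is super-regular for every $d$). Finally, full support is automatic: $\iota$ is a continuous surjection of $\cH$ onto $\cP_2(\rmH)$ and a nondegenerate Gaussian measure has full support $\cH$, hence $\iota_\sharp\frg$ charges every nonempty open subset of $\cP_2(\rmH)$, i.e.\ $\supp(\ttG)=\cP_2(\rmH)$.

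The last assertion of the statement then follows immediately: $\RWgr2\subset\RWr2$, and by Theorem~\ref{thm:Monge} every $\ttM\in\RWr2$ is an initial datum for which the $L^2$-optimal transport problem towards any target $\ttN\in\RW2$ has a unique solution in (strict) Monge form, so density of $\RWgr2$ yields density of this class of initial data. I expect the only genuinely nontrivial ingredient to be the existence of $\frg$ for $d\ge2$ — producing a nondegenerate Gaussian measure on $L^2(\OOmega,\P;\R^d)$ whose typical realization $X$ pushes $\P$ forward to a measure vanishing on all \textrm{d.c.}\ hypersurfaces of $\R^d$ — which is delicate because $\R^d$-valued Gaussian processes with $d\ge2$ do not in general have absolutely continuous occupation measures; this is exactly the reason the density statement is placed at the end, after the super-regular LGGRM of Section~\ref{sec:Monge} have been constructed, so that here it is obtained essentially for free as a corollary of that analysis.
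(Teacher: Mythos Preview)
Your proof is correct and follows essentially the same strategy as the paper's own argument: both exploit stability of $\RWgr2$ under absolute continuity (Remark~\ref{rem:stab-ac}), invoke the existence of a full-support super-G-regular LGGRM from the later constructions in Section~\ref{subsec:examples} (Theorems~\ref{thm:1d-gaussians-ok} and~\ref{thm:gaussian-C1-d}), and approximate finite Dirac combinations by normalized restrictions of $\ttG$ to small balls. The only cosmetic difference is that the paper makes the balls disjoint to write $\ttM_r$ as a single density against $\ttG$, whereas you take a convex combination of normalized restrictions and use joint convexity of $\sfW_2^2$; both variants are equally valid.
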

\begin{proof}
Thanks to 
\eqref{eq:invariance-super}, 
if $\ttM\in \RWgr2$ and $\ttM'\ll \ttM$ then also
$\ttM'\in \RWgr2$.
If $\rmH$ has finite dimension, we 
will see in the next section
(see Theorems \ref{thm:1d-gaussians-ok}
and 
\ref{thm:gaussian-C1-d})
that there exists a
reference measure $\ttG\in \RWgr2$ with full support.
It is then sufficient to observe that
the set 
\begin{displaymath}
  \Big\{\ttM'\in \RW2:\ttM'\ll \ttG\Big\}\quad\text{is dense in }\RW2,
\end{displaymath}
since its closure contains all the 
finite combination of Dirac masses in $\RW2$.
In fact, if 
$\ttM=\sum_{k=1}^n a_k\delta_{\mu_k}$
for distinct points $\mu_1,\cdots,\mu_n
\in \cP_2(\rmH)$,
we can choose $r>0$ so small that 
the balls $B_{r,k}=B_r(\mu_k)$ are disjoint.
Since $\ttG$ has full support, 
$Z_{r,k}:=G(B_{r,k})>0$ so that 
\begin{displaymath}
    \ttM_{r}:=
    \Big(\sum_{k=1}^n
    \frac {a_k}{Z_{r,k}}\nchi_{B_{r,k}}\Big)G
    \in \RWr2
\end{displaymath}
and $\ttM_r\to \ttM$ in $\RW2$ as $r\downarrow0.$
\end{proof}

\subsection{Examples of super-regular measures
for finite dimensional \texorpdfstring{$\rmH$}{}}
\label{subsec:examples}
In this last section we will 
exhibit many examples of super-regular measures.
We will focus on the relevant case of 
measures induced by nondegenerate Gaussian measures $\frg$ on $\cH$
as in Example \ref{ex:gaussian},
when $\rmH$ has finite dimension: by using
Lemma \ref{le:push-regular} it will be sufficient
to check that \eqref{eq:54} holds, i.e.~$\iota_\sharp\frg(\cP_2(\rmH)\setminus \cP_2^r(\rmH))=0.$

We start from the $1$-dimensional case, where
we will prove a very general result.
\subsubsection*{The \texorpdfstring{$1$}{}-dimensional case \texorpdfstring{$\rmH=\R$}{}}

\begin{example}[The random occupation measure
associated with Brownian motion in $[0,1{]}$]
\label{ex:Brownian}
Let $\OOmega=[0,1]$ endowed with the usual Lebesgue measure $\P$
and let $\frw$ be the standard Wiener measure concentrated on
$\rmC([0,1])$. We claim that
\begin{quote}
    \em 
    $W:=\iota_\sharp \frw$ is super-G-regular with full support.
\end{quote}
We can apply Lemma \ref{le:push-regular}.
Since $\frw$ is a Gaussian non-degenerate measure in $\cH=
L^2([0,1],\P)$, it is clearly a regular measure
in $\cP_2^r(\cH)$, so it is sufficient to check that
\begin{equation}
  \label{eq:105}
  X_\sharp\P\text{ is nonatomic for $\frw$-a.e.~path $X\in \cH$},
\end{equation}
i.e.
\begin{equation}
  \label{eq:106}
  \P\Big(\Big\{t\in [0,1]:X_t=y\Big\}\Big)=0
  \quad\text{for $\frw$-a.e.~$X\in \rmC([0,1])$}.
\end{equation}
In fact we have the much stronger result that
the so-called occupation measure $\iota(X)=X_\sharp\P$ 
is absolutely continuous with respect to the Lebesgue measure
in $\R$, see e.g.~\cite[Theorem 3.26]{Morters-Peres10}.
\end{example}
The previous example
is in fact a particular case of a general result: we will show that 
\emph{any} measure $G=\iota_\sharp\frg$
obtained as the push forward of
an arbitrary nondegenerate Gaussian
measure $\frg$ in $\cH$ 
is super-G-regular.

Before discussing this result, 
let us show a simple criterion
ensuring that a measure
$\ttM\in \cPP_2(\R)$ is concentrated on $\cP_2^r(\R).$ We set
\begin{equation}
    \label{eq:nchiD}
    \nchi_0(r):=\begin{cases}
        1&\text{if }r=0,\\
        0&\text{if }r\neq 0.
    \end{cases}
\end{equation}
\begin{lemma}
    \label{le:criterion}
    Let $\ttM=\iota_\sharp\frm$
    for $\frm\in \cP_2(\cH)$
    and let us suppose 
    that $\frm$ 
    admits the representation
    \eqref{eq:representation},
    \eqref{eq:joint}
    discussed in Section
    \ref{subsec:L-representation}.
    If
    \begin{equation}
        \label{eq:criterion-Xi}
        \int_\Thetao
        \nchi_0(\Xi(\thetao,\oomega_1)
        -\Xi(\thetao,\oomega_2))\,
        \d\Q(\thetao)=0
        \quad
        \text{for $\P\otimes \P$-a.e.~$(\oomega_1,\oomega_2)$
        }
    \end{equation}
    then $\ttM$ is concentrated on $\cP_2^r(\R).$
\end{lemma}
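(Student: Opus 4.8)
The plan is to reduce the statement, via the one‑dimensional identity $\cP_2^r(\R)=\cP_2^{al}(\R)$ recorded in \eqref{eq:regularities}, to a single application of Tonelli's theorem on the product space $\Thetao\times\OOmega\times\OOmega$.

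First I would use the representation \eqref{eq:M}, namely $\ttM=\int_\Thetao\delta_{\mu_\thetao}\,\d\Q(\thetao)$ with $\mu_\thetao=\Xi(\thetao,\cdot)_\sharp\P$, together with the fact that $\cP_2^r(\R)=\cP_2^{al}(\R)$ is a Borel subset of $\cP_2(\R)$ (Proposition \ref{prop:measurability}): this makes ``$\ttM$ is concentrated on $\cP_2^r(\R)$'' equivalent to ``$\mu_\thetao$ is atomless for $\Q$-a.e.\ $\thetao$''. Recalling that a measure $\mu\in\cP_2(\R)$ is atomless if and only if $(\mu\times\mu)(D)=0$, where $D:=\{(x,x):x\in\R\}$ is the diagonal of $\R^2$ (indeed $(\mu\times\mu)(D)=\int_\R\mu(\{x\})\,\d\mu(x)=\sum_i\mu(\{a_i\})^2$, the sum running over the atoms $a_i$ of $\mu$), it therefore suffices to prove that $(\mu_\thetao\times\mu_\thetao)(D)=0$ for $\Q$-a.e.\ $\thetao$.

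Next I would rewrite $(\mu_\thetao\times\mu_\thetao)(D)$ through $\Xi$. Since $\mu_\thetao\times\mu_\thetao$ is the push-forward of $\P\otimes\P$ under $(\oomega_1,\oomega_2)\mapsto(\Xi(\thetao,\oomega_1),\Xi(\thetao,\oomega_2))$ (apply \eqref{eq:push2} to the product, or equivalently use \eqref{eq:k3} with $k=2$), and since $(x_1,x_2)\in D$ precisely when $x_1-x_2=0$, one gets
\[
  (\mu_\thetao\times\mu_\thetao)(D)=\int_\OOmega\int_\OOmega\nchi_0\bigl(\Xi(\thetao,\oomega_1)-\Xi(\thetao,\oomega_2)\bigr)\,\d\P(\oomega_1)\,\d\P(\oomega_2).
\]
Because $\Xi$ is $\cF\otimes\cF_\OOmega$-measurable by Lemma \ref{le:representation}, the map $(\thetao,\oomega_1,\oomega_2)\mapsto\Xi(\thetao,\oomega_1)-\Xi(\thetao,\oomega_2)$ is $\cF\otimes\cF_\OOmega\otimes\cF_\OOmega$-measurable and $\nchi_0$ is Borel, so the integrand above is jointly measurable and nonnegative; in particular $\thetao\mapsto(\mu_\thetao\times\mu_\thetao)(D)$ is $\cF$-measurable.

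Finally I would integrate in $\thetao$ and use Tonelli to exchange the order of integration:
\[
  \int_\Thetao(\mu_\thetao\times\mu_\thetao)(D)\,\d\Q(\thetao)
  =\int_\OOmega\int_\OOmega\Bigl(\int_\Thetao\nchi_0\bigl(\Xi(\thetao,\oomega_1)-\Xi(\thetao,\oomega_2)\bigr)\,\d\Q(\thetao)\Bigr)\,\d\P(\oomega_1)\,\d\P(\oomega_2)=0,
\]
where the last equality is precisely the hypothesis \eqref{eq:criterion-Xi}. As the integrand $(\mu_\thetao\times\mu_\thetao)(D)$ is nonnegative, it must vanish for $\Q$-a.e.\ $\thetao$, which by the first step shows that $\ttM$ is concentrated on $\cP_2^r(\R)$. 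The only points requiring care are the joint measurability of the integrand (needed both to legitimize Tonelli and to make sense of $\thetao\mapsto(\mu_\thetao\times\mu_\thetao)(D)$) and the elementary diagonal characterization of atomlessness on $\R$; neither is a genuine obstacle, so once the reductions are in place the argument is a short Fubini–Tonelli computation.
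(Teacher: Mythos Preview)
Your proof is correct and follows essentially the same approach as the paper: both reduce to showing that $\int_\Thetao(\mu_\thetao\otimes\mu_\thetao)(D)\,\d\Q(\thetao)=0$ via the diagonal characterization of atomlessness, then apply Fubini--Tonelli to swap the $\Q$ and $\P^{\otimes 2}$ integrations. The paper packages the first step using the $2$-projection $\operatorname{pr}^2[\ttM]=\int\mu^{\otimes 2}\,\d\ttM(\mu)$ and formula \eqref{eq:k-integrals}, whereas you work directly with the disintegration $\ttM=\int_\Thetao\delta_{\mu_\thetao}\,\d\Q(\thetao)$; the computations are identical, and your version makes the measurability issues slightly more explicit.
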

\begin{proof}
Let 
$D:=\{(x,x):x\in\R\}$ be the diagonal
in $\R^2$, whose characteristic
function is given by
$(x,y)\mapsto \nchi_0(x-y)$.
We recall that 
a measure $\mu\in \cP_2(\R)$
is atomless (and thus belongs to 
$\cP_2^r(\R)$) if and only if 
\begin{equation}
    \label{eq:atomless2}
    \mu\otimes \mu(D)
    = 
    \int_{\R^2} \nchi_0(x-y)
    \,\d\mu(x)\,\d\mu(y)=0
    .
\end{equation}
Recalling the definition 
\eqref{eq:k-proj} of
the $k$-projection of $\ttM$, 
we deduce that $\ttM$ is concentrated 
on $\cP_2^r(\R)$
if and only if 
    \begin{displaymath}
        \operatorname{pr}^2[\ttM](D)=
        \int \nchi_0(x-y)
        \,\d\operatorname{pr}^2[\ttM](x,y)=0.
    \end{displaymath}
Applying formula 
\eqref{eq:k-integrals}
we thus 
express the above integral as
    \begin{equation}
        \label{eq:condition-rbis}
        \int_\Thetao 
        \Big(\int \nchi_0(\Xi(\thetao,\oomega_1)-
    \Xi(\thetao,\oomega_2))\,\d \P^{\otimes 2}(\oomega_1,\oomega_2)\Big)\,\d \Q(\thetao)=0.
    \end{equation}
    An application of Fubini's Theorem
    yields \eqref{eq:criterion-Xi}.
\end{proof}
As an application of the
above Lemma we have the following general result.
\begin{theorem}[Push forward of 
nondegenerate Gaussian measures are superregular]
    \label{thm:1d-gaussians-ok}
    If $\rmH=\R$ and 
    $\frg$ is a nondegenerate
    Gaussian measure in $\cH$,
    then $\ttG:=\iota_\sharp \frg
    \in \RWgr2$
    is super-G-regular.
\end{theorem}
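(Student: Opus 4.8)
The plan is to apply Lemma~\ref{le:push-regular} in its $G$-regular form. A nondegenerate Gaussian measure on $\cH$ is by definition null on every Gaussian-null Borel set, so $\frg\in\cP_2^{gr}(\cH)$; it also lies in $\cP_2(\cH)$ since its covariance is trace class (equivalently $\Lambda=\sum_n\lambda_n^2<\infty$ in the notation of Example~\ref{ex:gaussian}). Hence it only remains to verify condition \eqref{eq:54}, i.e.\ that $\iota(X)=X_\sharp\P\in\cP_2^r(\R)$ for $\frg$-a.e.~$X$. Because $\rmH=\R$ we have $\cP_2^r(\R)=\cP_2^{al}(\R)$ by \eqref{eq:regularities}, so this says exactly that $\ttG=\iota_\sharp\frg$ is concentrated on atomless measures, and by Lemma~\ref{le:criterion} it suffices to check the criterion \eqref{eq:criterion-Xi} for the Karhunen--Lo\`eve representation $\Xi(\thetao,\oomega)=\sum_n\xi_n(\thetao)\sfE_n(\oomega)$ of Example~\ref{ex:gaussian}, where $(\sfE_n)$ is an orthonormal basis of $\cH$, the $\xi_n\sim N(0,\lambda_n^2)$ are independent with $\lambda_n>0$ for all $n$, and $\sum_n\lambda_n^2=\Lambda<\infty$. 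For $(\oomega_1,\oomega_2)\in\OOmega\times\OOmega$ write $D(\thetao):=\Xi(\thetao,\oomega_1)-\Xi(\thetao,\oomega_2)$; I must show that $\Q(\{\thetao:D(\thetao)=0\})=0$ for $\P\otimes\P$-a.e.~$(\oomega_1,\oomega_2)$.

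The first step is to control $\sigma^2(\oomega_1,\oomega_2):=\sum_n\lambda_n^2\bigl(\sfE_n(\oomega_1)-\sfE_n(\oomega_2)\bigr)^2$. Integrating over $\OOmega\times\OOmega$ and using $\|\sfE_n\|_\cH=1$ gives $\int\!\int\sigma^2\,\d\P\,\d\P\le 2\Lambda<\infty$, so $\sigma^2<\infty$ $\P\otimes\P$-a.e. For the positivity, note that (since all $\lambda_n>0$) $\sigma^2(\oomega_1,\oomega_2)=0$ precisely when $\sfE_n(\oomega_1)=\sfE_n(\oomega_2)$ for every $n$, i.e.\ when $\Phi(\oomega_1)=\Phi(\oomega_2)$ for the Borel map $\Phi:=(\sfE_n)_n\colon\OOmega\to\R^{\N}$. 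The push-forward $\Phi_\sharp\P$ is atomless: an atom would produce a Borel set $A$ with $\P(A)>0$ on which every $\sfE_n$ is constant, and splitting $A=A_1\sqcup A_2$ with $\P(A_1)=\P(A_2)>0$ (possible since $\P$ is nonatomic) would make $\nchi_{A_1}-\nchi_{A_2}$ a nonzero element of $\cH$ orthogonal to all $\sfE_n$, contradicting the totality of the basis. Consequently the set $\{(\oomega_1,\oomega_2):\Phi(\oomega_1)=\Phi(\oomega_2)\}=(\Phi\times\Phi)^{-1}(\Delta)$, with $\Delta$ the diagonal of $\R^{\N}\times\R^{\N}$, has $\P\otimes\P$-measure $(\Phi_\sharp\P\otimes\Phi_\sharp\P)(\Delta)=0$, so $\sigma^2>0$ $\P\otimes\P$-a.e.

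Next I would identify the law of $D$ when $0<\sigma^2(\oomega_1,\oomega_2)<\infty$. The partial sums $D^{(N)}(\thetao):=\sum_{n\le N}\xi_n(\thetao)\bigl(\sfE_n(\oomega_1)-\sfE_n(\oomega_2)\bigr)$ are centered Gaussian with variance $\sum_{n\le N}\lambda_n^2(\sfE_n(\oomega_1)-\sfE_n(\oomega_2))^2$ increasing to $\sigma^2$; since this limit is finite, $(D^{(N)})$ is Cauchy in $L^2(\Q)$ and its limit $D^{*}$, being an $L^2$-limit of centered Gaussians with convergent variances, is $N(0,\sigma^2)$-distributed. On the other hand, from $\Xi^{(N)}:=\sum_{n\le N}\xi_n\sfE_n\to\Xi$ in $L^2(\Q\otimes\P)$ and Fubini's theorem, along a subsequence $\Xi^{(N_k)}(\cdot,\oomega)\to\Xi(\cdot,\oomega)$ in $L^2(\Q)$ for $\P$-a.e.~$\oomega$, hence $D^{(N_k)}\to D$ in $L^2(\Q)$ for $\P\otimes\P$-a.e.~$(\oomega_1,\oomega_2)$; this forces $D=D^{*}$ $\Q$-a.s., so $D\sim N(0,\sigma^2)$. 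As $\sigma^2>0$, this law has a density, whence $\Q(\{D=0\})=0$. This is exactly \eqref{eq:criterion-Xi}, so Lemma~\ref{le:criterion} gives that $\ttG$ is concentrated on $\cP_2^r(\R)$, and Lemma~\ref{le:push-regular} yields $\ttG\in\RWgr2$.

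The only genuinely non-routine point is the positivity $\sigma^2>0$ $\P\otimes\P$-a.e.\ --- that an orthonormal basis of $L^2(\OOmega,\P)$ separates the points of $\OOmega$ outside a $\P\otimes\P$-null set, equivalently that $\Phi_\sharp\P$ is atomless --- for which the splitting-of-$A$ argument is the crux. All remaining ingredients (the variance bound, the identification of the limiting law of $D$, and the passage from $L^2(\Q\otimes\P)$-convergence to pointwise-in-$\oomega$ $L^2(\Q)$-convergence) are standard and require only Fubini's theorem and one subsequence extraction.
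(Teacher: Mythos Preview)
Your proof is correct and follows the same overall architecture as the paper: reduce to Lemma~\ref{le:push-regular}, invoke Lemma~\ref{le:criterion}, use the Karhunen--Lo\`eve expansion of Example~\ref{ex:gaussian}, and show that the difference $D=\Xi(\cdot,\oomega_1)-\Xi(\cdot,\oomega_2)$ is a nondegenerate Gaussian for $\P\otimes\P$-a.e.\ pair, which boils down to the positivity of $\sigma^2(\oomega_1,\oomega_2)=\sum_n\lambda_n^2(\sfE_n(\oomega_1)-\sfE_n(\oomega_2))^2$.

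Where you genuinely diverge from the paper is in the proof of this positivity. The paper argues by contradiction: it equips $\OOmega$ with a Polish metric, assumes the zero set $A'\subset\OOmega^2\setminus D_\OOmega$ has positive $\P^{\otimes 2}$-measure, isolates a small ball $B$ with $S(B)\cap B=\emptyset$ and $\P^{\otimes 2}(A'\cap B)>0$, and builds the antisymmetric function $f=\nchi_{A'\cap B}-\nchi_{S(A'\cap B)}$; expanding $f$ in the product basis $\sfE_m\otimes\sfE_n$ on $\OOmega^2$ and exploiting that $\sfE_{m,n}=\sfE_{n,m}$ on $A$ forces all Fourier coefficients to vanish, a contradiction. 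Your route is more direct and avoids both the auxiliary metric and the double Fourier expansion: you observe that $\{\sigma^2=0\}=(\Phi\times\Phi)^{-1}(\Delta)$ for $\Phi=(\sfE_n)_n:\OOmega\to\R^\N$, and that $\Phi_\sharp\P$ must be atomless because an atom would yield a set of positive $\P$-measure on which all $\sfE_n$ are constant, and splitting it in two equal halves (using nonatomicity of $\P$) produces a nonzero vector in $\cH$ orthogonal to the whole basis. This is cleaner and isolates the essential content---completeness of $(\sfE_n)$ forces the coordinate map $\Phi$ to be ``essentially injective''---whereas the paper's argument, though correct, is more circuitous. Your handling of the identification of the law of $D$ via $L^2(\Q\otimes\P)$-convergence and a subsequence is also slightly more explicit than the paper's.
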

\begin{proof}
Since $\OOmega$ is a standard Borel space, we can find
a bounded metric $\sfd_\OOmega$ 
   in $\OOmega$ such that 
   $(\OOmega,\sfd_\OOmega)$ is
   a complete and separable 
   metric space  and
   $\cF_\OOmega$ coincides with the Borel $\sigma$-algebra induced by $\sfd_\OOmega.$
   We can also define
   $\sfd_{\OOmega^2}
   ((\oomega_1,\oomega_2),
   (\oomega_1',\oomega_2')):=
   \max 
   [\sfd_\OOmega(\oomega_1,\oomega_1'),
   \sfd_\OOmega(\oomega_2,\oomega_2')]
   $
   and the swap isometric map
   $S:\OOmega^2\to\OOmega^2$,
   $S(\oomega_1,\oomega_2):=
   (\oomega_2,\oomega_1).$
   
We adopt the notation of Example
\eqref{ex:gaussian}. Since $\frg$ is
nondegenerate, we can also assume that 
$\lambda_n>0$ for every $n\in \N_+.$
By Lemma 
    \ref{le:push-regular},  it is sufficient to prove that $\frg$ satisfies condition
    \eqref{eq:54}.
    Using the 
    representation 
    \eqref{eq:G-representation}
    and \eqref{eq:joint-expansion}, we can then apply Lemma 
    \eqref{le:criterion}:
    our thesis follows if we 
    prove \eqref{eq:criterion-Xi}.

    Thanks to 
    \eqref{eq:joint-expansion},
    we have
    \begin{equation}
        \label{eq:exp-2}
       D(\thetao;\oomega_1,\oomega_2):= \Xi(\thetao,\oomega_1)-
    \Xi(\thetao,\oomega_2)=
    \sum_n \xi_n(\thetao)
    \Big(\sfE_n(\oomega_1)-\sfE_n(\oomega_2)\Big)
    \end{equation}
    which is a series of independent random Gaussian variables.
    We know that 
    for $\P^{\otimes 2}$-a.e.~$(\oomega_1,\oomega_2)$
    \begin{itemize}
        \item 
    the series defining $D$
    converges in $L^2(\Thetao,\Q)$
    and also $\Q$-a.e.,
    \item its law
    $\nu_{\oomega_1,\oomega_2}:=
    D(\cdot;\oomega_1,\oomega_2)_\sharp\Q$
    is a Gaussian measure
    \item 
    $\nu_{\oomega_1,\oomega_2}=
    N(0,\lambda^2(\oomega_1,\oomega_2))$
    where 
    \begin{equation}
        \label{eq:variance}
        \lambda^2(\oomega_1,\oomega_2)
        =
        \sum_n 
        \lambda_n^2
        \Big(\sfE_n(\oomega_1)-\sfE_n(\oomega_2)\Big)^2.
    \end{equation}
    \end{itemize}
   We can observe that 
   the integral in \eqref{eq:criterion-Xi} is just
   $\nu_{\oomega_1,\oomega_2}(\{0\})$,
   so that 
   \eqref{eq:criterion-Xi} holds
   if $\nu_{\oomega_1,\oomega_2}$
   is non-degenerate, i.e.
   \begin{equation}
       \label{eq:non-degenerate}
      \lambda^2(\oomega_1,\oomega_2)
        >0
        \quad
        \text{
        for $\P^{\otimes 2}$-a.e.~$(\oomega_1,\oomega_2)$.
        }
   \end{equation}
   Let us denote by 
   $A\subset \OOmega^2$
   the set 
   where $\lambda$ vanishes
   and let $D_\OOmega
   :=\{(\oomega,\oomega):\oomega\in \OOmega\}$ be the diagonal in 
   $\OOmega^2.$
   Since $\P$ is diffuse,
   $\P^{\otimes 2}(D_\OOmega)=0$
   so that 
   we have to prove that 
   $\P^{\otimes 2}(A')=0$ 
   where $A':=A\setminus D_\OOmega$.
   Since $\lambda_n>0$ for every $n,$
   we immediately see that 
   \begin{equation}
       \label{eq:A}
       (\oomega_1,\oomega_2)\in A
       \quad
       \Leftrightarrow\quad
       \sfE_n(\oomega_1)=
       \sfE_n(\oomega_2)
       \quad\text{for every }n\in \N_+.
   \end{equation}
    We argue by contradiction
   and we suppose
   that $\P^{\otimes 2}(A')>0$. 
   We can thus find $(\bar \oomega_1,\bar \oomega_2)
   \in A'\cap \supp(\P)$
   and a sufficiently small ball
   $B=B_r(\bar \oomega_1,\bar \oomega_2)$
   such that 
   $S(B)\cap B=\emptyset$ and $\P^{\otimes 2}(A'\cap B) >0$.
   Since $A'$ is symmetric
   and $S_\sharp \P^{\otimes 2}=
   \P^{\otimes 2}$, 
   we have
   $\P^{\otimes 2}(A'\cap B)=
   \P^{\otimes 2}(A'\cap S(B))>0$.

   We set $B':=A'\cap B$ 
   and we eventually consider the bounded Borel function
   \begin{displaymath}
       f(\oomega_1,\oomega_2):=
       \nchi_{B'}(\oomega_1,\oomega_2)-
       \nchi_{S(B')}
       (\oomega_1,\oomega_2)=
        \nchi_{B'}(\oomega_1,\oomega_2)-
        \nchi_{B'}(\oomega_2,\oomega_1).
   \end{displaymath}
   We can 
   expand $f$ 
   as a orthogonal series 
   in $L^2(\OOmega^2,\P^{\otimes 2})$
   with respect to the 
   complete
   orthonormal system
   $\sfE_{m,n}(\oomega_1,\oomega_2):=
   \sfE_m(\oomega_1)
   \sfE_n(\oomega_2)$
   obtaining
   \begin{equation}
    \label{eq:fourier1}
       f=\sum_{m,n}\hat f_{m,n}\sfE_{m,n}
       \quad\text{converging in }
       L^2(\OOmega^2,\P^{\otimes 2})
   \end{equation}
   where
   \begin{equation}
   \label{eq:fourier2}
       \hat f_{m,n}:=
       \int_{\OOmega^2}
       f(\oomega_1,\oomega_2)
       \sfE_{m}(\oomega_1)
       \sfE_n(\oomega_2)
       \,\d \P^{\otimes 2}(\oomega_1,\oomega_2).
   \end{equation}
Since $f\equiv 0$ if $(\oomega_1,\oomega_2)\not\in A$,
the integral in \eqref{eq:fourier2}
can in fact be restricted to $A$.
Since \eqref{eq:A} implies in particular
\begin{displaymath}
    \sfE_{m,n}(\oomega_1,\oomega_2)=
    \sfE_{m}(\oomega_1)
    \sfE_n(\oomega_2)=
    \sfE_{n}(\oomega_1)
    \sfE_m(\oomega_2)=
    \sfE_{n,m}(\oomega_1,\oomega_2)
    \quad\text{$\P^{\otimes 2}$-a.e.~in $A$}
\end{displaymath}
we immediately get
$\hat f_{m,n}=\hat f_{n,m}.$
On the other hand, 
inverting the order of $\oomega_1,\oomega_2$ in \eqref{eq:fourier2},
using the invariance of 
$\P^{\otimes 2}$
and the anti-symmetry of $f$,
i.e.~$f(\oomega_2,\oomega_1)=
-f(\oomega_1,\oomega_2)$
we also get 
\begin{align*}
    \hat f_{m,n}&=
    \int_{\OOmega^2}
       f(\oomega_2,\oomega_1)
       \sfE_{m}(\oomega_2)
       \sfE_n(\oomega_1)
       \,\d \P^{\otimes 2}(\oomega_1,\oomega_2)
       \\&=
    -   \int_{\OOmega^2}
       f(\oomega_1,\oomega_2)
       \sfE_{m}(\oomega_2)
       \sfE_n(\oomega_1)
       \,\d \P^{\otimes 2}(\oomega_1,\oomega_2)=
       -\hat f_{n,m}
\end{align*}
We deduce that $\hat f_{m,n}=0$
for every pair of indexes,
a contradiction since
$f$ is not identically $0.$
\end{proof}
The range of application of the 
previous Theorem can be considerably extended
thanks to the following 
simple results.
\begin{corollary}
    \label{cor:G-null}
    Let $\cN:=\cP_2(\R)
    \setminus \cP_2^r(\R). $
    Then the (Borel) set $\iota^{-1}(\cN)$
    is Gaussian-null in $\cH.$
\end{corollary}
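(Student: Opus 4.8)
The plan is to read the corollary off from Theorem~\ref{thm:1d-gaussians-ok}: once that theorem is available, the statement is merely the unwinding of the definitions of ``super-G-regular measure'' and ``Gaussian-null set'', combined with the elementary push-forward identity $\frg(\iota^{-1}(B))=\iota_\sharp\frg(B)$. The one point deserving a separate (but routine) check is the Borel measurability of $\iota^{-1}(\cN)$.

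First I would verify measurability. By Proposition~\ref{prop:measurability}(1) --- equivalently, because $\mu\in\cP_2^r(\R)$ iff $\mu\otimes\mu(D)=0$ for the diagonal $D=\{(x,x):x\in\R\}$, and $\mu\mapsto\mu\otimes\mu(D)$ is upper semicontinuous --- the set $\cP_2^r(\R)$ is a $G_\delta$ in $\cP_2(\R)$; hence $\cN=\cP_2(\R)\setminus\cP_2^r(\R)$ is Borel, and since $\iota:\cH\to\cP_2(\R)$ is $1$-Lipschitz and therefore continuous, $\iota^{-1}(\cN)$ is Borel in $\cH$. Then I would fix an arbitrary nondegenerate Gaussian measure $\frg$ on $\cH$; Gaussian measures on a Hilbert space have finite second moment, so $\frg\in\cP_2(\cH)$ and $\ttG:=\iota_\sharp\frg\in\cPP_2(\R)$ by \eqref{eq:obvious}. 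Theorem~\ref{thm:1d-gaussians-ok} gives $\ttG\in\RWgr2$, so by Definition~\ref{Random Gauss-null sets} the measure $\ttG$ is concentrated on $\cP_2^r(\R)$, i.e.\ $\ttG(\cN)=0$. Consequently
\[
  \frg\big(\iota^{-1}(\cN)\big)=\iota_\sharp\frg(\cN)=\ttG(\cN)=0.
\]
As $\frg$ was an arbitrary nondegenerate Gaussian measure on $\cH$, the Borel set $\iota^{-1}(\cN)$ is Gaussian-null in $\cH$ according to the definition recalled in Section~\ref{subsec:regularity} and extended to $\cH$ at the beginning of Section~\ref{subsec:regular-measures}.

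All the substance of the argument is contained in Theorem~\ref{thm:1d-gaussians-ok}, whose proof rests on the criterion of Lemma~\ref{le:criterion} (reducing the regularity of $\ttG$ to the non-degeneracy \eqref{eq:non-degenerate} of the Gaussian law of $\Xi(\cdot,\oomega_1)-\Xi(\cdot,\oomega_2)$) and on the Fourier-antisymmetry argument that rules out the bad set \eqref{eq:A}. For the corollary itself I do not foresee any obstacle beyond invoking that theorem; and should one wish to apply Theorem~\ref{thm:1d-gaussians-ok} only to \emph{centered} nondegenerate Gaussians, the same two-line computation applies to each such $\frg$, after which one concludes by the fact that the Gaussian-null $\sigma$-ideal in $\cH$ agrees with the translation-invariant Aronszajn-null one, cf.~\cite{Csonryei99}.
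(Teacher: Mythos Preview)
Your proposal is correct and follows essentially the same route as the paper: invoke Theorem~\ref{thm:1d-gaussians-ok} to get $\frg(\iota^{-1}(\cN))=\iota_\sharp\frg(\cN)=0$ for every nondegenerate Gaussian $\frg$, and conclude by the definition of Gaussian-null set. Your added remarks on the Borel measurability of $\cN$ and on centered versus general Gaussians are harmless elaborations of what the paper leaves implicit.
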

\begin{proof}
    Theorem 
    \ref{thm:1d-gaussians-ok}
    shows that 
    $\frg(\iota^{-1}(\cN))=0$
    for every non-degenerate Gaussian $\frg$,
    so that $\iota^{-1}(\cN)$
    is Gaussian-null by definition. 
\end{proof}
\begin{corollary}[Push forward of 
G-regular measures are super-G-regular]
    \label{cor:1d-gaussians-ok}
    If $\rmH=\R$ and 
    $\frr$ is a G-regular 
    measure in $\cP_2^{gr}(\cH)$,
    then $\ttR:=\iota_\sharp \frr$
    is super-G-regular.
\end{corollary}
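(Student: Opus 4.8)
The plan is to deduce this directly from Lemma~\ref{le:push-regular}, applied with $\frm := \frr$. Since $\frr$ is assumed to be $G$-regular, i.e.\ $\frr \in \cP_2^{gr}(\cH)$, the only hypothesis of that lemma still to be checked is condition~\eqref{eq:54}, namely that $\iota(X) = X_\sharp\P \in \cP_2^r(\R)$ for $\frr$-a.e.\ $X \in \cH$.

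To verify this, set $\cN := \cP_2(\R)\setminus\cP_2^r(\R)$, so that the set of ``bad'' Lagrangian maps is exactly $\iota^{-1}(\cN)$. By Corollary~\ref{cor:G-null} this is a Gaussian-null Borel subset of $\cH$. Since, by Definition~\ref{def:usual-regular}, a $G$-regular measure on $\cH$ vanishes on every Gaussian-null Borel set, we get $\frr(\iota^{-1}(\cN)) = 0$, which is precisely~\eqref{eq:54}. Lemma~\ref{le:push-regular}, in its $G$-regular branch, then yields that $\ttR = \iota_\sharp\frr$ is super-$G$-regular, i.e.\ $\ttR \in \RWgr2$; note that the same lemma simultaneously records that $\ttR$ is concentrated on $\cP_2^r(\R)$, so no separate verification of that part of the definition is needed.

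There is no genuine obstacle in this last step: the entire difficulty has already been absorbed into Theorem~\ref{thm:1d-gaussians-ok} (whose content Corollary~\ref{cor:G-null} merely repackages as a statement about the $\sigma$-ideal of Gaussian-null sets in $\cH$) and into Lemma~\ref{le:push-regular}. The only point requiring a line of care is the measurability of $\iota^{-1}(\cN)$, but this is already asserted in the statement of Corollary~\ref{cor:G-null}, so the argument is a one-line combination of the two cited results.
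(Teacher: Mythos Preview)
Your proof is correct and follows essentially the same approach as the paper: both verify condition~\eqref{eq:54} by observing that $\iota^{-1}(\cN)$ is Gaussian-null in $\cH$ (via Corollary~\ref{cor:G-null}) and hence $\frr$-null by $G$-regularity, then conclude by Lemma~\ref{le:push-regular}.
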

\begin{proof}
    Recall that as a G-regular measure in 
    $\cH$ $\frr$ satisfies
    \begin{equation}
        \frr(B)=0\quad\text{for every 
        Gaussian null Borel set in $\cH$},
    \end{equation}
    in particular $\ff(\iota^{-1}(\cN))=0,$
    so that 
    $\iota(X)\in \cP_2^r(\R)$
    for $\frr$-a.e.~$X\in \cH.$
    We concude by Lemma \ref{le:push-regular}.
\end{proof}

Let us see two simple examples.
\begin{example}[Sum of Gaussians with random signs]
    \label{ex:random-conv}
    Let $\OOmega:=\{-1,+1\}^\N$ be the Cantor set endowed with 
    the uniform product measure
    $\P:=(\frac 12\delta_{-1}+\frac 12\delta_1)^{\otimes \N}$. 
    Every element $\oomega\in \OOmega$
    is a vector $(\oomega_i)_{i\in \N}$
    of signs $\pm1$ indexed by $i\in \N.$
    We denote by $\eps_i:\OOmega\to \R$ the
    $i$-th coordinate,
    and for every finite subset $I\subset \N$ we consider the Walsh function
    \begin{equation}
        \label{eq:Walsh}
        W_I(\oomega):=
        \prod_{i\in I}\eps_i,\quad
        W_\emptyset\equiv 1.
    \end{equation}
    If $\mathcal I$ denotes the (countable) collection of all the finite parts of $\N$, 
    the Walsh system $(W_I)_{I\in \cI}$ 
    is a complete orthonormal system in $\cH=L^2(\OOmega,\P)$. 

    We select a family of independent Gaussian random variables $\xi_I\sim N(0,\lambda_I^2)$
    indexed by $I\in \cI$ 
    and coefficients 
    $\lambda_I>0$ 
    for every $I\in \cI$ 
    such that $\Lambda:=\sum_{I\in \cI}\lambda_I^2<\infty$;
    we form the random vector 
    $\xxi=\sum_{I\in \cI} \xi_I W_I\in \cH$.
    Denoting by $\frg_W$ the law of $\xxi$ in $\cH$
    we obtain 
    a nondegenerate Gaussian measure $\frg_W\in \cP^r_2(\cH)$.
    Applying Theorem
    \ref{thm:1d-gaussians-ok}
    we immediately get
    \begin{quote}
        \em $G_W:=\iota_\sharp \frg_W$
    is super-regular.    
    \end{quote}
\end{example}

\begin{example}[The law of random Fourier series]
    Let us now select 
    $\OOmega:=(0,\pi)$ with
    the (normalized) Lebesgue measure $\P.$
    We consider the complete orthonormal system in $\cH:=L^2(0,\pi)$
    given by the usual Fourier basis
    \begin{equation}
        S_n(\oomega):=\sqrt 2\sin (n\oomega),\quad
        n\in \N_+.
    \end{equation}
    As for the previous example,
    we select a sequence of 
    independent Gaussian random variables
    $\xi_n\sim N(0,\lambda_n^2)$
    and coefficients
    $\lambda_n>0$ for every $n\in \N_+$
    with $\Lambda=\sum_{n=1}^\infty \lambda_n^2<\infty.$
    Denoting by $\frg_F$ the law of the random vector
    $\xxi=\sum_{n=1}^\infty  \xi_n S_n$
    we obtain a nondegenerate Gaussian measure
    $\frg_F\in \cP_2^r(\cH).$
    \begin{quote}
        \em $G_F:=\iota_\sharp \frg_F$
    is super-regular.    
    \end{quote}
    Notice that the case
    $\lambda_n:=\frac 1{\pi n}$
    corresponds to the (centered) Brownian bridge.
\end{example}
\subsubsection*{The finite dimensional case \texorpdfstring{$\rmH=\R^d$}{}, \texorpdfstring{$d>1$}{}}
Let us now discuss 
the case when $\rmH=\R^d$, $d>1$.
We will still focus on the construction
of suitable Gaussian measures $\frg$ on $\cH$
such that $G=\iota_\sharp \frg$ is super-regular
and we will consider two different approaches.
Unlike the previous $1$-d case, we will impose
further properties on $\frg$.

\begin{example}[Gaussian measures concentrated on $\rmC^1$ maps]
    In this first example, we 
    select $\OOmega:=(0,1)^d$
(or any smooth domain in $\R^d$)
endowed with the $d$-dimensional Lebesgue measure $\P$.
\end{example}
\begin{theorem}
    \label{thm:gaussian-C1-d}
    If 
$\frg$ is a non-degenerate Gaussian measure on
the Banach space $\cB=\rmC^1(\overline\OOmega;\R^d)\subset \cH$,
then 
$G=\iota_\sharp\frg$ is super-G-regular with full support.
\end{theorem}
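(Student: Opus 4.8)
The plan is to reduce everything to Lemma~\ref{le:push-regular}. First I would record that $\frg$, which is given as a nondegenerate Gaussian measure on $\cB=\rmC^1(\overline\OOmega;\R^d)$, is automatically a nondegenerate Gaussian measure on $\cH$: the inclusion $\cB\hookrightarrow\cH$ is continuous (since $\OOmega=(0,1)^d$ has unit volume, $\|X\|_\cH\le\|X\|_\cB$) and dense, Gaussian measures have finite moments of all orders so $\frg\in\cP_2(\cH)$, and a nonzero $\ell\in\cH^*$ restricts to a nonzero element of $\cB^*$ by density of $\cB$, so nondegeneracy on $\cB$ transfers to $\cH$. Consequently $\frg\in\cP_2^{gr}(\cH)$, and by Lemma~\ref{le:push-regular} it remains only to prove that $\iota(X)=X_\sharp\P\in\cP_2^r(\R^d)$ for $\frg$-a.e.~$X\in\cB$; this, together with the above, yields $G=\iota_\sharp\frg\in\RWgr2$.

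Next I would reduce this to a statement about the Jacobian of $X$. Every $X\in\cB$ is a bounded $\rmC^1$ map $\overline\OOmega\to\R^d$, so $X_\sharp\P$ has compact support and belongs to $\cP_2(\R^d)$. Moreover, if $\det\rmD X(\oomega)\neq0$ for $\Leb d$-a.e.~$\oomega\in\OOmega$, then for any Borel $A\subset\R^d$ with $\Leb d(A)=0$ the area formula for $\rmC^1$ maps gives $\int_{X^{-1}(A)}|\det\rmD X|\,\d\oomega=\int_A\#\{\oomega\in\OOmega:X(\oomega)=y\}\,\d y=0$, which forces $\Leb d(X^{-1}(A))=0$; hence $X_\sharp\P\ll\Leb d$, so $X_\sharp\P\in\cP_2^{gr}(\R^d)\subset\cP_2^r(\R^d)$. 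It therefore suffices to show that for $\frg$-a.e.~$X$ the Jacobian $\det\rmD X$ vanishes only on a $\Leb d$-null subset of $\OOmega$.

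This is the heart of the argument. Fix $\oomega\in\OOmega$ and let $\Phi_\oomega\colon\cB\to\R^{d\times d}$, $\Phi_\oomega(X):=\rmD X(\oomega)$, which is linear and bounded; hence $\gamma_\oomega:=(\Phi_\oomega)_\sharp\frg$ is a Gaussian measure on $\R^{d\times d}\cong\R^{d^2}$. Since $\frg$ is nondegenerate, $\supp\frg=\cB$, and $\cB$ contains the $\rmC^1$ map $\oomega\mapsto\oomega$ (the inclusion $\overline\OOmega\hookrightarrow\R^d$), whose differential at every point is the identity matrix $I_d$; as $\Phi_\oomega$ is continuous, $I_d\in\Phi_\oomega(\supp\frg)\subseteq\supp\gamma_\oomega$, so $\supp\gamma_\oomega$ is not contained in the algebraic hypersurface $\{M:\det M=0\}$. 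Because a Gaussian measure on a Euclidean space has a strictly positive density with respect to Lebesgue measure on the affine hull of its support, it gives zero mass to the zero set of any polynomial not identically vanishing on that hull; hence $\gamma_\oomega(\{\det=0\})=0$, i.e.~$\frg(\{X:\det\rmD X(\oomega)=0\})=0$ for every $\oomega\in\OOmega$. Since $(X,\oomega)\mapsto\det\rmD X(\oomega)$ is jointly continuous on $\cB\times\overline\OOmega$, the set $N:=\{(X,\oomega)\in\cB\times\OOmega:\det\rmD X(\oomega)=0\}$ is Borel, so Fubini's theorem gives $(\frg\otimes\P)(N)=\int_\OOmega\frg(\{X:(X,\oomega)\in N\})\,\d\P(\oomega)=0$; hence for $\frg$-a.e.~$X$ the slice $\{\oomega:\det\rmD X(\oomega)=0\}$ is $\P$-null, and thus $\Leb d$-null. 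Together with the previous paragraph and Lemma~\ref{le:push-regular}, this shows that $G$ is super-G-regular.

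Finally, for full support I would note that the restriction of the law map, $\iota\colon(\cB,\|\cdot\|_\cB)\to(\cP_2(\R^d),\sfw_2)$, is continuous (it is the composition of the continuous inclusion $\cB\hookrightarrow\cH$ with the $1$-Lipschitz surjection $\iota$) and has dense image, since $\iota(\cH)=\cP_2(\R^d)$ and $\cB$ is dense in $\cH$. From $\supp\frg=\cB$ and the elementary identity $\supp(f_\sharp\mu)=\overline{f(\supp\mu)}$ for a continuous map $f$ between separable metric spaces, it follows that $\supp G=\overline{\iota(\cB)}=\cP_2(\R^d)$, i.e.~$G$ has full support. The main obstacle is the third paragraph: it is the only place where one genuinely uses both the $\rmC^1$-regularity (to make the derivative-evaluation maps $\Phi_\oomega$ linear and continuous) and the nondegeneracy of $\frg$ (to place a map with nonvanishing Jacobian in $\supp\frg$); the area formula, the Fubini step, the reductions and the support computation are routine, though one must keep careful track of the three ambient spaces $\cB$, $\cH$ and $\R^d$.
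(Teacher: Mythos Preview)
Your proof is correct and follows essentially the same architecture as the paper: reduce to Lemma~\ref{le:push-regular}, use the area formula to reduce the regularity of $X_\sharp\P$ to the a.e.~nonvanishing of $\det\rmD X$, and then apply Fubini to the product $\frg\otimes\P$ after showing that for each fixed $\oomega$ the pushforward $\gamma_\oomega=(\Phi_\oomega)_\sharp\frg$ gives zero mass to $\{\det=0\}$.

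The only noticeable difference is in how you justify $\gamma_\oomega(\{\det=0\})=0$. The paper simply notes that $\Phi_\oomega:\cB\to\R^{d\times d}$ is \emph{surjective} (any matrix $M$ is the derivative at $\oomega$ of the affine map $\oomega'\mapsto M\oomega'$), so $\gamma_\oomega$ is a nondegenerate Gaussian on $\R^{d\times d}$ and therefore vanishes on the Lebesgue-null hypersurface $\{\det=0\}$. You instead argue that $I_d\in\supp\gamma_\oomega$, so $\det$ is not identically zero on the affine support of $\gamma_\oomega$, and then invoke the fact that a Gaussian gives zero mass to the zero set of any nontrivial polynomial on its support. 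Your route is slightly more general (it would survive even if $\Phi_\oomega$ were not surjective) but also slightly more elaborate; here surjectivity is immediate and gives the cleaner one-line conclusion. You also supply an explicit argument for full support, which the paper's proof leaves implicit.
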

\begin{proof}
    As before, we apply Lemma \ref{le:push-regular}.
Since $\frg$ is a Gaussian non-degenerate measure in $\cB$ and $\cB$ is dense in 
$\cH=
L^2(\OOmega,\P;\R^d)$, 
$\frg$ is clearly a Gaussian nondegenerate measure
in $\cP_2^{gr}(\cH)$, so it is sufficient to check that
\begin{equation}
  \label{eq:105bis}
  X_\sharp\P\text{ is absolutely continuous for $\frg$-a.e. $X\in \cB$}.
\end{equation}
By the area and co-area formulae
(see e.g.~\cite[Thm.~2.3]{Geman-Horowitz80}), 
the push forward $\mu_X=X_\sharp \P$ 
of a map $X\in \rmC^1(\overline\OOmega;\R^d)$
is absolutely continuous if 
\begin{equation}
  \label{eq:109pre}
  \P\Big(\big\{\oomega\in \OOmega:
  \det DX(\oomega)=0\big\}\Big)=0,
\end{equation}
so that 
\eqref{eq:105bis} is true if we show that
\eqref{eq:109pre} holds 
for $\frg$-a.e.~$X\in \cB.$
We can prove this property
by Fubini's Theorem.
We consider the product measure
$\tilde \frg:=\frg\otimes \P$ concentrated on
$\cB\times \OOmega$,
we denote by $\sfD:\cB\times\OOmega\to \R^{d\times d}$
the ``differential'' evaluation map $\sfD(X,\oomega):=
\rmD X(\oomega)$, 
and we introduce the closed set
\begin{equation}
  \label{eq:107}
  A:=\Big\{(X,\oomega)\in \cB\times\OOmega:
  \rmD X(\oomega)\in S\Big\},\quad
  S:=\Big\{D\in \R^{d\times d}:\det D=0\Big\}.
\end{equation}
We know that for every $\oomega\in \OOmega$
the map $X\mapsto \sfD(X,\oomega)$
is linear, continuous, and surjective
from $\cB$ to $\R^{d\times d}$.
We thus deduce that 
$\sfD(\cdot,\oomega)_\sharp \frg$ is a nondegenerate Gaussian
measure in $\R^{d\times d}$, so that
\[\frg\Big(\big\{X\in \cB:\sfD(X,\oomega)\in S\big\}\Big)=0
\quad\text{for every $\oomega\in \OOmega$}.\]
Integrating in $\OOmega$ we get
\begin{equation}
  \label{eq:108}
  \tilde\frg (A)=
  \int \nchi_{A}(X,\oomega)\,\d \tilde\frg=
  \int_{\OOmega} \frg\big(X\in \cB:(X,\oomega)\in A\big)
  \,\d \P(\oomega)=0.
\end{equation}
Applying Fubini's Theorem we thus deduce that
\begin{displaymath}
  \int_{\cB}
  \P\Big(\oomega\in \OOmega:
  (X,\oomega)\in A\Big)\,\d \frg(X)=0
  \end{displaymath}
  which yields \eqref{eq:109pre}
  for $\frg$-a.e.~$X\in \cB$.
\end{proof}
The next example is a natural generalization of 
Example \ref{ex:Brownian}.
\begin{example}[The occupation measure 
of the fractional Brownian motion]
    \label{ex:fractional}
    Let $\OOmega=[0,1]$ endowed with 
    the Lebesgue measure; we fix
    a Hurst parameter $H<1/d$ and we consider
    the $d$-dimensional fractional Brownian
    motion $(\Xi^H_t)_{t\in \OOmega}$
    \cite{BHOZ08}.
    Since $\Xi^H$
    has local time (or, equivalently, its
    occupation measure is absolutely continuous
    with square integrable density
    \cite{Pitt78,Geman-Horowitz80}, 
    \cite[Thm.~10.2.3]{BHOZ08}),
    its law 
    $\frw^H$ in $\cB:=\rmC([0,1];\R^d)$
    is a nondegenerate Gaussian measures
satisfying \eqref{eq:54}, so that 
$\ttW^H
=\iota_\sharp \frw^H\in \cPP_2(\rmH)$
is super-G-regular, according to Lemma 
\ref{le:push-regular}.
\end{example}
The above example is an application of a general technique due to Berman
\cite{berman1969local-f89}
and involving the Fourier transform.
In order to explain the main idea in a general case, 
let us denote by $\mu_X\in \cP_2(\R^d)$ 
    the law $\iota(X)=X_\sharp \P$ of 
    a generic 
    element $X\in \cH$.
    The Fourier transform of $\mu_X$
    is the continuous function 
    $\hat \mu_X:\R^d\to \mathbb{C}$ defined by 
    \begin{equation}
        \hat \mu_X(u):=
        \int_{\R^d}
        \rme^{i\, u\cdot x}
        \,\d\mu_X(x)=
        \int_\OOmega 
        \rme^{i\, u\cdot X(\oomega)}
        \,\d\P(\oomega)
        \quad u\in \R^d.
    \end{equation}
    Notice that $(X,u)\mapsto \hat\mu_X(u)$
    is jointly continuous in $\cH\times \R^d$.
    By Plancherel theorem, 
    $\mu_X$ 
    is absolutely continuous w.r.t.~the $d$-dimensional Lebesgue measure in $\R^d$
    with a density $\varrho_X\in L^2(\R^d)$
    if and only if
    $\hat \mu_X \in L^2(\R^d)$ and moreover
    \begin{equation}
        \label{eq:Plancherel}
        \int_{\R^d}\varrho_X^2\,\d x=
        \frac 1{(2\pi)^d}\int_{\R^d}|\hat \mu_X(u)|^2\,\d u
    \end{equation}
    It follows that if
    the measure $\frg\in \cP_2(\cH)$ satisfies
    \begin{equation}
        \label{eq:condition1}
        L^2:=\int_\cH\Big(
        \int_{\R^d}|\hat \mu_X(u)|^2\,\d u\Big)
        \,\d \frg(X)
        =
        \int_{\R^d}
        \Big(
        \int_\cH|\hat \mu_X(u)|^2
        \,\d \frg(X)
        \Big)
        \,\d u
        <\infty
    \end{equation}
    we get $\mu_X\in \cP_2^r(\R^d)$
    for $\frg$-a.e.~$X.$
    We can rewrite 
    \eqref{eq:condition1}
    by using the representation 
\eqref{eq:representation} by the process $\Xi$,
so that $X(\cdot)=\Xi(\thetao,\cdot)$
and therefore 
we can set $\mu_\thetao=\mu_{\Xi(\thetao,\cdot)}$.
We have 
\begin{align}
        \notag |\hat \mu_\thetao(u)|^2
        &=
        \hat \mu_\thetao(u)\cdot 
        \overline{\hat \mu_\thetao(u)}
        =
        \Big( \int_\OOmega 
        \rme^{i\, u\cdot \Xi(\thetao,\oomega_1)}
        \,\d\P(\oomega_1)\Big)
        \Big( \int_\OOmega 
        \rme^{-i\, u\cdot X(\thetao,\oomega_2)}
        \,\d\P(\oomega_2)\Big)
        \\
        \notag &=
        \int_{\OOmega^2}
        \rme^{i\, u\cdot \Xi(\thetao,\oomega_1)}
        \rme^{-i\, u\cdot \Xi(\thetao,\oomega_2)}
        \,\d\P^{\otimes 2}(\oomega_1,\oomega_2)
        \\&
        \label{eq:dacitare}=
        \int_{\OOmega^2}
        \rme^{i\, u\cdot (\Xi(\thetao,\oomega_1)-\Xi(\thetao,\oomega_2))}
        \,\d\P^{\otimes 2}(\oomega_1,\oomega_2).
\end{align}
Taking now the expectation
w.r.t.~$\Q$ and integrating in $\R^d$ w.r.t.~$u$
we end up with Berman condition 
\cite[Thm.~21.9]{Geman-Horowitz80}
\begin{equation}
    \label{eq:monstre}
     L^2=\int_{\OOmega^2}
    \bigg(
    \int_{\R^d}\mathbb{E}_\Q\Big[
        \rme^{i\, u\cdot (\Xi(\thetao,\oomega_1)-\Xi(\thetao,\oomega_2))}
        \Big]\,\d u\bigg)
        \,\d\P^{\otimes 2}(\oomega_1,\oomega_2)<\infty
\end{equation}
In the particular case 
when $\Xi=(\Xi^1,\cdots,\Xi^d)$ 
is a Gaussian process 
and 
the determinant $\Delta(\oomega_1,\oomega_2)$
of the covariance matrix 
of $\Xi(\cdot,\oomega_1)-
\Xi(\cdot,\oomega_2)$ is positive
for a.e.~$\oomega_1,\oomega_2$, we end up
with the sufficient
condition for the validity 
of \eqref{eq:monstre}
\cite[Thm. 22.1]{Geman-Horowitz80}
\begin{equation}
    \label{eq:sufficient-GH}
    \int_{\OOmega^2}\frac{1}{\Delta(\oomega_1,
    \oomega_2)^d}
    \,\d \P^{\otimes 2}(\oomega_1,\oomega_2)<\infty.
\end{equation}
In the case of the fractional Brownian motion
of Example \ref{ex:fractional} we
thus recover the condition $Hd<1.$

\begin{example}[Berman condition for Karhunen-Lo\`eve expansions]
    \label{ex:Berman-eigenvalue}
We slightly modify 
the above argument,
by considering 
an example inspired to the general framework discussed in Example \ref{ex:gaussian} and 
based on a complete orthonormal system
$\sfE'_n$ of 
the Hilbert space
$\cH':=L^2(\OOmega,\P;\R)$
of \emph{scalar valued} square summable functions.
If $\boldsymbol{e}_1,\cdots \boldsymbol{e}_d$
is an orthogonal basis of $\R^d$ (e.g.~the canonical one), we can then form
the complete orthonormal system 
$\sfE_{n,k}:=\boldsymbol{e}_k \sfE'_n$ of 
$\cH:=L^2(\OOmega,\P;\R^d)$.

We assign 
a sequence $\Sigma_n$, $n\in \N_+$,
of symmetric and positive definite matrices
in $\R^{d\times d}$ satisfying
the boundedness and coercivity condition
\begin{equation}
    \label{eq:uniform-Sigma}
    0<\alpha_n^2\le \Sigma_n \vv\cdot \vv\le \beta_n^2
    \quad \text{for every }
    \vv\in \R^d,\ |\vv|=1;\quad
    \sum_{n=1}^\infty \beta_n^2<\infty.
\end{equation}
We assign a sequence of centered 
 independent Gaussian
random variables in $\R^d$
$\xi_{n}\sim N(0,\Sigma_n)$;
since $\sum_n \beta_n^2<\infty$, 
we can form the random vector 
\begin{equation}
    \label{eq:G-representation-d}
    \xxi=\sum_n 
    \xi_n \sfE'_n
\end{equation}
corresponding to the 
the measurable process 
\begin{displaymath}
    \Xi(\thetao,\oomega)=
    \sum_n 
    \xi_n(\thetao)\sfE'_n(\oomega).
\end{displaymath}
It is clear that $\frg=\xxi_\sharp \Q$
is a nondegenerate Gaussian in $\cH$.
As in \eqref{eq:variance}
we consider the functions
\begin{equation}
    \label{eq:variance2}
    \alpha^2(\oomega_1,\oomega_2)
        =
        \sum_n 
        \alpha_n^2
        \Big(\sfE'_n(\oomega_1)-\sfE'_n(\oomega_2)\Big)^2,\quad
    \beta^2(\oomega_1,\oomega_2)
        =
        \sum_n 
        \beta_n^2
        \Big(\sfE'_n(\oomega_1)-\sfE'_n(\oomega_2)\Big)^2
\end{equation}
formed with the orthonormal system
of the scalar-valued $L^2$- space 
$\cH'$. 
We have already seen as a particular 
consequence of the calculations of Theorem 
\ref{thm:1d-gaussians-ok} 
that $\alpha^2(\oomega_1,\oomega_2)>0$ 
a.e.~if $\oomega_1\neq \oomega_2$.
\end{example}
\begin{theorem}
    \label{thm:criterion-stronger}
    If
    \begin{equation}
        \label{eq:integral-condition}
        \int_{\OOmega^2}
        \frac1{\alpha^d(\oomega_1,\oomega_2)}
        \,\d\P(\oomega_1)\d\P(\oomega_2)<\infty
    \end{equation}
    then $\ttG=\iota_\sharp\frg$ is super-G-regular.
\end{theorem}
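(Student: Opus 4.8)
The plan is to deduce Theorem~\ref{thm:criterion-stronger} from Lemma~\ref{le:push-regular}, so that the whole task reduces to checking two things about $\frg$: that it is $G$-regular, i.e.\ $\frg\in\cP_2^{gr}(\cH)$, and that condition \eqref{eq:54} holds, i.e.\ $\iota(X)=X_\sharp\P$ is absolutely continuous (hence regular) for $\frg$-a.e.\ $X\in\cH$. The first point is immediate in the setting of Example~\ref{ex:Berman-eigenvalue}: by \eqref{eq:uniform-Sigma} one has $\mathbb{E}_\Q\|\xxi\|_\cH^2=\sum_n\operatorname{tr}\Sigma_n\le d\sum_n\beta_n^2<\infty$, so $\frg\in\cP_2(\cH)$, and a nondegenerate Gaussian measure on $\cH$ vanishes on every Gaussian-null Borel set by the very definition of the latter; hence $\frg\in\cP_2^{gr}(\cH)$. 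So the entire content of the theorem is the absolute continuity statement, and I would prove it by Berman's Fourier method, exactly along the lines already indicated before \eqref{eq:monstre}.

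For the absolute continuity, I would first recall that by Plancherel's identity \eqref{eq:Plancherel} the measure $\mu_X:=\iota(X)$ is absolutely continuous with an $L^2(\R^d)$ density as soon as $\hat\mu_X\in L^2(\R^d)$, and that $\hat\mu_X\in L^2(\R^d)$ for $\frg$-a.e.\ $X$ as soon as the quantity $L^2$ of \eqref{eq:condition1} is finite. Representing $\frg=\xxi_\sharp\Q$ through the measurable process $\Xi=\sum_n\xi_n\sfE'_n$ as in Section~\ref{subsec:L-representation}, expanding $|\hat\mu_\thetao(u)|^2$ as in \eqref{eq:dacitare}, and swapping integrals by Tonelli (all integrands are nonnegative), one gets
\[
L^2=\int_{\OOmega^2}\Big(\int_{\R^d}\mathbb{E}_\Q\big[\mathrm e^{\,iu\cdot(\Xi(\cdot,\oomega_1)-\Xi(\cdot,\oomega_2))}\big]\,\d u\Big)\,\d\P^{\otimes 2}(\oomega_1,\oomega_2).
\]
For $\P^{\otimes 2}$-a.e.\ $(\oomega_1,\oomega_2)$ the series $\Xi(\cdot,\oomega_1)-\Xi(\cdot,\oomega_2)=\sum_n\xi_n\big(\sfE'_n(\oomega_1)-\sfE'_n(\oomega_2)\big)$ converges $\Q$-a.s.\ to a centered Gaussian vector in $\R^d$ with covariance matrix $C(\oomega_1,\oomega_2)=\sum_n\big(\sfE'_n(\oomega_1)-\sfE'_n(\oomega_2)\big)^2\Sigma_n$; hence $\mathbb{E}_\Q[\mathrm e^{\,iu\cdot(\cdots)}]=\mathrm e^{-\frac12 C(\oomega_1,\oomega_2)u\cdot u}$. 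Since $\Sigma_n\ge\alpha_n^2 I$ in the Loewner order, one has $C(\oomega_1,\oomega_2)\ge\alpha^2(\oomega_1,\oomega_2)\,I$ with $\alpha^2$ as in \eqref{eq:variance2}; in particular $C(\oomega_1,\oomega_2)$ is positive definite for $\P^{\otimes 2}$-a.e.\ $(\oomega_1,\oomega_2)$, because $\alpha^2>0$ a.e.\ off the diagonal (shown in the course of Theorem~\ref{thm:1d-gaussians-ok}) and $\P$ is diffuse. Therefore the Gaussian integral evaluates to $\int_{\R^d}\mathrm e^{-\frac12 C u\cdot u}\,\d u=(2\pi)^{d/2}(\det C)^{-1/2}\le(2\pi)^{d/2}\,\alpha^{-d}(\oomega_1,\oomega_2)$, and consequently
\[
L^2\le(2\pi)^{d/2}\int_{\OOmega^2}\frac{\d\P(\oomega_1)\,\d\P(\oomega_2)}{\alpha^d(\oomega_1,\oomega_2)}<\infty
\]
by the hypothesis \eqref{eq:integral-condition}. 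Thus $\mu_X$ is absolutely continuous for $\frg$-a.e.\ $X$, so $\iota(X)\in\cP_2^{gr}(\R^d)\subset\cP_2^r(\R^d)$ $\frg$-a.e., and Lemma~\ref{le:push-regular} gives $\ttG=\iota_\sharp\frg\in\RWgr2$.

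The routine part of this is the Fubini/Tonelli bookkeeping (legitimate because everything is nonnegative) together with the joint continuity of $(X,u)\mapsto\hat\mu_X(u)$, which supplies the needed measurability. The two points that require real care — and where I would place the technical weight of the write-up — are: (i) the $\Q$-a.s.\ convergence of the series defining the Gaussian increment and the identification of its covariance as $C(\oomega_1,\oomega_2)$ for $\P^{\otimes 2}$-a.e.\ pair, which I would handle by noting $\int_{\OOmega^2}\sum_n\operatorname{tr}(\Sigma_n)\big(\sfE'_n(\oomega_1)-\sfE'_n(\oomega_2)\big)^2\,\d\P^{\otimes 2}\le 2d\sum_n\beta_n^2<\infty$, so the summand is $\P^{\otimes 2}$-a.e.\ finite by Tonelli, whence the series of independent Gaussians converges a.s.\ and its limit is Gaussian; and (ii) the Loewner lower bound $C\ge\alpha^2 I$, which is precisely what converts the hypothesis on $\alpha$ into integrability of $(\det C)^{-1/2}$ and is the step where nondegeneracy of the $\Sigma_n$ (the lower bound $\alpha_n^2>0$) is essential. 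I do not expect any genuine obstacle beyond these; the structure of the argument is the same as in the one-dimensional Theorem~\ref{thm:1d-gaussians-ok}, with the characteristic-function/Plancherel computation replacing the Walsh/Fourier orthogonality argument.
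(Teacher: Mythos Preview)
Your proposal is correct and follows essentially the same route as the paper: both reduce to Lemma~\ref{le:push-regular}, verify \eqref{eq:54} via Plancherel and the Berman condition \eqref{eq:condition1}, identify the characteristic function of the Gaussian increment $\Xi(\cdot,\oomega_1)-\Xi(\cdot,\oomega_2)$ with covariance $\Sigma(\oomega_1,\oomega_2)\ge\alpha^2(\oomega_1,\oomega_2)I$, and bound the resulting Gaussian integral by $(2\pi)^{d/2}\alpha^{-d}$. The only minor imprecision is the blanket phrase ``all integrands are nonnegative'': the swap of the $\Q$ and $\P^{\otimes 2}$ integrals involves the complex exponential and is justified by Fubini (bounded integrand on a probability space) rather than Tonelli, but you already flag this under ``Fubini/Tonelli bookkeeping'' and it causes no difficulty.
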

Notice that the integral in \eqref{eq:integral-condition} can be restricted to the complement
of the diagonal $D_\OOmega$ in $\OOmega^2$, since
$\P$ is atomless.
\begin{proof}
We want to prove that 
\eqref{eq:monstre} holds.
We first integrate 
\eqref{eq:dacitare}
with respect to $\Q,$ obtaining
\begin{align}
      \mathbb{E}_\Q  |\hat \mu_\omega(u)|^2
        &=
    \int_\Thetao
    \bigg(\int_{\OOmega^2}
    \rme^{i\, u\cdot (\Xi(\thetao,\oomega_1)-\Xi(\thetao,\oomega_2))}
        \,\d\P^{\otimes 2}(\oomega_1,\oomega_2)
        \bigg)\,\d\Q(\thetao)
        \\&=
        \label{eq:L2norm}
      \int_{\OOmega^2}
      \bigg(
      \int_\Thetao
      \rme^{i\, u\cdot (\Xi(\thetao,\oomega_1)-\Xi(\thetao,\oomega_2))}
      \,\d\Q(\thetao)
      \bigg)
      \,\d\P^{\otimes 2}(\oomega_1,\oomega_2)
      \\&= 
      \int_{\OOmega^2}
      \mathbb{E}_\Q\bigg[
      \rme^{i\, u\cdot (\Xi(\cdot,\oomega_1)-\Xi(\cdot,\oomega_2))}
      \bigg]
      \,\d\P^{\otimes 2}(\oomega_1,\oomega_2).
\end{align}
We now observe that 
for $\P^{\otimes 2}$-a.e.~$(\oomega_1,\oomega_2)$
$\beta^2(\oomega_1,\oomega_2)$ is finite so that 
the expression
\begin{equation}
    \label{eq:inner-gaussian}
    D(\thetao;\oomega_1,\oomega_2)
    :=\Xi(\thetao,\oomega_1)-
    \Xi(\thetao,\oomega_2)=
    \sum_n \xi_n(\thetao)
    [\sfE'_n(\oomega)-\sfE'_n(\oomega_2)]
\end{equation}
is a series of 
independent Gaussian variables
pointwise converging $\Q$-a.e.
Its sum is a Gaussian random variable
with covariance matrix
\begin{equation}
    \label{eq:covariance}
    \Sigma(\oomega_1,\oomega_2):=
    \sum_{n=1}^\infty 
    \Sigma_n[\sfE'_n(\oomega)-\sfE'_n(\oomega_2)]^2,
\end{equation}
satisfying
\begin{equation}
    \label{eq:covariance-bounds}
    \alpha^2(\oomega_1,\oomega_2)
    \le \Sigma(\oomega_1,\oomega_2)
    \vv\cdot \vv\le 
    \beta^2(\oomega_1,\oomega_2)
    \quad 
    \text{for every }\vv\in \R^d,\ |\vv|=1,
\end{equation}
so that 
\begin{equation}
    \label{eq:inner-transform}
    \mathbb{E}_\Q\bigg[
      \rme^{i\, u\cdot (\Xi(\cdot,\oomega_1)-\Xi(\cdot,\oomega_2))}
      \bigg]
      =
      \exp\Big(-\frac 12 \Sigma (\oomega_1,\oomega_2)
      u\cdot u\Big).
\end{equation}
Combining \eqref{eq:inner-transform}
with \eqref{eq:L2norm} we get
\begin{equation}
    \label{eq:almost}
    \int_\cH|\hat \mu_X(u)|^2
        \,\d \frg(X)
        =
        \mathbb{E}_\Q  |\hat \mu_\omega(u)|^2=
    \int_{\OOmega^2}
        \exp\Big(-\frac 12 \Sigma(\oomega_1,\oomega_2)
       u\cdot u\Big)
      \,\d\P^{\otimes 2}(\oomega_1,\oomega_2).
\end{equation}
We 
can plug \eqref{eq:almost} in \eqref{eq:condition1}
obtaining
after a further application of Fubini's Theorem
\begin{align*}
    L^2&=
    \int_{\R^d}
    \Big(
    \int_{\OOmega^2}
        \exp\Big(-\frac 12 \Sigma 
        (\oomega_1,\oomega_2)
       u\cdot u\Big)
      \,\d\P^{\otimes 2}(\oomega_1,\oomega_2)
      \Big)\,\d u
      \\&=
    \int_{\OOmega^2}
    \Big(
    \int_{\R^d}
        \exp\Big(-\frac 12 \Sigma 
        (\oomega_1,\oomega_2)
       u\cdot u\Big)
      \,\d u  
      \Big)
      \,\d\P^{\otimes 2}(\oomega_1,\oomega_2)
\end{align*}
Since the inner integral is
\begin{displaymath}
    \int_{\R^d}
        \exp\Big(-\frac 12 \Sigma 
        (\oomega_1,\oomega_2)
       u\cdot u\Big)\,\d u
       \le 
       \int_{\R^d}
        \exp\Big(-\frac 12 \alpha^2  
        (\oomega_1,\oomega_2)
       |u|^2\Big)\,\d u
      \le 
      \frac{(2\pi)^{d/2}}
      {\alpha^d(\oomega_1,\oomega_2)}
\end{displaymath}
we conclude that 
\begin{displaymath}
    L^2\le   
    (2\pi)^{d/2}
    \int_{\OOmega^2}
    \frac 1{\alpha^d(\oomega_1,\oomega_2)}
    \,\d\P^{\otimes 2}(\oomega_1,\oomega_2),
\end{displaymath}
 so that $L^2$ is finite
 if and only if \eqref{eq:integral-condition}
holds.
\end{proof}
We apply the above result 
to the $d$-dimensional version of 
Example \eqref{ex:random-conv}.
\begin{example}[Sum of $d$-dimensional Gaussians with random signs]
\label{ex:random-conv2}
    Let $\OOmega, \P,$ 
    and the Walsh system $W_I$ as 
    in Example \eqref{ex:random-conv}:
    they form a complete orthonormal system
    for the ``scalar'' Hilbert space
    $\cH'=L^2(\OOmega,\P;\R)$.
    We now select a family of independent $\R^d$-Gaussian random variables
    $\xi_I\sim N(0,\Sigma_I)$ as in the previous discussion with
    \begin{equation}
        \label{eq:bounds-Walsh}
        0<\alpha^2_I\le \Sigma_I\vv\cdot\vv\le \beta_I^2
        \quad\text{for every }\vv\in \R^d,\quad
        B^2=\sum_I \beta_I^2<\infty.
    \end{equation}
    As in \eqref{eq:G-representation-d}
    we consider the random vector
    \begin{equation}
        \label{eq:G-Walsh-d}
        \xxi=\sum_{I\in \cI}
        \xi_IW_I
        \quad
        \text{with}\quad 
        \frg=\xxi_\sharp\Q.
    \end{equation}
    We decompose 
    the set $\cI$ of finite parts of $\N_+$
    in the disjoint union of
    $\cI_n$, $n\in \N$,
    with
    \begin{equation}
        \label{eq:partition}
        \cI_0=\{\emptyset\},\quad 
        \cI_n:=
        \Big\{I\in \cI:
        \max I=n\Big\},\quad n>0,
    \end{equation}
    i.e.
    \begin{displaymath}
        \cI_1=\big\{\{1\}\big\},\ 
        \cI_2=\big\{\{2\},\{1,2\}\big\},\
        \cI_3=\big\{\{3\},\{2,3\},
        \{1,3\},\{1,2,3\}\big\},\cdots
    \end{displaymath}
    Notice that for every $I\in \cI_n$
    the corresponding Walsh function
    can be factorized as 
    \begin{equation}
        \label{eq:factorization}
        W_I=\eps_n W_{I'}
        \quad\text{for some $I'\subset 
    \{1,\cdots,n-1\}.$}    
    \end{equation}
    For each $\cI_n$, $n\in \N$, we compute the 
    contribution of $\alpha_I^2$ 
    to the total sum
    \begin{equation}
        \label{eq:partial-sum}
        A^2_n:=\sum_{I\in \cI_n}\alpha_I^2,\quad
        \text{so that}\quad
        A^2=
        \sum_{n=0}^\infty A_n^2
        =\sum_I \alpha_I^2
        \le B^2.
    \end{equation}
    The next result shows that 
    $G=\iota_\sharp \frg$
    is super-G-regular if $A_n$ does not decay
    too fast.
    \end{example}
    \begin{theorem}
        \label{thm:Walsh-d}
        If
        \begin{equation}
            \label{eq:Walsh-d-condition}
            \sum_{n=1}^\infty \frac{1}{2^n A_n^d}<\infty
        \end{equation}
        then $\ttG=\iota_\sharp \frg$
        is super-G-regular.
    \end{theorem}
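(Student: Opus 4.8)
The plan is to deduce the statement from the sufficient integrability condition of Theorem~\ref{thm:criterion-stronger}: once \eqref{eq:integral-condition} is verified for the function
\[
  \alpha^2(\oomega_1,\oomega_2)=\sum_{I\in\cI}\alpha_I^2\bigl(W_I(\oomega_1)-W_I(\oomega_2)\bigr)^2
\]
associated with the Walsh system, Theorem~\ref{thm:criterion-stronger} immediately gives that $\ttG=\iota_\sharp\frg$ is super-G-regular. So the whole task reduces to bounding $\alpha^2(\oomega_1,\oomega_2)$ from below by a quantity depending only on how close $\oomega_1$ and $\oomega_2$ are in the Cantor set, and then integrating that bound against $\P^{\otimes 2}$.

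First I would introduce, for $\oomega_1\neq\oomega_2$, the separation index $N=N(\oomega_1,\oomega_2):=\min\{n\in\N_+:\eps_n(\oomega_1)\neq\eps_n(\oomega_2)\}$, so that $\eps_i(\oomega_1)=\eps_i(\oomega_2)$ for $i<N$ while $\eps_N(\oomega_1)=-\eps_N(\oomega_2)$. The key structural step uses the factorization \eqref{eq:factorization}. If $I\in\cI_n$ with $n<N$ (and for $I=\emptyset$), then $W_I$ depends only on coordinates on which $\oomega_1$ and $\oomega_2$ agree, so $W_I(\oomega_1)-W_I(\oomega_2)=0$. If $I\in\cI_N$, one writes $W_I=\eps_N W_{I'}$ with $I'\subset\{1,\dots,N-1\}$, hence $W_{I'}(\oomega_1)=W_{I'}(\oomega_2)=:\sigma\in\{-1,1\}$ and $W_I(\oomega_1)-W_I(\oomega_2)=(\eps_N(\oomega_1)-\eps_N(\oomega_2))\sigma=\pm2$, so that $(W_I(\oomega_1)-W_I(\oomega_2))^2=4$. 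Discarding the remaining nonnegative terms (the indices $I$ with $\max I>N$) then yields the clean bound
\[
  \alpha^2(\oomega_1,\oomega_2)\ \ge\ 4\sum_{I\in\cI_N}\alpha_I^2\ =\ 4\,A_{N(\oomega_1,\oomega_2)}^2 ;
\]
incidentally this also re-proves $\alpha^2>0$ off the diagonal, which is the non-degeneracy fact implicitly used inside the proof of Theorem~\ref{thm:criterion-stronger}, since $\alpha_I^2>0$ and $\cI_n\neq\emptyset$ for every $n\geq1$.

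Next I would carry out the elementary measure computation. Since $\P$ is atomless, the diagonal of $\OOmega^2$ is $\P^{\otimes2}$-null and can be ignored; on its complement the pairs $(\eps_i(\oomega_1),\eps_i(\oomega_2))$, $i\in\N_+$, are i.i.d.\ under $\P^{\otimes2}$ with $\P^{\otimes2}(\eps_i(\oomega_1)=\eps_i(\oomega_2))=\tfrac12$, whence $\P^{\otimes2}(\{N=n\})=2^{-(n-1)}\cdot\tfrac12=2^{-n}$ for every $n\in\N_+$. Combining this with the lower bound for $\alpha^2$,
\[
  \int_{\OOmega^2}\frac{\d\P^{\otimes2}(\oomega_1,\oomega_2)}{\alpha^d(\oomega_1,\oomega_2)}
  \ \le\ \sum_{n=1}^{\infty}\frac{\P^{\otimes2}(\{N=n\})}{(2A_n)^{d}}
  \ =\ \frac1{2^{d}}\sum_{n=1}^{\infty}\frac1{2^{n}A_n^{d}}\ <\ \infty
\]
precisely by the hypothesis \eqref{eq:Walsh-d-condition}. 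This establishes \eqref{eq:integral-condition}, so Theorem~\ref{thm:criterion-stronger} concludes the proof.

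The only genuinely delicate point I expect is the structural lower bound on $\alpha^2$: one must check that the factorization \eqref{eq:factorization} applies uniformly to all $I\in\cI_N$ and that it produces a contribution equal to $4$ independent of $I$, so that summing over $\cI_N$ reconstructs exactly $4A_N^2$. Everything else — identifying the distribution of the separation index $N$ and performing the final geometric-type summation — is routine; the finiteness of $\alpha^2$ itself is automatic from $|W_I|\equiv1$ and $\sum_I\beta_I^2=B^2<\infty$.
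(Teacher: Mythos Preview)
Your proposal is correct and follows essentially the same route as the paper: reduce to the integrability criterion of Theorem~\ref{thm:criterion-stronger}, introduce the first-disagreement index $N(\oomega_1,\oomega_2)$, use the factorization \eqref{eq:factorization} to obtain $\alpha^2(\oomega_1,\oomega_2)\ge 4A_{N(\oomega_1,\oomega_2)}^2$, and then compute $\P^{\otimes 2}(\{N=n\})=2^{-n}$ to conclude. The only cosmetic difference is that you retain the factor $2^{-d}$ in the final estimate, whereas the paper drops it.
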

    Notice that 
    in the simplest
    case when $\Sigma_I=\lambda_I \sfI_{d\times d}$ 
    and $\alpha_I=\beta_I=\lambda_I$, asymptotic behaviours as 
    $A_n\sim n^{-\theta}$ with $\theta>1/2$
    or $A_n\sim a^{-n}$
    with $1<a<2^{1/d}$
    comply with \eqref{eq:Walsh-d-condition}
    and the summability of $n\mapsto A_n^2$
    (corresponding to $\sum_I\beta_I^2<\infty$).
    \begin{proof}
        We are in the setting of Theorem
        \eqref{thm:criterion-stronger},
        so it is sufficient to check
        that \eqref{eq:integral-condition} holds,
        where in our case
        \begin{equation}
            \label{eq:our-condition}
            \alpha^2(\oomega_1,\oomega_2)=
            \sum_{I\in \cI}
            \alpha_I^2\Big(W_I(\oomega_1)
            -W_I(\oomega_2)\Big)^2
            =
            \sum_{n\in \N}
            \sum_{I\in \cI_n}
            \alpha_I^2\Big(W_I(\oomega_1)
            -W_I(\oomega_2)\Big)^2
        \end{equation}
        For every pair $(\oomega_1,\oomega_2)\in \OOmega^2$ with $\oomega_1\neq \oomega_2$
        let us denote by 
        $N(\oomega_1,\oomega_2)$
        the first integer $n\in N_+$
        such that $\eps_n(\oomega_1)\neq 
        \eps_n(\oomega_2)$:
        \begin{equation}
            \label{eq:min-N}
            N(\oomega_1,\oomega_2):=
            \min\Big\{n\in\N_+:
            \eps_n(\oomega_1)\neq 
        \eps_n(\oomega_2)
        \Big\}.
        \end{equation}
        Since $\oomega_1\neq \oomega_2$
        the set in \eqref{eq:min-N} 
        is not empty, so that $N(\oomega_1,\oomega_2)$ 
        is well defined.
        Since $\eps_k(\oomega_1)=\eps_k(\oomega_2)$
        for every $k<N(\oomega_1,\oomega_2)$
        and therefore
        $W_{I'}(\oomega_1)=W_{I'}(\oomega_2)$
        for every $I'\subset \{1,\cdots,N(\oomega_1,\oomega_2)-1\}$.
        Therefore the factorization 
        \eqref{eq:factorization}
        shows that 
        \begin{equation}
            \label{eq:diff1}
            N=N(\oomega_1,\oomega_2),
            \quad
            I\in \cI_N
            \quad\Rightarrow\quad
            W_I(\oomega_1)=
            \eps_N(\oomega_1)W_{I'(\oomega_1)}
            \neq 
            W_I(\oomega_2)=
            \eps_N(\oomega_2)W_{I'(\oomega_1)}
        \end{equation}
        so that 
        \begin{equation}
            \label{eq:lower-bound}
            \alpha^2(\oomega_1,\oomega_2)
            \ge 
            \sum_{I\in \cI_N}
            \alpha_I^2\Big(W_I(\oomega_1)
            -W_I(\oomega_2)\Big)^2
            =
            4 
            \sum_{I\in \cI_N}
            \alpha_I^2
            =4A^2_N
        \end{equation}
        so that 
        \begin{equation}
            \label{eq:upper-bound}
            \int_{\OOmega^2}\frac1{\alpha^d}
            \,\d\P^{\otimes 2}
            \le 
            \sum_{n=1}^\infty
            \frac{1}{A_n^d}
            \P^{\otimes 2}\Big[
            \{(\oomega_1,\oomega_2)\in \OOmega^2:
            N(\oomega_1,\oomega_2)=n\}\Big].
        \end{equation}
        Recall now that
        $\eps_n$ are independent
        and 
        $\P^{\otimes 2}[\eps_k(\oomega_1)\neq 
        \eps_k(\oomega_2)]= \P^{\otimes 2}[\eps_k(\oomega_1)= 
        \eps_k(\oomega_2)]=1/2$ for all $k\in \N_+$.
        We thus obtain
        \begin{displaymath}
            \P^{\otimes 2}\Big[
            \{(\oomega_1,\oomega_2)\in \OOmega^2:
            N(\oomega_1,\oomega_2)=n\}\Big]=
            \frac 1{2^n}
        \end{displaymath}
        and inserting this expression in \eqref{eq:upper-bound}
        we eventually get
        \begin{displaymath}
            \int_{\OOmega^2}\frac1{\alpha^d}
            \,\d\P^{\otimes 2}
            \le 
             \sum_{n=1}^\infty
            \frac{1}{2^nA_n^d}<+\infty 
        \end{displaymath}
        thanks to \eqref{eq:Walsh-d-condition}.
    \end{proof}

\printbibliography

\end{document}